\pgfplotsset{compat=1.15}
\numberwithin{equation}{section} 
\newtheorem{thm}{Theorem}[section]
\newtheorem{cor}[thm]{Corollary} 
\newtheorem{prop}[thm]{Proposition}
\newtheorem{lem}[thm]{Lemma}
\theoremstyle{definition}
\newtheorem{defn}[thm]{Definition}
\newtheorem{note}[thm]{Notation}
\newtheorem{conv}[thm]{Convention}
\newtheorem{assu}[thm]{Assumption}
\providecommand{\customgenericname}{}
\newcommand{\newcustomtheorem}[2]{%
  \newenvironment{#1}[1]
  {%
   \renewcommand\customgenericname{#2}%
   \renewcommand\theinnercustomgeneric{##1}%
   \innercustomgeneric
  }
  {\endinnercustomgeneric}
}
\theoremstyle{remark}
\newtheorem{rem}[thm]{Remark}
\def\XXint#1#2#3{{\setbox0=\hbox{$#1{#2#3}{\int}$ }
\vcenter{\hbox{$#2#3$ }}\kern-.6\wd0}}
\newcommand{\B}{\mathbb{B}} 
\newcommand{\Z}{\mathbb{Z}}
\newcommand{\R}{\mathbb{R}} 
\newcommand{\C}{\mathbb{C}}
\newcommand{\N}{\mathbb{N}}
\newcommand{\1}{\mathbf{1}} 
\newcommand{\dbar}{\overline{\partial}}
\newcommand{\Bc}{\mathcal{B}}
\newcommand{\Co}{\mathscr{C}}
\newcommand{\Ds}{\mathscr{D}}
\newcommand{\Bs}{\mathscr{B}}
\newcommand{\Ec}{\mathcal{E}}
\newcommand{\Es}{\mathscr{E}}
\newcommand{\Fc}{\mathcal{F}}
\newcommand{\Fs}{\mathscr{F}}
\newcommand{\Hc}{\mathcal{H}}
\newcommand{\Tc}{\mathcal{T}}
\newcommand{\Kb}{\mathbb{K}}
\newcommand{\Kc}{\mathcal{K}}
\newcommand{\Ls}{\mathscr{L}}
\newcommand{\M}{\mathcal{M}}
\newcommand{\Ns}{\mathscr{N}}
\newcommand{\Pc}{\mathcal{P}}
\newcommand{\Rc}{\mathcal{R}}
\newcommand{\Sc}{\mathcal{S}}
\newcommand{\Ss}{\mathscr{S}}
\newcommand{\Uc}{\mathcal{U}}
\newcommand{\Xs}{\mathscr{X}}
\newcommand{\Ys}{\mathscr{Y}}
\newcommand{\Zs}{\mathscr{Z}}
\newcommand{\id}{\mathrm{id}}
\newcommand{\eps}{\varepsilon}
\newcommand{\bmo}{\mathrm{bmo}}
\newcommand{\BMO}{\mathrm{BMO}}
\newcommand{\dist}{\operatorname{dist}}
\newcommand{\supp}{\operatorname{supp}}
\newcommand{\essup}{\mathop{\operatorname{essup}}}
\newcommand{\loc}{\mathrm{loc}}
\newcommand{\re}{\operatorname{Re}}
\newcommand{\im}{\operatorname{Im}}
\newcommand{\Vol}{\operatorname{Vol}}
\def\@tocline#1#2#3#4#5#6#7{\relax
  \ifnum #1>\c@tocdepth % then omit
  \else
    \par \addpenalty\@secpenalty\addvspace{#2}%
    \begingroup \hyphenpenalty\@M
    \@ifempty{#4}{%
      \@tempdima\csname r@tocindent\number#1\endcsname\relax
    }{%
      \@tempdima#4\relax
    }%
    \parindent\z@ \leftskip#3\relax \advance\leftskip\@tempdima\relax
    \rightskip\@pnumwidth plus4em \parfillskip-\@pnumwidth
    #5\leavevmode\hskip-\@tempdima
      \ifcase #1
       \or\or \hskip 1em \or \hskip 2em \else \hskip 3em \fi%
      #6\nobreak\relax
    \hfill\hbox to\@pnumwidth{\@tocpagenum{#7}}\par% <---- \dotfill -> \hfill
    \nobreak
    \endgroup
  \fi}
	\title[Universal $\dbar$ Solution on Strongly Pseudoconvex]{A Universal $\overline\partial$ Solution Operator on Nonsmooth Strongly Pseudoconvex Domains}          
\author[]{Liding Yao} 
\address{Liding Yao, Department of Mathematics,
	The Ohio State University, Columbus, OH 43210} 
\email{yao.1015@osu.edu}
\subjclass[2020]{32A26 (primary) 32T25, 32W05 and 46E35 (secondary)} 
\keywords{Cauchy-Riemann equations, integral representation, strongly pseudoconvex domains, negative Sobolev spaces}
\begin{document}

\begin{abstract}
    We construct homotopy formulae $f=\overline\partial \mathcal H_q f+\mathcal H_{q+1}\overline\partial f$ on a bounded domain which is either $C^2$ strongly pseudoconvex or $C^{1,1}$ strongly $\mathbb C$-linearly convex. Such operators exhibit Sobolev estimates $\mathcal H_q:H^{s,p}\to H^{s+1/2,p}$ and H\"older-Zygmund estimates $\mathcal H_q:\mathscr C^s\to\mathscr C^{s+1/2}$ simultaneously for all $s\in\mathbb R$ and $1<p<\infty$. In particular this provides the existence and $\frac12$ estimate for solution operator on Sobolev space of negative index these domains. The construction uses a new decomposition for the commutator $[\overline\partial,\mathcal E]$.
\end{abstract}
\maketitle

\section{Introduction}

% Let $\Omega\subset\C^n$ be a bounded Lipschitz domain. We say $\Omega$ is \textit{strongly $\C$-linearly convex}, if there is a $c>0$, for every $\zeta\in\Omega^c$ there is a complex subspace $\Pi_\zeta\subset\C^n$ of dimension $(n-1)$ such that for every $0<r<c$
% \begin{equation*}
%     \dist\big(\Omega,\{\zeta+w:w\in\Pi_\zeta,|w|>r\}\big)\ge cr^2.
% \end{equation*}

% In the case where $\Omega$ is $C^1$, our definition is equivalent to what is used in \cite{GongLanzaniCConvex}.

% We say that $\Omega$ is \textit{locally strongly $\mathbb C$-linearly convexifiable}, if for every $\zeta\in b\Omega$ there is a holomorphic coordinate chart $F:U\subset\C^n\to\C^n$ near $\zeta$, such that $F(U\cap\Omega)$ is strongly $\C$-linearly convex.

% On $C^2$ domains, the locally strongly $\C$-linearly convexifiability is the same to the strongly pseudoconvexity, see e.g. ??.

The goal of this paper is to prove the following:
\begin{thm}\label{Thm::MainThm}
    Let $\Omega\subset\C^n$ be a bounded domain, which is either strongly pseudoconvex with $C^2$ boundary, or strongly $\C$-linearly convex with $C^{1,1}$ boundary.
    
    Then there are operators $\Hc_q:\Ss'(\Omega;\wedge^{0,q})\to\Ss'(\Omega;\wedge^{0,q-1})$ that maps $(0,q)$-forms to $(0,q-1)$-forms with distributional coefficients for $1\le q\le n$ (we set $\Hc_{n+1}:=0$), such that
\begin{enumerate}[(i)]
    \item{\normalfont(Homotopy formula)}\label{Item::MainThm::Homo} $f=\dbar\Hc_q f+\Hc_{q+1}\dbar f$ for all $1\le q\le n$ and $(0,q)$-forms $f\in\Ss'(\Omega;\wedge^{0,q})$.
    \item{\normalfont{($\frac12$ estimates)}}\label{Item::MainThm::Sob} For every $1\le q\le n$, $s\in\R$ and $p\in(1,\infty)$, we have $\frac12$ estimate on Sobolev spaces $\Hc_q:H^{s,p}(\Omega;\wedge^{0,q})\to H^{s+\frac12,p}(\Omega;\wedge^{0,q-1})$ and on H\"older spaces $\Hc_q:\Co^s(\Omega;\wedge^{0,q})\to \Co^{s+\frac12}(\Omega;\wedge^{0,q-1})$.
    \item\label{Item::MainThm::BddP} The skew Bergman projection $\Pc f:=f-\Hc_1\dbar f$ has boundedness $\Pc:H^{s,p}(\Omega)\to H^{s,p}(\Omega)$ and $\Pc:\Co^s(\Omega)\to\Co^s(\Omega)$ for all $s\in\R$ and $p\in(1,\infty)$. In particular $\Pc:L^p(\Omega)\to L^p(\Omega)$ for $p\in(1,\infty)$.
    % \item{\normalfont{($L^p$-$L^q$ estimates)}}\label{Item::MainThm::Lp} For every $s\in\R$ and $1<p<2n+2$, $\Hc_q:H^{s,p}(\Omega;\wedge^{0,q})\to H^{s,\frac{2(n+1)p}{2n+2-p}}(\Omega;\wedge^{0,q-1})$ for all  $1\le q\le n$.
\end{enumerate}
\end{thm}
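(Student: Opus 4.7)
The plan is to reduce the problem on the rough domain $\Omega$ to a classical Henkin--Ramirez type construction on a smooth ambient set, by means of a universal extension operator. Fix an extension operator $\Ec:\Ss'(\Omega)\to\Ss'(\C^n)$ of Rychkov--Seeley type, adapted to the $C^2$ (resp.\ $C^{1,1}$) regularity of $\partial\Omega$, that is bounded on $H^{s,p}$ and $\Co^s$ simultaneously for every $s\in\R$ and $1<p<\infty$. Choose a smooth strongly pseudoconvex domain $\widetilde\Omega\Supset\overline\Omega$ by smoothing a defining function of $\Omega$; on $\widetilde\Omega$ one has a classical $\tfrac12$-gaining Henkin--Ramirez homotopy $g=\dbar T_qg+T_{q+1}\dbar g$ that is bounded on all the relevant scales. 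Provisionally define $\Hc_qf:=(T_q\Ec f)|_\Omega$.

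Part (ii) is then immediate from composing three bounded maps: $\Ec$ is bounded on $H^{s,p}(\Omega)\to H^{s,p}(\widetilde\Omega)$, $T_q$ gains $\tfrac12$ derivative, and restriction is bounded, with the same chain working on $\Co^s$. The real content is part (i). Applying the ambient identity to $\Ec f$ and restricting to $\Omega$ yields
\[
f=(\Ec f)|_\Omega=\dbar\Hc_qf+\Hc_{q+1}\dbar f+\bigl(T_{q+1}[\dbar,\Ec]f\bigr)\big|_\Omega,
\]
so one must absorb the commutator term. Here I would use the decomposition announced in the abstract, aiming to write $[\dbar,\Ec]=\dbar A_q-A_{q+1}\dbar$ for auxiliary operators $A_q$ bounded on every $H^{s,p}$ and $\Co^s$. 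Then $T_{q+1}[\dbar,\Ec]f$ splits into a $\dbar$-exact piece, to be folded into a redefined $\Hc_q$, and a piece applied to $\dbar f$, to be folded into $\Hc_{q+1}$; both folds preserve the $\tfrac12$-gain. Part (iii) follows essentially for free, since $\Pc f=f-\Hc_1\dbar f$ then equals $(\Ec f)|_\Omega$ up to corrections built from operators already shown bounded on $H^{s,p}$ and $\Co^s$ of all orders.

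The main obstacle is producing the commutator decomposition at the full range $s\in\R$. On positive-regularity spaces one can interpret $[\dbar,\Ec]f$ via the jump of $\Ec f$ across $\partial\Omega$ and invoke classical single/double-layer estimates, but for $s<0$ there are no traces, so one must commute $\dbar$ with the kernel defining $\Ec$ before testing against $f$. The finite boundary regularity ($C^2$ or $C^{1,1}$) limits how many derivatives one can move onto that kernel, and accounting for this loss precisely is what produces the sharp $\tfrac12$-gain rather than a worse exponent. A secondary difficulty is uniform treatment of the two geometric settings: in the strongly $\C$-linearly convex $C^{1,1}$ case the Levi form is not available pointwise, and one must run the argument with Berndtsson--Andersson type reproducing kernels built from $\C$-linear convexity rather than the standard Henkin kernel, checking that the commutator decomposition is compatible with that choice.
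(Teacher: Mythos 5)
Your reduction has two genuine gaps, one geometric and one analytic, and a missing final step.

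\textbf{The ambient domain $\widetilde\Omega$ is the wrong place to run a Leray--Koppelman step.} If $\overline\Omega\Subset\widetilde\Omega$ and $T_q$ is the Henkin--Ramirez homotopy of the \emph{smooth} domain $\widetilde\Omega$, then on the compact set $\overline\Omega$ the boundary piece of $T_q$ is $C^\infty$-smoothing and only the Bochner--Martinelli part contributes a singularity, so $T_q$ gains a full derivative (not $\tfrac12$) on $\Omega$. That is not the real issue; the issue is the commutator term $T_{q+1}[\dbar,\Ec]f$. The distribution $[\dbar,\Ec]f$ is supported in $\widetilde\Omega\setminus\overline\Omega$, and near $\partial\Omega$ only the Bochner--Martinelli piece of $T_{q+1}$ is active. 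The Bochner--Martinelli kernel gives no gain across a hypersurface and certainly not $\tfrac12$; the $\tfrac12$ comes from the anisotropic ellipsoid geometry encoded by the support function of $\partial\Omega$ (cf.\ \eqref{Eqn::WeiEst::SuppFun}), which $T_{q+1}$ cannot see because $\partial\Omega$ is interior to its domain. So one has to introduce a Leray--Koppelman kernel built from a defining function \emph{of $\Omega$ itself}. That kernel is at best $C^{0,1}$ in $\zeta$, and this is exactly where the known constructions on smooth domains break down and the new idea is needed.

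\textbf{The proposed commutator splitting $[\dbar,\Ec]=\dbar A_q-A_{q+1}\dbar$ is not the right decomposition and is unsubstantiated.} Note that $A_q:=\Ec$ satisfies that identity trivially (since $[\dbar,\Ec]f=\dbar\Ec f-\Ec\dbar f$), but plugging it in just reproduces the ambient homotopy formula and gives no new information. Any nontrivial choice would require $B_q:=A_q-\Ec$ to intertwine $\dbar$ exactly ($\dbar B_qf=B_{q+1}\dbar f$), which is a severe constraint with no evident construction. What the paper actually does is quite different: it decomposes the commutator \emph{as a distribution}, $[\dbar,\Ec]f=\sum_{j}D_{v_j}^m\,\Sc^{m,j}[\dbar,\Ec]f$ with the $\Sc^{m,j}$ convolution operators of Theorem~\ref{Thm::RefAtD}, confining all $m$ derivatives to a single direction $v_j$. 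It then pairs each piece with a Leray--Koppelman kernel $K^{\rho_j}$ built from a defining function normalized by $D_{v_j}\rho_j\equiv1$, so the transferred derivatives $D_{v_j}^m$ land harmlessly on $K^{\rho_j}$ (see \eqref{Eqn::Intro::RefineRevInt}). This single-direction discipline is the key idea missing from your proposal; without it, integrating by parts against a kernel that is only $C^{0,1}$ in $\zeta$ destroys integrability for large negative $s$.

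\textbf{You also miss the local-to-global step.} Even once a well-behaved local homotopy exists near each boundary point, stitching by a partition of unity produces a parametrix $f=\dbar\Hc_q'f+\Hc_{q+1}'\dbar f+\Rc_qf$ with a compact error $\Rc_q$, not a homotopy. Passing from the parametrix to a genuine homotopy needs the spectral argument of Section~\ref{Section::PM} (a Riesz-theory inversion uniformly over the chain $H^{s,2}$), which also handles the negative $s$. Relatedly, your sketch of part (iii) goes wrong: $\Pc f$ is not a correction of $(\Ec f)|_\Omega=f$ but a Cauchy--Fantappi\`e-type projection built from the holomorphic kernels $F^{\rho_j}$.
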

Here $\Ss'(\Omega)$ is the space of distributions on $\Omega$ which admits extension to distributions on $\R^n$. See Notation~\ref{Note::Space::Dist} and Lemma~\ref{Lem::Space::SsLim}~\ref{Item::Space::SsLim::Cup}. For the precise definitions of $H^{s,p}$ and $\Co^s$ see Definitions~\ref{Defn::Space::Sob} and \ref{Defn::Space::Hold}. For precise definitions of strongly pseudoconvexity and $\C$-linearly convexity, see Definition~\ref{Defn::WeiEst::Convex}.

Such solution operators $\Hc_q$ are called \textit{universal solution operators}, in the sense that we have one operator which has optimal regularity estimates on $H^{s,p}$ and  $\Co^s$ simultaneously for all $s\in\R$ and $1<p<\infty$. % (see Definitions~\ref{Defn::Space::Sob} and \ref{Defn::Space::Hold} for their definitions). 
In previous studies the constructions are only known for smooth domains, one by Shi and the author \cite{ShiYaoCk} on strongly pesudoconvex domains and the other by the author \cite{YaoCXFinite} on convex domains of finite type.

We will provide a stronger result in Theorem~\ref{Thm::GenThm}, which unifies the Sobolev and H\"older estimates via Triebel-Lizorkin spaces and includes the $L^p$-$L^q$ estimates for $\Hc_q$.

As an immediate corollary of Theorem~\ref{Thm::MainThm} we have the following result for the $\dbar$ equation:
\begin{cor}
    Let  $\Omega\subset\C^n$ be a bounded domain, which is either $C^2$ strongly pseudoconvex or $C^{1,1}$ strongly $\C$-linearly convex. Let $1\le q\le n$. For every $(0,q)$-form $f$ on $\Omega$ whose coefficients are extendable distributions such that $\dbar f=0$ in the sense of distributions, there is a $(0,q-1)$ form $u$ on $\Omega$ whose coefficients are also extendable distributions, such that $\dbar u=f$.
    
    Moreover, for every $s\in\R$  (possibly negative) and $1<p<\infty$ there are a $C_{s,p}>0$ and $C_s>0$, such that if the coefficients of $f$ are $H^{s,p}$, then we can choose $u$ whose coefficients are $H^{s+1/2,p}$ with estimate $\|u\|_{H^{s+1/2,p}(\Omega;\wedge^{0,q-1})}\le C_{s,p}\|f\|_{H^{s,p}(\Omega;\wedge^{0,q})}$; if the coefficients of $f$ are $\Co^s$, then we can choose $u$ whose coefficients are $\Co^{s+1/2}$ with estimate $\|u\|_{\Co^{s+1/2}(\Omega;\wedge^{0,q-1})}\le C_s\|f\|_{\Co^s(\Omega;\wedge^{0,q})}$.
\end{cor}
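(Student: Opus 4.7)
The proof of this corollary is essentially a direct application of Theorem~\ref{Thm::MainThm}, so the strategy is short. Given a $(0,q)$-form $f \in \Ss'(\Omega;\wedge^{0,q})$ with $\dbar f = 0$ in the distributional sense, I would simply define
\[
u := \Hc_q f \in \Ss'(\Omega;\wedge^{0,q-1}),
\]
using the operator furnished by Theorem~\ref{Thm::MainThm}. The mapping property $\Hc_q:\Ss'(\Omega;\wedge^{0,q})\to\Ss'(\Omega;\wedge^{0,q-1})$ guarantees at once that the coefficients of $u$ are extendable distributions, as required.

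Next I would invoke the homotopy formula in Theorem~\ref{Thm::MainThm}\ref{Item::MainThm::Homo}, which gives
\[
f = \dbar \Hc_q f + \Hc_{q+1}\dbar f = \dbar u + \Hc_{q+1}(0) = \dbar u,
\]
so $u$ solves the $\dbar$ equation. Note this uses the convention $\Hc_{n+1}:=0$, which covers the top-degree case $q=n$ automatically. This establishes existence in all degrees $1 \le q \le n$.

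For the quantitative statements, I would read them off directly from the $\frac12$ estimates in Theorem~\ref{Thm::MainThm}\ref{Item::MainThm::Sob}. If the coefficients of $f$ lie in $H^{s,p}(\Omega)$, then $\Hc_q:H^{s,p}(\Omega;\wedge^{0,q})\to H^{s+1/2,p}(\Omega;\wedge^{0,q-1})$ is bounded, giving the Sobolev estimate with $C_{s,p}$ equal to the operator norm of $\Hc_q$. Similarly, if the coefficients of $f$ lie in $\Co^s(\Omega)$, the boundedness $\Hc_q:\Co^s(\Omega;\wedge^{0,q})\to\Co^{s+1/2}(\Omega;\wedge^{0,q-1})$ delivers the H\"older estimate with $C_s$ the corresponding operator norm.

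There is no real obstacle here; the content of the corollary is entirely packaged into Theorem~\ref{Thm::MainThm}. The only point worth flagging is that since $s\in\R$ is arbitrary, this covers the case of negative Sobolev exponents, which is precisely the novelty emphasized in the abstract and which relies crucially on the fact that $\Hc_q$ is defined on the full scale $\Ss'(\Omega;\wedge^{0,q})$ rather than only on function spaces of nonnegative regularity.
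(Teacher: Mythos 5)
Your proposal is correct and coincides with the paper's intended argument: the corollary is stated as an immediate consequence of Theorem~\ref{Thm::MainThm}, obtained by setting $u = \Hc_q f$, applying the homotopy formula with $\dbar f = 0$, and reading off the $\frac12$ estimates. There is nothing more to add.
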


The main novelty of this paper is to give a construction of $\Hc$ such that $\Hc f$ is defined on of all (extendable) distributions $f$ while the domain $\Omega$ is allowed to have ``minimal smoothness'', and as a result, giving the existence of the $\dbar$ solutions on distributions for such $\Omega$.

To achieve this we take a further decomposition to the Bochner-Martinelli formula. In previous constructions only one Leray-Koppelman kernel is used for the homotopy operator, see \eqref{Eqn::Intro::PreviousHT} and \eqref{Eqn::Intro::LKFormula}. In our construction, we decompose the form along different directions, each associated with a different Leray-Koppelman form, see \eqref{Eqn::Intro::RefineRevInt} for a general idea. 

To be more precise, let us recall the previous constructions and their obstructions below.

\subsection{Previous obstructions and new modifications}
Pick an extension operator $\Ec$ that extends functions of $\Omega$ to  a bounded neighborhood $\Uc$ of $\overline\Omega$. Recall the associated Bochner-Martinelli formula that for every $1\le q\le n$ and $(0,q)$ form $f$,
\begin{equation}\label{Eqn::Intro::BMFormula}
        f(z)=\dbar_z\int_{\Uc} B_{q-1}(z,\cdot)\wedge \Ec f+\int_{\Uc} B_q(z,\cdot)\wedge \Ec \dbar f+\int_{\Uc\backslash\overline\Omega}B_q(z,\cdot)\wedge[\dbar,\Ec]f,\quad\text{for every }z\in\Omega.
\end{equation}
This formula is valid when $f$ has distributional coefficients as long as the extension $\Ec f$ is defined as a distribution supported in $\Uc$.

Here $B_q(z,\zeta)$ is the following $(n,n-1)$ form which has degree $(0,q)$ in $z$ and $(n,n-1-q)$ in $\zeta$:
\begin{equation}
    \label{Eqn::Intro::DefB}
    B(z,\zeta):=\frac{b\wedge(\dbar b)^{n-1}}{(2\pi i)^n|\zeta-z|^{2n}}=:\sum_{q=0}^{n-1}B_q(z,\zeta),\qquad\text{where }b(z,\zeta)=\sum_{j=1}^n(\bar\zeta_j-\bar z_j)d\zeta_j.
\end{equation}

The Leray-Koppelman formula (via commutators) takes homotopy on the commutator term through a kernel $K(z,\zeta)=K^Q(z,\zeta)$ for $(z,\zeta)\in\Omega\times(\Uc\backslash\Omega)$ associated to a Leray form $Q(z,\zeta)$ (see \eqref{Eqn::Intro::DefKQ} below): now for $g=[\dbar,\Ec]f$ that satisfies the vanishing condition $g|_\Omega=0$, we have
\begin{equation}\label{Eqn::Intro::LKFormula}
    \int_{\Uc\backslash\overline\Omega}B_{q}(z,\cdot)\wedge g=\dbar_z\int_{\Uc\backslash\overline\Omega} K_{q-1}^Q(z,\cdot)\wedge g+\int_{\Uc\backslash\overline\Omega}K_q^Q(z,\cdot)\wedge \dbar g,\quad\text{for every }z\in\Omega.
\end{equation}

To summarize \eqref{Eqn::Intro::DefB} and \eqref{Eqn::Intro::DefKQ}, for previous studies one used homotopy formulae $f=\dbar\mathbf H_qf+\mathbf H_{q+1}\dbar f$ for $q=1,\dots,n$ and $(0,q)$ form $f$, where $(\mathbf H_q)_{q=1}^n$ are given by
\begin{equation}\label{Eqn::Intro::PreviousHT}
    \mathbf H_qf(z)=\int_\Uc B_{q-1}(z,\cdot)\wedge\Ec f+\int_{\Uc\backslash\overline\Omega}K_{q-1}^Q(z,\cdot)\wedge[\dbar,\Ec]f.
\end{equation}

To be more precise, a \textit{Leray form} is a $(1,0)$ form $Q(z,\zeta)=\sum_{j=1}^nQ_j(z,\zeta)d\zeta_j$ that is holomorphic in $z\in\Omega$ and is non-vanishing for all $(z,\zeta)\in\Omega\times(\Uc\backslash\Omega)$. The associated Leray-Koppelman kernel $K^Q(z,\zeta)$ is
\begin{gather}
    \label{Eqn::Intro::DefKQ}
    K(z,\zeta)=K^Q(z,\zeta):=\frac{ b(z,\zeta)\wedge Q(z,\zeta)}{(2\pi i)^n}\wedge\sum_{k=1}^{n-1}\frac{(\dbar b(z,\zeta))^{n-1-k}\wedge(\dbar Q(z,\zeta))^{k-1}}{|z-\zeta|^{2(n-k)}(Q(z,\zeta)\cdot(\zeta-z))^k}=:\sum_{q=0}^{n-2}K^Q_q(z,\zeta).
\end{gather}
Here  $K^Q_q$ is the component that has degree $(0,q)$ in $z$ and $(n,n-2-q)$ in $\zeta$.

In the case where $\Omega$ is strongly convex with a defining function $\rho$ we can choose $Q(z,\zeta)=\partial\rho(\zeta)$.

When the domain $\Omega$ is smooth, $Q$ is allowed to be $C^\infty$ in $\zeta$, hence $K^Q(z,\cdot)$ is a bounded smooth function in $\Uc\backslash\Omega$ for every fixed $z\in\Omega$. In previous studies, both in \cite{ShiYaoCk} and \cite{YaoCXFinite}, for a distribution $g$ we were able to build a decomposition $g=\sum_{|\alpha|\le m}D^\alpha g_\alpha$ for a large enough $m$, such that $g_\alpha$ are $L^1$ and $g_\alpha|_\Omega=0$ as well. Therefore integrating by parts we have
\begin{equation}\label{Eqn::Intro::RevInt}
    \int_{\Uc\backslash\overline\Omega} K_{q-1}^Q(z,\cdot)\wedge g=\sum_{|\alpha|\le m}\int_{\Uc\backslash\overline\Omega} K_{q-1}^Q(z,\cdot)\wedge D^\alpha_\zeta g_\alpha=\sum_{|\alpha|\le m}(-1)^{|\alpha|}\int_{\Uc\backslash\overline\Omega}(D^\alpha_\zeta K_{q-1}^Q)(z,\cdot)\wedge g_\alpha.
\end{equation}
The condition $g_\alpha|_\Omega=0$ guarantees that all boundary integrals vanish. Since now $D^\alpha_\zeta K$ is a bounded function (for fixed $z$), the right hand side above is legitimate Lebesgue integral, giving the definedness of $\Hc f$.

However this method does \textbf{not} work for nonsmooth domains, since $Q(z,\zeta)$ cannot be smooth in $\zeta$. As a result $\{(D^\alpha_\zeta K_{q-1}^Q)(z,\cdot)\}_{|\alpha|\le m}\not\subset L^1$ for large enough $m$. In \cite{ShiYaoC2} $\Hc_q$ is only defined on $H^{s,p}$ for $s>1/p$ because we need the property $[\dbar,\Ec]f\in L^1(\Uc\backslash\overline\Omega)$ when $f\in H^{s,p}$ (see \cite[Remark~4.2~(iii)]{ShiYaoC2}). The  assumption $s>1/p$ is also required in a recent preprint \cite{ShiC2FiniteType} for the same reason.

To resolves the problem, instead of just modifying \eqref{Eqn::Intro::LKFormula} we impose further decomposition the commutator term in \eqref{Eqn::Intro::BMFormula}. 

Let us work on a local domain $V\cap\Omega$ where $V$ is a neighborhood of a boundary point. We can assume $b\Omega$ is strongly convex and thus we can use $\partial\rho(\zeta)$ for the Leray form where $\rho$ is any ($C^{1,1}$) defing function. We can pick a $\R$-linear basis $(v_1,\dots,v_{2n})$ for $\C^n$ such that $v_j$ points outward with respect to $b\Omega\cap V$. 

Notice that each $1\le j\le 2n$, we can pick a $C^{1,1}$ local defining function $\rho_j$ on $V\cap\Omega$ such that $D_{v_j}\rho_j\equiv1$. Although $K^{\partial\rho_j}$ cannot be a smooth in $\zeta$, it is nevertheless smooth along the direction $D_{v_j}$ in $\zeta$.

Next, instead of using $g=\sum_{|\alpha|\le m}D^\alpha g_\alpha$ in \eqref{Eqn::Intro::RevInt}, we use a refined decomposition $g=\sum_{j=1}^{2n}D_{v_j}^mg_{m,j}$ where $g_{m,j}|_\Omega=0$. It is important that the differentiations $D_{v_j}^m$ are all confined to a single direction $v_j$. In this way, for $g=[\dbar,\Ec]f$, by taking $m$ large enough $g_{m,j}$ can all be $L^1$ as well. Therefore,
\begin{align}\notag
    \int_{V\backslash\overline\Omega}B_{q-1}(z,\cdot)\wedge g=&\sum_{j=1}^{2n}\int_{V\backslash\overline\Omega}B_{q-1}(z,\cdot)\wedge D_{v_j}^mg_{m,j}
    \\\notag
    =&\sum_{j=1}^{2n}\bigg(\dbar_z\int_{V\backslash\overline\Omega}K_{q-1}^{\partial\rho_j}(z,\cdot)\wedge D_{v_j}^mg_{m,j}+\int_{V\backslash\overline\Omega}K_q^{\partial\rho_j}(z,\cdot)\wedge D_{v_j}^m\dbar g_{m,j}\bigg)
    \\\label{Eqn::Intro::RefineRevInt}
    =&(-1)^m\sum_{j=1}^{2n}\bigg(\dbar_z\int_{V\backslash\overline\Omega}(D_{v_j}^mK_{q-1}^{\partial\rho_j})(z,\cdot)\wedge g_{m,j}+\int_{V\backslash\overline\Omega}(D_{v_j}^mK_q^{\partial\rho_j})(z,\cdot)\wedge \dbar g_{m,j}\bigg).
\end{align}
In the construction, the decomposition map $\Sc^{m,j}g:= g_{m,j}$ here are all achieved by certain convolutions, which, in particular, commute with $\dbar$.

Since the integrands in the last row are $L^1$ now, we obtain the local definedness of $(\Hc_q)_q$ on distributions, which, roughly speaking are the following: for $(0,q)$ form $f$ on $\Omega$ such that $f|_{\Omega\backslash\overline V}\equiv0$,
\begin{equation*}
    \Hc_qf(z)=\int_VB_{q-1}(z,\cdot)\wedge\Ec f+\sum_{j=1}^{2n}\int_{V\backslash\overline\Omega}K_{q-1}^{\partial\rho_j}(z,\cdot)\wedge D_{v_j}^m\Sc^{m,j}[\dbar,\Ec]f,\qquad z\in V\cap\Omega.
\end{equation*}
See \eqref{Eqn::LocalHT::H'} for the precise formula.

Now \eqref{Eqn::Intro::RefineRevInt} is only a construction in local.
Although it is possible to extend this construction to global, in the paper we choose to construct local operators and glue them together by partition of unity. 

The partition of unity argument is not able to produce the final homotopy formulae, but a \textit{parametrix formula} $f=\dbar\Hc'_qf+\Hc'_{q+1}\dbar f+\Rc_qf$. Fortunately, the term $(\Rc_q)_{q=1}^n$ are all compact. Roughly speaking we can take $\Hc_q=\Hc'_q\circ(\id-\Rc_q)^{-1}$ for the final operators. 

\subsection{Historical remark} The study of the $\dbar$ equation on strongly pseudoconvex domain has a long history.
There are two major approaches on the solution theory, the $\dbar$-Neumann approach and the integral representation method. 

The \textit{$\dbar$-Neumann problems} defines the \textit{canonical solutions} $\dbar^*N$ that has minimal $L^2$ norm. The $\dbar$-Neumann problem was proposed in \cite{GarabedianSpencer1952} and developed by Morrey \cite{Morrey1958} and Kohn \cite{Kohn1963}. We refer \cite{ChenShawBook} for a detailed discussion.

On smooth strongly pseudoconvex domains, the $\frac12$-estimate were first achieved by Kohn in \cite{Kohn1963} which proves $\dbar^*N_q:H^{s,2}\to H^{s+1/2,2}$ for $s\ge0$. Later for $(0,1)$-forms, Greiner-Stein \cite{GreinerStein1977} proves the boundedness $\dbar^*N_1:H^{s,p}\to H^{s+1/2,p}$ for $s\ge0$ and $1<p<\infty$ and $\dbar^*N_1:\Co^s\to \Co^{s+1/2}$ for $s>0$. For $(0,q)$ forms the optimal Sobolev and H\"older bounds for $\dbar^*N_q$ were done by Chang \cite{Chang1989}. 

It is worth to point out that even for the smooth domains, the canonical solution is never definable on generic distributions with suitably large order, i.e. the boundary value problem is ill-posed, see Lemma~\ref{Lem::Space::IllPosed}.

We use the second approach called \textit{integral representations}, which yield non-canonical solutions but the expressions can be more explicit. We refer \cite{RangeSCVBook} and \cite{LiebMichelBook} for a general discussion. 

This method was introduced by Henkin \cite{Henkin1969} and Grauert \& Lieb \cite{GrauertLieb1971}, both gave $C^0\to C^0$ estimate on strongly pseudoconvex domains whose boundary are $C^2$. Later Kerzman \cite{Kerzman1971} gave the boundedness $L^p\to L^p$ and $L^\infty\to \Co^{1/2-}$ for $q=1$. \O vrelid \cite{Ovrelid1971} obtained estimates $L^p\to L^p$ and $C^k\to C^k$ for $q\ge1$. The sharp $L^\infty\to \Co^{1/2}$ estimate was first achieved by Henkin-Romanov \cite{HenkinRomanov1971} for $q=1$. For $q\ge1$ the $L^\infty\to \Co^{1/2}$ estimate was first done by Range-Siu \cite{RangeSiu1973} where the domains need to be smooth. When the domain is sufficiently smooth, Siu \cite{Siu1974} proved the existence of estimate $C^k\to \Co^{k+1/2}$ on closed $(0,1)$ forms with $b\Omega\in C^{k+4}$; later Lieb-Range \cite{LiebRange1980} proved this estimate on closed $(0,q)$ forms with $b\Omega\in C^{k+2}$.

Recently Gong \cite{GongHolderSPsiCXC2} constructed homotopy operators with boundedness $\Co^s\to\Co^{s+1/2}$ for all $s>1$ while the strong pseudoconvex $\Omega$ is only $C^2$ smooth. Note that for such domain, the canonical solution operator $\dbar^*N_q$ cannot have $\Co^s$ boundedness for large enough $s$. Later Shi and the author \cite{ShiYaoC2} proved the Sobolev estimate $H^{s,p}\to H^{s+1/2,p}$ for all $1<p<\infty $ and $s>1/p$.

All the above studies on strongly pseudoconvex domains require at least $C^2$ boundary. For the exact $\frac12$ estimate (either on H\"older spaces or on Sobolev spaces) the weakest regularity assumption that is known to be possible is $C^{1,1}$ by Gong-Lanzani \cite{GongLanzaniCConvex} with the domain being $\C$-linearly convex. They proved the H\"older estimate $\Co^s\to\Co^{s+1/2}$ for $s>1$. Note that their method does not yield $\frac12$-estimate on $C^{1,\alpha}$ domains when $\alpha<1$, see \cite[Section~4]{GongLanzaniCConvex} for a brief discussion.

Our Theorem~\ref{Thm::MainThm} implies all the results above.

\medskip

There are also studies on intersections of strongly pseudoconvex domains, where $\Omega=\Omega_1\cap\dots\cap\Omega_m$ has Lipschitz singularity on the intersected boundary. Range-Siu \cite{RangeSiu1973} constructed a solution operator $L^\infty\to\Co^{1/2-}$ on $(0,q)$ forms assuming that the intersections are transversal. When $m=2$ the transversality assumption can be removed, see Peter \cite{Peters1989}. For fix $k\ge1$, Peters \cite{Peters1991} gave the construction which has the estimate $C^k\to \Co^{k+1/2-}$ on weak transversal strongly pseudoconvex domains. See also \cite{MichelPerotti1990}. However, none of these results has sharp $\frac12$ estimate. It is open whether one can construct a solution operator with estimate $L^\infty\to\Co^{1/2}$ on transversal intersections of smooth strongly pseudoconvex domains.

When the domain is not pseudoconvex, it is still possible to construct $\dbar$ solution operator on $(0,q)$ forms with certain fixed $q$. The so-called \textit{$a_q$ condition} was introduced by H\"ormander \cite{Hormander1965}, which is also named as $Z(q)$ domains in e.g. \cite[Page~57]{FollandKohn1972}. In the case $q=1$ this is exactly the strongly pseudoconvexity. On smooth $a_q$ domains the Sobolev and H\"older $\frac12$ estimates on $\dbar$-closed $(0,q)$-forms are given by Greiner-Stein \cite{GreinerStein1977} for $q=1$ and by Beals-Greiner-Stanton \cite{BGS1987} for $q\ge1$. See also the recent work by Gong \cite{GongHolderAq}.

In constructing solution operators, most of the previous constructions used Bochner-Martinelli on $\Omega$ and Leray-Koppelman on boundary $b\Omega$, see e.g. \cite[Theorems~11.1.2 and 11.2.2]{ChenShawBook}: for $(0,q)$ form $f$ on $\overline\Omega$,
\begin{gather*}
    f=\dbar_z\int_\Omega B_{q-1}(z,\cdot)\wedge f+\int_\Omega B_q(z,\cdot)\wedge\dbar f+\int_{b\Omega}B_q(z,\cdot)\wedge f,\qquad z\in\Omega;
    \\
    \int_{b\Omega}B_q(z,\cdot)\wedge f=-\dbar_z\int_{b\Omega} K_{q-1}(z,\cdot)\wedge f-\int_{b\Omega}K_q(z,\cdot)\wedge\dbar f,\qquad z\in\Omega.
\end{gather*}

The usage of extension operator $\Ec$ for \eqref{Eqn::Intro::BMFormula} traced back to Lieb-Range \cite{LiebRange1980}, the commutator method was introduced by Peters \cite{Peters1991} and later  used by \cite{Michel1991}, both of whom used Seeley's half-space extension \cite{Seeley} which works for smooth domains. Recently Gong \cite{GongHolderSPsiCXC2} further developed this method using $\Ec$ for Stein's extension \cite[Chapter VI]{SteinBook} which is defined on Lipschitz domains and extends $H^{s,p}$ and $\Co^s$ for $s>0$. In our case we choose $\Ec$ to be the Rychkov extension operator, which works on Lipschitz domains and extends $H^{s,p}$ and $\Co^s$ for \textbf{all} $s$ (including $s<0$), see \eqref{Eqn::Space::ExtOp}. The Rychkov's extension operator was first introduced to solve the $\dbar$-equation in \cite{ShiYaoC2}.

\subsection{Organization and notations}
The paper is organized as follows. % In Section~\ref{Section::Overview} we give the construction of the solution operator in detail and explain why the previous constructions in \cite{ShiYaoC2,ShiYaoCk} do not work when either the domain is non-smooth or when the function space has large negative index. 
In Section~\ref{Section::PM} we use spectral theory to provide a version of parametrix formulae. The construction of homotopy formulae $f=\dbar\Hc_qf+\Hc_{q+1}\dbar f$ for the total domain $\Omega$ is then reduced to the construction of parametrix formulae $f=\dbar\Hc'_qf+\Hc'_{q+1}\dbar f+\Rc_qf$ on local domain $\omega$ which share a piece of boundary to $\Omega$. In Section~\ref{Section::AntiDev} we construct the refined anti-derivative operators associated to the local domain $\omega$ by  modifying \cite[Section~4]{ShiYaoExt}. In Section~\ref{Section::WeiEst} we use the standard argument to prove weighted estimate of the Leray-Koppelman form on local domains which are $C^{1,1}$ strongly $\C$-linearly convex. In Section~\ref{Section::LocalHT} we construct the parametrix formulae $f=\dbar\Hc'_qf+\Hc'_{q+1}\dbar f+\Rc_qf$ using the anti-derivative operators from Section~\ref{Section::AntiDev}, and we prove the regularity estimates for $\Hc'_q,\Rc_q$ using the weighted estimates in Section~\ref{Section::WeiEst}. In Section~\ref{Section::ProofThm} we complete the proof of the main theorem by combining Proposition~\ref{Prop::PM::ConcretePM} and Theorem~\ref{Thm::LocalHT}. In Appendix~\ref{Section::Space} we review all the necessary tools from theory of function spaces, including the extension operator and the regularity estimate of the Bochner-Martinelli integral operator.

In the following we use $\N=\{0,1,2,\dots\}$ as the set of non-negative integers. 

On a complex coordinate system $(z_1=x_1+iy_1,\dots,z_n=x_n+iy_n)$,  $D^\alpha=D^\alpha_z$ denotes the total derivative $\frac{\partial^{|\alpha|}}{\partial x_1^{\alpha_1}\partial y_1^{\alpha_2}\dots\partial x_n^{\alpha_{2n-1}}\partial y_n^{\alpha_{2n}}} $ where $\alpha\in\N^{2n}$.

We use the notation $x \lesssim y$ to denote that $x \leq Cy$, where $C$ is a constant independent of $x,y$, and $x \approx y$ for ``$x \lesssim y$ and $y \lesssim x$''. We use $x\lesssim_\eps y$ to emphasize the dependence of $C$ on the parameter $\eps$.

For a function class $\Xs$ and a domain $U$, we use $\Xs(U)=\Xs(U;\C)$ as the space of complex-valued functions in $U$ that have regularity $\Xs$. We use $\Xs(U;\R)$ if the functions are restricted to being real-valued. We use $\Xs(U;\wedge^{p,q})$ for the space of (complex-valued) $(p,q)$-forms on $U$ that have regularity $\Xs$.

We use the term \textit{parametrix formulae} for which take the form $f=\dbar\Hc'_qf+\Hc'_{q+1}\dbar f+\Rc_qf$ for $q\ge1$ and $f=\Pc' f+\Hc'_1\dbar f+\Rc_0f$ for $q=0$, all of which are only defined on a neighborhood of a boundary point.

    For integration of bi-degree forms we use the following convention from \cite[Page~264]{ChenShawBook}. Note that this is different from \cite[Section~III.1.9]{RangeSCVBook}: 
    \begin{equation}\label{Eqn::Intro::IntConv}
        \int_x\big(u_{IJ}(x,y)dx^I\wedge dy^J\big)=\Big(\int_xu_{IJ}(x,y)dx^I\Big)dy^J.
    \end{equation}

For a bidegree form $T(z,\zeta)$ defined on the product space which has no $dz$ components, we use the decomposition $T=\sum_{q=1}^nT_q$ where $T_q$ has degree $(0,q)$ in $z$. We use $\Tc_pf(z)=\int T_q(z,\cdot)\wedge f$ to denote that $\Tc_p$ is an integral operator on $(0,p)$ forms. In this notation we indicate that $\Tc_p$ maps a $(0,p)$ form to a $(0,q)$ form. As special cases, 
\begin{itemize}
    \item $\Bc_qf(z)=\int B_{q-1}(z,\cdot)\wedge f$ (given in \eqref{Eqn::Space::DefBOp}) to denote the Bochner-Martinelli operator $1\le q\le n$;
    \item $\Kc_qg(z)=\int K_{q-2}(z,\cdot)\wedge g$ (given in \eqref{Eqn::WeiEst::DefKOp}) to denote the Leray-Koppelman operator $2\le q\le n$;
    \item $\Fc g(z)=\Fc_1g(z)=\int F(z,\cdot)\wedge g$ (see \eqref{Eqn::WeiEst::DefFOp}), where $F(z,\zeta)=F_0(z,\zeta)$ is the Cauchy-Fantappi\`e form.
\end{itemize}

\begin{conv}\label{Conv::Intro::MixForm}
    For a mixed degree form $f(\zeta)=\sum_{q=0}^nf_q(\zeta)$ where $f_q(\zeta)=\sum_{|I|=q}f_I(\zeta)d\bar \zeta^I$ has degree $(0,q)$, and for operators $(\Tc_q)_{q=0}^n$ where each $\Tc_q$ is defined on $(0,q)$ forms, we use $\Tc f$ to denote the mixed degree form $\sum_{q=0}^n\Tc_qf_q$.

\end{conv}
In particular $\Bc f=\sum_{q=1}^n\Bc_qf_q$, $\Kc f=\sum_{q=2}^n\Kc_qf_q$, $\Hc f=\sum_{q=1}^n\Hc_qf$, $\Rc f=\sum_{q=0}^n\Rc_qf$, $\Fc f=\Fc f_1$ and $\Pc f=\Pc f_0$. The homotopy formula can be written as $f=\Pc f+\dbar\Hc f+\Hc\dbar f$, and parametrix formula can be written as $f=\Pc'f+\dbar \Hc'f+\Hc'\dbar f+\Rc f$.

On $\R^N$ we use $\{\Sc^{m,k}:(m,k)=(0,0)\text{ or }1\le k\le N,\ m\ge0\}$ to denote a chosen family of convolution operators (constructed in Theorem~\ref{Thm::RefAtD}) which depend on a $\R$-linear basis $(v_1,\dots,v_N)$. There we have $f=\sum_{j=0}^N\Sc^{0,j}f=\Sc^{0,0}f+\sum_{j=1}^ND_{v_j}^m\Sc^{m,j}f$ for all $m\ge1$.

In most cases, we use $\Omega$ to denote a bounded Lipschitz domain, we use $\omega=\{(x',x_N):x_N>\sigma(x')\}$ to denote a special Lipschitz domain (see Definition~\ref{Defn::Space::SpecDom}). When $\omega\subset\R^N$ is fixed we use $\delta(x)=\min(1,\dist(x,b\omega))$ for $x\in\R^N$ to denote a cut-offed distance function.

\subsection*{Acknowledgement} The author would like to thank Xianghong Gong for the valuable supports and comments. The author would also like to thank Chian Yeong Chuah and Jan Lang for the discussion of the spectral theory.

\section{Parametrix Formulae}~\label{Section::PM}

Ultimately we need the homotopy formulae $f=\dbar\Hc_qf+\Hc_{q+1}\dbar f$ on $\Omega$. However the concrete construction we give in Section~\ref{Section::LocalHT} are only valid on local. To glue the operators from local to global we end up the form $f=\dbar\Hc'_qf+\Hc'_{q+1}\dbar f+\Rc_qf$ where $\Rc_q$ are compact. There are many previous studies using this method, see for example \cite{LeitererParametrix,Laurent-ThiebautParametrix}. 

In our case, we also need to incorporate function spaces with negative indices. Let us begin with a spectral theorem on chain of Banach spaces.

% It is enough to construct homotopy formulae for forms supported near a piece of the boundary. This is done by using parametrix formulae. 

% This is partly use in [Chapter III.4, LR]

\begin{lem}\label{Lem::PM::RieszThm}
    Let $(\Xs^k)_{k\in\Z}$ be a sequence of Banach spaces such that $\Xs^k\subset \Xs^{k-1}$ are all compact embeddings with respect to their norm topologies for all $k\in\Z$. Denote $\Xs^{-\infty}:=\bigcup_{k\in\Z}\Xs^k$ and $\Xs^{+\infty}:=\bigcap_{k\in\Z}\Xs^k$ be their inductive and inverse limits.

    Let $R:\Xs^{-\infty}\to \Xs^{-\infty}$ be a linear map such that $R|_{\Xs^k}:\Xs^k\to \Xs^{k+1}$ are bounded for all $k\in\Z$. Then there is a decomposition $\Xs^{-\infty}=\Ns\oplus \Ls$ of $R$ invariant subspaces, such that
    \begin{enumerate}[(i)]
        \item\label{Item::PM::RieszThm::N} 
        
        $\Ns=\bigcup_{l=1}^\infty\{f\in \Xs^{-\infty}:(\id-R)^lf=0\}\subset \Xs^{+\infty}$ is finite dimensional. %Moreover there exists a $\tilde m\ge0$ such that $\Ns=\ker((\id_{\Xs^{-\infty}}-R)^m)$ and $\Ls=\im((\id_{\Xs^{-\infty}}-R)^m)$ for all $m\ge\tilde m$.
        \item\label{Item::PM::RieszThm::Gamma} Denote $\Gamma:\Xs^{-\infty}\twoheadrightarrow \Ls$ be the associated projection. Then $\id-\Gamma R:\Xs^{-\infty}\to \Xs^{-\infty}$ is invertible, and $(\id-\Gamma R)^{-1}:\Xs^k\to \Xs^k$ are bounded for all $k\in\Z$.
        \item\label{Item::PM::RieszThm::MoreBdd} Suppose further that there is a quasi-Banach space $\Ys$ such that $\Xs^M\subset\Ys\subset\Xs^{-M}$ for some $M\ge0$, where inclusions are both continuous embedding. Suppose $R:\Ys\to\Ys$ is bounded. Then $\id-\Gamma$ is invertible on $\Ys$ and  $(\id-\Gamma R)^{-1}:\Ys\to \Ys$ is bounded.
    \end{enumerate}
\end{lem}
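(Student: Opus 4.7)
The first observation is that for every $k\in\Z$, the operator $R$ factors as $\Xs^k\xrightarrow{R}\Xs^{k+1}\hookrightarrow\Xs^k$ in which the last inclusion is compact by hypothesis, so $R:\Xs^k\to\Xs^k$ is a compact operator and the Riesz-Schauder spectral theory applies to $\id-R$ on each $\Xs^k$.

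For (i), the plan is to bootstrap regularity: any $f\in\Xs^k$ with $f=Rf$ automatically lies in $\Xs^{k+1}$, and iterating places $\ker(\id-R)$ inside $\Xs^{+\infty}$; for $l\ge 2$, if $(\id-R)^lf=0$, one sets $g:=(\id-R)f\in\ker((\id-R)^{l-1})$, uses induction on $l$ to get $g\in\Xs^{+\infty}$, and then $f=Rf+g$ bootstraps $f\in\Xs^{+\infty}$ as well. Riesz-Schauder on a single $\Xs^k$ gives finite-dimensionality of the generalized $1$-eigenspace and stabilization of the ascent, so $\Ns$ is finite dimensional and coincides with $\Ns\cap\Xs^k$ for every $k$.

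For (ii), I would use the canonical Riesz projection $\Pi=\frac{1}{2\pi i}\oint_\gamma(zI-R)^{-1}\,dz$ around a small circle $\gamma$ at $\lambda=1$ chosen to avoid the rest of the discrete nonzero spectrum. This is defined on each $\Xs^k$ and agrees under the inclusions since the resolvent is intrinsic to $R$, producing a global bounded projection $\Pi:\Xs^{-\infty}\twoheadrightarrow\Ns$; set $\Gamma:=\id-\Pi$ with range $\Ls$. The decomposition $\Xs^{-\infty}=\Ns\oplus\Ls$ is $R$-invariant, and a direct calculation gives $\id-\Gamma R=\id$ on $\Ns$ (since $R\Ns\subset\Ns$ and $\Gamma|_\Ns=0$) and $\id-\Gamma R=\id-R$ on $\Ls$ (since $R\Ls\subset\Ls$ and $\Gamma|_\Ls=\id$). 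As $\id-R$ is a bijection with bounded inverse on $\Ls\cap\Xs^k$ by Riesz-Schauder, assembling the two pieces yields the algebraic inverse of $\id-\Gamma R$ and its boundedness $\Xs^k\to\Xs^k$.

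The main obstacle is (iii), because on $\Ys$ the operator $R$ is only assumed bounded and not compact, so Riesz-Schauder does not apply on $\Ys$ directly. The plan is to transfer the decomposition from the chain to $\Ys$: pick a basis $f_1,\dots,f_d$ of $\Ns$ and coefficient functionals $\phi_i\in(\Xs^{-M})^*$ representing $\Pi$ on $\Xs^{-M}$; the embeddings $\Ns\subset\Xs^{+\infty}\subset\Xs^M\subset\Ys$ and $\Ys\hookrightarrow\Xs^{-M}$ make $\Pi=\sum_i\phi_i(\cdot)f_i$ a bounded projection on $\Ys$ with closed range $\Ns$ and closed kernel $\Gamma(\Ys)\subset\Ls$. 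On $\Gamma(\Ys)$ the bounded operator $\id-\Gamma R$ equals $\id-R$ and is an algebraic bijection, its inverse being the restriction of the algebraic $(\id-R)^{-1}|_{\Ls}$ from (ii). Since $\Gamma(\Ys)$ is a closed subspace of the quasi-Banach, hence F-space, $\Ys$, the open mapping theorem promotes this to a bounded inverse on $\Gamma(\Ys)$; combined with the identity on $\Ns$ this produces $(\id-\Gamma R)^{-1}:\Ys\to\Ys$ bounded.
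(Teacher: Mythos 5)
Your treatment of (i) and (ii) is sound and takes a slightly different but equivalent route: the paper invokes the abstract Riesz theorem for compact operators and matches the decompositions across scales by uniqueness of complements, while you construct the global projection $\Pi$ as a Dunford integral $\frac{1}{2\pi i}\oint_\gamma(zI-R)^{-1}\,dz$. That is legitimate, because the nonzero point spectrum and the resolvents on the chain $\Xs^k$ agree under inclusion (any eigenvector for $\lambda\neq 0$ bootstraps into $\Xs^{+\infty}$, and $(zI-R)^{-1}$ restricted to $\Xs^{k+1}$ is the resolvent there), so a single contour around $1$ works for every $k$. Your inductive bootstrap $f = Rf + g$ for (i) is essentially the paper's $f = f - (\id - R)^{\tilde m}f$ written out.

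There is a genuine gap in (iii). You assert that $\id - \Gamma R$ is an algebraic bijection on $\Gamma(\Ys)$, ``its inverse being the restriction of the algebraic $(\id-R)^{-1}|_{\Ls}$ from (ii)''. But (ii) only tells you that $(\id-R)^{-1}|_{\Ls}$ maps $\Gamma(\Ys)\subset\Xs^{-M}$ into $\Ls\cap\Xs^{-M}$; it does not, by itself, land in $\Ys$. Surjectivity of $\id-\Gamma R$ on $\Gamma(\Ys)$ is exactly the content you need and have not supplied, and without it the open mapping theorem has nothing to act on. To fill the hole you would write, for $y\in\Gamma(\Ys)$ and $x:=(\id-R)^{-1}y\in\Ls\cap\Xs^{-M}$,
\begin{equation*}
x = y + Ry + \cdots + R^{2M-1}y + R^{2M}x,
\end{equation*}
and use $R:\Ys\to\Ys$ together with $R^{2M}:\Xs^{-M}\to\Xs^{M}\subset\Ys$ to conclude $x\in\Ys$. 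This is precisely the identity the paper uses, $(\id-\Gamma R)^{-1} = \sum_{k=0}^{2M}(\Gamma R)^k + (\id-\Gamma R)^{-1}(\Gamma R)^{2M}$, which yields both the image statement and the boundedness on $\Ys$ directly, making the appeal to the open mapping theorem unnecessary. As written, your proof of (iii) is incomplete; once the missing smoothing step is added your route collapses to the paper's.
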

\begin{proof}
    Recall the Riesz Theorem of compact operators, where we use the result from \cite[Theorem~3.3]{KressSpectralCompact}:
    
    \medskip\textit{Let $X$ be a Banach space and $T:X\to X$ be a compact operator. Then for the eigenvalue $1$, the generalized eigenspace $N:=\bigcup_{l=1}^\infty\ker((\id_X-T)^l$) is finite dimensional. Moreover let $\tilde m\ge0$ be the smallest integer such that $N=\ker((\id_X-T)^{\tilde m})$, then $L:=\im ((\id_X-T)^{\tilde m})$ satisfies $X=N\oplus L$ and that $(\id-T)|_L:L\to L$ is invertible. Moreover $N=\ker((\id_X-T)^m)$, then $L:=\im ((\id_X-T)^m)$ for all $m\ge\tilde m$.}

    \medskip
    Now let us take $X=\Xs^k$ for each $k$. Since $\Xs^{k+1}\subset\Xs^k$ is compact embedding, $R|_{\Xs^k}:\Xs^k\to\Xs^k$ is a compact operator. Thus we can find $\tilde m_k\ge1$ such that there is decomposition $\Xs^k=\Ns^k\oplus \Ls^k$, where $\Ns^k:=\ker(\id_{\Xs^k}-R|_{\Xs^k})^m$ is finite dimensional and $\Ls^k:=\im(\id_{\Xs^k}-R|_{\Xs^k})^m$ for all $m\ge\tilde m_k$. Let us temporally denote by the associated projections 
    $$\Theta_k:\Xs^k\twoheadrightarrow \Ns^k,\qquad\Gamma_k:\Xs^k\twoheadrightarrow \Ls^k,\qquad k\in\Z.$$

    Clearly $\Ns^{k+1}\subseteq \Ns^k$. For every $f\in \Ns^k$ we can write $f=f-(\id-R)^{\tilde m_k}f$. Since $\id-(\id-R)^{\tilde m_k}$ is a linear combinations of $R,\dots,R^{\tilde m_k}$, all of which map $\Xs^k\to\Xs^{k+1}$, we get $f\in\Xs^{k+1}$ which means $f\in\Ns^{k+1}$. This is to say $\Ns^{k+1}=\Ns^k$. Take $k\to\pm\infty$ we conclude that $\Ns=\lim_{k\to+\infty}\Ns^k\in\bigcap_{l\ge0}\Xs^l=\Xs^{+\infty}$.
    Therefore there is a common $\tilde m\le \dim\Ns$ such that $\Ns=\ker((\id_{\Xs^{-\infty}}-R)^m)$ for $m\ge\tilde m$. 
    By the Riesz Theorem again $\Ls^k=\im((\id_{\Xs^{k}}-R|_{\Xs^k})^m)$ for all such $m\ge\tilde m$. Let $k\to-\infty$ we get $\Ls=\im((\id_{\Xs^{-\infty}}-R)^m)$.    This concludes \ref{Item::PM::RieszThm::N}.

    Next we show that $\Gamma_{k+1}=\Gamma_k|_{\Xs^{k+1}}$. For every $f\in \Xs^{k+1}$ we have $f=n_k+s_k=n_{k+1}+s_{k+1}$ where $n_j=\Theta_jf$ and $s_j=\Gamma_jf$ for $j\in\{k,k+1\}$. Clearly $\Ls^{k+1}\ni s_{k+1}-s_k=n_k-n_{k+1}\in \Ns$, which means $s_{k+1}=s_k$. Therefore we can define $\Gamma=\varinjlim\Gamma_k$ as the direct limit with respect to the chain $\dots\hookrightarrow \Xs^k\hookrightarrow\Xs^{k+1}\hookrightarrow\dots$. Moreover $\Theta:=\id_{\Xs^{-\infty}}-\Gamma$ defines the projection $\Xs^{-\infty}\twoheadrightarrow \Ns$.
    
    Since $1$ is not a spectral point of $\Gamma_kR$, we see that $(\id-\Gamma R)^{-1}|_{\Xs^k}=(\id_{\Xs^k}-\Gamma_kR)^{-1}:\Xs^k\to \Xs^k$ are bounded for all $k$. Take $k\to-\infty$, we get $\id-\Gamma R$ is invertible on $\Xs^{-\infty}$, giving \ref{Item::PM::RieszThm::Gamma}.

    % Note that $N$ and $S$ are both $R$ invariant, and $\Theta:\Xs^{-\infty}\twoheadrightarrow N$, $\Gamma:\Xs^{-\infty}\twoheadrightarrow S$ are projections, thus $R\Gamma=\Gamma R$ and $R\Theta=\Theta R$.
    
    For \ref{Item::PM::RieszThm::MoreBdd}, now assume $M\ge0$ and $\Xs^M\subset\Ys\subset\Xs^{-M}$ be a quasi-Banach space. Recall that $\Theta:\Ys\subset\Xs^{-\infty}\twoheadrightarrow\Ns\subset\Ys$ and $\Gamma=\id-\Theta$. Therefore, $\Gamma|_\Ys:\Ys\twoheadrightarrow\Ys$ is also bounded. 

    Suppose $R:\Ys\to\Ys$ is bounded, thus $\Gamma R:\Ys\to\Ys$ is also bounded. We write $(\id-\Gamma R)^{-1}=\sum_{k=0}^{2M}(\Gamma R)^k+(\id-\Gamma R)^{-1}(\Gamma R)^{2M}$. By assumption $(\Gamma R)^k:\Ys\to\Ys$ is bounded and $(\Gamma R)^{2M}:\Ys\subset\Xs^{-M}\to\Xs^M$. By \ref{Item::PM::RieszThm::Gamma} $(\id-\Gamma R)^{-1}:\Xs^M\to\Xs^M\subset\Ys$ is bounded. Together we conclude that $(\id-\Gamma R)^{-1}:\Ys\to\Ys$ is bounded, proving \ref{Item::PM::RieszThm::MoreBdd}.
\end{proof}
\begin{rem}
    By a more careful analysis, one can show that the same result is true if we only assume that $R|_{\Xs^k}:\Xs^k\to\Xs^k$ is merely compact for all $k$. In our application, $R$ gains derivatives in Sobolev spaces, which is stronger than being compact.

    The only reason we consider $\Ys$ to be quasi-Banach is because the space $\Fs_{pr}^s$ in Definition~\ref{Defn::Space::TLSpace} is not normed but quasi-normed for $p<1$ or $r<1$. To prove Theorem~\ref{Thm::MainThm} we only need to consider Banach spaces.
\end{rem}

Lemma~\ref{Lem::PM::RieszThm} leads to the following parametrix formulae. For future application we state it in a more general setting:

\begin{prop}[Parametrix formulae, abstract version]\label{Prop::PM::AbsPM}
    For $0\le q\le n$, let $(\Xs^k_q)_{k\in\Z} $ be a sequence of Banach spaces such that $\Xs_q^k\subset\Xs_q^{k-1}$ are compact embeddings for all $k\in\Z$. Denote $\Xs^{-\infty}_q=\bigcup_{k\in\Z}\Xs^k_q$ and $\Xs^{+\infty}_q=\bigcap_{k\in\Z}\Xs^k_q$ as before. For $0\le q\le n$, $R_q:\Xs^{-\infty}_q\to \Xs^{-\infty}_q$ be linear maps such that:
    \begin{enumerate}[(A)]
        \item\label{Item::PM::AbsPM::RCpt} $R_q:\Xs_q^k\to\Xs_q^{k+1}$ is bounded for all $k\in\Z$.
    \end{enumerate}
    Let $\Gamma_q:\Xs^{-\infty}_q\to \Xs^{-\infty}_q$ be the corresponding spectral projections from Lemma~\ref{Lem::PM::RieszThm}~\ref{Item::PM::RieszThm::Gamma}. In particular $(\id-\Gamma_qR_q)^{-1}:\Xs_q^k\to\Xs_q^k$ are bounded for all $k\in\Z$.

    Let $P':\Xs^{-\infty}_0\to \Xs^{-\infty}_0$, $H_q:\Xs^{-\infty}_q\to \Xs^{-\infty}_{q-1}$, $\widetilde H_q:\Xs^{+\infty}_q\to \Xs^{+\infty}_{q-1}$ and $D_{q-1}:\Xs^{-\infty}_{q-1}\to\Xs^{-\infty}_q$ for $1\le q\le n$ be linear maps (where we set $H'_{n+1}=\widetilde H_{n+1}=0$) such that
    \begin{enumerate}[(A)]\setcounter{enumi}{1}
        \item\label{Item::PM::AbsPM::D} $D_q|_{\Xs^{+\infty}_q}:\Xs^{+\infty}_q\to \Xs^{+\infty}_{q+1}$ for all $0\le q\le n$.
        \item\label{Item::PM::AbsPM::RD} $R_{q+1}D_q=D_qR_q$ on $\Xs^{-\infty}_q$ for $1\le q\le n$, and $R_1D_0-D_0R_0=D_0P':\Xs^{-\infty}_0\to\Xs^{+\infty}_1$.
        \item\label{Item::PM::AbsPM::ParaHT} $\id_{\Xs^{-\infty}_0}=P'+H_1'D_0$ and $\id_{\Xs^{-\infty}_q}=D_{q-1}H_q'+H_{q+1}'D_q+R_q$ for $1\le q\le n$.
        \item\label{Item::PM::AbsPM::Cinfty} $\id_{\Xs^{+\infty}_q}=D_{q-1}\widetilde H_q+\widetilde H_{q+1}D_q$ for $1\le q\le n$.
    \end{enumerate}

    We define the operators 
    \begin{equation}\label{Eqn::PM::AbsPM::PH}
        H_q:=(\widetilde H_q(\id_{\Xs^{-\infty}_q}-\Gamma_q)R_q+H'_q)\cdot(\id_{\Xs^{-\infty}_q}-\Gamma_q R_q)^{-1},\quad P:=\id_{\Xs^{-\infty}_0}-H_1D_0.
    \end{equation}

    Then the following results hold.
    \begin{enumerate}[(i)]
        \item\label{Item::PM::AbsPM::HT} $P:\Xs^{-\infty}_0\to \Xs^{-\infty}_0$ and $H_q:\Xs^{-\infty}_q\to \Xs^{-\infty}_q$ are defined, and satisfy $\id_{\Xs^{-\infty}_0}=P+H_1D_0$ and $\id_{\Xs^{-\infty}_q}=D_{q-1}H_q+H_{q+1}D_q$ for $1\le q\le n$.
    \item\label{Item::PM::AbsPM::BddH} When $1\le q\le n$, suppose there are quasi-Banach space $\Xs^{+\infty}_q\subseteq\Ys_q\subseteq \Xs^{-\infty}_q$ and  $\Zs_{q-1}\subseteq \Xs^{-\infty}_{q-1}$ such that $R_q:\Ys_q\to\Ys_q$ and $H'_q:\Ys_q\to\Zs_{q-1}$ are both bounded. Then $H_q:\Ys_q\to\Zs_{q-1}$ is also bounded.
    \item\label{Item::PM::AbsPM::BddP} When $q=0$, suppose there are quasi-Banach space $\Xs^{+\infty}_0\subseteq\Ys_0,\Zs_0\subseteq \Xs^{-\infty}_0$ such that $R_0:\Ys_0\to\Ys_0$ and $P',R_0:\Ys_0\to\Zs_0$ are all bounded. Then $P:\Ys_0\to\Zs_0$ is also bounded.
    \end{enumerate}
\end{prop}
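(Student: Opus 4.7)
The plan is to prove (i) by an algebraic identity chase using a spectral intertwining of $D_q$ with $\Gamma_q$, and then to derive (ii) and (iii) by expressing $H_q$ (respectively $P$) as the parametrix operator $H'_q$ (respectively $P'$) plus a finite-rank correction factoring through the generalized eigenspace $\Ns_q\subset\Xs^{+\infty}_q$. Throughout, set $\Theta_q:=\id-\Gamma_q$, so that $\Theta_q$ projects onto $\Ns_q$ and $A_q:=\id-\Gamma_qR_q$ has bounded inverse by Lemma~\ref{Lem::PM::RieszThm}.

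The crucial step for (i) is the intertwining $\Gamma_{q+1}D_q=D_q\Gamma_q$ for $1\le q\le n-1$. From (C), $R_{q+1}D_q=D_qR_q$ yields $(\id-R_{q+1})^lD_q=D_q(\id-R_q)^l$ for every $l\ge0$, so $D_q\Ns_q\subseteq\Ns_{q+1}$ and $D_q\Ls_q\subseteq\Ls_{q+1}$; thus $D_q$ respects the direct sum decomposition, giving $A_{q+1}D_q=D_qA_q$ and $A_{q+1}^{-1}D_q=D_qA_q^{-1}$. To verify the homotopy identity for $1\le q\le n$, I would expand $D_{q-1}H_q=D_{q-1}\widetilde H_q\Theta_qR_qA_q^{-1}+D_{q-1}H'_qA_q^{-1}$, use (E) to substitute $D_{q-1}\widetilde H_q=\id-\widetilde H_{q+1}D_q$ on $\Xs^{+\infty}_q$ (legitimate since $\Theta_qR_q$ lands in $\Ns_q$), use (D) to substitute $D_{q-1}H'_q=\id-H'_{q+1}D_q-R_q$, and use the elementary identity $\Gamma_qR_qA_q^{-1}=A_q^{-1}-\id$ to simplify. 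On the other side, in $H_{q+1}D_q$ one commutes $D_q$ past $A_{q+1}^{-1}$, $R_{q+1}$, and $\Gamma_{q+1}$ via the intertwining, producing cross-terms that cancel those from $D_{q-1}H_q$ and leaving $\id$. The $q=0$ identity $\id=P+H_1D_0$ is tautological from the definition $P:=\id-H_1D_0$.

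For (ii), write $H_q=H'_qA_q^{-1}+\widetilde H_q\Theta_qR_qA_q^{-1}$. Lemma~\ref{Lem::PM::RieszThm}\ref{Item::PM::RieszThm::MoreBdd}, together with the hypothesis $R_q:\Ys_q\to\Ys_q$ bounded, gives $A_q^{-1}:\Ys_q\to\Ys_q$ bounded, so the first summand is bounded $\Ys_q\to\Zs_{q-1}$ by the assumption on $H'_q$. The second summand is finite-rank: $\Theta_qR_qA_q^{-1}:\Ys_q\to\Ns_q$ is continuous (projection onto a finite-dimensional subspace of $\Xs^{+\infty}_q\subseteq\Ys_q$), and $\widetilde H_q$ maps $\Ns_q$ into the finite-dimensional $\widetilde H_q(\Ns_q)\subset\Xs^{+\infty}_{q-1}$, which sits in $\Zs_{q-1}$ in the function-space settings of interest; the composition is therefore bounded $\Ys_q\to\Zs_{q-1}$.

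The main obstacle is (iii), since (C) only yields $R_1D_0=D_0(R_0+P')$, causing the intertwining $\Gamma_1D_0=D_0\Gamma_0$ to fail in general. My plan is to write $P=P'-(H_1-H'_1)D_0$ from (D) at $q=0$, expand $H_1-H'_1=(\widetilde H_1\Theta_1+H'_1\Gamma_1)R_1A_1^{-1}$, and then substitute $R_1D_0=D_0(R_0+P')$ so that the unruly derivative $D_0$ is consumed inside the bounded composition $R_0+P':\Ys_0\to\Zs_0$. The finite-rank piece $\widetilde H_1\Theta_1R_1A_1^{-1}D_0$ is bounded $\Ys_0\to\Zs_0$ exactly as in (ii), its image lying in the finite-dimensional $\widetilde H_1(\Ns_1)\subset\Zs_0$. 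The remaining piece $H'_1\Gamma_1R_1A_1^{-1}D_0=H'_1(A_1^{-1}-\id)D_0$ is handled by iterating $A_1^{-1}D_0=D_0+A_1^{-1}\Gamma_1D_0(R_0+P')$ and invoking (ii) at $q=1$ with a suitable intermediate quasi-Banach space (in the Sobolev or H\"older applications, the space losing one derivative from $\Zs_0$); the smoothing property $R_0:\Ys_0\to\Zs_0$ precisely offsets the one derivative in $D_0$, which is why this additional hypothesis in (iii) is essential.
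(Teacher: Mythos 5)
Your arguments for (i) and (ii) follow essentially the same route as the paper. For (i) the paper proves the intertwining $\Gamma_{q+1}D_q=D_q\Gamma_q$ (for $1\le q\le n$, hence $(\id-\Gamma_{q+1}R_{q+1})^{-1}D_q=D_q(\id-\Gamma_qR_q)^{-1}$) by checking that $D_q$ preserves the $\Ns_q\oplus\Ls_q$ decomposition, and then expands $\id_q=D_{q-1}H_q+H_{q+1}D_q$ directly; your plan is the same computation arranged slightly differently. For (ii) both proofs invoke Lemma~\ref{Lem::PM::RieszThm}~\ref{Item::PM::RieszThm::MoreBdd} to get $(\id-\Gamma_qR_q)^{-1}\colon\Ys_q\to\Ys_q$ and then compose; you explicitly flag the implicit step $\widetilde H_q(\Ns_q)\subset\Zs_{q-1}$ for the finite-rank piece, which the paper's ``standard compositions'' glosses over (and which the applications in Proposition~\ref{Prop::PM::ConcretePM} do satisfy, since there $\Xs^{+\infty}_{q-1}=\Co^\infty\subset\Zs_{q-1}$).

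Part (iii) is where your plan departs from the paper's and where there is a genuine gap. Writing $P=P'-(H_1-H'_1)D_0$ and expanding $H_1-H'_1=(\widetilde H_1\Theta_1+H'_1\Gamma_1)R_1A_1^{-1}$ with $A_q:=\id-\Gamma_qR_q$ is fine, but the iteration of $A_1^{-1}D_0=D_0+A_1^{-1}\Gamma_1D_0(R_0+P')$ does not close: using $A_1^{-1}\Gamma_1=A_1^{-1}-\Theta_1$, a single substitution reproduces $A_1^{-1}D_0$ on the right-hand side, so the recursion never terminates in a closed form unless $\id-R_0-P'$ were invertible, which is not assumed. Moreover, your fallback of ``invoking (ii) at $q=1$'' requires an intermediate quasi-Banach $\Ys_1$ on which $R_1\colon\Ys_1\to\Ys_1$ and $H'_1\colon\Ys_1\to\Zs_0$ are bounded; neither is among the hypotheses of (iii), which assumes only $R_0\colon\Ys_0\to\Ys_0$ and $P',R_0\colon\Ys_0\to\Zs_0$. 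The paper sidesteps both obstacles by defining the defect $S_0:=D_0A_0-A_1D_0$ and computing, via $\Gamma_j=\id-\Theta_j$ together with the hypothesis $R_1D_0-D_0R_0=D_0P'$,
\[
S_0=D_0P'-\Theta_1R_1D_0+D_0\Theta_0R_0\colon\Xs^{-\infty}_0\to\Xs^{+\infty}_1,
\]
each summand factoring through the assumed smoothing $D_0P'$ or through a $\Theta$-projection onto a finite-dimensional $\Ns\subset\Xs^{+\infty}$. This yields the exact conjugation identity $A_1^{-1}D_0-D_0A_0^{-1}=A_1^{-1}S_0A_0^{-1}$, which in turn gives
\[
P=(P'-R_0)A_0^{-1}+\{\text{maps }\Xs^{-\infty}_0\to\Xs^{+\infty}_0\},
\]
and the boundedness $P\colon\Ys_0\to\Zs_0$ follows using only the $q=0$ hypotheses. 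Your observation that the smoothing of $D_0P'$ is what must save (iii) is correct, but it needs to be packaged into $S_0$ to produce a closed formula; replace your iteration by this defect identity.
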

\begin{proof}
    It is more convenient to use $\id_q=\id_{\Xs^{-\infty}_q}$ and we denote $\Theta_q:=\id_q-\Gamma_q$. 

    Recall from Lemma~\ref{Lem::PM::RieszThm} we have $\id_q-\Gamma_q:\Xs^{-\infty}_q\twoheadrightarrow\Ns_q$ where $\Ns_q=\bigcup_{l=1}^\infty\ker((\id-R)^l)\subseteq\Xs^{+\infty}_q$ is finite dimensional. 
    By Lemma~\ref{Lem::PM::RieszThm}~\ref{Item::PM::RieszThm::Gamma} since $\id_q-\Gamma_qR_q$ is invertible on $\Xs^{-\infty}_q$, we see that $P$ and $H_q$ are all defined.

    Clearly $\id_0=P+H_1D_0$ since we define $P$ by this way. To check $\id_q=D_{q-1}H_q+H_{q+1}D_q$, the key is to check
    \begin{equation}\label{Eqn::PM::AbsPM::InvComm}
        (\id_{q+1}-\Gamma_{q+1}R_{q+1})^{-1}D_q=D_q(\id_q-\Gamma_qR_q)^{-1},\quad 1\le q\le n.
    \end{equation}
    
    Indeed, for $f\in\Ns_q$ we have $(\id_q-R_q)^mf=0$ for some $m\ge1$. By $R_{q+1}D_q=D_qR_q$ we have $(\id_{q+1}-R_{q+1})^mD_qf=D_q(\id_q-R_q)^mf=0$, giving $D_qf\in\Ns_{q+1}$. We conclude that $D_q:\Ns_q\to\Ns_{q+1}$, i.e. $\Theta_{q+1}D_q=D_q\Theta_q$.     
    Using $\Gamma_q=\id_q-\Theta_q$ we get $\Gamma_{q+1}D_q=D_q\Gamma_q$ as well. So $D_q(\id_q-\Gamma_qR_q)=(\id_{q+1}-\Gamma_{q+1}R_{q+1})D_q$. Multiplying $(\id_{q+1}-\Gamma_{q+1}R_{q+1})^{-1}$ and $(\id_q-\Gamma_qR_q)^{-1}$ we get \eqref{Eqn::PM::AbsPM::InvComm}.
    
    Therefore, for $1\le q\le n$,
    \begin{align*}
        \id_q=&(\id_q-\Gamma_q R_q)(\id_q-\Gamma_q R_q)^{-1}=(D_{q-1}H'_q+H_{q+1}'D_q+R_q-\Gamma_q R_q)(\id_q-\Gamma_q R_q)^{-1}
        \\
        =&\big(D_{q-1}H'_q+H_{q+1}'D_q+(D_{q-1}\widetilde H_q+\widetilde H_{q+1}D_q)\Theta_qR_q\big)(\id_q-\Gamma_q R_q)^{-1}
        \\
        =&D_{q-1}(H'_q+\widetilde H_q\Theta_qR_q)(\id_q-\Gamma_q R_q)^{-1}+(H'_{q+1}+\widetilde H_{q+1}\Theta_{q+1}R_{q+1})D_q(\id_q-\Gamma_qR_q)^{-1}
        \\
        =&D_{q-1}(H'_q+\widetilde H_q\Theta_qR_q)(\id_q-\Gamma_q R_q)^{-1}+(H'_{q+1}+\widetilde H_{q+1}\Theta_{q+1}R_{q+1})(\id_{q+1}-\Gamma_{q+1} R_{q+1})^{-1}D_q
        \\
        =&D_{q-1}H_q+H_{q+1}D_q.
    \end{align*}
    This proves \ref{Item::PM::AbsPM::HT}.

    When $R_q:\Ys_q\to\Ys_q$ and $H'_q:\Ys_q\to\Zs_{q-1}$, by Lemma~\ref{Lem::PM::RieszThm}~\ref{Item::PM::RieszThm::MoreBdd} $(\id_q-\Gamma_q R_q)^{-1}:\Ys_q\to\Ys_q$ is also bounded. The result \ref{Item::PM::AbsPM::BddH} then follows from \eqref{Eqn::PM::AbsPM::PH} with standard compositions.

    Finally we prove \ref{Item::PM::AbsPM::BddP} using the assumption $R_1D_0-D_0R_0=D_0P':\Xs^{-\infty}_0\to\Xs^{+\infty}_1$. 
    
    For convenience we denote $S_0:=D_0(\id_0-\Gamma_0R_0)-(\id_1-\Gamma_1R_1)D_0$.    
    Recall that $\Theta_j=\id_j-\Gamma_j$ and $\Theta_j:\Xs^{-\infty}_j\to\Xs^{+\infty}_j$ for $j=0,1$, which means 
    \begin{equation*}
        S_0=\Gamma_1R_1D_0-D_0\Gamma_0R_0=R_1D_0-\Theta_1R_1D_0+D_0\Theta_0R_0-D_0R_0=D_0P'-\Theta_1R_1D_0+D_0\Theta_0R_0:\Xs^{-\infty}_0\to\Xs^{+\infty}_1.
    \end{equation*}

    On the other hand, by multiplying $(\id_1-\Gamma_1R_1)^{-1}$ and $(\id_0-\Gamma_0R_0)^{-1}$ to $S_0$ we have
    \begin{align*}
        (\id_1-\Gamma_1R_1)^{-1}D_0-D_1(\id_0-\Gamma_0R_0)^{-1}=(\id_1-\Gamma_1R_1)^{-1}S_0(\id_0-\Gamma_0R_0)^{-1}:\Xs^{-\infty}_0\to\Xs^{+\infty}_1.
    \end{align*}
    Therefore, using $\Theta_j:\Xs^{-\infty}_j\to\Xs^{+\infty}_j$ again we have
    \begin{align*}
        &P=\id_0-H_1D_0=(\id_0-\Gamma_0R_0)(\id_0-\Gamma_0R_0)^{-1}-(H'_1+\widetilde H_1\Theta_1R_1)(\id_1-\Gamma_1R_1)^{-1}D_0
        \\
        =&(\id_0-\Gamma_0R_0-H'_1D_0)(\id_0-\Gamma_0R_0)^{-1}-H'_1(\id_1-\Gamma_1R_1)^{-1}S_0(\id_0-\Gamma_0R_0)^{-1}-\widetilde H_1\Theta_1R_1(\id_1-\Gamma_1R_1)^{-1}D_0
        \\
        =&(P'-R_0+\Theta_0R_0)(\id_0-\Gamma_0R_0)^{-1}+\{\text{maps with boundedness }\Xs^{-\infty}_0\to\Xs^{+\infty}_0\}
        \\
        =&(P'-R_0)(\id_0-\Gamma_0R_0)^{-1}+\{\text{maps with boundedness }\Xs^{-\infty}_0\to\Xs^{+\infty}_0\}.
    \end{align*}

    By the assumptions in \ref{Item::PM::AbsPM::BddP}, $R_0:\Ys_0\to\Ys_0$ and $R_0,P':\Ys_0\to\Zs_0$ are all bounded. By Lemma~\ref{Lem::PM::RieszThm}~\ref{Item::PM::RieszThm::MoreBdd} $(\id_0-\Gamma_0R_0)^{-1}:\Ys_0\to\Ys_0$ is bounded. Therefore the boundedness $P:\Ys_0\to\Zs_0$ follows from the standard composition argument.
\end{proof}
\begin{rem}\label{Rmk::PM::D2=0}
    We do not assume $D_{q+1}D_q=0$ in the assumption since it is not used in the proof. If we add it in the assumptions, then $\id_q=D_{q-1}H'_q+H'_{q+1}D_q+R_q$ and $\id_{q+1}=D_qH'_{q+1}+H'_{q+2}D_{q+1}+R_{q+1}$ give
    $$R_qD_q=D_q-D_qD_{q-1}H'_q-D_qH'_{q+1}D_q=D_q-D_qH'_{q+1}D_q-H'_{q+2}D_{q+1}D_q=R_{q+1}D_q.$$ This will be used implicitly in \eqref{Eqn::PM::ConcretePM::LocalHTAssum} below.
\end{rem}

\begin{rem}
    If one only wants $f=Pf+DHf+HDf$ for $f\in\Xs^{-M}$ where $M\ge1$ is a large but finite number, we do not need to use the spectral projections $\Gamma_q$. We can assume that $\widetilde H$ extends to $\Xs^{2M}\to \Xs^M$ for large $M$. In this case we can take
    \begin{equation*}
        \textstyle
        H_q:=H'_q\cdot\sum_{j=0}^{3M}R_q^j+\widetilde H_q\cdot R_q^{3M},\qquad 1\le q\le n.
    \end{equation*}
    One can check that $H_q$ are defined on $\Xs^{-M}_q$ and satisfy $f_q=D_qH_qf_q+H_{q+1}D_qf_q$ for all $f_q\in\Xs^{-M}_q$ such that $D_qf_q\in\Xs^{-M}_{q+1}$.

    If $H',R$ and $\widetilde H$ have explicit formula, so does $H$ in this construction, while $\Gamma$ in general can be rather abstract, as it is difficult to characterize the eigenspace of a generic integral operator.
\end{rem}

In application to the homotopy operators we apply Proposition~\ref{Prop::PM::AbsPM} with partition of unity. To incorporate Theorem~\ref{Thm::GenThm} later we use the Triebel-Lizorkin spaces.

\begin{prop}[Parametrix formulae via partition of unity]\label{Prop::PM::ConcretePM}
    Let $\Omega\subset\C^n$ be a bounded Lipschitz domain. For every $\zeta\in b\Omega$ we have the following objects:
    \begin{itemize}
        \item A holomorphic map $\Phi_\zeta:U_\zeta\hookrightarrow\C^n$ which is biholomorphic onto its image such that $\Phi_\zeta(U_\zeta)\ni\zeta$.
        \item An open set $\omega_\zeta\subset\C^n$ such that $\omega_\zeta\cap U_\zeta=\Phi_\zeta^{-1}(\Omega)$.
        \item Operators $\Pc^\zeta:\Ss'(\omega_\zeta)\to\Ss'(U_\zeta\cap\omega_\zeta)$, $\Hc_q^\zeta:\Ss'(\omega_\zeta;\wedge^{0,q})\to\Ss'(U_\zeta\cap\omega_\zeta;\wedge^{0,q-1})$ for $1\le q\le n$, $\Rc_q^\zeta:\Ss'(\omega_\zeta;\wedge^{0,q})\to\Ss'(U_\zeta\cap\omega_\zeta;\wedge^{0,q})$ for $0\le q\le n$.
    \end{itemize}

    Suppose we have the following:
    \begin{enumerate}[(a)]
        \item{\normalfont (Local homotopy)}\label{Item::PM::ConcretePM::LocalHT} For every $0\le q\le n$ and $f\in\Ss'(\omega_\zeta;\wedge^{0,q})$, (still we set $\Hc_{n+1}^\zeta=0$.)
        \begin{equation}\label{Eqn::PM::ConcretePM::LocalHTAssum}
    \begin{aligned}
        f|_{U_\zeta\cap\omega_\zeta}&=\Pc^\zeta f+\Hc_1^\zeta\dbar f+\Rc_0^\zeta f,&\dbar\Pc^\zeta f&=0,&&&& \text{when }q=0;
        \\
        f|_{U_\zeta\cap\omega_\zeta}&=\dbar\Hc_q^\zeta f+\Hc_{q+1}^\zeta\dbar f+\Rc_q^\zeta f,&&&&&&\text{when }1\le q\le n.
    \end{aligned}
\end{equation}

        \item\label{Item::PM::ConcretePM::SobBdd} There is a $0<t_0<1$ such that $\Hc_q^\zeta:H^{s,2}\to H^{s+t_0,2}$ and $\Rc_q^\zeta:H^{s,2}\to H^{s+t_0,2}$ are bounded for all $s\in\R$ and $0\le q\le n$ (we use $\Hc_0^\zeta=0$).
        \item\label{Item::PM::ConcretePM::DPBdd}$[f\mapsto \chi_1\cdot \Pc^\zeta(\chi_2f)]:\Ss'(\omega_\zeta)\to\Co^\infty(\omega_\zeta)$ when $\chi_1,\chi_2\in\Co_c^\infty(U_\zeta)$ has disjoint supports.
        \item \label{Item::PM::ConcretePM::GlobalHT}  $\Omega$ admits global $\Co^\infty$ estimate, i.e. there are $\widetilde\Hc_q:\Co^\infty(\Omega;\wedge^{0,q})\to\Co^\infty(\Omega;\wedge^{0,q-1})$ for $1\le q\le n$ such that 
        \begin{equation}\label{Eqn::PM::ConcretePM::TildeH}
            f=\dbar\widetilde\Hc_qf+\widetilde\Hc_q\dbar f,\quad\text{for all }1\le q\le n\quad\text{ and }f\in\Co^\infty(\Omega;\wedge^{0,q}).
        \end{equation}
        
    \end{enumerate}

Then there are $\Pc^\Omega:\Ss'(\Omega)\to\Ss'(\Omega)$ and $\Hc_q^\Omega=\Ss'(\Omega;\wedge^{0,q})\to \Ss'(\Omega;\wedge^{0,q-1})$ for $1\le q\le n$ such that 
    \begin{enumerate}[(i)]
        \item\label{Item::PM::ConcretePM::HT} We have homotopy formula
        \begin{align}\label{Eqn::PM::ConcretePM::HT0}
        f&=\Pc^\Omega f+\Hc_1^\Omega\dbar f,&\text{for all }f\in\Ss'(\Omega),&&q=0;
        \\\label{Eqn::PM::ConcretePM::HTq}
        f&=\dbar\Hc_q^\Omega f+\Hc_{q+1}^\Omega\dbar f,&\text{for all }f\in\Ss'(\Omega;\wedge^{0,q}),&&1\le q\le n.
    \end{align}
        \item\label{Item::PM::ConcretePM::GlobalBdd} Fix a $0\le q\le n$. Let $s_0,s_1\in\R$, $p_0,p_1,r_0,r_1\in(0,\infty]$ be such that $\frac{s_0+1-s_1}{2n}\ge\frac1{p_0}-\frac1{p_1}$ and $s_0+1>s_1$. Suppose for every $\zeta\in b\Omega$ (here we use $\Hc_0^\zeta=\Hc_{n+1}^\zeta=0$ as usual):
        \begin{enumerate}[label=\normalfont{(\thesection.\arabic*)}]\setcounter{enumii}{\value{equation}}
            \item\label{Item::PM::ConcretePM::GlobalBdd::R} $\Rc_q^\zeta:\Fs_{p_0r_0}^{s_0}(\omega_\zeta;\wedge^{0,q})\to \Fs_{p_0r_0}^{s_0}(U_\zeta\cap\omega_\zeta;\wedge^{0,q})$ is bounded;
            \item \label{Item::PM::ConcretePM::GlobalBdd::H'Cpt} $\Hc_{q+\epsilon}^\zeta:\Fs_{p_0r_0}^{s_0}(\omega_\zeta;\wedge^{0,q+\epsilon})\to \Fs_{p_0r_0}^{s_0}(U_\zeta\cap\omega_\zeta;\wedge^{0,q+\epsilon-1})$ are bounded for $\epsilon\in\{0,1\}$. 
            \item\label{Item::PM::ConcretePM::GlobalBdd::Bdd0}  If $q=0$: $\Rc_0^\zeta,\Pc^\zeta:\Fs_{p_0r_0}^{s_0}(\omega_\zeta)\to \Fs_{p_1r_1}^{s_1}(U_\zeta\cap\omega_\zeta)$ and $\Hc_1^\zeta:\Fs_{p_0r_0}^{s_0}(\omega_\zeta;\wedge^{0,1})\to \Fs_{p_1r_1}^{s_1}(U_\zeta\cap\omega_\zeta)$ are all bounded.
            \item\label{Item::PM::ConcretePM::GlobalBdd::Bddq} If $1\le q\le n$: $\Hc_q^\zeta:\Fs_{p_0r_0}^{s_0}(\omega_\zeta;\wedge^{0,q})\to \Fs_{p_1r_1}^{s_1}(U_\zeta\cap\omega_\zeta;\wedge^{0,q-1})$ is bounded.\setcounter{equation}{\value{enumii}}
        \end{enumerate}

        Then automatically we have the boundedness $\Pc^\Omega:\Fs_{p_0r_0}^{s_0}(\Omega)\to \Fs_{p_1r_1}^{s_1}(\Omega)$ if $q=0$, and $\Hc^\Omega_q:\Fs_{p_0r_0}^{s_0}(\Omega;\wedge^{0,q})\to \Fs_{p_1r_1}^{s_1}(\Omega;\wedge^{0,q-1})$ if $1\le q\le n$.
    \end{enumerate}
\end{prop}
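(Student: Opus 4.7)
The strategy is a two-step reduction. First, I glue the local data of hypothesis~\ref{Item::PM::ConcretePM::LocalHT} into a global parametrix on $\Omega$ of the form
\[
    f=\Pc'f+\Hc'_1\dbar f+\Rc_0f,\qquad f=\dbar\Hc'_qf+\Hc'_{q+1}\dbar f+\Rc_qf\quad(1\le q\le n),
\]
using a partition of unity plus Bochner-Martinelli in the interior patch. Second, I apply the abstract Proposition~\ref{Prop::PM::AbsPM} to invert the compact remainder $\Rc$ via the Riesz spectral projection, thereby turning the parametrix into a true homotopy with boundedness inherited from the local data.

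\textbf{Step 1: Building the parametrix.} Cover $\overline\Omega$ by one interior patch $V_0\Subset\Omega$ and finitely many boundary patches $V_j=\Phi_{\zeta_j}(U_{\zeta_j})$, $j=1,\dots,N$. Fix a $\Co^\infty$ partition of unity $\{\chi_j\}_{j=0}^N$ subordinate to this cover, and slightly larger cutoffs $\tilde\chi_j\in\Co_c^\infty(V_j)$ with $\tilde\chi_j\equiv1$ on a neighborhood of $\supp\chi_j$. On each boundary patch, transfer $\Hc_q^{\zeta_j},\Pc^{\zeta_j},\Rc_q^{\zeta_j}$ through $\Phi_{\zeta_j}$ (which commutes with $\dbar$). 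On the interior patch, since $\chi_0f$ has compact support in $\Omega$, the Bochner-Martinelli identity on $\C^n$ reads $\chi_0f=\dbar\Bc_q(\chi_0f)+\Bc_{q+1}\dbar(\chi_0f)$ (with the usual reduction at $q=0$ and $q=n$). Expanding $\dbar(\chi_jf)=\chi_j\dbar f+\dbar\chi_j\wedge f$ and $\tilde\chi_j\dbar u=\dbar(\tilde\chi_ju)-\dbar\tilde\chi_j\wedge u$ inside each local identity and summing in $j$ yields the global parametrix with
\begin{gather*}
    \Hc'_qf:=\Bc_q(\chi_0f)+\sum_{j=1}^N\tilde\chi_j\Hc_q^{\zeta_j}(\chi_jf),\qquad \Pc'f:=\sum_{j=1}^N\tilde\chi_j\Pc^{\zeta_j}(\chi_jf),\\
    \Rc_qf:=\Bc_{q+1}(\dbar\chi_0\wedge f)+\sum_{j=1}^N\Big(-\dbar\tilde\chi_j\wedge\Hc_q^{\zeta_j}(\chi_jf)+\tilde\chi_j\Hc_{q+1}^{\zeta_j}(\dbar\chi_j\wedge f)+\tilde\chi_j\Rc_q^{\zeta_j}(\chi_jf)\Big).
\end{gather*}
Every summand of $\Rc_q$ factors through $\Bc$, $\Hc^{\zeta_j}$, or $\Rc^{\zeta_j}$, so by hypothesis~\ref{Item::PM::ConcretePM::SobBdd} and the one-derivative gain of $\Bc$ it maps $H^{s,2}(\Omega)\to H^{s+t_0,2}(\Omega)$ for some uniform $t_0>0$.

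\textbf{Step 2: Invoking Proposition~\ref{Prop::PM::AbsPM}.} Take the Sobolev chain $\Xs^k_q:=H^{kt_0,2}(\Omega;\wedge^{0,q})$; Rellich on bounded Lipschitz $\Omega$ furnishes the required compact embeddings, $\Xs^{+\infty}_q\subseteq\Co^\infty(\Omega;\wedge^{0,q})$, and $\Xs^{-\infty}_q=\Ss'(\Omega;\wedge^{0,q})$. Set $D_q=\dbar$, $R_q=\Rc_q$, $H'_q=\Hc'_q$, $P'=\Pc'$, and $\widetilde H_q=\widetilde\Hc_q$ from hypothesis~\ref{Item::PM::ConcretePM::GlobalHT}. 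Conditions \ref{Item::PM::AbsPM::RCpt}, \ref{Item::PM::AbsPM::D}, \ref{Item::PM::AbsPM::ParaHT}, \ref{Item::PM::AbsPM::Cinfty} are immediate from Step~1 and hypotheses (b), (d). The commutation \ref{Item::PM::AbsPM::RD} is forced by $\dbar^2=0$: applying $\dbar$ to the $q$-th parametrix and comparing with the $(q+1)$-th parametrix applied to $\dbar f$ makes the $\dbar H'_{q+1}\dbar$ terms cancel, leaving $R_{q+1}\dbar=\dbar R_q$ for $q\ge1$ and $R_1\dbar-\dbar R_0=\dbar\Pc'$ at $q=0$. Using $\dbar\Pc^{\zeta_j}=0$, $\dbar\Pc'f$ collapses to $\sum_j\dbar\tilde\chi_j\wedge\Pc^{\zeta_j}(\chi_jf)$; the $\Co^\infty_c$ factors $\dbar\tilde\chi_j$ and $\chi_j$ have disjoint supports, so hypothesis~\ref{Item::PM::ConcretePM::DPBdd} gives $\dbar\Pc':\Ss'(\Omega)\to\Co^\infty(\Omega;\wedge^{0,1})=\Xs^{+\infty}_1$. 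Then Proposition~\ref{Prop::PM::AbsPM}\ref{Item::PM::AbsPM::HT} produces $\Pc^\Omega$ and $(\Hc^\Omega_q)_{q=1}^n$ satisfying~\eqref{Eqn::PM::ConcretePM::HT0}--\eqref{Eqn::PM::ConcretePM::HTq}.

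\textbf{Step 3: Boundedness, and the main obstacle.} For part~\ref{Item::PM::ConcretePM::GlobalBdd}, apply Proposition~\ref{Prop::PM::AbsPM}\ref{Item::PM::AbsPM::BddH} (for $q\ge1$) or \ref{Item::PM::AbsPM::BddP} (for $q=0$) with $\Ys_q:=\Fs_{p_0r_0}^{s_0}(\Omega;\wedge^{0,q})$ and $\Zs_{q-1}:=\Fs_{p_1r_1}^{s_1}(\Omega;\wedge^{0,q-1})$. Hypotheses~\ref{Item::PM::ConcretePM::GlobalBdd::R} and~\ref{Item::PM::ConcretePM::GlobalBdd::H'Cpt}, stability of $\Fs^{s_0}_{p_0r_0}$ under multiplication by fixed $\Co^\infty$ functions, and the smoothing of $\Bc$ together give $\Rc_q:\Ys_q\to\Ys_q$. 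Hypothesis~\ref{Item::PM::ConcretePM::GlobalBdd::Bddq} (resp.~\ref{Item::PM::ConcretePM::GlobalBdd::Bdd0}) controls the boundary summands of $\Hc'_q$ (resp.~$\Pc'$ and $\Rc_0$) as $\Ys_q\to\Zs_{q-1}$; the interior summand $\Bc_q(\chi_0f)$ (and the $\Bc$-terms in $\Rc$) is handled by the one-derivative gain of Bochner-Martinelli (appendix) combined with the Triebel-Lizorkin embedding $\Fs^{s_0+1}_{p_0r_0}\hookrightarrow\Fs^{s_1}_{p_1r_1}$, which is afforded precisely by the numerical inequalities $s_0+1>s_1$ and $\frac{s_0+1-s_1}{2n}\ge\frac1{p_0}-\frac1{p_1}$. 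The delicate point is the smoothing identity $R_1\dbar-\dbar R_0=\dbar\Pc':\Ss'\to\Co^\infty$, since this is exactly what drives \ref{Item::PM::AbsPM::BddP} for $\Pc^\Omega$; it hinges entirely on the off-diagonal hypothesis~\ref{Item::PM::ConcretePM::DPBdd}, without which only $\dbar\Pc'\in\Ss'$ is available and the spectral reduction would fail to transport boundedness to $\Pc^\Omega$.
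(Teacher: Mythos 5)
Your proof takes essentially the same route as the paper's: glue the local parametrices via a two-layer cutoff family ($\chi_j$, $\tilde\chi_j$) plus an interior Bochner--Martinelli patch, verify the hypotheses of the abstract Proposition~\ref{Prop::PM::AbsPM} on the chain $\Xs_q^k=H^{kt_0,2}(\Omega;\wedge^{0,q})$ (in particular the smoothing $R_q:H^{s,2}\to H^{s+t_0,2}$ and the off-diagonal fact $\dbar P':\Ss'\to\Co^\infty$ coming from hypothesis~\ref{Item::PM::ConcretePM::DPBdd}), and then transfer boundedness through Proposition~\ref{Prop::PM::AbsPM}~\ref{Item::PM::AbsPM::BddH}/\ref{Item::PM::AbsPM::BddP} using the embedding $\Fs_{p_0r_0}^{s_0+1}(\Omega)\hookrightarrow\Fs_{p_1r_1}^{s_1}(\Omega)$. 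The only cosmetic difference is that you omit the outer cutoff $\lambda_0$ on the interior $\Bc$ term (the paper keeps it, which introduces an extra $-\dbar\lambda_0\wedge\Bc[\chi_0 f]$ term in $R_q$), and you write $\Xs^{+\infty}_q\subseteq\Co^\infty$ where the paper's Lemma~\ref{Lem::Space::SsLim} gives equality --- the equality is actually needed so that hypothesis~\ref{Item::PM::ConcretePM::GlobalHT} genuinely supplies $\widetilde H_q:\Xs^{+\infty}_q\to\Xs^{+\infty}_{q-1}$, so you should state it.
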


\begin{proof} 
%    Note that $\Phi_{\zeta}:U_\zeta\to \Phi_\zeta(U_\zeta)$ are biholomophic maps, which means $\dbar \Phi_{\zeta}^*=\Phi_{\zeta}^*\dbar$, $\dbar \Phi_{\zeta*}=\Phi_{\zeta*}\dbar$ and $\Phi_\zeta$ is diffeomorphism in the interior. Therefore we can assume $\Phi_\zeta\equiv\id$ which means $U_\zeta\cap\omega_\zeta=U_\zeta\cap\Omega$.

    By compactness of $b\Omega$ we can take a finitely many $\zeta_1,\dots,\zeta_M$ such that $b\Omega\subset\bigcup_{\nu=1}^M\Phi_{\zeta_\nu}(U_{\zeta_\nu})$. Let $\chi_0\in C_c^\infty(\Omega)$ and $\chi_\nu\in C_c^\infty(\Phi_{\zeta_\nu}(U_{\zeta_\nu}))$ ($1\le \nu\le M$) be such that 
    $\sum_{\nu=0}^\infty\chi_\nu(z)\equiv1$ for all $z\in\Omega$. We take $\lambda_0\in C_c^\infty(\Omega)$ and $\lambda_\nu\in C_c^\infty(\Phi_{\zeta_\nu}(U_{\zeta_\nu}))$ for $1\le \nu\le M$, such that for each $0\le\nu\le M$, $\lambda_\nu\equiv1$ in an open neighborhood of $\supp\chi_\nu$. Therefore $\chi_\nu=\lambda_\nu\chi_\nu$ and $\supp\chi_\nu\cap\supp(\dbar\lambda_\nu)=\varnothing$ for $0\le \nu\le M$.

    We need to apply Proposition~\ref{Prop::PM::AbsPM}. Set 
    $$\Xs^k_q:=H^{kt_0,2}(\Omega;\wedge^{0,q}),\qquad 0\le q\le n,\qquad k\in\Z.$$ Therefore by Lemma~\ref{Lem::Space::SsLim}, we have $\Xs^{-\infty}_q=\Ss'(\Omega;\wedge^{0,q})$ and $\Xs^{+\infty}_q=\Co^\infty(\Omega;\wedge^{0,q})$. Set $D_q:=\dbar$ on $(0,q)$-forms. Immediately \ref{Item::PM::AbsPM::D} holds. By \eqref{Eqn::PM::ConcretePM::TildeH} we get \ref{Item::PM::AbsPM::Cinfty}.
    
    We define operators $P'$, $H'_q$ and $R'_q$ on $\Omega$ by the following: 
    \begin{align}
    \label{Eqn::PM::ConcretePM::P'}
        P'f:=&\sum_{\nu=1}^M\Phi_{\zeta_\nu*}\big((\lambda_\nu\circ \Phi_{\zeta_\nu})\cdot\Pc^{\zeta_\nu}[\Phi_{\zeta_\nu}^*(\chi_\nu f)]\big)=\sum_{\nu=1}^M\lambda_\nu\cdot\Phi_{\zeta_\nu*}\circ\Pc^{\zeta_\nu}\circ\Phi_{\zeta_\nu}^*[\chi_\nu f];
        \\
    \label{Eqn::PM::ConcretePM::H'}    H'_qf:=&\lambda_0\cdot\Bc_q[\chi_0f]+\sum_{\nu=1}^M\lambda_\nu\cdot\Phi_{\zeta_\nu*}\circ\Hc_q^{\zeta_\nu}\circ\Phi_{\zeta_\nu}^*[\chi_\nu f].
    \end{align}
    Here $\Bc_q$ are the Bochner-Martinelli integral operators from \eqref{Eqn::Space::DefBOp}. Recall (see Lemmas~\ref{Lem::Space::BMKernel} and \ref{Lem::Space::BMFormula}) the boundedness $\Bc_q:H^{s,2}_c(\Omega;\wedge^{0,q})\to H^{s+1,2}(\Omega;\wedge^{0,q-1})$ for all $s\in\R$ and the formula $g=\dbar\Bc_qg+\Bc_{q+1}\dbar g$ for $(0,q)$ forms $g$ that has compact support.

    We define $(R_q)_{q=0}^n$ by $R_0f:=f-P'f-H'_1\dbar f$ and $R_qf:=f-\dbar H_q'f-H_{q+1}'\dbar f$ for $1\le q\le n$. Therefore \ref{Item::PM::AbsPM::ParaHT} holds since we define $R_q$ by this way. % Using the $\widetilde\Hc_q$ in the assumption, we set $\widetilde P':=\widetilde\Hc_1^\Omega\dbar$ and $\widetilde H:=\sum_{q=1}^n\widetilde\Hc_q^\Omega$.

    To verify \ref{Item::PM::AbsPM::RD}, we adapt the standard convention for forms of mixed degree from Convention~\ref{Conv::Intro::MixForm}: for form $f=\sum_{q=0}^nf_q$ where $f_q$ has degree $(0,q)$, we use $P'f=P'f_0$ and $H'f=\sum_{q=1}^nH'_qf_q$ and $Rf=\sum_{q=0}^nR_qf_q$. Therefore, $Rf=f-P'f-\dbar H'f-H'\dbar f$, which means $\dbar R-R\dbar=\dbar P'$. Therefore (recall Remark~\ref{Rmk::PM::D2=0} and we set $R_{n+1}=0$),
    $$R_{q+1}\dbar=\dbar R_q\quad\text{ for }1\le q\le n,\qquad\text{and}\qquad R_1\dbar_0-\dbar R_0=\dbar P'.$$

    Note that by \eqref{Eqn::PM::ConcretePM::P'} and \eqref{Eqn::PM::ConcretePM::LocalHTAssum} we have
    \begin{equation*}
        \dbar P'f=\sum_{\nu=1}^M\dbar\lambda_\nu\wedge\Phi_{\zeta_\nu*}\Pc^{\zeta_\nu}[\Phi_{\zeta_\nu}^*(\chi_\nu f)]=\Phi_{\zeta_\nu*}\circ\big((\Phi_{\zeta_\nu}^*\dbar\lambda_\nu)\wedge\Pc^{\zeta_\nu}\circ[(\Phi_{\zeta_\nu}^*\chi_\nu)\cdot(\Phi_{\zeta_\nu}^*f)]\big).
    \end{equation*}
    Since $\dbar\lambda_\nu$ and $\chi_\nu$ have disjoint supports, so do $\Phi_{\zeta_\nu}^*\dbar\lambda_\nu$ and $\Phi_{\zeta_\nu}^*\chi_\nu$. By the assumption \ref{Item::PM::ConcretePM::DPBdd} we get $\dbar P':\Ss'\to\Co^\infty$. This completes the verification of \ref{Item::PM::AbsPM::RD}.
    
    To see $R_q:H^{s,2}\to H^{s+t_0,2}$ is bounded, i.e. to verify \ref{Item::PM::AbsPM::RCpt} we compute the explicit expression.
    \begin{align*}
    \notag
        Rf=&f-P'f-\dbar H'f-H'\dbar f
        \\
        =&\lambda_0\chi_0f-\dbar(\lambda_0\Bc[\chi_0f])-\lambda_0\Bc[\chi_0\dbar f]
        \\
        &+\sum_{\nu=1}^M\Big\{\lambda_\nu\chi_\nu f-\lambda_\nu\Phi_{\zeta_\nu*}\Pc^{\zeta_\nu}\Phi_{\zeta_\nu}^*[\chi_\nu f]-\dbar(\lambda_\nu\Phi_{\zeta_\nu*}\Hc^{\zeta_\nu}\Phi_{\zeta_\nu}^*[\chi_\nu f])-\lambda_\nu\Phi_{\zeta_\nu*}\Hc^{\zeta_\nu}\Phi_{\zeta_\nu}^*[\chi_\nu\dbar f]\Big\}
        \\
        =&\lambda_0\cdot(\id-\dbar\Bc-\Bc\dbar)[\chi_0f]-\dbar\lambda_0\wedge\Bc[\chi_0f]+\lambda_0\cdot\Bc[\dbar\chi_0\wedge f]+\sum_{\nu=1}^M\lambda_\nu\cdot\Phi_{\zeta_\nu*}\Hc^{\zeta_\nu}\Phi_{\zeta_\nu}^*[\dbar\chi_\nu\wedge f]
        \\
        &+\sum_{\nu=1}^M\Big\{\lambda_\nu\cdot\Phi_{\zeta_\nu*}\big\{\id-\Pc^{\zeta_\nu}-\dbar \Hc^{\zeta_\nu}-\Hc^{\zeta_\nu}\dbar\big\}\Phi_{\zeta_\nu}^*[\chi_\nu f]-\dbar\lambda_\nu\wedge\Phi_{\zeta_\nu*}\Hc^{\zeta_\nu}\Phi_{\zeta_\nu}^*[\chi_\nu f]\Big\}
        \\
        =&\lambda_0\cdot\Bc[\dbar\chi_0\wedge f]-\dbar\lambda_0\wedge\Bc[\chi_0f]+\sum_{\nu=1}^M\lambda_\nu\cdot\Phi_{\zeta_\nu*}\circ\Rc^{\zeta_\nu}\circ\Phi_{\zeta_\nu}^*[\chi_\nu f]
        \\
        &+\sum_{\nu=1}^M\Big\{-\dbar\lambda_\nu\wedge\Phi_{\zeta_\nu*}\circ\Hc^{\zeta_\nu}\circ\Phi_{\zeta_\nu}^*[\chi_\nu f]+\lambda_\nu\cdot\Phi_{\zeta_\nu*}\circ\Hc^{\zeta_\nu}\circ\Phi_{\zeta_\nu}^*[\dbar\chi_\nu\wedge f]\Big\}.
    \end{align*}
    
    By separating the degrees we have for $0\le q\le n$, (here we use $\Hc^\zeta_0=\Hc^\zeta_{n+1}=0$)
    \begin{multline}\label{Eqn::PM::ConcretePM::Rq}
        R_qf=\lambda_0\cdot\Bc_{q+1}[\dbar\chi_0\wedge f]-\dbar\lambda_0\wedge\Bc_q[\chi_0f]+\sum_{\nu=1}^M\lambda_\nu\cdot\Phi_{\zeta_\nu*}\circ\Rc^{\zeta_\nu}_q\circ\Phi_{\zeta_\nu}^*[\chi_\nu f]
        \\
        +\sum_{\nu=1}^M\Big\{-\dbar\lambda_\nu\wedge\Phi_{\zeta_\nu*}\circ\Hc^{\zeta_\nu}_q\circ\Phi_{\zeta_\nu}^*[\chi_\nu f]+\lambda_\nu\cdot\Phi_{\zeta_\nu*}\circ\Hc^{\zeta_\nu}_{q+1}\circ\Phi_{\zeta_\nu}^*[\dbar\chi_\nu\wedge f]\Big\}.
    \end{multline}
    Recall from Lemma~\ref{Lem::Space::BMKernel} $f\mapsto \lambda_0\cdot\Bc_{q+1}[\dbar\chi_0\wedge f]+\dbar\lambda_0\wedge\Bc_q[\chi_0f]$ is bounded $H^{s,2}(\Omega)\to H^{s+1,2}(\Omega)$, which is in particular as a map $H^{s,2}\to H^{s+t_0,2}$.

    Note that for $\psi\in\{\chi_\nu,\dbar\chi_\nu,\lambda_\nu,\dbar\lambda_\nu\}$, by Lemma~\ref{Lem::Space::ExtDiffeo} we have 
    \begin{equation}\label{Eqn::PM::ConcretePM::CompBdd}
        [f\mapsto\Phi_{\zeta_\nu}^*(\psi\wedge f)]:\Fs_{pr}^s(\Omega)\to\Fs_{pr}^s(\omega_\zeta),\quad[g\mapsto\psi\wedge\Phi_{\zeta_\nu*}g]:\Fs_{pr}^s(U_\zeta\cap\omega_\zeta)\to\Fs_{pr}^s(\Omega),\quad\forall p,r,s.
    \end{equation}
    Recall from Lemma~\ref{Lem::Space::SpaceLem} we have $H^{s,2}=\Fs_{22}^s$. Therefore from \eqref{Eqn::PM::ConcretePM::Rq} and assumption $\Hc^\zeta:H^{s,2}\to H^{s+t_0,2}$, we conclude that $R:H^{s,2}\to H^{s+t_0,2}$ is bounded, which verifies \ref{Item::PM::AbsPM::RCpt}. 
    
    By Proposition~\ref{Prop::PM::AbsPM}~\ref{Item::PM::AbsPM::HT} we get \eqref{Eqn::PM::ConcretePM::HT0} and \eqref{Eqn::PM::ConcretePM::HTq} with $\Pc^\Omega=P$ and $\Hc^\Omega_q=H_q$ via \eqref{Eqn::PM::AbsPM::PH}, giving the proof of \ref{Item::PM::ConcretePM::HT}.
    
    To prove \ref{Item::PM::ConcretePM::GlobalBdd}, we apply Proposition~\ref{Prop::PM::AbsPM}~\ref{Item::PM::AbsPM::BddP} and \ref{Item::PM::AbsPM::BddH} with $\Ys_q=\Fs_{p_0r_0}^{s_0}(\Omega;\wedge^{0,q})$ and $\Zs_q=\Fs_{p_1r_1}^{s_1}(\Omega;\wedge^{0,q})$. 
    
    By Lemma~\ref{Lem::Space::BMKernel}, for $\phi,\psi\in\{\chi_0,\dbar\chi_0,\lambda_0,\dbar\lambda_0\}$, the map $f\mapsto\phi\wedge\Bc[\psi\wedge f]$ is bounded $\Fs_{p_0r_0}^{s_0}(\Omega;\wedge^{0,\bullet})\to\Fs_{p_0r_0}^{s_0+1}(\Omega;\wedge^{0,\bullet})$, which in particular is bounded as the map $\Fs_{p_0r_0}^{s_0}\to \Fs_{p_0r_0}^{s_0}$. By Lemma~\ref{Lem::Space::TLEmbed} and the assumption $s_0+1-s_1>0$ and $s_0-s_1\ge2n(\frac1{p_0}-\frac1{p_0})$, we have embedding $\Fs_{p_0r_0}^{s_0+1}(\Omega)\hookrightarrow\Fs_{p_1r_1}^{s_1}(\Omega)$. Therefore
    \begin{equation}\label{Eqn::PM::ConcretePM::BMBdd}
        [f\mapsto\phi\wedge\Bc(\psi\wedge f)]:\Fs_{p_0r_0}^{s_0}(\Omega;\wedge^{0,\bullet})\to \Fs_{p_1r_1}^{s_1}(\Omega;\wedge^{0,\bullet})\text{ is bounded}.
    \end{equation}
    
    By assumption \ref{Item::PM::ConcretePM::GlobalBdd::H'Cpt} $\Hc^\zeta_q,\Hc^\zeta_{q+1}:\Fs_{p_0r_0}^{s_0}\to\Fs_{p_0r_0}^{s_0}$ are bounded. Applying \eqref{Eqn::PM::ConcretePM::Rq} with \eqref{Eqn::PM::ConcretePM::CompBdd} we conclude that $R_q:\Fs_{p_0r_0}^{s_0}\to\Fs_{p_0r_0}^{s_0}$ (i.e. $R_q:\Ys_q\to \Ys_q$) is bounded.

    When $q=0$, we apply the assumption \ref{Item::PM::ConcretePM::GlobalBdd::Bdd0} that $\Pc^\zeta,\Rc_0^\zeta,\Hc_1^\zeta:\Fs_{p_0r_0}^{s_0}\to\Fs_{p_1r_1}^{s_1}$ to \eqref{Eqn::PM::ConcretePM::P'} and \eqref{Eqn::PM::ConcretePM::Rq}. By \eqref{Eqn::PM::ConcretePM::CompBdd}, we conclude that $P',R_0:\Fs_{p_0r_0}^{s_0}\to\Fs_{p_1r_1}^{s_1}$ are both bounded, i.e. $P',R_0:\Ys_0\to\Zs_0$. By Proposition~\ref{Prop::PM::AbsPM}~\ref{Item::PM::AbsPM::BddP}, we get the boundedness $\Pc^\Omega:\Fs_{p_0r_0}^{s_0}\to\Fs_{p_1r_1}^{s_1}$.

    When $1\le q\le n$, we apply the assumption \ref{Item::PM::ConcretePM::GlobalBdd::Bddq} that $\Hc_q^\zeta:\Fs_{p_0r_0}^{s_0}\to\Fs_{p_1r_1}^{s_1}$ to \eqref{Eqn::PM::ConcretePM::H'}. By \eqref{Eqn::PM::ConcretePM::CompBdd} and \eqref{Eqn::PM::ConcretePM::BMBdd}, we conclude that $H'_q:\Fs_{p_0r_0}^{s_0}\to\Fs_{p_1r_1}^{s_1}$ is bounded. In other words $H'_q:\Ys_q\to\Zs_{q-1}$. By Proposition~\ref{Prop::PM::AbsPM}~\ref{Item::PM::AbsPM::BddH}, we get the boundedness $\Hc^\Omega_q:\Fs_{p_0r_0}^{s_0}\to\Fs_{p_1r_1}^{s_1}$.    
\end{proof}

\section{A New Decomposition for Reverse Integration by Parts}\label{Section::AntiDev}

% We now reduce the discussion to the special domain $\omega=\{(z',x_n+iy_n):y_n>\sigma(z',x_n)\}$.
% \begin{conv}
%     A \textit{special Lipschitz domain} on $\R^N$ is an open set $\omega$ of the form $\omega=\{(x',x_N):x_N>\sigma(x')\}$ where $\sigma:\R^{N-1}\to\R$ is a Lipschitz function $\|\nabla\sigma\|_{L^\infty(\R^{N-1};\R^{N-1}}<1$.
% We call $\rho(x):=\sigma(x')-x_N$ the \textit{special defining function} associated to $\omega$.    
% \end{conv}

Now $\omega=\{(z',x_n+iy_n):y_n>\sigma(z',x_n)\}$ where $\sigma$ is $C^{1,1}$. We see that $\rho(z):=\sigma(z',x_n)-y_n$ is a $C^{1,1}$ defining function. In \cite{ShiYaoCk} we have to take derivatives of $\rho$ on all directions, which creates issue to on defining $\Kc_q\circ[\dbar,\Ec]f$ when $f$ is a distribution with large index. Our goal in the section is to limit the direction of derivatives on only the $\frac\partial{\partial y_n}$ direction.

To be more precise, we are going to give a following refinement to the anti-derivative operators from \cite{ShiYaoExt}.

\begin{thm}\label{Thm::RefAtD}
    Let $v=(v_k)_{k=1}^N$ be a collection of $N$ linearly independent vectors in $\R^N$. We denote the associated positive cone $V_v:=\{a_1v_1+\dots+a_Nv_N:a_1,\dots,a_N>0\}$.
    
    Then there are $\{\mu^{m,k}:(m,k)=(0,0)\text{ or }m\ge0,\ 1\le k\le N \}\subset\Ss'(\R^N)$ such that the convolution operators $\Sc^{m,k}f:=\mu^{m,k}\ast f$ are defined on $\Ss'(\R^N)$ and the following hold:
    \begin{enumerate}[(i)]
        \item\label{Item::RefAtD::Sum} For every $f\in\Ss'(\R^N)$, $f=\sum_{k=0}^N\Sc^{0,k}f$ and $\Sc^{0,k}f=D_{v_k}^m\Sc^{m,k}f$ hold for all $1\le k\le N$ and $m\ge1$. 
        \item\label{Item::RefAtD::Supp} $\supp\mu^{m,k}\subset V_v$ for every $m,k$. In particular $\supp (\Sc^{m,k}f)\subseteq V_v+\supp f$.
        \item\label{Item::RefAtD::Bdd} For $m\ge0$ we have $\Sc^{0,0},\Sc^{m,k}:\Fs_{pr}^s(\R^N)\to \Fs_{pr}^{s+m}(\R^N)$ for all $s\in\R$ and $p,r\in(0,\infty]$.

    \end{enumerate}

In particular $\Sc^{0,0},\Sc^{m,k}:H^{s,p}(\R^N)\to H^{s+m,p}(\R^N)$ and $\Sc^{0,0},\Sc^{m,k}:\Co^s(\R^N)\to\Co^{s+m}(\R^N)$ for every $s\in\R$ and $1<p<\infty$. 
\end{thm}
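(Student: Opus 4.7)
The plan is to reduce to a standard basis by a linear change of variables so that $v_k=e_k$ and $V_v=(0,\infty)^N$, then build the $\mu^{m,k}$ as one-sided Littlewood--Paley molecules equipped with a $k$-directional antiderivative, following the structure of \cite[Section~4]{ShiYaoExt}. The one-variable ingredient is a Schwartz function $h\in\Ss(\R)$ with $\supp h\subset[0,\infty)$, $\int h=1$, and $\hat h(\xi)-1$ flat at $\xi=0$ (i.e.\ all positive moments of $h$ vanish). Setting $h_j(t):=2^jh(2^jt)$ and $H_j(x):=\prod_{i=1}^Nh_j(x_i)$, we have $H_j\to\delta_0$ in $\Ss'(\R^N)$ and the directional telescoping identity
\begin{equation*}
\delta_0=H_0+\sum_{k=1}^N\sum_{j\geq 0}\eta_j^{(k)},\qquad\eta_j^{(k)}(x):=\Bigl(\prod_{i<k}h_{j+1}(x_i)\Bigr)\bigl(h_{j+1}-h_j\bigr)(x_k)\Bigl(\prod_{i>k}h_j(x_i)\Bigr),
\end{equation*}
in which every factor is supported in $[0,\infty)$ in its own variable.

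I set $\mu^{0,0}:=H_0$ and $\mu^{0,k}:=\sum_{j\geq 0}\eta_j^{(k)}$. For $m\geq 1$ the $k$-th factor $(h_{j+1}-h_j)(x_k)$ is replaced by its $m$-fold primitive
\begin{equation*}
g_{j,m}(t):=\frac{1}{(m-1)!}\int_{-\infty}^t(t-s)^{m-1}(h_{j+1}-h_j)(s)\,ds,
\end{equation*}
which is Schwartz and supported in $[0,\infty)$ because $h_{j+1}-h_j$ has all moments vanishing (the $0$-th since $\int h_{j+1}-\int h_j=0$ and the higher ones by the moment condition on $h$). Writing $\tilde\eta_j^{(m,k)}$ for the resulting tensor product and $\mu^{m,k}:=\sum_{j\geq 0}\tilde\eta_j^{(m,k)}$, one verifies $D_{v_k}^m\mu^{m,k}=\mu^{0,k}$ directly, giving \ref{Item::RefAtD::Sum} and \ref{Item::RefAtD::Supp}.

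The main content is the bound \ref{Item::RefAtD::Bdd}. A rescaling yields the self-similar form $\tilde\eta_j^{(m,k)}(x)=2^{j(N-m)}\Psi^{(m,k)}(2^jx)$ (up to a harmless $2^{j+1}$-versus-$2^j$ discrepancy in some coordinates) with a fixed Schwartz function $\Psi^{(m,k)}$ supported in $[0,\infty)^N$, and on the Fourier side
\begin{equation*}
\widehat{\Psi^{(m,k)}}(\xi)=\Bigl[\prod_{i<k}\hat h(\xi_i)\Bigr]\cdot\frac{\hat h(\xi_k/2)-\hat h(\xi_k)}{(i\xi_k)^m}\cdot\Bigl[\prod_{i>k}\hat h(\xi_i)\Bigr].
\end{equation*}
The middle factor is Schwartz precisely because $\hat h-1$ is flat at the origin, while the outer Schwartz factors force $\widehat{\tilde\eta_j^{(m,k)}}$ to be Fourier-concentrated in the cone region $|\xi|\sim|\xi_k|\sim 2^j$; hence the $k$-directional antiderivative produces an isotropic gain of $m$ derivatives on the microlocal support. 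Convolution with $\mu^{0,0}=H_0\in\Ss$ is smoothing of arbitrary order. The claim $\Sc^{m,k}:\Fs_{pr}^s\to\Fs_{pr}^{s+m}$ then reduces to controlling $\sum_j2^{-jm}(2^{jN}\Psi^{(m,k)}(2^j\cdot))*f$ with the correct gain, which follows from the molecular (Peetre--maximal) characterization of $\Fs_{pr}^s$ together with the Fefferman--Stein vector-valued maximal inequality. I expect the main obstacles to be, first, producing the $h$ with all positive moments vanishing (the delicate Rychkov-type ingredient of \cite[Section~4]{ShiYaoExt}), and second, carrying out the $j$-summation in the quasi-Banach range $p,r\in(0,1]$ directly in the quasi-norm where duality is unavailable; once these are in place, the remaining arguments are a routine adaptation of standard Triebel--Lizorkin machinery.
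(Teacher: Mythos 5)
Your construction is genuinely different from the paper's, and I believe it is correct in outline, but the key step (part \ref{Item::RefAtD::Bdd}) is left as a gesture that does not reduce to the paper's cited lemma, so let me flag where the real work would be.

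The paper builds $\mu^{m,k}$ as a two-layer sum $\mu^{m,k}=\sum_{j\ge1}2^{-jm}\tilde\psi^{m,k}_j\ast\phi_j$, i.e.\ each term is a convolution of a synthesis molecule $\tilde\psi^{m,k}_j$ with the Rychkov analysis function $\phi_j$ at the same scale. This is done precisely so that $\Sc^{m,k}f=\sum_j 2^{-jm}\tilde\psi^{m,k}_j\ast\phi_j\ast f$ has the form $T^{\eta,\theta,r}f=\sum_j2^{jr}\eta_j\ast\theta_j\ast f$ of Proposition~\ref{Prop::AntiDev::RychOp}, and part \ref{Item::RefAtD::Bdd} then follows by citation. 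Your $\mu^{m,k}=\sum_{j\ge0}\tilde\eta_j^{(m,k)}$ is a one-layer telescoping sum of tensor products $\tilde\eta_j^{(m,k)}$. It does not have a built-in analysis factor, so it is \emph{not} of the form handled by Proposition~\ref{Prop::AntiDev::RychOp}, and your appeal to ``molecular characterization plus Fefferman--Stein'' does not directly apply to a single-layer convolution sum in the full quasi-Banach range $p,r\in(0,\infty]$ (for molecular arguments you would still need to insert the reproducing formula $f=\sum_l\psi_l\ast\phi_l\ast f$ and estimate the resulting double sum, or for multiplier arguments you would need to establish that $\hat\mu^{m,k}$ satisfies H\"ormander--Mikhlin bounds of order $-m$ of sufficiently high differentiability to cover small $p,r$, and additionally handle the $p=\infty$, $r<\infty$ case, which the paper addresses separately via \cite[Proposition~17]{YaoExtensionMorrey}). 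Your cone-localization observation --- that $\widehat{\tilde\eta_j^{(m,k)}}$ is concentrated where $|\xi|\lesssim|\xi_k|\sim 2^j$, so that dividing by $(i\xi_k)^m$ buys an isotropic gain $|\xi|^{-m}$ --- is the right heuristic and I believe it does make the multiplier route go through, but that computation is the actual content of \ref{Item::RefAtD::Bdd} and is not carried out. In short: your route is viable and arguably more transparent than the paper's, but it does not plug into Proposition~\ref{Prop::AntiDev::RychOp} as the paper's does, and the boundedness estimate needs to be proven rather than invoked.
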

\begin{rem}
    We are going to construct $\mu^{m,k}$ by taking Schwartz functions $\phi_0,\psi_0,\tilde\psi^{m,k}_j,\phi_j$ for $j\ge1$, all supported in $V_v$, such that for all $m\ge0$ and $1\le k\le N$,
    \begin{enumerate}[(a)]
        \item $\tilde\psi^{m,k}_j(x)=2^{(j-1)N}\tilde\psi^{m,k}_1(2^{j-1}x)$, $\phi_j(x)=2^{(j-1)N}\phi^{m,k}_1(2^{j-1}x)$ for $j\ge2$.
        \item $\int x^\alpha\tilde\psi^{m,k}_j(x)dx=\int x^\alpha\phi_j(x)dx=0$ for all multiindex $\alpha\in\N^N$.
        \item $\mu^{0,0}=\psi_0\ast\phi_0$ and $\mu^{m,k}=\sum_{j=1}^\infty 2^{-jm}\tilde\psi^{m,k}_j\ast\phi_j$ for all $m\ge0$ and $1\le k\le N$.
    \end{enumerate}
\end{rem}

% \begin{rem}
%     The result is easy if one does not require the support condition \eqref{Item::AntiDev::RefSigma::Supp}. For simplicity we assume $m=2k$ is an even integer and $v_j=\frac\partial{\partial x_j}$ for all $1\le j\le N$. Pick $\chi\in\Co_c^\infty(\R^N)$, we define $(Lf)^\wedge(\xi)=(1-\chi(\xi))(|2\pi\xi_1|^m+\dots+|2\pi\xi_N|^m)^{-1}\hat f(\xi)$. We can take $(\Sc^{0,0}f)^\wedge:=\chi(\xi)\hat f(\xi)$ and $\Sc^{m,k}f=D_k^m Lf$. These operator 
% \end{rem}

% \begin{rem}
%     In fact we have boundedness on Besov and Triebel-Lizorkin spaces $\Sc^{m,k}:\As_{pq}^s(\R^N)\to\As_{pq}^{s+m}(\R^N)$ for all $\As\in\{\Bs,\Fs\}$, $s\in\R$ and $0<p,q\le\infty$. The proof is the same to that for Theorem~\ref{Thm::RefAtD} without modification.
% \end{rem}

Let us review some ingredients used in \cite[Section~4]{ShiYaoExt}. We denote by $\dot\Ss(\R^N)$ the space of all Schwartz functions $f\in\Ss(\R^N)$ such that $\int x^\alpha f(x)dx=0$ for all multi-index $\alpha$, equivalently for all Schwartz $f$ such that its Fourier transform $\hat f(\xi):=\int f(x)e^{-2\pi ix\xi}dx$ satisfies $\lim_{\xi\to0}|\xi|^{-m}\hat f(\xi)=0$ for all $m$. Its dual space $\dot\Ss'(\R^N)$ is the quotient space $\dot\Ss'(\R^N)=\Ss'(\R^N)/\{\text{polynomial}\}$ (see \cite[Section~5.1.2]{TriebelTheoryOfFunctionSpacesI}).

For $\sigma\in\dot\Ss'(\R^N)$ and $f\in\dot\Ss(\R^N)$ we can talk about convolution $\sigma\ast f\in\dot\Ss'(\R^N)$. Indeed, if we set $f^\kappa(x):=f(-x)$ then for every $g\in\dot\Ss(\R^N)$ we have $f^\kappa\ast g\in\dot\Ss(\R^N)$ which still has all moment vanishing, hence $\langle \sigma\ast f,g\rangle_{\dot\Ss',\dot\Ss}=\langle\sigma,f^\kappa\ast g\rangle_{\dot\Ss',\dot\Ss}$ is defined.

 In our case we need to following condition so that $\sigma\ast f\in\dot\Ss$:
\begin{lem}[{\cite[Lemma~4.2]{ShiYaoExt}}]\label{Lem::AntiDev::S0Conv}
    Let $\eta_0\in\dot\Ss(\R^N)$. For $k\in\Z$ we denote $\eta_j(x):=2^{jN}\eta_0(2^jx)$. 
    
    Then for each $r\in\R$ the summation $\sigma^r=\sum_{k\in\Z}2^{kr}\eta_k$ converges in $\dot\Ss'(\R^N)$.
    
    Moreover if $f\in\dot\Ss(\R^N)$, then $\sigma^r\ast f=\sum_{k\in\Z}2^{kr}\eta_k\ast f$ converges $\dot\Ss(\R^N)$.
\end{lem}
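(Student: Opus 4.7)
The plan is to pass everything to the Fourier side. Recall that $f\in\dot\Ss$ iff $\hat f\in\Ss$ vanishes to infinite order at the origin, so for every multi-index $\beta$ and every $M,L\ge 0$ one has
$$|\partial^\beta\hat\eta_0(\zeta)|\lesssim_{\beta,M,L}\frac{|\zeta|^M}{(1+|\zeta|)^{M+L}},\qquad\zeta\in\R^N,$$
with an analogous bound for $\hat f$ when $f\in\dot\Ss$. The scaling $\hat\eta_k(\xi)=\hat\eta_0(2^{-k}\xi)$ is the only piece that depends on $k$, and is what makes both series telescope into absolute convergence.

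For convergence of $\sigma^r=\sum_k 2^{kr}\eta_k$ in $\dot\Ss'$, I would test against an arbitrary $g\in\dot\Ss$ and show the scalar series $\sum_k 2^{kr}\langle\eta_k,g\rangle$ converges absolutely. By Parseval,
$$\langle\eta_k,g\rangle=\int\hat\eta_0(2^{-k}\xi)\,\hat g(-\xi)\,d\xi.$$
For $k\to+\infty$ the origin-vanishing of $\hat\eta_0$ gives $|\hat\eta_0(2^{-k}\xi)|\lesssim_M 2^{-kM}|\xi|^M(1+2^{-k}|\xi|)^{-M-N-1}$, leading to $|\langle\eta_k,g\rangle|\lesssim_M 2^{-kM}$ for every $M$, since $\hat g\in\Ss$. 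For $k\to-\infty$ I would substitute $\xi=2^k\zeta$ to rewrite $\langle\eta_k,g\rangle=2^{kN}\int\hat\eta_0(\zeta)\,\hat g(-2^k\zeta)\,d\zeta$, and now invoke the infinite-order vanishing of $\hat g$ at $0$ (this is where $g\in\dot\Ss$ is used) to obtain $|\langle\eta_k,g\rangle|\lesssim_M 2^{kM}$ for every $M$. Super-polynomial decay in both tails makes $\sum_k 2^{kr}|\langle\eta_k,g\rangle|$ finite for every $r\in\R$, yielding a limit in $\dot\Ss'$.

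For the second claim, $\widehat{\eta_k\ast f}(\xi)=\hat\eta_0(2^{-k}\xi)\hat f(\xi)$ again lies in $\dot\Ss$, and it suffices to establish absolute convergence of the series of Fourier transforms in every Schwartz seminorm. Leibniz's rule gives
$$\partial^\alpha\bigl(\hat\eta_0(2^{-k}\xi)\hat f(\xi)\bigr)=\sum_{\beta\le\alpha}\binom{\alpha}{\beta}2^{-k|\beta|}(\partial^\beta\hat\eta_0)(2^{-k}\xi)\,\partial^{\alpha-\beta}\hat f(\xi).$$
For $k\ge 0$ each term is bounded by $2^{-kM}$ times a Schwartz seminorm of $\hat f$, by the origin-vanishing of $\partial^\beta\hat\eta_0$. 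For $k\le 0$, the growing factor $2^{-k|\beta|}$ is absorbed by the origin-vanishing of $\hat f$: writing $2^{-k|\beta|}|\xi|^{|\beta|}\le(2^{-k}|\xi|)^{|\beta|}$ and playing this against the Schwartz decay of $\partial^\beta\hat\eta_0$ while using $|\partial^{\alpha-\beta}\hat f(\xi)|\lesssim_{M'}|\xi|^{M'}(1+|\xi|)^{-M'-L}$ yields a $2^{kM}$ bound for arbitrary $M$. Hence every weighted seminorm $\sup_\xi(1+|\xi|)^{M'}|\partial^\alpha(\cdot)|$ is summable in $k$, so $\sum_k 2^{kr}\eta_k\ast f$ converges in $\dot\Ss$.

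The principal technical obstacle is the asymmetry in $k$: the factor $2^{-k|\beta|}$ arising from differentiating $\hat\eta_0(2^{-k}\xi)$ hurts as $k\to-\infty$ but helps as $k\to+\infty$. The resolution, implicit in both parts, is to trade between the two origin-vanishing factors: $\hat\eta_0$ carries the work for $k\to+\infty$, while $\hat g$ (or $\hat f$) carries it for $k\to-\infty$. This is precisely why the hypothesis $f\in\dot\Ss$ (rather than merely $f\in\Ss$) is essential in part (b); without it the series $\sum 2^{kr}\eta_k\ast f$ would generally fail to converge in any reasonable Schwartz sense as $k\to-\infty$.
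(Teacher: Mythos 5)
The paper does not reproduce a proof of this lemma; it simply cites \cite[Lemma~4.2]{ShiYaoExt}. Your Fourier-side argument is correct and is the standard route (almost certainly the one in the cited source): the essential content is precisely the trade you identify, with the infinite-order vanishing of $\hat\eta_0$ at the origin absorbing the $k\to+\infty$ tail (where $\hat\eta_0(2^{-k}\xi)$ concentrates at high frequency, and the $2^{-k|\beta|}$ from Leibniz only helps) and the infinite-order vanishing of $\hat g$, resp. $\hat f$, at the origin absorbing the $k\to-\infty$ tail (where the $2^{-k|\beta|}$ factor grows and must be killed); taking $M$ large in both directions then yields, for each fixed $r$, geometric decay of $2^{kr}$ times the relevant seminorm in both $k\to\pm\infty$ limits. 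The only items left implicit---Banach--Steinhaus to conclude that the weak-$*$ limit of the partial sums is again a continuous functional on the Fr\'echet space $\dot\Ss$, and the routine verification that the Schwartz seminorm bounds you derive for $\widehat{\eta_k\ast f}$ are uniform in $\xi$ once $L,L',M$ are chosen large relative to $|\alpha|,|\beta|,M'$---are standard and do not constitute gaps.
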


In 1 dimensional case we have,
\begin{lem}[{\cite[Lemma~4.3]{ShiYaoExt}}]\label{Lem::AntiDev::AFourierLemma}
    For $u\in\dot\Ss(\R)$, the function $( 2\pi i\tau)^{-1}\widehat u(\tau)$ is the Fourier transform of the $\dot\Ss$-function $v(t):=\int_{\R_+}u(t-s)ds$. Moreover if $\supp u\subset(0,\infty)$ then $\supp v\subset(0,\infty)$.

    In other words it makes sense to say $\big(\frac d{dt}\big)^{-1}u(t):=\int_{\R_+} u(t-s)ds$.
\end{lem}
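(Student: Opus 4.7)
The plan is to substitute $r=t-s$ in the definition so that $v(t)=\int_{-\infty}^{t}u(r)\,dr$ is recognized as a one‑sided primitive of $u$, and then verify separately: (a)~$v\in\Ss(\R)$; (b)~$\widehat v(\tau)=(2\pi i\tau)^{-1}\widehat u(\tau)$; (c)~$v\in\dot\Ss(\R)$; and finally the support statement, which will be essentially immediate from the rewritten expression.

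First I would check that $v\in\Ss(\R)$. The classical identity $v'(t)=u(t)$ follows from the fundamental theorem of calculus, so every derivative $v^{(k)}=u^{(k-1)}$ for $k\ge 1$ is Schwartz. The only issue is the decay of $v$ itself, and this is precisely where $u\in\dot\Ss$ is used: the zeroth moment $\int_{\R}u=0$ allows me to also write $v(t)=-\int_{t}^{\infty}u(r)\,dr$. Combining the original expression for $t\to-\infty$ with this second expression for $t\to+\infty$, the rapid decay of $u$ on $\R$ gives rapid decay of $v$ at both ends, establishing $v\in\Ss(\R)$.

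Next I would compute the Fourier transform. Applying the Fourier transform to the classical identity $v'=u$ yields $2\pi i\tau\,\widehat v(\tau)=\widehat u(\tau)$, which already gives $\widehat v(\tau)=(2\pi i\tau)^{-1}\widehat u(\tau)$ for $\tau\ne 0$. To make sense of the formula globally I would invoke the Fourier characterization of $\dot\Ss$ recalled just before the lemma: $u\in\dot\Ss$ is equivalent to $\widehat u$ being flat at the origin, i.e.\ $\lim_{\tau\to 0}|\tau|^{-m}\widehat u(\tau)=0$ for every $m$. Consequently $\widehat u(\tau)/\tau$ is itself a $C^\infty$ function, also flat at $0$, and together with the Schwartz decay of $\widehat u$ at infinity it belongs to $\dot\Ss(\R)$. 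Since this function agrees with the continuous $\widehat v$ on $\R\setminus\{0\}$, they agree everywhere.

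The conclusion $v\in\dot\Ss$ then comes for free: by the previous paragraph $\widehat v=(2\pi i\tau)^{-1}\widehat u\in\dot\Ss$, hence $v\in\dot\Ss$ by the same Fourier characterization. For the support claim, if $\supp u\subset(0,\infty)$ then $u$ vanishes on a neighborhood of $(-\infty,0]$ (because $\supp u$ is a closed subset of the open half‑line), so for every $t\le 0$ the integrand in $v(t)=\int_{-\infty}^{t}u(r)\,dr$ vanishes identically, giving $v(t)=0$ and hence $\supp v\subset(0,\infty)$. The only step that requires real care is the pointwise division by $\tau$ when passing from $2\pi i\tau\,\widehat v=\widehat u$ to the closed form for $\widehat v$; here the infinite‑order flatness of $\widehat u$ at the origin, equivalent to the vanishing of all moments of $u$, is indispensable.
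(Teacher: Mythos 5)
The lemma is cited from \cite[Lemma~4.3]{ShiYaoExt} and not reproved in this paper, so there is no internal argument to compare against; your proof — change variables to $v(t)=\int_{-\infty}^t u(r)\,dr$, establish $v\in\Ss(\R)$ via the two one-sided primitives and the zeroth-moment cancellation, Fourier-transform $v'=u$, and finish via the Fourier characterization of $\dot\Ss$ — is the natural route and is essentially correct. One small logical slip: having established that $\widehat v(\tau)=(2\pi i\tau)^{-1}\widehat u(\tau)$ is a Schwartz function that is flat at the origin, you then assert ``$\widehat v\in\dot\Ss$'' and deduce $v\in\dot\Ss$ ``by the same Fourier characterization.'' Neither step matches the stated equivalence. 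By the paper's definition, $\dot\Ss$ consists of Schwartz functions with all moments vanishing, equivalently Schwartz functions whose \emph{Fourier transform} is flat at the origin. A Schwartz function that is itself flat at the origin need not have vanishing moments, so $\widehat v$ flat at $0$ does not yield $\widehat v\in\dot\Ss$; and even if one had $\widehat v\in\dot\Ss$, the equivalence would say $\widehat{\widehat v}=v(-\cdot)$ is flat at the origin, which is a different statement from $v\in\dot\Ss$. The repair is trivial and uses exactly what you already proved: $\widehat v$ being Schwartz and flat at the origin \emph{is} the Fourier criterion for $v\in\dot\Ss$, so simply delete the intermediate claim and conclude directly. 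Everything else — the two-sided decay of $v$, the FTC step $v'=u$, and the support argument (including the observation that $\supp u$ closed and contained in $(0,\infty)$ forces $\inf\supp u>0$) — is correct.
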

Repeating the lemma $m$ times we see that $\big(\frac d{dt}\big)^{-m}u\in\dot\Ss(\R)$ is supported in $(0,\infty)$ as well.

The key to prove Theorem~\ref{Thm::RefAtD} is to modify \cite[Proposition~4.4]{ShiYaoExt} slightly.
\begin{prop}\label{Prop::AntiDev::RefSigma}
    There are $\sigma^m_k\in\dot\Ss'(\R^N)$ for $m\ge0$ and $1\le k\le N$ such that the following hold.
    \begin{enumerate}[(i)]
        \item\label{Item::AntiDev::RefSigma::Schwartz} For every $f\in\dot\Ss(\R^N)$, we have $\sigma^m_k\ast f\in\dot\Ss(\R^N)$ for all  $m\ge1$ and $1\le k\le N$.
        \item\label{Item::AntiDev::RefSigma::Supp} If $\supp f\subset(0,\infty)^N$ then $\supp(\sigma^m_k\ast f)\subset(0,\infty)^N$ as well.
        \item\label{Item::AntiDev::RefSigma::Sum} $f=\sum_{k=1}^N\sigma^0_k\ast f$ and $\frac {\partial^m}{\partial x_k^m}\sigma^m_k\ast f(x)=\sigma^0_k\ast f(x)$ for all $m\ge1$ and $1\le k\le N$.
    \end{enumerate}
\end{prop}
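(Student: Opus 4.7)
The plan refines the construction of Proposition~4.4 in \cite{ShiYaoExt} by splitting its single Calder\'on-type reproducing formula into $N$ directional pieces, one for each coordinate direction. After a linear change of variables sending $v_k\mapsto e_k$, we may assume $V_v=(0,\infty)^N$.

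The central task is to produce Schwartz functions $\phi_0,\psi_0^{(1)},\ldots,\psi_0^{(N)}\in\dot\Ss(\R^N)$, all supported in the closed positive cone $[0,\infty)^N$, such that with the $L^1$-dilates $\phi_j(x):=2^{jN}\phi_0(2^jx)$ and $\psi_j^{(k)}(x):=2^{jN}\psi_0^{(k)}(2^jx)$ one has the reproducing identity
\begin{equation*}
\sum_{k=1}^N\sum_{j\in\Z}\psi_j^{(k)}\ast\phi_j=\delta\quad\text{in }\dot\Ss'(\R^N).
\end{equation*}
I would obtain such data by starting from a cone-supported Calder\'on pair $(\phi_0,\Psi_0)\in\dot\Ss(\R^N)^2$ with $\sum_j\Psi_j\ast\phi_j=\delta$ (as furnished by \cite{ShiYaoExt}), and then splitting $\Psi_0=\sum_{k=1}^N\psi_0^{(k)}$ into $N$ pieces in $\dot\Ss$ supported in $[0,\infty)^N$; a natural choice is the product form $\psi_0^{(k)}(x)=\Xi(x_k)\prod_{j\ne k}\Phi(x_j)$ with $\Xi,\Phi\in\dot\Ss(\R)$ supported in $(0,\infty)$, selected (together with a tensor-product $\phi_0$) so that the Calder\'on identity holds coordinatewise.

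Given such $\phi_0,\psi_0^{(k)}$, I set
\begin{equation*}
\tilde\psi_0^{m,k}:=D_{x_k}^{-m}\psi_0^{(k)},
\end{equation*}
which by iterating Lemma~\ref{Lem::AntiDev::AFourierLemma} in the single variable $x_k$ (treating the remaining coordinates as parameters) lies in $\dot\Ss(\R^N)$ with support in $[0,\infty)^N$. With dyadic dilates $\tilde\psi_j^{m,k}(x):=2^{jN}\tilde\psi_0^{m,k}(2^jx)$, a scaling calculation gives the key identity $D_{x_k}^m\tilde\psi_j^{m,k}=2^{jm}\psi_j^{(k)}$. I then define
\begin{equation*}
\sigma^m_k:=\sum_{j\in\Z}2^{-jm}\,\tilde\psi_j^{m,k}\ast\phi_j\in\dot\Ss'(\R^N).
\end{equation*}
Applying Lemma~\ref{Lem::AntiDev::S0Conv} with $\eta_0:=\tilde\psi_0^{m,k}\ast\phi_0\in\dot\Ss$ and exponent $r=-m$ shows the series converges in $\dot\Ss'$ and that $\sigma^m_k\ast f\in\dot\Ss$ for every $f\in\dot\Ss$, giving (i). Property (ii) follows since each summand $\tilde\psi_j^{m,k}\ast\phi_j$ is supported in $[0,\infty)^N+[0,\infty)^N=[0,\infty)^N$, so for $f$ with $\supp f\subset(0,\infty)^N$ the coordinatewise strict positivity of $\supp f$ combined with the nonnegativity of $\supp(\tilde\psi_j^{m,k}\ast\phi_j)$ forces $\supp(\sigma^m_k\ast f)\subset(0,\infty)^N$. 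For (iii), the reproducing identity gives $\sum_k\sigma^0_k=\sum_k\sum_j\psi_j^{(k)}\ast\phi_j=\delta$, and termwise differentiation together with the scaling identity above yields $D_{x_k}^m\sigma^m_k=\sum_j\psi_j^{(k)}\ast\phi_j=\sigma^0_k$.

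The principal obstacle I expect is the construction of $\phi_0,\psi_0^{(k)}$ with both physical-side cone support and the directional reproducing identity while preserving the vanishing-moment condition. A naive multiplicative partition of unity applied to $\Psi_0$ destroys the vanishing moments, while a Fourier-side decomposition using a homogeneous degree-zero partition of unity $\sum_k\chi_k\equiv1$ subordinate to the cones $\{|\xi_k|\ge c|\xi|\}$ would give the correct Fourier localization but generally fails to preserve cone support in physical space (Paley--Wiener obstruction). The product-form construction above circumvents both difficulties by reducing the problem to a one-variable Calder\'on identity in each coordinate — the verification of this one-variable identity is the main technical step.
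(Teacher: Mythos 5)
Your overall architecture is right, and several of the mechanical steps check out: iterating Lemma~\ref{Lem::AntiDev::AFourierLemma} in the single variable $x_k$ does preserve $\dot\Ss$ and positive support; the dilation identity $D_{x_k}^m\tilde\psi_j^{m,k}=2^{jm}\psi_j^{(k)}$ is correct; the convergence of the series via Lemma~\ref{Lem::AntiDev::S0Conv} and the support bookkeeping for (ii) are fine. But you have not actually produced the objects $\phi_0,\psi_0^{(1)},\dots,\psi_0^{(N)}$ on which the whole argument rests, and the ansatz you propose for them does not work.

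Concretely, you want $\psi_0^{(k)}(x)=\Xi(x_k)\prod_{l\ne k}\Phi(x_l)$ with a \emph{single} profile $\Phi$ independent of the relative position of $l$ and $k$, together with a tensor-product $\phi_0$, so that $\sum_{k}\sum_{j}\psi_j^{(k)}\ast\phi_j=\delta$. Passing to the Fourier side and writing $A=\widehat\Xi\,\widehat h$, $B=\widehat\Phi\,\widehat h$ for the one-variable profiles, this requires
\begin{equation*}
\sum_{j\in\Z}\sum_{k=1}^N A(2^{-j}\xi_k)\prod_{l\ne k}B(2^{-j}\xi_l)=1\qquad\text{for all }\xi\ne0.
\end{equation*}
This is a genuinely $N$-variable constraint, not ``a one-variable Calder\'on identity in each coordinate'' as you claim, and a symmetric product of a single-scale $B$ does not telescope. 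The quantity that does telescope is $\prod_{l}\widehat g(2^{-j}\xi_l)-\prod_{l}\widehat g(2^{1-j}\xi_l)$, and the only natural way to split it into $N$ coordinate-indexed pieces is the Leibniz-type expansion
\begin{equation*}
\prod_{l}\widehat g(2^{-j}\xi_l)-\prod_{l}\widehat g(2^{1-j}\xi_l)
=\sum_{k=1}^N\Big(\prod_{l<k}\widehat g(2^{-j}\xi_l)\Big)\big(\widehat g(2^{-j}\xi_k)-\widehat g(2^{1-j}\xi_k)\big)\Big(\prod_{l>k}\widehat g(2^{1-j}\xi_l)\Big),
\end{equation*}
which is what the paper uses (see \eqref{Eqn::AntiDev::RefSigma::DefMu}--\eqref{Eqn::AntiDev::ProofCase=1::MuFourier1}). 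That $k$-th summand \emph{is} a tensor product over coordinates, but the factors for $l<k$ live at scale $2^{-j}$ and those for $l>k$ at scale $2^{1-j}$: two distinct profiles, placed asymmetrically with respect to $k$. Your ansatz, by forcing a single profile $\Phi$ at a single scale, cannot reproduce this telescoping, and you give no alternative mechanism; the ``principal obstacle'' you flag is precisely the unresolved gap. A secondary, smaller point: the extra convolution by $\phi_j$ in your $\sigma^m_k$ is unnecessary at this stage (it mimics the later Theorem~\ref{Thm::RefAtD} rather than this proposition), and once the telescoping pieces are in hand the clean choice is to set $\sigma^m_k:=\sum_j\vartheta^m_{k,j}$ directly, with $\vartheta^m_{k,j}$ the product of $g$-factors at the two scales and a $D_{x_k}^{-m}$ applied to the $k$-th difference factor.
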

\begin{proof}
    Let $g(t)$ be the function as in the proof of \cite[Theorem~4.1]{RychkovExtension}. That is, we have a $g \in \Ss (\R)$ satisfying:
	$$\supp g \subseteq [1, \infty);\quad \int_{\R} g = 1,\text{ and }\int_{\R} t^k g(t)dt = 0\quad\text{for all }k \in \Z_+.$$
	
    Therefore
    $$u_j(t):=2^{j} g (2^jt) - 2^{j-1} g (2^{j-1}t),\quad j\in\Z,$$ 
    are Schwartz functions supported in $\R_+$ and have integral $0$. By the moment condition of $g$ (we have $\int t^kg(t)dt=0$, $k\ge1$) we have $\int t^ku_j(t)dt=0$ for all $k\ge1$ as well. Therefore $u_j\in\dot\Ss(\R)$. By Lemma~\ref{Lem::AntiDev::AFourierLemma}, $\big( \frac{d}{dt} \big)^{-m} u_j\in\dot\Ss(\R)$ is a well-defined moment vanishing Schwartz function supported in $\R_+$.

    For $1\le k\le N$, $m\ge0$ and $j\in\Z$, we define $\vartheta^m_{k,j}\in\Ss(\R^n)$ as
    \begin{equation}\label{Eqn::AntiDev::RefSigma::DefMu}
	\vartheta^m_{k,j}(x)
	:=\Big(\prod_{l=1}^{k-1}2^jg(2^jx_l)\Big)\cdot (\partial_t^{-m}u_j)(x_k)\cdot
	\Big(\prod_{l=k+1}^N2^{j-1}g(2^{j-1}x_l)\Big). %,\quad 1\le l\le n,\quad j\in\Z.
	\end{equation}
    Taking Fourier transform of \eqref{Eqn::AntiDev::RefSigma::DefMu} we have
    \begin{align}\label{Eqn::AntiDev::ProofCase=1::MuFourier1}
	\widehat \vartheta^m_{k,j} ( \xi)
	&=\prod_{l=1}^{k-1} \widehat{g} (2^{-j} \xi_l)\cdot   
	\frac{\widehat{g} (2^{-j}\xi_k)-\widehat{g}(2^{1-j}\xi_k)}{(2\pi i\xi_k)^m}\cdot \prod_{l=k+1}^N \widehat{g}(2^{ 1-j}\xi_l)= 2^{-jm}\widehat\vartheta^m_{k,0}(2^{-j}\xi).
    \end{align}
    In particular $D_{x_k}^m\vartheta^m_{k,j}=\vartheta^0_{k,j}$ for all $m\ge1$, $1\le k\le N$ and $j\in\Z$.

    By Lemma~\ref{Lem::AntiDev::AFourierLemma} $\partial_t^{-m}u_j\in\dot\Ss(\R)$, therefore $\widehat\vartheta^m_{k,j}(\xi)=O(|\xi_k|^\infty)=O(|\xi|^\infty)$ which implies $\vartheta^m_{k,j}\in\dot\Ss(\R^N)$. Applying Lemma~\ref{Lem::AntiDev::AFourierLemma} with $\supp g\subset(0,\infty)$ and $\supp \partial_t^{-m}u\subset(0,\infty)$ we get $\supp\vartheta^m_{k,j}\subset(0,\infty)^N$.
    
    Now define
    \begin{equation*}
        \sigma^m_k(x):=\sum_{j\in\Z}\vartheta^m_{k,j}(x)=\sum_{j\in\Z}2^{j(n-m)}\vartheta^m_{k,0}(2^jx).
    \end{equation*}

    By Lemma~\ref{Lem::AntiDev::S0Conv}, $\sigma^m_k\ast f=\sum_{j\in\Z}\vartheta^m_{k,j}\ast f $ converges $\dot\Ss(\R^N)$ for every $f\in\dot\Ss(\R^N)$, which gives \ref{Item::AntiDev::RefSigma::Schwartz}. If in addition $\supp f\subset(0,\infty)^N$ then $\supp(\vartheta^m_{k,j}\ast f)\subset(0,\infty)^N+(0,\infty)^N=(0,\infty)^N$, which gives \ref{Item::AntiDev::RefSigma::Supp}. 
    
    Finally for \ref{Item::AntiDev::RefSigma::Sum}, $D_{x_k}^m\sigma^m_k\ast f=\sigma^0_k\ast f$ follows from $D_{x_k}^m\vartheta^m_{k,j}=\vartheta^0_{k,j}$ as we have
    \begin{equation*}
        \frac{\partial^m}{\partial x_k^m}(\sigma^m_k\ast f)=\sum_{j\in\Z}\frac{\partial^m}{\partial x_k^m}\vartheta^m_{k,j}\ast f=\sum_{j\in\Z}\vartheta^0_{k,j}\ast f=\sigma^0_k\ast f.
    \end{equation*}

    For $\xi\in\R^N\backslash\{0\}$ we have
    \begin{align*}
        \sum_{k=1}^N\widehat\sigma^0_k(\xi)=&\sum_{j\in\Z}\sum_{k=1}^N\widehat\vartheta^0_{k,j}(\xi)=\sum_{j\in\Z}\widehat{g} (2^{-j} \xi_1) \cdots \widehat g(2^{-j}\xi_N)-\widehat{g} (2^{1-j} \xi_1)\cdots\widehat g(2^{1-j}\xi_N)
        \\
        =&\lim\limits_{J\to+\infty}\widehat g (2^{-J} \xi_1) \cdots \widehat g(2^{-J}\xi_N)-\widehat{g} (2^J \xi_1) \cdots \widehat g(2^J\xi_N)=1.
    \end{align*}

    Therefore $\hat f(\xi)=\sum_{k=1}^N(\sigma^0_k\ast f)^\wedge(\xi)$ pointwise for $f\in\dot\Ss(\R^N)$ and $\xi\in\R^N$. Taking Fourier inversion we get $f=\sum_{k=1}^N\sigma^0_k\ast f$ and complete the proof.
\end{proof}

We also recall some ingredients from Littlewood-Paley decomposition. 
\begin{prop}[{\cite[Proposition~3.10]{ShiYaoExt}}]\label{Prop::AntiDev::RychOp}
    Let $\eta_0,\theta_0\in\dot\Ss(\R^N)$. We set $\eta_j(x):=2^{jN}\eta_0(2^jx)$ and $\theta_j(x):=2^{jN}\theta_0(2^jx)$ for $j\ge1$. For $r\in\R$ we define the operator
    \begin{equation*}
        T^{\eta,\theta,r}f:=\sum_{j=1}^\infty2^{jr}\eta_j\ast\theta_j\ast f,\qquad f\in\Ss'(\R^N).
    \end{equation*}

    Then $T^{\eta,\theta,r}:\Ss'(\R^N)\to\Ss'(\R^N)$ is defined. And  $T^{\eta,\theta,r}:\Fs_{pq}^s(\R^N)\to \Fs_{pq}^{s-r}(\R^N)$ are bounded for all $s\in\R$ and $p,q\in(0,\infty]$.    
    In particular $T^{\eta,\theta,r}$ is bounded $H^{s,p}(\R^N)\to H^{s-r,p}(\R^N)$ and $\Co^s(\R^N)\to \Co^{s-r}(\R^N)$ for all $s\in\R$, $1<p<\infty$.
\end{prop}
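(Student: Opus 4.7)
My plan is to establish this via the Littlewood--Paley characterization of the Triebel--Lizorkin scale, combined with almost-orthogonality coming from the infinite-order vanishing of $\widehat{\eta_0}$ and $\widehat{\theta_0}$ at the origin (which follows from $\eta_0,\theta_0\in\dot\Ss$). First I would check that $T^{\eta,\theta,r}$ is well-defined on $\Ss'(\R^N)$: writing $\widehat{\eta_j\ast\theta_j}(\xi)=\widehat{\eta_0}(2^{-j}\xi)\widehat{\theta_0}(2^{-j}\xi)$, the moment-vanishing and Schwartz decay combine to produce rapid decay in both $|\xi|$ and $1/|\xi|$, so $\sum_{j\ge 1}2^{jr}\eta_j\ast\theta_j$ converges in $\Ss'(\R^N)$ (this is essentially Lemma~\ref{Lem::AntiDev::S0Conv} restricted to $j\ge 1$), and convolution with any $f\in\Ss'(\R^N)$ is meaningful.

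For the space-boundedness, I would fix a standard dyadic resolution $\{\varphi_k\}_{k\ge 0}\subset\Ss(\R^N)$ with $\widehat{\varphi_k}$ supported in $\{|\xi|\sim 2^k\}$ for $k\ge 1$, so that
\begin{equation*}
\|g\|_{\Fs_{pq}^{s-r}}\approx \Big\|\Big(\sum_{k\ge 0}2^{k(s-r)q}|\varphi_k\ast g|^q\Big)^{1/q}\Big\|_{L^p}.
\end{equation*}
Applying $\varphi_k\ast(\cdot)$ to $T^{\eta,\theta,r}f$ gives $\sum_j 2^{jr}\varphi_k\ast\eta_j\ast\theta_j\ast f$. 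The infinite-order vanishing of $\widehat{\eta_0},\widehat{\theta_0}$ at the origin yields, for every $M$, the almost-orthogonality estimate $\|\varphi_k\ast\eta_j\ast\theta_j\|_{L^1}\lesssim_M 2^{-M|j-k|}$, so effectively only $j\sim k$ contributes. Pointwise each surviving summand can be controlled by the Peetre maximal function $\theta_j^\ast f(x):=\sup_{y\in\R^N}|(\theta_j\ast f)(x-y)|/(1+2^j|y|)^a$ for $a$ large.

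The main technical obstacle is the quasi-Banach range $\min(p,q)\le 1$, where the Hardy--Littlewood maximal function $M$ fails to be bounded on $L^p$. The standard remedy is to choose $a>N/t$ for some $t\in(0,\min(p,q))$ and exploit the pointwise bound $\theta_j^\ast f\lesssim (M|\theta_j\ast f|^t)^{1/t}$, so that the Fefferman--Stein vector-valued maximal inequality applies on $L^{p/t}(\ell^{q/t})$. Combining this with the geometric decay $2^{-M|j-k|}$ to re-index the sum over $j$, and with the reverse Littlewood--Paley characterization $\|f\|_{\Fs_{pq}^s}\approx\|(\sum_j 2^{jsq}|\theta_j\ast f|^q)^{1/q}\|_{L^p}$ (valid because $\theta_0\in\dot\Ss$ satisfies the needed Tauberian-type nondegeneracy after possibly incorporating a dual synthesis kernel), one obtains the target bound. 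The consequences for $H^{s,p}=\Fs_{p,2}^s$ when $1<p<\infty$ and $\Co^s=\Fs_{\infty,\infty}^s$ are then immediate corollaries via the identifications in Lemma~\ref{Lem::Space::SpaceLem}.
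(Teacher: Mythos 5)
The paper does not actually prove this proposition; it cites it from \cite{ShiYaoExt}, with the remark that \cite{ShiYaoExt} only covers $(p,q)\notin\{\infty\}\times(0,\infty)$ and that the remaining case $p=\infty>q$ is taken from \cite[Proposition~17]{YaoExtensionMorrey}. Your sketch --- almost-orthogonality from infinite vanishing moments, control of $\varphi_k\ast\eta_j\ast\theta_j\ast f$ by Peetre maximal functions, and the Fefferman--Stein vector-valued maximal inequality on $L^{p/t}(\ell^{q/t})$ for $t<\min(p,q)$ --- is the standard route and is essentially what the cited reference does, and the treatment of well-definedness via seminorm decay of $\check\eta_j\ast\check\theta_j\ast\phi$ is correct. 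One small overstatement: you invoke the full Littlewood--Paley \emph{equivalence} $\|f\|_{\Fs_{pq}^s}\approx\|(\sum_j 2^{jsq}|\theta_j\ast f|^q)^{1/q}\|_{L^p}$ and appeal to a Tauberian nondegeneracy condition, but for the upper bound only the one-sided local-means estimate is needed, which holds for any $\theta_0\in\dot\Ss$ with no Tauberian hypothesis.

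The genuine gap is the case $p=\infty$, $0<q<\infty$. Your Fefferman--Stein step requires $p/t>1$ with $t<\min(p,q)\le q<\infty$, which silently forces $p<\infty$. For $p=\infty>q$ the quasi-norm in Definition~\ref{Defn::Space::TLSpace} is the Frazier--Jawerth type expression
\begin{equation*}
\|g\|_{\Fs_{\infty q}^{s}}=\sup_{x,J}2^{NJ/q}\Big\|\big(2^{js}\phi_j\ast g\big)_{j\ge\max(0,J)}\Big\|_{L^q(B(x,2^{-J});\ell^q)},
\end{equation*}
with the supremum over balls and a truncated index range, and the vector-valued maximal argument does not transfer. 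This is precisely why the paper's remark splits off that case to a different reference; you would need to argue at the level of the localized $\ell^q$ averages (as in \cite{YaoExtensionMorrey}), not just apply Fefferman--Stein. The easy boundary case $p=q=\infty$ (the H\"older--Zygmund scale) does work by a direct $L^\infty$ estimate, but it should be stated separately rather than absorbed into the Fefferman--Stein argument.
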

Here \cite{ShiYaoExt} only treat the cases $(p,r)\notin\{\infty\}\times(0,\infty)$. The boundedness of the case $p=\infty>q$ follows from \cite[Proposition~17]{YaoExtensionMorrey}.

% It is possible to consider a single convolution. The result using double convolutions follows directly from \cite{ShiYaoExt}.

\begin{proof}[Proof of Theorem~\ref{Thm::RefAtD}]
    By passing to an (in fact unique) invertible real linear transform we can assume that $v_j=e_j$ for $1\le j\le N$, i.e. $D_{v_j}=\frac\partial{\partial x_j}$ and $V_v=(0,\infty)^N$.

    A result by Rychkov in \cite[Proposition~2.1 and Theorem~4.1(a)]{RychkovExtension} shows that there are families of Schwartz functions $(\phi_j,\psi_j)_{j=0}^\infty\subset\Ss(\R^N)$ such that (see also Remark~\ref{Rmk::Space::ExtOp}):
    \begin{itemize}
        \item $\phi_j,\psi_j\in\dot\Ss(\R^N)$ and $\phi_j(x)=2^{(j-1)N}\phi_1(2^{j-1}x)$, $\psi_j(x)=2^{(j-1)N}\psi_1(2^{j-1}x)$ for all $j\ge1$.
        
        \item For every $f\in\Ss'(\R^N)$, $f=\sum_{j=0}^\infty\psi_j\ast\phi_j\ast f$ with the sum converging in $\Ss'(\R^N)$.
        \item $\supp\phi_j,\supp\psi_j\subset (0,\infty)^N$ for all $j\ge0$.
    \end{itemize}

    Now for every $m\ge0$ and $1\le k\le N$, using Proposition~\ref{Prop::AntiDev::RefSigma} we define $\tilde\psi^{m,k}_j\in\dot\Ss(\R^N)$ for $j\ge1$ by
    \begin{equation*}
        \tilde\psi^{m,k}_j(x):=2^{jm}\sigma^m_k\ast\psi_j.
    \end{equation*}
    This gives the following: for every $m\ge1$, $1\le k\le N$ and $j\ge1$, 
    \begin{itemize}
        \item $\tilde\psi^{m,k}_j(x)=2^{(j-1)N}\tilde\psi^{m,k}_1(2^{j-1}x)$.
        \item $\psi_j=\sum_{k=1}^N\tilde\psi^{0,k}_j$ and $D_{x_k}^m\tilde\psi^{m,k}_j=2^{jm}\tilde\psi^{0,k}_j$.
        \item $\supp\tilde\psi^{m,k}_j\subset(0,\infty)^N$.
    \end{itemize}
    
    We define  $\Sc^{m,k}f=\mu^{m,k}\ast f$  by
    \begin{equation}
        \mu^{0,0}:=\psi_0\ast\phi_0,\qquad \mu^{m,k}:=\sum_{j=1}^\infty 2^{-jm}\tilde\psi^{m,k}_j\ast\phi_j,\quad m\ge0,\quad 1\le k\le N.
    \end{equation}

    Clearly $\mu^{0,0}\in\Ss(\R^N)$. By Proposition~\ref{Prop::AntiDev::RychOp}, $\mu^{m,k}=\Sc^{m,k}\delta_0\in\Ss'(\R^N)$ makes sense for all $m\ge0$ and $1\le k\le N$. 
    Immediately \ref{Item::RefAtD::Sum} follows from $\psi_j=\sum_{k=1}^N\tilde\psi^{0,k}_j$ and $D_{x_k}^m\tilde\psi^{m,k}_j=2^{jm}\tilde\psi^{0,k}_j$ for all $j\ge1$.

    Since $\tilde\psi^{m,k}_j$, $\psi_j$ and $\phi_j$ are all supported in $(0,\infty)^N$, taking summations we get \ref{Item::RefAtD::Supp}.

    By Proposition~\ref{Prop::AntiDev::RychOp}, $\Sc^{m,k}:\Fs_{pq}^s\to \Fs_{pq}^{s+m}$ for all $m\ge0$, $1\le k\le N$, $p,q\in(0,\infty]$ and $s\in\R$. The boundedness $\Sc^{0,0}:\Fs_{pq}^s\to \Fs_{pq}^{s+m}$ can follow from e.g. \cite[Proposition~3.14]{ShiYaoExt}. This proves \ref{Item::RefAtD::Bdd}.
\end{proof}

Using the notation \eqref{Eqn::Space::TildeSpace} for the space $\widetilde\Fs_{pq}^s$, we have
\begin{cor}\label{Cor::RefAtD::ScBdd}
    Let $\omega\subset\R^N$ be a special Lipschitz domain (see Definition~\ref{Defn::Space::SpecDom}) and let $v_1,\dots,v_N\in\R^N$ be such that $V_v\subset\{x_N<-|x'|\}$. Then the operators $\Sc^{m,k}f=\mu^{m,k}\ast f$ from Theorem~\ref{Thm::RefAtD} has boundedness
    \begin{equation}\label{Eqn::RefAtD::ScBdd}
        \Sc^{0,0},\Sc^{m,k}:\widetilde\Fs_{pq}^s(\omega^c)\to\widetilde\Fs_{pq}^{s+m}(\omega^c),\quad \text{for all}\quad p,q\in(0,\infty],\quad s\in\R,\quad m\ge0,\quad 1\le k\le N.
    \end{equation}
\end{cor}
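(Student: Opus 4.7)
The plan is to deduce the corollary almost directly from Theorem~\ref{Thm::RefAtD} by checking that the convolution operators $\Sc^{0,0}$ and $\Sc^{m,k}$ preserve the support constraint built into the definition of $\widetilde\Fs_{pq}^s(\omega^c)$. Recall that (by \eqref{Eqn::Space::TildeSpace}) this latter space consists of $f\in\Fs_{pq}^s(\R^N)$ with $\supp f\subseteq\overline{\omega^c}$, equipped with the $\Fs_{pq}^s(\R^N)$ norm. So the only thing to verify, beyond the boundedness already supplied by Theorem~\ref{Thm::RefAtD}~\ref{Item::RefAtD::Bdd}, is the inclusion $V_v+\overline{\omega^c}\subseteq\overline{\omega^c}$.

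First I would record the geometric lemma: if $\omega=\{(x',x_N):x_N>\sigma(x')\}$ with $\sigma$ Lipschitz of constant $\le 1$ (as in Definition~\ref{Defn::Space::SpecDom}), and if $V_v\subset\{x_N<-|x'|\}$, then for any $p=(p',p_N)\in\overline{\omega^c}$ (so that $p_N\le\sigma(p')$) and any $v=(v',v_N)\in V_v$ (so that $v_N<-|v'|$), the Lipschitz bound $\sigma(p'+v')\ge\sigma(p')-|v'|$ gives
\begin{equation*}
    p_N+v_N < p_N-|v'| \le \sigma(p')-|v'| \le \sigma(p'+v'),
\end{equation*}
hence $p+v\in\omega^c$. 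Taking closures yields the claimed inclusion.

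With this in hand, the rest is mechanical. For $f\in\widetilde\Fs_{pq}^s(\omega^c)$, Theorem~\ref{Thm::RefAtD}~\ref{Item::RefAtD::Bdd} gives $\Sc^{m,k}f\in\Fs_{pq}^{s+m}(\R^N)$ with $\|\Sc^{m,k}f\|_{\Fs_{pq}^{s+m}}\lesssim \|f\|_{\Fs_{pq}^s}$, and Theorem~\ref{Thm::RefAtD}~\ref{Item::RefAtD::Supp} combined with the geometric lemma gives $\supp(\Sc^{m,k}f)\subseteq V_v+\overline{\omega^c}\subseteq\overline{\omega^c}$, so $\Sc^{m,k}f\in\widetilde\Fs_{pq}^{s+m}(\omega^c)$ with the desired norm bound. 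The same argument handles $\Sc^{0,0}$, whose kernel $\mu^{0,0}=\psi_0\ast\phi_0$ is likewise supported in $V_v$ by the construction in the proof of Theorem~\ref{Thm::RefAtD}.

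There is no real obstacle; the only subtlety is the Lipschitz cone calculation, and the strict inequality $v_N<-|v'|$ (guaranteed by $V_v\subset\{x_N<-|x'|\}$) gives the needed slack against the $\le 1$ Lipschitz constant of $\sigma$. Everything else is just invoking Theorem~\ref{Thm::RefAtD} and the definition of $\widetilde\Fs_{pq}^s(\omega^c)$.
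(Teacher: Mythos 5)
Your proposal is correct and follows essentially the same route as the paper: reduce to the support inclusion $\omega^c+V_v\subseteq\omega^c$ (which the paper delegates to Remark~\ref{Rmk::Space::SpecDom+Cone} and you prove directly via the Lipschitz estimate), then invoke Theorem~\ref{Thm::RefAtD}~\ref{Item::RefAtD::Supp} and \ref{Item::RefAtD::Bdd}. The only cosmetic point is that $\omega^c$ is already closed since $\omega$ is open, so writing $\overline{\omega^c}$ is redundant, but nothing is wrong.
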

\begin{proof}
    The assumption $V_v\subset\{x_N<-|x'|\}$ implies $\omega^c+V_v\subseteq\omega^c$ (see Remark~\ref{Rmk::Space::SpecDom+Cone}). If $f\in\Ss'(\R^N)$ satisfies $f|_\omega=0$, then $\Sc^{m,k}f|_\omega=0$ hold as well. The result \eqref{Eqn::RefAtD::ScBdd} then follows from Theorem~\ref{Thm::RefAtD}~\ref{Item::RefAtD::Bdd}.
\end{proof}

\section{Weighted Estimates on Local Domains}\label{Section::WeiEst}

%Now consider $\omega=\omega_\sigma=\{(z',x_n+iy_n):y_n>\sigma(z',x_n)\}$ where $\sigma:\R^{2n-1}\to\R$. In this section we take a stronger assumption to  Assumption~\ref{Assumption::Domain}. 

In this section we focus on local and redo the weighted estimate of the Leray-Koppelman kernel. 
\begin{assu}\label{Assumption::StrongDomain}
    In this section we consider the special domain $\omega\subset\C^n$ given by
    \begin{equation*}
        \omega=\omega_\sigma=\{(z',x_n+iy_n)\in\C^n:y_n>\sigma(z',x_n)\}.
    \end{equation*}
    Here $\sigma:\R^{2n-1}_{(z',x_n)}\to\R_{y_n}$ is a Lipschitz such that $\sigma(0)=0$,  $\|\nabla\sigma\|_{L^\infty(\R^{2n-1})}=\sup_{w'\neq w''}\frac{|\sigma(w')-\sigma(w'')|}{|w'-w''|}<1$, and $\sigma|_{\B^{2n-1}}$ is $C^{1,1}$. 
    
    Let $\rho(z)=\rho_\sigma(z):=\sigma(z',x_n)-y_n$ be the canonical defining function (which means $\rho(0)=0$). We assume that there is a $c_0>0$ such that
    \begin{equation}\label{Eqn::WeiEst::SuppFun}
        |\partial\rho(\zeta)\cdot(\zeta-z)|\ge c_0(\rho(\zeta)-\rho(z)+|z-\zeta|^2),\qquad z\in\B^{2n}\cap\omega,\quad\zeta\in\B^{2n}\backslash\overline\omega.
    \end{equation}

    Here \eqref{Eqn::WeiEst::SuppFun} is not necessarily true for general $(z,\zeta)\in\omega\times\overline\omega^c$.
\end{assu}

We set $\delta(w)=\delta_\omega(w):=\max(1,\dist(w,b\omega))$. Clearly there $C>0$ depending only on $\|\nabla\sigma\|_{L^\infty(\B^{2n-1})}$ such that 
\begin{equation*}
    C^{-1}|\rho(w)|\le\delta(w)\le C|\rho(w)|,\qquad w\in\B^{2n}.
\end{equation*}

Let us recall the Cauchy-Fantappi\`e form and Leray-Koppelman form associated to the defining function $\rho$, and their corresponding integral operators:
\begin{align}
\label{Eqn::WeiEst::DefF}
    F(z,\zeta)&=F^\rho(z,\zeta):=\frac{\partial\rho(\zeta)\wedge(\dbar\partial\rho(\zeta))^{n-1}}{(2\pi i)^n(\partial\rho(\zeta)\cdot(\zeta-z))^n},\qquad z\in \B^{2n}\cap\omega,\quad\zeta\in\B^{2n}\backslash\overline\omega;
    \\\label{Eqn::WeiEst::DefFOp}
    \Fc g(z)&=\Fc^\rho g(z):=\int_{\B^{2n}\backslash\overline\omega} F^\rho(z,\cdot)\wedge g,\quad z\in\B^{2n}\cap\omega,\qquad g\in L^1(\B^{2n}\backslash\overline\omega;\wedge^{0,1});
    \\
\label{Eqn::WeiEst::DefK}
    K(z,\zeta)&=K^\rho(z,\zeta):=\frac{ b(z,\zeta)\wedge \partial\rho(\zeta)}{(2\pi i)^n}\wedge\sum_{k=1}^{n-1}\frac{(\dbar b(z,\zeta))^{n-1-k}\wedge(\dbar \partial\rho(\zeta))^{k-1}}{|z-\zeta|^{2(n-k)}(\partial\rho(\zeta)\cdot(\zeta-z))^k}=:\sum_{q=0}^{n-2}K_q(z,\zeta);
\\\label{Eqn::WeiEst::DefKOp}
    \Kc_q g(z)&=\Kc_q^\rho g(z):=\int_{\B^{2n}\backslash\overline\omega} K^\rho_{q-2}(z,\cdot)\wedge g,\quad z\in\B^{2n}\cap\omega,\qquad g\in L^1(\B^{2n}\backslash\overline\omega;\wedge^{0,q}),\quad 2\le q\le n.
\end{align}
Here $K(z,\zeta)$ and $F(z,\zeta)$ are both forms in the product spaces $(\B^{2n}\cap\omega)\times(\B^{2n}\backslash\overline\omega)$. $K$ has total degree $(n,n-2)$, and $K_q$ is the components that has degree $(0,q)$ in $z$ and degree $(n,n-2-q)$ in $\zeta$. $F$ has total degree $(n,n-1)$, which has degree $(0,0)$ in $z$ and $(n,n-1)$ in $\zeta$.
Also notice that $F$ is holomorphic in $z$. 
\begin{lem}\label{Lem::WeiEst::LPFormula}
    Following Assumption~\ref{Assumption::StrongDomain}, the bi-degree forms $B(z,\zeta)$, $F^\rho(z,\zeta)$ and $K^\rho(z,\zeta)$ given in \eqref{Eqn::Intro::DefB}, \eqref{Eqn::WeiEst::DefF} and \eqref{Eqn::WeiEst::DefK} satisfy $\dbar_{z,\zeta}K^\rho(z,\zeta)=B(z,\zeta)-F^\rho(z,\zeta)$ for $z\in\B^{2n}\cap\omega$ and $\zeta\in\B^{2n}\backslash\overline\omega$.

    As a result, for the associated operators $\Fc$ (see \eqref{Eqn::WeiEst::DefFOp}), $\Bc=\sum_{q=1}^n\Bc_q$ (see \eqref{Eqn::Space::DefBOp}) and $\Kc=\sum_{q=2}^n\Kc_q$ (see \eqref{Eqn::WeiEst::DefKOp}),
    % \begin{equation*}
    %     \Kc_qg(z)=\Kc_q^\rho g(z):=\int_{\B^{2n}\backslash\overline\omega}K_{q-2}(z,\cdot)\wedge g,\quad z\in\B^{2n}\cap\omega,\quad g\in L^1(\B^{2n}\backslash\overline\omega;\wedge^{0,q}),\quad 2\le q\le n,
    % \end{equation*}
    we have $\dbar\Kc-\Kc\dbar=\Bc-\Fc$ on the domain where both sides are defined.
    
    More precisely
    $\Bc_1g-\Fc g=\Kc_2\dbar g$ for all $g\in L^1(\B^{2n}\backslash\overline\omega;\wedge^{0,1})$ such that $\dbar g\in L^1$; and for $2\le q\le n$, $\Bc_qg=\dbar\Kc_qg-\Kc_{q+1}\dbar g$ for all $g\in L^1(\B^{2n}\backslash\overline\omega;\wedge^{0,q})$ such that $\dbar g\in L^1$ (still we set $\Kc_{n+1}=0$).
\end{lem}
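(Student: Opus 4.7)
The lemma has two parts: a pointwise kernel identity $\dbar_{z,\zeta}K^\rho(z,\zeta)=B(z,\zeta)-F^\rho(z,\zeta)$ on $(\B^{2n}\cap\omega)\times(\B^{2n}\setminus\overline\omega)$, and the operator identity which follows by integrating this against a test form and using Leibniz plus Stokes.

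For the pointwise identity I would use the Cauchy-Fantappi\`e transgression. Introduce the interpolating $(1,0)$-form
$$\eta_t(z,\zeta):=(1-t)\frac{b(z,\zeta)}{|\zeta-z|^2}+t\frac{\partial\rho(\zeta)}{\partial\rho(\zeta)\cdot(\zeta-z)},\qquad t\in[0,1],$$
which satisfies the normalisation $\eta_t(z,\zeta)\cdot(\zeta-z)\equiv 1$ along the whole segment. With $c_n=(2\pi i)^{-n}$, the $(n,n-1)$ Cauchy-Fantappi\`e form $\Phi_n(\eta_t):=c_n\,\eta_t\wedge(d_{z,\zeta,t}\eta_t)^{n-1}$ is $d_{z,\zeta,t}$-closed on its natural domain and restricts to $B$ at $t=0$ and to $F^\rho$ at $t=1$. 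Integrating the $dt$-coefficient of $\Phi_n(\eta_t)$ over $t\in[0,1]$ identifies $K^\rho$ as the transgression primitive, yielding $d_{z,\zeta}K^\rho=B-F^\rho$; since $K^\rho$ has top $d\zeta$-degree $n$, $\partial_\zeta K^\rho\equiv 0$, and because $F^\rho$ is holomorphic in $z$ the $\partial_z$ piece also drops out, so this collapses to $\dbar_{z,\zeta}K^\rho=B-F^\rho$.

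For the operator identity, the pointwise identity decomposes by bi-degree as $\dbar_z K_{q-1}^\rho+\dbar_\zeta K_q^\rho=B_q-F_q$ for $0\le q\le n-1$, with $F_q=F^\rho$ only when $q=0$ and $K_{-1}^\rho=K_{n-1}^\rho=0$. Fix a $(0,q)$-form $g$ with $g,\dbar g\in L^1(\B^{2n}\setminus\overline\omega)$; multiplying the identity for index $q-1$ by $g$ and integrating over $\B^{2n}\setminus\overline\omega$ gives $\dbar\Kc^\rho_q g$ from the $\dbar_z K_{q-2}^\rho$ term (interchange of $\dbar_z$ with the $\zeta$-integral) and $\Bc_q g-\Fc g$ (the $\Fc$ present only when $q=1$) on the right. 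The remaining $\int\dbar_\zeta K_{q-1}^\rho\wedge g$ is treated via Leibniz
$$\dbar_\zeta(K_{q-1}^\rho\wedge g)=\dbar_\zeta K_{q-1}^\rho\wedge g+(-1)^{|K_{q-1}^\rho|}K_{q-1}^\rho\wedge\dbar g,\qquad |K_{q-1}^\rho|=2n-1,$$
combined with Stokes' theorem on $\B^{2n}\setminus\overline\omega$ (valid since $K_{q-1}^\rho\wedge g$ carries top $d\zeta$-degree, so $d_\zeta=\dbar_\zeta$ on it); this splits the integral into a piece equal to $\pm\Kc^\rho_{q+1}\dbar g$ and a boundary integral on $\partial(\B^{2n}\setminus\overline\omega)=b\B^{2n}\cup(b\omega\cap\B^{2n})$.

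The main obstacle is showing the boundary integral vanishes for every $g$ in the stated $L^1$ class. My plan is a density argument: approximate $g$ by suitable smooth forms $g_\epsilon$ compactly supported in $\B^{2n}\setminus\overline\omega$ with $g_\epsilon\to g$ and $\dbar g_\epsilon\to\dbar g$ in $L^1$ (Friedrichs-type truncation-mollification on the Lipschitz open set), for which the boundary integrals are identically zero, and then pass to the $L^1$ limit. The kernels $B$, $F^\rho$, $K^\rho$ are weakly singular of order $|\zeta-z|^{-(2n-1)}$ only along the diagonal (which sits inside $b\omega$), hence locally integrable in $\zeta$ for fixed $z\in\B^{2n}\cap\omega$, so dominated convergence legitimises the limit in each operator. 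Careful bookkeeping of the three sign conventions, namely the Leibniz sign $(-1)^{2n-1}=-1$, the Stokes orientation of $\partial(\B^{2n}\setminus\overline\omega)$, and the integration convention \eqref{Eqn::Intro::IntConv} for bi-degree forms, reconciles the resulting formula with the stated identities $\Bc_1 g-\Fc g=\Kc_2^\rho\dbar g$ for $q=1$ and $\Bc_q g=\dbar\Kc_q^\rho g-\Kc_{q+1}^\rho\dbar g$ for $2\le q\le n$.
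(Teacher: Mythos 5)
Your proposal and the paper share the first step (the kernel identity $\dbar_{z,\zeta}K^\rho=B-F^\rho$, proved in your case by the Cauchy--Fantappi\`e transgression and in the paper by citing \cite[Lemma~11.1.1]{ChenShawBook}), but the route to the operator identity is genuinely different, and it is there that your argument has a real gap.

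You invoke Stokes' theorem on $\B^{2n}\setminus\overline\omega$, pick up a boundary integral over $b\B^{2n}\cup(b\omega\cap\B^{2n})$, and propose to kill it by approximating $g$ with $g_\eps\in\Co_c^\infty(\B^{2n}\setminus\overline\omega)$ so that $g_\eps\to g$ \emph{and} $\dbar g_\eps\to\dbar g$ in $L^1$. This density claim is not valid for the class of forms in the statement: the graph-closure of $\Co_c^\infty(U)$ inside $\{g\in L^1(U):\dbar g\in L^1(U)\}$ is a proper subspace, for the same reason that $W_0^{1,1}(U)\subsetneq W^{1,1}(U)$ on a bounded Lipschitz domain $U$. (Take any smooth $g$ whose pull-back to the boundary is nonzero; if $g_\eps\to g$ and $\dbar g_\eps\to\dbar g$ in $L^1$ with $g_\eps\in\Co_c^\infty$, then for fixed $z$ the Stokes boundary functional $g\mapsto\int_{\partial}K(z,\cdot)\wedge g$ would have to vanish on $g$, which it does not.) So the ``pass to the $L^1$ limit'' step does not close the boundary term.

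The paper takes a different path that avoids Stokes altogether: it observes that the relevant wedge of a component of $K^\rho$ with the form $g$ is already a top-degree $(n,n)$-form in $\zeta$, so applying $\dbar_\zeta$ to it gives a $(n,n+1)$-form, which vanishes \emph{pointwise} (there is no boundary to integrate over). The Leibniz rule $\dbar_\zeta(K\wedge g)=\dbar_\zeta K\wedge g+K\wedge\dbar g$ then gives the needed integration-by-parts identity directly, without an approximation argument, and the remaining work is the bookkeeping that interchanges $\dbar_z$ with $\int_\zeta$, carried out against the convention \eqref{Eqn::Intro::IntConv}. If you want to salvage your approach, you should replace the density step with this degree observation; the key point is that the passage from $\int\dbar_\zeta K\wedge g$ to $-\int K\wedge\dbar g$ should come from the pointwise vanishing of the top-degree $\dbar_\zeta$, not from Stokes plus approximation.
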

% See also \cite[Proposition~2.1]{GongHolderSPsiCXC2}.
\begin{proof}
    Notice that $F(z,\cdot),K(z,\cdot)$ are bounded functions in $\zeta$, so $\Kc_qg$ and $\Fc g$ are defined in $\Omega$ for $g\in L^1$.

    Since the form $\partial\rho(\zeta)=\sum_{j=1}^n\partial_j\rho(\zeta)d\zeta_j$ is holomorphic in $z$ (in fact constant in $z$) and is nonvanishing for $z\in\B^{2n}\cap\omega$ and $\zeta\in\B^{2n}\backslash\overline\omega$ by  \eqref{Eqn::WeiEst::SuppFun}. Using e.g. \cite[Lemma~11.1.1]{ChenShawBook} we have $\dbar_{z,\zeta}K=B-F$ in the sense of distributions.
    
    Note that $K_{q-2}$ is a $(2n-2)$-form and has degree $(n,n-q)$ in $\zeta$. Therefore, for a $(0,q)$ form $g$ we have $0=\dbar_\zeta(K_{q-2}\wedge g)=(\dbar_\zeta K_{q-2})\wedge g+K_{q-2}\wedge\dbar_\zeta g$, which means
    $$\int K_{q-2}(z,\cdot)\wedge\dbar_\zeta g=-\int(\dbar_\zeta K_{q-2})(z,\cdot)\wedge g.$$
    On the other hand by convention \eqref{Eqn::Intro::IntConv},
    \begin{align*}
        &\dbar_z\int K_{q-2}(z,\cdot)\wedge g=\sum_{j=1}^nd\bar z_j\wedge\Big(\int\big(\tfrac\partial{\partial\bar z_j}K_{q-2}\big)(z,\cdot)\wedge g\Big)=\sum_{j=1}^n(-1)^{2n-2+q}\Big(\int\big(\tfrac\partial{\partial\bar z_j}K_{q-2}\big)(z,\cdot)\wedge g\Big)\wedge d\bar z_j
        \\
        =&\sum_{j=1}^n(-1)^{2n-2+q}\int\Big(\big(\tfrac\partial{\partial\bar z_j}K_{q-2}\big)(z,\cdot)\wedge g\wedge d\bar z_j\Big)=\sum_{j=1}^n\int d\bar z_j\wedge\big(\tfrac\partial{\partial\bar z_j}K_{q-2}\big)(z,\cdot)\wedge  g=\int\dbar_zK_{q-2}(z,\cdot)\wedge g.
    \end{align*}
    
    The formula $\dbar\Kc-\Kc\dbar=\Bc-\Fc$ then follows from $\dbar_{z,\zeta}K=B-F$.
\end{proof}
% \begin{rem}\label{Rmk::WeiEst::HomologyPf}
%     Since $\sigma\in C^{1,1}$, from \eqref{Eqn::WeiEst::DefK} we see that $K$ is only $L^\infty_\loc$ in $\zeta$. The formula $\dbar_{z,\zeta}K=B-F$ can be proved using standard approximation on $\rho$. Alternatively set $\phi(z,\zeta):=\frac{b}{2\pi i|b|^2}$ and $\psi(z,\zeta):=\frac{\dbar\partial\rho(\zeta)}{2\pi i\partial\rho(\zeta)\cdot(\zeta-z)}$. By e.g. \cite[Page~263]{ChenShawBook} we have $B=\phi\wedge(\dbar\phi)^{n-1}$, $F=\psi\wedge(\dbar\psi)^{n-1}$ and $K=\sum_{k=0}^{n-2}\phi\wedge\psi\wedge(\dbar\phi)^{n-2-k}\wedge(\dbar\psi)^k$. Therefore
%     $$\dbar_{z,\zeta} K=\dbar_{z,\zeta}\sum_{k=0}^{n-2}\phi\wedge\psi\wedge(\dbar\phi)^{n-2-k}\wedge(\dbar\psi)^k$$    
% \end{rem}

The purpose of this section is to give the following standard estimates. Here we use $\zeta=(\zeta',\xi_n+i\eta_n)$.
% \begin{note}
%     For $s,t\in\R$ we define weight function 
%     \begin{equation}
%         \vartheta^{s,t}(z):=\begin{cases}
%         \dist(z,b\omega)^s,& \dist(z,b\omega)\le1\\\dist(z,b\omega)^{-t}&\dist(z,b\omega)\ge1
%     \end{cases},\quad z\in\C^n.
%     \end{equation}
% \end{note}

\begin{prop}\label{Prop::WeiEst::WeiEst}
    Suppose $\omega$ satisfies Assumption~\ref{Assumption::StrongDomain}. Let $\delta(z):=\max(1,\dist(z,b\omega))$.
    
    For every $l,m\ge0$, $0<s< l+m-3/2$ (in particular $l+m\ge2$), there is a $C>0$, depending only on $s,l,m,\|\nabla\sigma\|_{C^{0,1}(\B^{2n-1})}$ and the $c_0$ in \eqref{Eqn::WeiEst::SuppFun}, such that, for every $\alpha\in\N_{z,\bar z}^{2n}$ with $|\alpha|\le l$, 
    \begin{align}
        \label{Eqn::WeiEst::Fz}\int_{\B^{2n}\cap\omega}\delta(z)^s|D_z^\alpha D_{\eta_n}^mF^\rho(z,\zeta)|d\Vol(z)&\le C\delta(\zeta)^{s+1-l-m},&\zeta\in\B^{2n}\backslash\overline\omega;
        \\
    \label{Eqn::WeiEst::Fzeta}\int_{\B^{2n}\backslash\overline\omega}\delta(\zeta)^s|D_z^\alpha D_{\eta_n}^mF^\rho(z,\zeta)|d\Vol(\zeta)&\le C\delta(z)^{s+1-l-m},&  z\in\B^{2n}\cap\omega;
    \\
        \label{Eqn::WeiEst::K1z}\int_{\B^{2n}\cap\omega}\delta(z)^s|D_z^\alpha D_{\eta_n}^mK^\rho(z,\zeta)|d\Vol(z)&\le C\delta(\zeta)^{s+\frac32-l-m},&\zeta\in\B^{2n}\backslash\overline\omega;
        \\
        \label{Eqn::WeiEst::K1zeta}\int_{\B^{2n}\backslash\overline\omega}\delta(\zeta)^s|D_z^\alpha D_{\eta_n}^mK^\rho(z,\zeta)|d\Vol(\zeta)&\le C\delta(z)^{s+\frac32-l-m},& z\in\B^{2n}\cap\omega;
        \\
        \label{Eqn::WeiEst::K2z}\int_{\B^{2n}\cap\omega}|\delta(z)^sD_z^\alpha D_{\eta_n}^mK^\rho(z,\zeta)|^{\frac{2n+2}{2n+1}}d\Vol(z)&\le C\delta(\zeta)^{(s+1-l-m)\frac{2n+2}{2n+1}},&\zeta\in\B^{2n}\backslash\overline\omega;
        \\
        \label{Eqn::WeiEst::K2zeta}\int_{\B^{2n}\backslash\overline\omega}|\delta(\zeta)^sD_z^\alpha D_{\eta_n}^mK^\rho(z,\zeta)|^{\frac{2n+2}{2n+1}}d\Vol(\zeta)&\le C\delta(z)^{(s+1-l-m)\frac{2n+2}{2n+1}},&z\in\B^{2n}\cap\omega.
    \end{align}
\end{prop}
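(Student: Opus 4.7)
The plan is to follow the classical weighted-kernel strategy originating in Henkin's and Range-Siu's work (presented in Chapters~III and VII of Range's monograph) and recently adapted to $C^2$ strongly pseudoconvex domains by Gong and by Shi-Yao. The novelty here is essentially bookkeeping: the only derivatives we take are $D_z^\alpha D_{\eta_n}^m$, and this restriction must be used to avoid ever differentiating $\partial\rho$ in a direction in which $\rho$ is only Lipschitz.

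First I would establish pointwise bounds on the derivatives of the two kernels. Since $\rho(\zeta) = \sigma(\zeta',\xi_n)-\eta_n$, the form $\partial\rho(\zeta)$ does not depend on $\eta_n$, so the support function $S(z,\zeta):=\partial\rho(\zeta)\cdot(\zeta-z)$ satisfies $D_{\eta_n}^m S=0$ for $m\ge 2$ while $D_{\eta_n}S$ is bounded. Likewise $D_z^\alpha S$ is bounded and vanishes for $|\alpha|\ge 2$. A routine product-rule computation then yields
\begin{equation*}
|D_z^\alpha D_{\eta_n}^m F^\rho(z,\zeta)| \lesssim |S(z,\zeta)|^{-n-l-m},
\end{equation*}
\begin{equation*}
|D_z^\alpha D_{\eta_n}^m K^\rho(z,\zeta)| \lesssim \sum_{k=1}^{n-1}\ \sum_{a_1+a_2\le l+m}\frac{1}{|z-\zeta|^{2(n-k)+a_1}\,|S(z,\zeta)|^{k+a_2}}.
\end{equation*}
The $C^{1,1}$ regularity of $\rho$ is sufficient because $\partial\rho$ itself is never differentiated; its Lipschitz control only enters as a size bound on $|\partial\rho|$.

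Second, combining \eqref{Eqn::WeiEst::SuppFun} with $|\rho(z)|\approx\delta(z)$, $|\rho(\zeta)|\approx\delta(\zeta)$ and the trivial bound $|S|\ge|\operatorname{Im} S|$, I would get the lower bound
\begin{equation*}
|S(z,\zeta)|\gtrsim \delta(z)+\delta(\zeta)+|\operatorname{Im} S(z,\zeta)|+|z-\zeta|^2,
\end{equation*}
and then introduce adapted real coordinates $(t_1,t_2,u_1,\dots,u_{2n-2})$ on $\omega$ (for fixed $\zeta$), where $t_1\approx\delta(z)$ measures the normal distance to $b\omega$, $t_2\approx\operatorname{Im} S(z,\zeta)$ is the complex-normal (non-real) direction, and $u_1,\dots,u_{2n-2}$ are the real tangential directions with $|u|^2\lesssim|z-\zeta|^2$. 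The Jacobian is bounded above and below by constants depending only on $\|\nabla\sigma\|_{C^{0,1}(\B^{2n-1})}$ and $c_0$. In these coordinates the integrals reduce to one-dimensional polar estimates: for \eqref{Eqn::WeiEst::Fzeta} one is led to
\begin{equation*}
\int_0^1\!\int_0^1\!\int_0^1 \frac{t_1^s\, r^{2n-3}}{(\delta(\zeta)+t_1+|t_2|+r^2)^{n+l+m}}\,dr\,dt_2\,dt_1,
\end{equation*}
and successive integration in $t_2$, $r^{2n-3}dr$ (via $u=r^2$), and $t_1^s dt_1$ produces $\delta(\zeta)^{s+1-l-m}$ under the assumption $s<l+m-1$.

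Third, for \eqref{Eqn::WeiEst::K1zeta} the mixed singularity $|z-\zeta|^{-2(n-k)-a_1}|S|^{-k-a_2}$ translates into a polar integrand with one fewer power of $r^2$ absorbed by the $|z-\zeta|^{-2}$ factor, yielding an extra power $r$ in the tangential direction and hence the $1/2$-gain, producing $\delta(\zeta)^{s+3/2-l-m}$; requiring convergence of every one-dimensional integral uniformly over the double sum in $k$ and $a_1+a_2$ is exactly what forces the sharper condition $s<l+m-3/2$. The estimates \eqref{Eqn::WeiEst::Fz}, \eqref{Eqn::WeiEst::K1z} with the roles of $z$ and $\zeta$ swapped follow by a symmetric argument, since the lower bound on $|S|$ is symmetric in $\delta(z),\delta(\zeta)$. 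Finally the $L^{(2n+2)/(2n+1)}$ estimates \eqref{Eqn::WeiEst::K2z} and \eqref{Eqn::WeiEst::K2zeta} are obtained by raising the pointwise bound to the power $(2n+2)/(2n+1)$ and repeating the same polar analysis. The main obstacle is the combinatorial bookkeeping in the $K$-sum: each split $(k,a_1,a_2)$ contributes a different polar integral with a different apportionment of $|u|^2$ between $|z-\zeta|^{-2}$ and $|S|^{-1}$, and one needs to verify that the $1/2$-gain persists for every term and that the convergence condition compounds exactly to $s<l+m-3/2$.
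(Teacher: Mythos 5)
Your plan is a legitimate alternative to the paper's proof: the paper decomposes $\B^{2n}\cap\omega$ (resp.\ $\B^{2n}\backslash\overline\omega$) into dyadic ellipsoid annuli $E_{2^{-j}}(\zeta)\setminus E_{2^{-j-1}}(\zeta)$ adapted to the level sets of the support function (via Lemma~\ref{Lem::WeiEst::BasisEst} and Lemma~\ref{Lem::WeiEst::IntEst}), and then sums the annular contributions, whereas you change variables once to an adapted frame and iterate explicit one-dimensional integrals in the style of Range's monograph. Both routes rest on exactly the same two ingredients: the support-function lower bound \eqref{Eqn::WeiEst::SuppFun} and the observation that $D_z^\alpha D_{\eta_n}^m$ never differentiates $\partial\rho$ or $\dbar\partial\rho$. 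The paper's annular decomposition avoids ever having to exhibit the adapted coordinate system explicitly or bound its Jacobian, at the cost of setting up the ellipsoid machinery; your route is more hands-on but gives closed-form polar integrals. I think your route works, modulo the issue below.

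There is one genuine inaccuracy in your pointwise bound for $K^\rho$. You wrote
\begin{equation*}
|D_z^\alpha D_{\eta_n}^m K^\rho(z,\zeta)| \lesssim \sum_{k=1}^{n-1}\ \sum_{a_1+a_2\le l+m}\frac{1}{|z-\zeta|^{2(n-k)+a_1}\,|S(z,\zeta)|^{k+a_2}},
\end{equation*}
but the numerator of the $k$-th Leray--Koppelman term contains a single factor $b(z,\zeta)=\sum_j(\bar\zeta_j-\bar z_j)\,d\zeta_j$, which is $O(|z-\zeta|)$, so the undifferentiated term is already $\lesssim |z-\zeta|^{-(2(n-k)-1)}|S|^{-k}$, and the correct bound after $l+m$ derivatives is
\begin{equation*}
|D_z^\alpha D_{\eta_n}^m K^\rho(z,\zeta)| \lesssim \sum_{k=1}^{n-1}\ \sum_{a_1+a_2\le l+m}\frac{1}{|z-\zeta|^{2(n-k)-1+a_1}\,|S(z,\zeta)|^{k+a_2}}.
\end{equation*}
This one degree of improvement is not cosmetic: it is exactly what turns the tangential polar factor into $r^{2k-2}\,dr$ instead of $r^{2k-3}\,dr$ and hence produces the $\tfrac12$-gain you describe in words. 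With your written bound, the worst $k=1$, $a_1=0$, $a_2=l+m$ term gives a polar integrand of order $r^{-1}$, which does not yield the claimed $\delta(\zeta)^{s+3/2-l-m}$ under $s<l+m-3/2$. Your subsequent prose (``one fewer power of $r^2$ absorbed by the $|z-\zeta|^{-2}$ factor, yielding an extra power $r$'') indicates you are in fact implicitly using the sharper bound, so this is a bookkeeping slip rather than a conceptual gap, but the displayed formula needs the $-1$ in the $|z-\zeta|$ exponent. Also, in your displayed model integral you write ``for \eqref{Eqn::WeiEst::Fzeta}'' but the variables are set up for \eqref{Eqn::WeiEst::Fz} (the $z$-integral with $\delta(\zeta)$ fixed); the $\zeta$-integral case is handled symmetrically after flipping the roles as you say (the paper makes this precise via \eqref{Eqn::WeiEst::FlipE}).
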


We follow the same path to \cite[Section~7]{YaoCXFinite} with minor corrections.

Recall the constant $c_0>0$ in \eqref{Assumption::StrongDomain}. For $\zeta\in \B^{2n}$ and $0<r<1$ we define the $r$-ellipsoid associated to $\rho$:
\begin{equation}
    E_r(\zeta)=E^\rho_r(\zeta):=\{\zeta+w:w\in B\big(0,\sqrt{r/c_0}\big):\max_{\theta\in[0,2\pi]}|\rho(\zeta)-\rho(\zeta+e^{i\theta}w)|<r/c_0\}.
\end{equation}

By assumption \eqref{Assumption::StrongDomain}, $|\partial\rho(\zeta)\cdot w|=\max_{\theta\in[0,2\pi]}|\partial\rho(\zeta)\cdot e^{i\theta}w|\ge c_0(\max_{\theta\in[0,2\pi]}|\rho(\zeta)-\rho(\zeta+e^{i\theta} w)|+|w|^2)$ whenever $\zeta\in \B^{2n}\backslash\omega$ and $\zeta+w\in\B^{2n}\cap\omega$. Therefore,
\begin{equation}\label{Eqn::WeiEst::SuppFun2}
    |\partial\rho(\zeta)\cdot(\zeta-z)|\ge r,\quad\text{for}\quad\zeta\in\B^{2n}\backslash\omega,\quad z\in\B^{2n}\cap\omega\backslash E_r(\zeta).
\end{equation}

Notice that if $\zeta+w_1\in E_r(\zeta)\cap\B^{2n}$ and $\zeta+w_1+w_2\in E_r(\zeta+w_1)\cap \B^{2n}$, then $w_1+w_2\in B(0,2\sqrt{r/c_0})=B(0,\sqrt{4r/c_0})$ and $|\rho(\zeta)-\rho(\zeta+e^{i\theta}(w_1+w_2))|\le 2r/c_0$ for all $\theta\in[0,2\pi]$. Therefore $\zeta+w_1+w_2\in E_{4r}(\zeta)$, in particular
\begin{equation}\label{Eqn::WeiEst::FlipE}
    \text{for every }z,\zeta\in\B^{2n},\qquad z\in E_r(\zeta)\quad\Longrightarrow\quad\zeta\in E_{4r}(z).
\end{equation}

\begin{lem}\label{Lem::WeiEst::BasisEst}
    Keeping the defining function $\rho(z)=\sigma(z',x_n)-y_n$ from above. For every $l,m\ge0$ there is a constant $C=C(l,m,c_0)>0$ such that, for every $1\le k\le n$,  $0<r<1$, $\zeta\in\B^{2n}\backslash\omega$ and $z\in\B^{2n}\cap\omega\backslash E_r(\zeta)$,
    \begin{equation}\label{Eqn::WeiEst::BasisEst}
        \sum_{|\alpha|\le l}\Big|D_z^\alpha D_{\eta_n}^m\frac{\partial\rho(\zeta)\wedge(\dbar\partial\rho(\zeta))^{k-1}}{(\partial\rho(\zeta)\cdot(\zeta-z))^k}\Big|\le Cr^{-l-m-k}.
    \end{equation}
\end{lem}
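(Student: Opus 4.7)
The strategy is to observe that both $D_z^\alpha$ and $D_{\eta_n}^m$ act trivially on the numerator $\partial\rho(\zeta)\wedge(\dbar\partial\rho(\zeta))^{k-1}$, so the entire calculation reduces to differentiating the scalar $(\partial\rho(\zeta)\cdot(\zeta-z))^{-k}$. The numerator kills $D_z^\alpha$ because it depends only on $\zeta$. For $D_{\eta_n}^m$, the key structural point is that $\rho(\zeta)=\sigma(\zeta',\xi_n)-\eta_n$, so every coefficient of $\partial\rho$ is a first-order partial of $\sigma(\zeta',\xi_n)$ (plus the constant $i/2$ in the $d\zeta_n$ slot) and every coefficient of $\dbar\partial\rho$ is a distributional second-order partial of $\sigma(\zeta',\xi_n)$. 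None of these depend on $\eta_n$. The $C^{1,1}$ regularity of $\sigma$ on $\B^{2n-1}$ then gives an $L^\infty$ bound on the numerator depending only on $k$ and $\|\nabla\sigma\|_{C^{0,1}(\B^{2n-1})}$.

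Next I would analyze $g(z,\zeta):=\partial\rho(\zeta)\cdot(\zeta-z)=\sum_j\partial_j\rho(\zeta)(\zeta_j-z_j)$ directly. Since $\partial\rho(\zeta)$ is $z$-independent and $\zeta-z$ is affine in $z$, any first-order real derivative $D_z^\beta g$ equals $\pm\partial_j\rho(\zeta)$ or $\pm i\partial_j\rho(\zeta)$ and is bounded, while $D_z^\beta g\equiv 0$ for $|\beta|\ge 2$. The only $\eta_n$-dependence in $g$ sits in the single factor $\zeta_n-z_n$: hence $D_{\eta_n}g=i\partial_n\rho(\zeta)$ (bounded) and $D_{\eta_n}^jg\equiv 0$ for $j\ge 2$, and in fact every mixed derivative $D_z^\beta D_{\eta_n}^j g$ with $|\beta|+j\ge 2$ vanishes.

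The final step is to apply the Fa\`a di Bruno formula to $D_z^\alpha D_{\eta_n}^m g^{-k}$. Because every total derivative of $g$ of order $\ge 2$ is zero, the only surviving combinatorial terms correspond to partitioning the $|\alpha|+m$ derivative slots into singletons; each resulting term is a bounded coefficient times $g^{-k-|\alpha|-m}$. Invoking \eqref{Eqn::WeiEst::SuppFun2}, which yields $|g(z,\zeta)|\ge r$ whenever $z\in\B^{2n}\cap\omega\setminus E_r(\zeta)$, I obtain $|D_z^\alpha D_{\eta_n}^m g^{-k}|\le C r^{-k-|\alpha|-m}$. Summing over $|\alpha|\le l$ and multiplying by the bounded numerator produces \eqref{Eqn::WeiEst::BasisEst}. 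The main thing to watch out for is the verification of the $\eta_n$-independence of the numerator; once that structural observation is secured, the rest is routine product-and-chain-rule bookkeeping, and $C$ depends only on $k,l,m,c_0$ and $\|\nabla\sigma\|_{C^{0,1}(\B^{2n-1})}$.
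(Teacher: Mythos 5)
Your proof is correct and follows essentially the same route as the paper: you note that the numerator is independent of $z$ and of $\eta_n$ (so all nontrivial derivatives vanish and it is bounded by $\|\nabla\sigma\|_{C^{0,1}}$-data), that $g(z,\zeta)=\partial\rho(\zeta)\cdot(\zeta-z)$ is affine in $z$ and in $\eta_n$ with all derivatives of total order $\ge 2$ vanishing, and then apply the chain rule together with the lower bound $|g|\ge r$ from \eqref{Eqn::WeiEst::SuppFun2}. Your version is somewhat more carefully spelled out than the paper's (which compresses the chain-rule step into the shorthand $S^{-|\alpha|-k}\cdot(\nabla\rho)^{\otimes|\alpha|}$), but the idea is identical.
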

\begin{proof}
    Clearly $|D_z^\alpha D_{\eta_n}^m(\partial\rho(\zeta)\wedge(\dbar\partial\rho(\zeta))^{k-1})|\le\|\nabla^2\sigma\|_{L^\infty}^2\|\nabla\sigma\|_{C^{0,1}}\lesssim1$ as it vanishes when $\alpha\neq0$ or $m\ge0$.

    Since $\rho(\zeta)$ is linear in $\eta_n$, by chain rule we see that for $r>0$, $\zeta\in\B^{2n}\backslash\omega$ and $z\in\B^{2n}\cap\omega\backslash E_r(\zeta)$,
    $$|(D_z^\alpha D_{\eta_n}^m (\partial\rho(\zeta)\cdot(\zeta-z))^{-k})|\lesssim_l|D_{\eta_n}^m (S^{-|\alpha|-k}\cdot(\nabla\rho)^{\otimes |\alpha|})|\lesssim r^{-l-m-k},\quad\text{for }|\alpha|\le l.$$ 
    Moreover the implied constant depends only on $m,l,c_0$. This finishes the proof.
\end{proof}

Integrating on $E_r(\zeta)$ we have,

\begin{lem}\label{Lem::WeiEst::IntEst}
    For every $m,l\ge0$ there is a constant $C=C(l,m,c_0,\|\nabla\sigma\|_{L^\infty(\B^{2n-1})})>0$ such that for every $z\in \omega$ and $\zeta\in\B^{2n}\backslash\omega$, and for every $0<r<1$,
    \begin{align}
    \label{Eqn::WeiEst::IntEst::Fz}
    \sum_{|\alpha|\le l}\int_{\B^{2n}\cap\omega\cap E_r(\zeta)\backslash E_{\frac r2}(\zeta)}|D_z^\alpha D_{\eta_n}^mF^\rho(w,\zeta)|d\Vol(w)&\le Cr^{1-l-m};
    \\
    \label{Eqn::WeiEst::IntEst::Fzeta}
    \sum_{|\alpha|\le l}\int_{\B^{2n}\cap E_r(z)\backslash (E_{\frac r2}(z)\cup\omega)}|D_z^\alpha D_{\eta_n}^mF^\rho(z,w)|d\Vol(w)&\le Cr^{1-l-m};
    \\
    \label{Eqn::WeiEst::IntEst::K1z}
        \sum_{|\alpha|\le l}\int_{\B^{2n}\cap\omega\cap E_r(\zeta)\backslash E_{\frac r2}(\zeta)}|D_z^\alpha D_{\eta_n}^mK^\rho(w,\zeta)|d\Vol(w)&\le Cr^{\frac32-l-m};
        \\\label{Eqn::WeiEst::IntEst::K1zeta}
        \sum_{|\alpha|\le l}\int_{\B^{2n}\cap E_r(z)\backslash (E_{\frac r2}(z)\cup\omega)}|D_z^\alpha D_{\eta_n}^mK^\rho(z,w)|d\Vol(w)&\le Cr^{\frac32-l-m};
        \\\label{Eqn::WeiEst::IntEst::K2z}
        \sum_{|\alpha|\le l}\int_{\B^{2n}\cap\omega\cap E_r(\zeta)\backslash E_{\frac r2}(\zeta)}|D_z^\alpha D_{\eta_n}^mK^\rho(w,\zeta)|^{\frac{2n+2}{2n+1}}d\Vol(w)&\le Cr^{(1-l-m)\frac{2n+2}{2n+1}};
        \\\label{Eqn::WeiEst::IntEst::K2zeta}
        \sum_{|\alpha|\le l}\int_{\B^{2n}\cap E_r(z)\backslash (E_{\frac r2}(z)\cup\omega)}|D_z^\alpha D_{\eta_n}^mK^\rho(z,w)|^{\frac{2n+2}{2n+1}}d\Vol(w)&\le Cr^{(1-l-m)\frac{2n+2}{2n+1}}.
    \end{align}
\end{lem}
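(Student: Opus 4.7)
The plan is to combine the pointwise bounds of Lemma~\ref{Lem::WeiEst::BasisEst} with a volume/geometry estimate on the annular region $E_r(\zeta)\backslash E_{\frac r2}(\zeta)$ (and analogously for $E_r(z)\backslash E_{\frac r2}(z)$, using \eqref{Eqn::WeiEst::FlipE} to swap roles). Fix $\zeta \in \B^{2n}\backslash\overline\omega$ and choose a complex orthonormal frame on $\C^n$ adapted to $\overline{\nabla\rho(\zeta)}$. Writing a generic point of $E_r(\zeta)$ as $\zeta + w$, decompose the offset $w = w_\nu + w_\tau$ into complex normal and complex tangential components. The condition $\max_\theta|\rho(\zeta)-\rho(\zeta+e^{i\theta}w)|<r/c_0$ forces $|\partial\rho(\zeta)\cdot w|\lesssim r$, hence $|w_\nu|\lesssim r$, while $|w|\leq\sqrt{r/c_0}$ gives $|w_\tau|\lesssim\sqrt{r}$, so $\Vol(E_r(\zeta))\lesssim r^2\cdot(\sqrt{r})^{2(n-1)}=r^{n+1}$. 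Crucially, on the annular region $E_r\backslash E_{\frac r2}$ we also have $|\partial\rho(\zeta)\cdot w|\gtrsim r$ by \eqref{Eqn::WeiEst::SuppFun2}, so $|w_\nu|\sim r$.

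For \eqref{Eqn::WeiEst::IntEst::Fz} and \eqref{Eqn::WeiEst::IntEst::Fzeta}: the form $F^\rho$ is precisely the $k=n$ case of the quotient in Lemma~\ref{Lem::WeiEst::BasisEst}, so $|D_z^\alpha D_{\eta_n}^m F^\rho(w,\zeta)|\lesssim r^{-l-m-n}$ pointwise on the complement of $E_{\frac r2}(\zeta)$. Multiplying by the volume bound $r^{n+1}$ yields the required $r^{1-l-m}$. The symmetric version follows by swapping $z\leftrightarrow\zeta$ via \eqref{Eqn::WeiEst::FlipE}.

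For \eqref{Eqn::WeiEst::IntEst::K1z} and \eqref{Eqn::WeiEst::IntEst::K1zeta}: decompose $K^\rho=\sum_{k=1}^{n-1}K^\rho_{(k)}$ where $K^\rho_{(k)}$ is the product of a Bochner--Martinelli-type factor $b\wedge(\dbar b)^{n-1-k}/|z-\zeta|^{2(n-k)}$ and a support-function factor $\partial\rho\wedge(\dbar\partial\rho)^{k-1}/(\partial\rho\cdot(\zeta-z))^k$. By the Leibniz rule, Lemma~\ref{Lem::WeiEst::BasisEst}, and $|b|\lesssim|w|$, each Leibniz term with $j$ derivatives on the support-function factor is bounded pointwise by $r^{-k-j}|w|^{-(2n-2k-1)-(l+m-j)}$. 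To integrate, I would split the annular region according to whether $|w_\tau|\lesssim r$ or $|w_\tau|\gtrsim r$. On the former, $|w|\sim r$ (since $|w_\nu|\sim r$) gives $|w|^{-\alpha}\lesssim r^{-\alpha}$; on the latter, the tangential integral $\int_{|w_\tau|\leq\sqrt{r}}|w_\tau|^{-\alpha}dw_\tau\lesssim r^{(2n-2-\alpha)/2}$ is applied whenever $\alpha<2n-2$. A direct computation in each regime produces $r^{3/2-l-m}$ uniformly in $k$ and $j$. For \eqref{Eqn::WeiEst::IntEst::K2z} and \eqref{Eqn::WeiEst::IntEst::K2zeta} I would repeat the above with the integrand raised to the power $p=(2n+2)/(2n+1)$: the tangential integrability condition $(2n-2k-1)p<2n-2$ reduces to $n<k(n+1)$, which holds for all $1\leq k\leq n-1$, and the bookkeeping yields $(1-l-m)p$ in the exponent.

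The main obstacle lies in handling the Leibniz terms uniformly in $l+m$: when many derivatives land on $|z-\zeta|^{-2(n-k)}$, the tangential singularity $|w_\tau|^{-\alpha}$ with $\alpha\geq 2n-2$ fails to be integrable near $w_\tau=0$. This is resolved by exploiting $|w_\nu|\sim r$ on the annular region, which forces $|w|\gtrsim r$ in the near-axial subregion $\{|w_\tau|\lesssim r\}$ and allows the singular tangential factor to be replaced by $r^{-\alpha}$ before integrating. Verifying that both the near-axial and far-tangential subregions contribute exactly the target power of $r$ (respectively $3/2-l-m$ and $(1-l-m)p$) is the technical heart of the proof.
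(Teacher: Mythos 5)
Your proposal is correct and follows essentially the same strategy as the paper: the pointwise bounds of Lemma~\ref{Lem::WeiEst::BasisEst}, the ellipsoid geometry for the volume count, the symmetry \eqref{Eqn::WeiEst::FlipE} to reduce the $z$-integrated cases to the $\zeta$-integrated ones, and the observation that on the annulus $|w|\gtrsim r$ so excess derivatives landing on the Bochner--Martinelli factor can be absorbed into $r$-powers. The $F^\rho$ estimate is handled identically. For the $K^\rho$ estimates, the paper is somewhat terser: it simply writes $\sum_{|\alpha|\le l}|D_z^\alpha D_{\eta_n}^m K^\rho(w,\zeta)|\lesssim\sum_k r^{-l-m-k}|w-\zeta|^{-(2n-2k-1)}$ on the complement of $E_{r/2}(\zeta)$ (implicitly using the Leibniz rule and $|w-\zeta|\gtrsim r$ as you do), then integrates in adapted coordinates in a single pass with $t=|u_n|$ and $s=|u'|$. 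Your explicit Leibniz accounting and the split into near-axial and far-tangential subregions make the absorption step more transparent, but they are not needed: once $|w|\gtrsim r$ is known uniformly on the complement of $E_{r/2}(\zeta)$, one can replace every excess factor $|w|^{-1}$ by $r^{-1}$ everywhere and then integrate $|w|^{-(2n-2k-1)}$ directly, which is what the paper does.

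Two small inaccuracies, neither fatal: (1) when you write that the "singular tangential factor" $|w_\tau|^{-\alpha}$ gets replaced by $r^{-\alpha}$, the object being bounded is really $|w|^{-\alpha}$ (since $|z-\zeta|^{-2(n-k)}=|w|^{-2(n-k)}$, not $|w_\tau|^{-2(n-k)}$); the replacement is justified by $|w|\gtrsim r$, which holds on the whole annulus, not only the near-axial part. (2) For \eqref{Eqn::WeiEst::IntEst::K2z}, the integrability condition $(2n-2k-1)\frac{2n+2}{2n+1}<2n-2$ simplifies to $n<2k(n+1)$, not $n<k(n+1)$; your stronger stated inequality still holds for $1\le k\le n-1$, so the conclusion is unaffected, but the algebra is off by a factor of two.
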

\begin{proof}

    We only estimate \eqref{Eqn::WeiEst::IntEst::Fz}, \eqref{Eqn::WeiEst::IntEst::K1z} and \eqref{Eqn::WeiEst::K2z}, since by \eqref{Eqn::WeiEst::FlipE} the estimate for \eqref{Eqn::WeiEst::IntEst::Fzeta}, \eqref{Eqn::WeiEst::IntEst::K1zeta} and \eqref{Eqn::WeiEst::K2zeta} follow the same argument.

    By Lemma~\ref{Lem::WeiEst::BasisEst} we see that for $\zeta\in \B^{2n}\backslash\omega$ and $z\in\B^{2n}\cap \omega\backslash E_{r/2}(\zeta)$,
    \begin{equation*}
        \sum_{|\alpha|\le l}|D_z^\alpha D^m_{\eta_n}F^\rho(z,\zeta)|\lesssim r^{-l-m-n};\qquad\sum_{|\alpha|\le l}|D_z^\alpha D^m_{\eta_n}K^\rho(z,\zeta)|\lesssim\sum_{k=1}^{n-1}\frac{r^{-l-m-k}}{|z-\zeta|^{2(n-k)-1}}.
    \end{equation*}

    For fix $r$ we  take an unitary complex coordinate system $u=(u_1,\dots,u_n)=(u',u_n)$ center at $\zeta$ such that 
    \begin{equation}\label{Eqn::WeiEst::IntEst::CoordTmp}
        \{u:|u_n|<(1+\|\nabla\sigma\|_{L^\infty})^{-1}r,|u'|<\tfrac12\sqrt{r/c_0}\}\subseteq E_r(\zeta)\subseteq \{u:|u_n|<2r,|u'|<\sqrt{r/c_0}\}.
    \end{equation}

    Therefore we get \eqref{Eqn::WeiEst::IntEst::Fz} by the following:
    \begin{align*}
        &\sum_{|\alpha|\le l}\int_{\B^{2n}\cap\omega\cap E_r(\zeta)\backslash E_{\frac r2}(\zeta)}|D_z^\alpha D_{\eta_n}^mF^\rho(w,\zeta)|d\Vol(w)\lesssim|E_r(\zeta)|r^{-l-m-n}\approx r^{-l-m+1}.
    \end{align*}
    We get \eqref{Eqn::WeiEst::IntEst::K1z} by the following:
    \begin{align*}
        &\sum_{|\alpha|\le l}\int_{\B^{2n}\cap\omega\cap E_r(\zeta)\backslash E_{\frac r2}(\zeta)}|D_z^\alpha D_{\eta_n}^mK^\rho(w,\zeta)|d\Vol(w)\lesssim\sum_{k=1}^{n-1}\int_{E_r(\zeta)}\frac{r^{-l-m-k}}{|w-\zeta|^{2(n-k)-1}}d\Vol(w)
        \\
        \lesssim&\sum_{k=1}^{n-1}\int_{|u_n|<r,|u'|<\sqrt r}\frac{r^{-l-m-k}}{(|u'|+|u_n|)^{2(n-k)-1}}d\Vol(u)\lesssim\sum_{k=1}^{n-1}\int_0^r t\Big(\int_0^{\sqrt r}\frac{r^{-l-m-k}s^{2n-3}ds}{(t+s)^{2n-2k-1}}\Big)dt
        \\
        \lesssim&\sum_{k=1}^{n-1}r^{-l-m-k}\int_0^r tdt\int_0^{\sqrt r}s^{2k-2}ds\approx\sum_{k=1}^{n-1}r^{-l-m-k}r^2r^{k-\frac12}\approx r^{\frac32-m-l}.
    \end{align*}
    We get \eqref{Eqn::WeiEst::IntEst::K2z} by the similar calculation to that for \eqref{Eqn::WeiEst::IntEst::K1z}:
    \begin{align*}
        &\sum_{|\alpha|\le l}\int_{\B^{2n}\cap\omega\cap E_r(\zeta)\backslash E_{\frac r2}(\zeta)}|D_z^\alpha D_{\eta_n}^mK^\rho(w,\zeta)|^{\frac{2n+2}{2n+1}}d\Vol(w)\lesssim\sum_{k=1}^{n-1}\int_{E_r(\zeta)}\frac{r^{-(l+m+k)\frac{2n+2}{2n+1}}}{|w-\zeta|^{(2n-2k-1)\frac{2n+2}{2n+1}}}d\Vol(w)
        \\
        \lesssim&\sum_{k=1}^{n-1}\int_{|u_n|<r,|u'|<\sqrt r}\frac{r^{-(l+m+k)\frac{2n+2}{2n+1}}}{(|u'|+|u_n|)^{(2n-k-1)\frac{2n+2}{2n+1}}}d\Vol(u)\lesssim\sum_{k=1}^{n-1}\int_0^r t\Big(\int_0^{\sqrt r}\frac{r^{-(l+m+k)\frac{2n+2}{2n+1}}s^{2n-3}ds}{(t+s)^{(2n-2k-1)\frac{2n+2}{2n+1}}}\Big)dt
        \\
        \lesssim&\sum_{k=1}^{n-1}\int_0^r tdt\int_0^{\sqrt r}\frac{s^{2n-3-(2n-2k-1)\frac{2n+2}{2n+1}}ds}{r^{(l+m+k)\frac{2n+2}{2n+1}}}\lesssim\sum_{k=1}^{n-1}\frac{r^2r^{n-1-(n-k-\frac12)\frac{2n+2}{2n+1}}}{r^{(l+m+k)\frac{2n+2}{2n+1}}}\approx r^{(1-l-m)\frac{2n+2}{2n+1}}.
    \end{align*}
    
    Note that all the implied constants in the estimate above depend only on $m,l,c_0$ and $\|\nabla\sigma\|_{C^{0,1}(B(0,1))}$, completing the proof.
\end{proof}

\begin{proof}[Proof of Proposition~\ref{Prop::WeiEst::WeiEst}]
    We only prove \eqref{Eqn::WeiEst::Fz}, \eqref{Eqn::WeiEst::K1z} and \eqref{Eqn::WeiEst::K2z} as the estimates for \eqref{Eqn::WeiEst::Fzeta}, \eqref{Eqn::WeiEst::K1zeta} and \eqref{Eqn::WeiEst::K2zeta}  follow the same argument except we replace \eqref{Eqn::WeiEst::IntEst::Fz}, \eqref{Eqn::WeiEst::IntEst::K1z} and \eqref{Eqn::WeiEst::IntEst::K2z} by \eqref{Eqn::WeiEst::IntEst::Fzeta}, \eqref{Eqn::WeiEst::IntEst::K1zeta} and \eqref{Eqn::WeiEst::IntEst::K2zeta} respectively.

    Fix a $\zeta\in\B^{2n}\backslash\overline\omega$ let $J_0\in\Z$ be the smallest number such that $E_{2^{-J_0}}(\zeta)\subset\omega^c$. Clearly $2^{-J_0}\approx\delta(\zeta)$ and the implied constant depends only on $c_0$ and $\|\nabla\sigma\|_{L^\infty}$. Therefore applying \eqref{Eqn::WeiEst::IntEst::Fz} we have \eqref{Eqn::WeiEst::Fz}:
    \begin{align*}
        &\sum_{|\alpha|\le l}\int_{\B^{2n}\cap\omega}\delta(z)^s|D^\alpha_zD^m_{\eta_n}F^\rho(z,\zeta)|d\Vol(z)=\sum_{|\alpha|\le l}\sum_{j=0}^{J_0}\int_{\B^{2n}\cap\omega\cap E_{2^{-j}}(\zeta)\backslash E_{2^{-j-1}}(\zeta)}\hspace{-0.5in}\delta(z)^s|D^\alpha_zD^m_{\eta_n}F^\rho(z,\zeta)|d\Vol(z)
        \\
        \lesssim&\sum_{j=0}^{J_0}2^{j(l+m-1-s)}\approx 2^{J_0(l+m-1-s)}\approx\delta(\zeta)^{s+1-l-m}.
    \end{align*}
    
    Applying \eqref{Eqn::WeiEst::IntEst::K1z} and \eqref{Eqn::WeiEst::IntEst::K2z} similarly we have
    \begin{align*}
        &\sum_{|\alpha|\le l}\int_{\B^{2n}\cap\omega}\delta(z)^s|D^\alpha_zD^m_{\eta_n}K^\rho(z,\zeta)|d\Vol(z)\lesssim\sum_{j=0}^{J_0}2^{j(l+m-\frac32-s)}\approx 2^{J_0(l+m-\frac32-s)}\approx\delta(\zeta)^{s+\frac32-l-m};
        \\
        &\sum_{|\alpha|\le l}\int_{\B^{2n}\cap\omega}|\delta(z)^sD^\alpha_zD^m_{\eta_n}K^\rho(z,\zeta)|^{\frac{2n+2}{2n+1}}d\Vol(z)\lesssim\sum_{j=0}^{J_0}2^{j(l+m-1-s)\frac{2n+2}{2n+1}}\approx\delta(\zeta)^{(s+1-l-m)\frac{2n+2}{2n+1}}.
    \end{align*}
    This proves \eqref{Eqn::WeiEst::K1z} and \eqref{Eqn::WeiEst::K2z}.
\end{proof}

Finally we convert the weighted estimate into boundedness of integral operators. Recall the weighted Sobolev spaces in Definition~\ref{Defn::Space::WeiSob}.

\begin{cor}\label{Cor::WeiEst::WeiOp}
    For  $m\ge0$ we define $\Fc^{m,\rho}$ and $\Kc^{m,\rho}_q$ for $2\le q\le n$ by
    \begin{align}\label{Eqn::WeiEst::OpF}
        \Fc^{m,\rho}g(z):=&\int_{\B^{2n}\backslash\overline\omega} (D^m_{\eta_n}F^\rho(z,\cdot))\wedge g,&z\in\B^{2n}\cap\omega,\qquad g\in L^1(\B^{2n}\backslash\overline\omega;\wedge^{0,1});
        \\\label{Eqn::WeiEst::OpK}
        \Kc^{m,\rho}_qg(z):=&\int_{\B^{2n}\backslash\overline\omega}(D^m_{\eta_n}K_{q-2}^\rho(z,\cdot))\wedge g,&z\in\B^{2n}\cap\omega,\qquad  g\in L^1(\B^{2n}\backslash\overline\omega;\wedge^{0,q}).
    \end{align}

    Then for every  $l\ge0$ and $1<s<l+m-\frac12$ (in particular $l+m\ge2$),
    \begin{align}\label{Eqn::WeiEst::SobEstF}
        \Fc^{m,\rho}:&L^p(\B^{2n}\backslash\overline\omega,\delta^{1-s};\wedge^{0,1})\to W^{l,p}(\B^{2n}\cap\omega,\delta^{m+l-s}),& 1\le p\le\infty;
    \\
    \label{Eqn::WeiEst::SobEstK1}
        \Kc^{m,\rho}_q:&L^p(\B^{2n}\backslash\overline\omega,\delta^{1-s};\wedge^{0,q})\to W^{l,p}(\B^{2n}\cap\omega,\delta^{m+l-\frac12-s};\wedge^{0,q-2}),&1\le p\le \infty;
        \\\label{Eqn::WeiEst::SobEstK2}
        \Kc^{m,\rho}_q:&L^p(\B^{2n}\backslash\overline\omega,\delta^{1-s};\wedge^{0,q})\to W^{l,\frac{(2n+2)p}{2n+2-p}}(\B^{2n}\cap\omega,\delta^{m+l-s};\wedge^{0,q-2}),&1\le p\le 2n+2.
    \end{align}

    In particular, using the Triebel-Lizorkin spaces from Definition~\ref{Defn::Space::TLSpace}, for every $s>1$, $\eps>0$ and $2\le q\le n$,
    \begin{align}\label{Eqn::WeiEst::SobBddF}
        \Fc^{m,\rho}:&\widetilde\Fs_{p\infty}^{s-1}(\overline\B^{2n}\backslash\omega;\wedge^{0,1})\to\Fs_{p\eps}^{s-m}(\B^{2n}\cap\omega)&1\le p\le \infty;
        \\\label{Eqn::WeiEst::SobBddK1}
        \Kc^{m,\rho}_q:&\widetilde\Fs_{p\infty}^{s-1}(\overline\B^{2n}\backslash\omega;\wedge^{0,1})\to\Fs_{p\eps}^{s+1/2-m}(\B^{2n}\cap\omega)&1\le p\le \infty;
        \\\label{Eqn::WeiEst::SobBddK2}
        \Kc^{m,\rho}_q:&\widetilde\Fs_{p\infty}^{s-1}(\overline\B^{2n}\backslash\omega;\wedge^{0,1})\to\Fs_{\frac{(2n+2)p}{2n+2-p},\eps}^{s-m}(\B^{2n}\cap\omega)&1\le p\le 2n+2;
    \end{align}
\end{cor}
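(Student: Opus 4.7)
The plan has two stages: first, establish the weighted Sobolev bounds \eqref{Eqn::WeiEst::SobEstF}--\eqref{Eqn::WeiEst::SobEstK2}; second, convert them to the Triebel-Lizorkin bounds \eqref{Eqn::WeiEst::SobBddF}--\eqref{Eqn::WeiEst::SobBddK2} by chaining with embeddings from Appendix~\ref{Section::Space}.

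\textbf{Stage 1.} For each multi-index $\alpha$ with $|\alpha|\le l$, differentiate under the integral sign to get
\begin{equation*}
D_z^\alpha\Fc^{m,\rho}g(z)=\int (D_z^\alpha D_{\eta_n}^m F^\rho)(z,\zeta)\wedge g(\zeta)\,d\Vol(\zeta),
\end{equation*}
and analogously for $\Kc^{m,\rho}_q$. This reduces each weighted Sobolev estimate to a weighted $L^p\to L^p$ (or $L^p\to L^q$) bound for a kernel operator, to which I apply a weighted Schur test. The estimates \eqref{Eqn::WeiEst::Fz} and \eqref{Eqn::WeiEst::Fzeta} are precisely the row and column sum bounds needed to close the test for the $F$-kernel (with a Schur test weight of the form $\delta^\gamma$), and \eqref{Eqn::WeiEst::K1z}, \eqref{Eqn::WeiEst::K1zeta} play the same role for the $K$-kernel. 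The admissible range of $\gamma$ is dictated by the constraint $0<\tilde s<l+m-3/2$ from Proposition~\ref{Prop::WeiEst::WeiEst}, and is non-empty precisely when $1<s<l+m-\frac12$. For \eqref{Eqn::WeiEst::SobEstK2} I instead use the improved-integrability bounds \eqref{Eqn::WeiEst::K2z}, \eqref{Eqn::WeiEst::K2zeta}, which say that the kernel lies in a weighted $L^{(2n+2)/(2n+1)}$ space in each variable; Minkowski's integral inequality (equivalently Young's inequality combined with the anisotropic ellipsoid structure of Lemma~\ref{Lem::WeiEst::IntEst}) then converts this into the $L^p\to L^{(2n+2)p/(2n+2-p)}$ gain, the exponent being the Sobolev conjugate in the ``effective dimension'' $2n+2$ naturally attached to the non-isotropic balls $E_r$.

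\textbf{Stage 2.} To obtain the Triebel-Lizorkin statements, I compose Stage 1 with two embeddings from the appendix. On the source side, a Hardy-type inequality provides
\begin{equation*}
\widetilde\Fs_{p\infty}^{s-1}(\overline\B^{2n}\backslash\omega;\wedge^{0,\bullet})\hookrightarrow L^p(\B^{2n}\backslash\overline\omega,\delta^{1-s};\wedge^{0,\bullet})\qquad (s>1),
\end{equation*}
using that elements of the tilde-space vanish at $b\omega$ to order $s-1$. On the target side, a weighted-Sobolev-to-Triebel-Lizorkin embedding
\begin{equation*}
W^{l,p}(\B^{2n}\cap\omega,\delta^{m+l-s})\hookrightarrow \Fs_{p\eps}^{s-m}(\B^{2n}\cap\omega)
\end{equation*}
holds for any $\eps>0$ once $l$ is chosen large enough compared to $s-m$ (this is where the arbitrary fine index $\eps>0$ enters). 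For \eqref{Eqn::WeiEst::SobBddK2} one uses the same target embedding at the higher integrability exponent $(2n+2)p/(2n+2-p)$. Chaining source embedding, Stage 1, and target embedding produces each of \eqref{Eqn::WeiEst::SobBddF}, \eqref{Eqn::WeiEst::SobBddK1}, and \eqref{Eqn::WeiEst::SobBddK2}.

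\textbf{Main obstacle.} The delicate part is Stage 1: the asymmetry between the source weight $\delta^{1-s}$ (which blows up at $b\omega$ because $s>1$) and the target weight $\delta^{m+l-s}$ (which vanishes there) means that naive Schur with test function $\phi\equiv 1$ fails for $p>1$. One must choose $\gamma$ in $\phi=\delta^\gamma$ so that both the direct and dual Schur inequalities close simultaneously for the entire range $p\in[1,\infty]$, and the worst case (forcing the bound $s<l+m-\frac12$) comes from $p=1$ in the row condition. A secondary technical care is required in Stage 2 for the target embedding when $s-m<0$, where one must invoke the duality-based description of $\Fs_{p\eps}^{s-m}$ rather than any classical Sobolev interpretation in order to justify the inclusion from $W^{l,p}$ with the weight $\delta^{m+l-s}$.
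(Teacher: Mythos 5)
Your two-stage skeleton matches the paper's proof exactly: differentiate under the integral, apply Schur's test (Lemma~\ref{Lem::Space::Schur}) using the kernel estimates of Proposition~\ref{Prop::WeiEst::WeiEst}, then compose with the Hardy embeddings \eqref{Eqn::Space::HLTilde} and \eqref{Eqn::Space::HLSob} of Proposition~\ref{Prop::Space::HLLem} to pass from the weighted $L^p\to W^{l,p}$ bounds to the Triebel-Lizorkin ones. But the \emph{Main obstacle} paragraph misdescribes how the Schur step closes and where the constraint $s<l+m-\tfrac12$ comes from.

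The paper does not run the classical weighted Schur test with a test function $\phi=\delta^\gamma$. Instead it absorbs the weights into the kernel first, setting $G(\zeta,z)=\delta(\zeta)^{s-1}\,|D^\alpha_z D^m_{\eta_n}K^\rho(z,\zeta)|\,\delta(z)^{m+l-\frac12-s}$ for \eqref{Eqn::WeiEst::SobEstK1} (analogously for $F^\rho$ and for $\gamma=\tfrac{2n+2}{2n+1}$), and then applies Lemma~\ref{Lem::Space::Schur}, in which $\gamma$ is the integrability exponent of the kernel rows and columns rather than a weight power, to $G$ viewed as an operator between unweighted $L^p$ spaces. Once the weights sit inside $G$, the row/column $L^1$ bounds (from \eqref{Eqn::WeiEst::K1z}--\eqref{Eqn::WeiEst::K1zeta}) yield $L^p\to L^p$ for all $p\in[1,\infty]$ simultaneously, and taking $\gamma=\tfrac{2n+2}{2n+1}$ with \eqref{Eqn::WeiEst::K2z}--\eqref{Eqn::WeiEst::K2zeta} yields $L^p\to L^{(2n+2)p/(2n+2-p)}$ directly, with no separate Minkowski or anisotropic-ellipsoid argument needed. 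In this formulation the two Schur conditions are $p$-independent, so the assertion that ``the worst case comes from $p=1$ in the row condition'' is false. The constraint $1<s<l+m-\tfrac12$ comes instead from requiring both shifted parameters $s-1$ (row) and $m+l-\tfrac12-s$ (column) to lie in the admissible window $(0,l+m-\tfrac32)$ of Proposition~\ref{Prop::WeiEst::WeiEst} for the $K$-kernel; the $F$-kernel is less restrictive. Finally, in Stage 2 the target embedding \eqref{Eqn::Space::HLSob} is already stated for every integer $l>s-m$ including the case $s-m<0$, so no separate duality-based description of $\Fs_{p\eps}^{s-m}$ is required there.
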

\begin{rem}
    Using the notations \eqref{Eqn::WeiEst::DefKOp} and \eqref{Eqn::WeiEst::DefFOp} we have $\Fc^{0,\rho}=\Fc^\rho$ and $\Kc^{0,\rho}=\Kc^\rho$.

    Using integration by parts, for $1\le q\le n$, we get the relation
    \begin{equation*}
        \Kc^{m,\rho}_qg=(-1)^m\Kc^{0,\rho}_q\circ D_{y_n}^mg\qquad\big(\Fc^{m,\rho} g=(-1)^m\Fc^{0,\rho}\circ D_{y_n}^mg\text{ if }q=1\big),\qquad g\in \Co_c^\infty(\B^{2n}\backslash\overline\omega;\wedge^{0,q}).
    \end{equation*}
\end{rem}
\begin{proof}
    For $\alpha\in\N^{2n}_{z,\bar z}$, $2\le q\le n$ and $g\in L^1(\B^{2n}\backslash\overline\omega;\wedge^{0,q})$, we have $D^\alpha_z\Kc^{m,\rho}_q g(z)=\int(D^\alpha_zD^m_{\eta_n} K_{q-2})(z,\cdot)\wedge g$, where we have $D^\alpha_z\Fc^{m,\rho} g(z)=\int(D^\alpha_zD^m_{\eta_n} F)(z,\cdot)\wedge g$ when $q=0$. 

    We apply Lemma~\ref{Lem::Space::Schur} with $(X,\mu)=(\B^{2n}\backslash\overline\omega,d\Vol(\zeta))$ and $(Y,\nu)=(\B^{2n}\cap\omega,d\Vol(z))$ and the following:
    \begin{align*}
        \text{for \eqref{Eqn::WeiEst::SobEstF}},&& G(\zeta,z)=&\delta(\zeta)^{s-1}|F^\rho(z,\zeta)|\delta(z)^{m+l-s},&&\gamma=1;
        \\
        \text{for \eqref{Eqn::WeiEst::SobEstK1}},&& G(\zeta,z)=&\delta(\zeta)^{s-1}|K_{q-1}^\rho(z,\zeta)|\delta(z)^{m+l-1/2-s},&&\gamma=1;
        \\
        \text{for \eqref{Eqn::WeiEst::SobEstK2}},&& G(\zeta,z)=&\delta(\zeta)^{s-1}|K_{q-1}^\rho(z,\zeta)|\delta(z)^{m+l-s},&&\gamma=\tfrac{2n+2}{2n+1}.
    \end{align*}
    Therefore by Proposition~\ref{Prop::WeiEst::WeiEst}, for every $|\alpha|\le l$ and $2\le q\le n$,
    \begin{align*}
        D^\alpha\Fc^{m,\rho}:&L^p(\B^{2n}\backslash\overline\omega,\delta^{1-s};\wedge^{0,1})\to L^p(\B^{2n}\cap\omega,\delta^{m+l-s}),&&1\le p\le \infty;
        \\
        D^\alpha\Kc_q^{m,\rho}:&L^p(\B^{2n}\backslash\overline\omega,\delta^{1-s};\wedge^{0,q})\to L^p(\B^{2n}\cap\omega,\delta^{m+l-1/2-s};\wedge^{0,q-2}),&&1\le p\le \infty;
        \\
        D^\alpha\Kc_q^{m,\rho}:&L^p(\B^{2n}\backslash\overline\omega,\delta^{1-s};\wedge^{0,q})\to L^\frac{(2n+2)p}{2n+2-p}(\B^{2n}\cap\omega,\delta^{m+l-s};\wedge^{0,q-2}),&&1\le p\le 2n+2.
    \end{align*}
    These bounds give \eqref{Eqn::WeiEst::SobBddF}, \eqref{Eqn::WeiEst::SobBddK1} and \eqref{Eqn::WeiEst::SobBddK2}. By Proposition~\ref{Prop::Space::HLLem} (see also Corollary~\ref{Cor::Space::IntOpBdd}) we get \eqref{Eqn::WeiEst::SobBddF}, \eqref{Eqn::WeiEst::SobBddK1} and \eqref{Eqn::WeiEst::SobBddK2}. 
\end{proof}

Later in Theorem~\ref{Thm::LocalHT} we need the following axillary result.
\begin{lem}\label{Lem::WeiEst::DisjointSupp}
    Let $\lambda\in\Co^\infty(\B^{2n}\cap\omega)$ and let $g\in L^1(\B^{2n}\backslash\overline\omega;\wedge^{0,1})$ satisfy $\supp\lambda\cap\supp g=\varnothing$. Then $\lambda\cdot\Fc^{m,\rho}g\in\Co^\infty(\B^{2n}\cap\omega)$.
\end{lem}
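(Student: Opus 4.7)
The plan is to differentiate under the integral sign, exploiting the fact that the kernel $D_{\eta_n}^m F^\rho(z,\zeta)$ is holomorphic in $z$ and its only $z$-singularity arises from the denominator $(\partial\rho(\zeta)\cdot(\zeta-z))^{n+m}$, which we will bound away from zero by combining \eqref{Eqn::WeiEst::SuppFun} with the positive distance between $\supp\lambda$ and $\supp g$.

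I will first reduce to a local statement. For any $z_0 \in \supp \lambda$, the disjointness $\supp \lambda \cap \supp g = \varnothing$ gives $r_0 := \dist(z_0, \supp g) > 0$, so there is an open neighborhood $U \ni z_0$ in $\B^{2n} \cap \omega$ with $\dist(U, \supp g) \geq r_0/2$. For $z \in U$ and $\zeta \in \supp g$, the hypothesis \eqref{Eqn::WeiEst::SuppFun} then yields $|\partial\rho(\zeta) \cdot (\zeta - z)| \geq c_0 |z - \zeta|^2 \geq c_0 r_0^2 / 4$.

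Next I will extract the $\eta_n$-dependence of the kernel. Because $\rho(\zeta) = \sigma(\zeta', \xi_n) - \eta_n$ is linear in $\eta_n$, the form $\partial\rho(\zeta)$ and the coefficients of $\dbar\partial\rho(\zeta)$ are independent of $\eta_n$; moreover $P(z,\zeta) := \partial\rho(\zeta) \cdot (\zeta - z)$ is affine in $\eta_n$ with $\partial_{\eta_n} P = i\partial_{\zeta_n}\rho(\zeta)$ (itself independent of $\eta_n$ and in $L^\infty_\zeta$). An elementary computation then gives the closed form
\begin{equation*}
D_{\eta_n}^m F^\rho(z,\zeta) = \frac{N_m(\zeta)}{P(z,\zeta)^{n+m}},
\end{equation*}
where $N_m$ is an $L^\infty$ expression in $\zeta$ independent of $z$. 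Since $P(z,\zeta)$ is holomorphic (in fact affine) in $z$, this shows $D_{\eta_n}^m F^\rho(z,\zeta)$ is holomorphic in $z$ on the region $P\neq 0$, and for each multi-index $\alpha$ one has the pointwise bound $|D_z^\alpha D_{\eta_n}^m F^\rho(z,\zeta)| \leq C_\alpha |P(z,\zeta)|^{-n-m-|\alpha|}$. On $U \times \supp g$ this is uniformly bounded by a constant $C_\alpha(r_0)$.

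Finally, since $g \in L^1$ and $|D_z^\alpha D_{\eta_n}^m F^\rho(z,\zeta) \wedge g(\zeta)| \leq C_\alpha(r_0) |g(\zeta)|$ uniformly for $z \in U$, dominated convergence permits differentiation under the integral sign for every $\alpha$, so $\Fc^{m,\rho} g \in \Co^\infty(U)$ and hence $\lambda \cdot \Fc^{m,\rho}g \in \Co^\infty(U)$. Since $z_0 \in \supp \lambda$ was arbitrary and $\lambda \cdot \Fc^{m,\rho}g$ vanishes identically off $\supp \lambda$, the product is smooth on all of $\B^{2n} \cap \omega$. The main (mild) obstacle is the closed-form computation in the third paragraph, which depends essentially on the specific structure $\rho = \sigma(\zeta',\xi_n) - \eta_n$; for a generic $C^{1,1}$ defining function this identity would fail and one would have to first reduce to this normal form (as is done throughout Section~\ref{Section::WeiEst} under Assumption~\ref{Assumption::StrongDomain}).
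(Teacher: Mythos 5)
Your proof is correct and follows essentially the same route as the paper: both exploit the disjointness of $\supp\lambda$ and $\supp g$ (plus \eqref{Eqn::WeiEst::SuppFun}) to bound $|\partial\rho(\zeta)\cdot(\zeta-z)|$ away from zero uniformly, and both rely on the structural fact that $\rho$ is linear in $\eta_n$ so that $D_{\eta_n}^m$ only raises the exponent of the denominator. The paper simply cites Lemma~\ref{Lem::WeiEst::BasisEst} for the resulting uniform kernel bounds rather than spelling out the closed-form expression $N_m(\zeta)/P(z,\zeta)^{n+m}$, but the content is the same.
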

Here $\supp\lambda\subset\overline{\B^{2n}}\cap\overline\omega$ and $\supp g\subseteq\overline{\B^{2n}}\backslash\omega$ are closed subsets in $\C^n$ (hence are compact as well).
\begin{proof}
    By \eqref{Eqn::WeiEst::DefF} and Lemma~\ref{Lem::WeiEst::BasisEst}, we see that if $A\subseteq\overline{\B^{2n}}\cap\overline\omega$ and $B\subseteq\overline{\B^{2n}}\backslash\omega$ are two disjoint compact sets, then $D_{\eta_n}^mF(\cdot,\zeta)\in\Co^\infty(A)$ uniformly for $\zeta\in B$. More precisely
    \begin{equation*}
        \sup_{z\in A,\zeta\in B}|D^\alpha_zD^m_{\eta_n}F(z,\zeta)|<\infty,\quad\text{for all }\alpha\in\N^{2n},\quad m\ge0.
    \end{equation*}

    By assumption $\dist(\supp\lambda,\supp g)>0$. Therefore for every $\alpha,\beta$ and $m\ge0$,
    \begin{align*}
        \sup_{z\in\B^{2n}\cap\omega}|D^\alpha\lambda(z)D^\beta\Fc^{m,\rho}g(z)|\le&\|\lambda\|_{C^{|\alpha|}}\sup_{z\in\supp\lambda}\int_{\supp g}|D^\beta_zD^m_{\eta_n}F(z,\zeta)||g(\zeta)|d\Vol(\zeta)
        \\
        \le&\|\lambda\|_{C^{|\alpha|}}\|g\|_{L^1}\sup_{z\in \supp\lambda,\zeta\in \supp g}|D^\beta_zD^m_{\eta_n}F(z,\zeta)|<\infty.
    \end{align*}
    This completes the proof.
\end{proof}

\section{Local Homotopy Formulae on $C^{1,1}$ Special Domains}\label{Section::LocalHT}

For completeness let us recall the definition of strongly pseudoconvexity and $\C$-linearly convexity.

\begin{defn}\label{Defn::WeiEst::Convex}
    Let $\Omega\subset\C^n$ be a bounded $C^1$ domain with $C^1$ defining function $\varrho:\C^n\to\R$, i.e. we have $\Omega=\{z\in\C^n:\varrho(z)<0\}$ and $\nabla\varrho(\zeta)\neq0$ for all $\zeta\in b\Omega$.

    \begin{itemize}
        \item Let $\Omega$ be $C^2$ and assume $\varrho\in C^2$. We say that $\Omega$ is \textit{(Levi) strongly pseudoconvex}, if there is a $c>0$ such that for every $\zeta\in b\Omega$,
        \begin{equation*}
            \sum_{j,k=1}^n\frac{\partial^2\varrho}{\partial z_j\partial\bar z_k}(\zeta)w_j\bar w_k\ge c|w|^2,\quad\text{for every }w\in\C^n\text{ such that }\partial\varrho(\zeta)\cdot w=\sum_{j=1}^n\frac{\partial\varrho}{\partial z_j}(\zeta)w_j=0.
        \end{equation*}
        \item Let $\Omega$ be $C^{1,1}$ and assume $\varrho\in C^{1,1}$. We say that $\Omega$ is \textit{$\C$-linearly pseudoconvex}, if there is a $c>0$ such that,
        \begin{equation*}
            |\partial\rho(\zeta)\cdot(\zeta-z)|\ge c|z-\zeta|^2,\quad\text{for every }z\in\Omega\text{ and }\zeta\in b\Omega.
        \end{equation*}
    \end{itemize}
\end{defn}
Recall that both definitions are independent of the choice of $\varrho$. See e.g. \cite[Page~56]{RangeSCVBook} and \cite[Theorem~1.1]{GongLanzaniCConvex} respectively.

In this section we identify $\R^{2n}$ with $\C^n$ by the correspondence of $u=(u',u'')\in\R^{2n}$ and $u'+iu''\in\C^n$. And we denote by $D_u$ the real directional derivative on the $\zeta$-variable:
\begin{equation}\label{Eqn::LocalHT::DirectionK}
    D_uK(z,\zeta)=\frac d{dt}K(z,\zeta+t(u'+iu''))\Big|_{\R\ni t=0}.
\end{equation}

We are going to construct parametrix formulae on the domain $\omega=\{z\in \C^n:y_n>\sigma(z',x_n)\}$, which satisfies a slightly stronger assumption to Assumption~\ref{Assumption::StrongDomain}.
\begin{assu}\label{Assumption::DomainRotate}
    We assume the open subset $\omega\subset\C^n$ has the form $\omega=\{z\in\C^n:y_n>\sigma(z',x_n)\}$ where $\sigma:\R^{2n-1}\to\R$ is a Lipschitz function such that $\|\nabla\sigma\|_{L^\infty(\R^{2n-1})}<1$. 
    
    For every vector $u\in\{z:y_n<-3|(z',x_n)|\}$, we denote by $\rho_u:\C^n\to\R$ the unique Lipschitz function such that $D_u\rho\equiv1$ and $\rho_u(z)=0$ for all $z\in b\omega$. We assume for such $u$, $\rho_u|_{\B^{2n}}$ is $C^{1,1}$, and there is a $c_u>0$ such that 
    \begin{equation*}
        |\partial\rho_u(\zeta)\cdot(z-\zeta)|\ge c_u(\rho_u(\zeta)-\rho_u(\zeta)+|z-\zeta|^2),\quad z\in\B^{2n}\cap\omega,\quad\zeta\in\B^{2n}\backslash\omega.
    \end{equation*}
\end{assu}
\begin{rem}
    Here we recall for clarification that  $\rho(z)=\sigma(z',x_n)-y_n$ is the unique defining function such that $D_{-e_{2n}}\rho=-\frac\partial{\partial y_n}\rho\equiv1$, where $-e_{2n}=(0,\dots,0,-i)\in\{y_n<-3|(z',x_n)|\}$.
\end{rem}

When $\Omega$ is the domain in Theorem~\ref{Thm::MainThm}, then locally it can be biholomorphic to such $\omega$.
\begin{lem}\label{Lem::LocalHT::LocalDomain}
    Let $\Omega$ is either a strongly pseudoconvex domain with $C^2$ boundary or a strongly $\C$-linearly convex domain with $C^{1,1}$ boundary (see Definition~\ref{Defn::WeiEst::Convex}), then for every $\zeta_0\in b\Omega$ there is a neighborhood $U\ni\zeta_0$ and a biholomorphic map $F:U\xrightarrow{\simeq}\B^{2n}$ and a $C^{1,1}$ function $\sigma_0:\R^{2n-1}\to\R$ such that 
    \begin{enumerate}[(i)] 
        \item Let $\omega=\{z\in\C^n:y_n>\sigma_0(z',x_n)\}$, we have $F(\zeta_0)=0$ (thus $\sigma_0(0)=0$), $\omega\cap \B^{2n}=F(U\cap\Omega)$.
        \item\label{Item::LocalHT::LocalDomain::RhoPhi} For every unitary transform $\Psi:\C^n\to\C^n$ such that $\Psi(0,\dots,0,i)\in\{z:y_n>3|(z',x_n)|\}$, the domain $\Psi(\omega)$ satisfies Assumption~\ref{Assumption::StrongDomain}. That is, for each such $\Psi$, we have $\Psi(\omega)=\{y_n>\sigma_\Psi(z',x_n)\}$ for some $C^{1,1}$ function $\sigma_\Psi:\R^{2n-1}\to\R$ such that $\|\nabla\sigma_\Psi\|_{L^\infty}<1$ and there is a $c_\Psi>0$ such that for $\rho_\Psi(z)=y_n-\sigma_\Psi(z',x_n)$
        \begin{equation}\label{Eqn::LocalHT::SuppFunPhi}
            |\partial\rho_\Psi(\zeta)\cdot(\zeta-z)|\ge c_\Psi(\rho_\Psi(\zeta)-\rho_\Psi(z)+|z-\zeta|^2),\qquad z\in \B^{2n}\cap\Psi(\omega),\quad \zeta\in\B^{2n}\backslash\Psi(\overline{\omega}).
        \end{equation}
    \end{enumerate}

    In particular $\omega$ satisfies Assumption~\ref{Assumption::DomainRotate}.
\end{lem}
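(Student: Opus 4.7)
My approach is to first normalize coordinates so $\Omega$ becomes a graph near $\zeta_0$, then apply a biholomorphic change to make the domain strongly $\mathbb{C}$-linearly convex, and finally extend the boundary support function inequality to a neighborhood.

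I would begin by translating so that $\zeta_0 = 0$ and applying a unitary rotation so that the outer real normal to $b\Omega$ at $0$ is $-ie_n$. Since $b\Omega$ is at least $C^1$, this gives locally $\Omega \cap W = \{y_n > \tau(z',x_n)\} \cap W$ for some neighborhood $W \ni 0$ and a function $\tau$ with $\tau(0) = 0$, $\nabla\tau(0) = 0$, and whose regularity ($C^{1,1}$ or $C^2$) inherits from $b\Omega$. In the $C^{1,1}$ strongly $\mathbb{C}$-linearly convex case, I simply take $F_0 = \mathrm{id}$. In the $C^2$ strongly pseudoconvex case, the key step is Narasimhan's lemma (see for instance Range \cite{RangeSCVBook}, Section II.2): the Levi polynomial at $0$ provides a holomorphic change of coordinates, quadratic in $z$, after which a suitably shrunk $F_0(\Omega \cap W)$ becomes strongly convex (hence strongly $\mathbb{C}$-linearly convex) near $0$, with $C^2$ boundary. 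I then compose with a post-rotation if necessary to restore the outer normal at $0$ to $-ie_n$. Setting $F := \lambda F_0$ for sufficiently small $\lambda > 0$, I can take $U := F^{-1}(\mathbb{B}^{2n})$, write $F(U \cap \Omega) = \omega \cap \mathbb{B}^{2n}$ with $\omega = \{y_n > \sigma_0(z',x_n)\}$, and arrange $\sigma_0(0) = 0$, $\nabla \sigma_0(0) = 0$, and $\|\nabla \sigma_0\|_{L^\infty(\mathbb{R}^{2n-1})} < 1$ (the latter by further shrinking $\lambda$ if needed, since $\nabla\sigma_0$ is continuous and vanishes at $0$).

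For part \ref{Item::LocalHT::LocalDomain::RhoPhi}, I would fix any unitary $\Psi$ with $\Psi(0,\dots,0,i) \in \{y_n > 3|(z',x_n)|\}$. This cone condition guarantees that $b\Psi(\omega) \cap \mathbb{B}^{2n}$ remains a graph $\{y_n = \sigma_\Psi(z', x_n)\}$ with $\|\nabla\sigma_\Psi\|_{L^\infty} < 1$, again after possibly taking $\lambda$ smaller (the Lipschitz constant of the new graph is continuous in $\Psi$, and the compactness of the closed cone keeps the bound uniform). Strong $\mathbb{C}$-linear convexity of $\omega$ is preserved under the unitary (indeed $\mathbb{C}$-linear) transformation $\Psi$, so $\Psi(\omega)$ is still $C^{1,1}$ strongly $\mathbb{C}$-linearly convex with defining function $\rho_\Psi(z) = y_n - \sigma_\Psi(z',x_n)$, and hence
\begin{equation*}
    |\partial\rho_\Psi(\zeta) \cdot (\zeta - z)| \ge c_\Psi' |z - \zeta|^2, \qquad z \in \Psi(\omega), \quad \zeta \in b\Psi(\omega).
\end{equation*}
To upgrade this to the form \eqref{Eqn::LocalHT::SuppFunPhi} (with $\zeta$ in a full neighborhood of $b\Psi(\omega)$ and with the $\rho_\Psi(\zeta) - \rho_\Psi(z)$ term present), I would apply a Taylor expansion of $\rho_\Psi$ around the nearest boundary point of $\zeta$, as in \cite[Chapter~IV]{RangeSCVBook} or \cite{GongLanzaniCConvex}: since $\rho_\Psi \in C^{1,1}$, the remainder is $O(|\zeta - \zeta^\ast|^2 + |z - \zeta|^2)$, and can be absorbed into the quadratic term on the right after shrinking $\mathbb{B}^{2n}$ (equivalently $\lambda$) sufficiently, at the cost of decreasing $c_\Psi'$ to some $c_\Psi > 0$.

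\textbf{Main obstacle.} The delicate point is Narasimhan's reduction in the strongly pseudoconvex case, which exploits $C^2$ regularity via the Levi polynomial; one must verify that after this biholomorphism and the subsequent post-rotation and rescaling, the resulting graph function $\sigma_0$ retains $C^{1,1}$ regularity with Lipschitz constant strictly less than one. The second delicate point is that \eqref{Eqn::LocalHT::SuppFunPhi} requires the inequality not only at $b\Psi(\omega)$ but on the full product $(\mathbb{B}^{2n} \cap \Psi(\omega)) \times (\mathbb{B}^{2n} \setminus \Psi(\overline{\omega}))$, including with the $\rho_\Psi(\zeta) - \rho_\Psi(z)$ term, so the Taylor extension must be carried out carefully enough that the error terms remain controlled uniformly on the relevant region. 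Note that the constant $c_\Psi$ is allowed to depend on $\Psi$, so no uniformity across $\Psi$ is needed and no compactness argument over the set of admissible $\Psi$ is required.
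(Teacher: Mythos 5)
Your overall coordinate setup (translate, rotate, strongly convexify via Narasimhan--Range in the $C^2$ case, affine change in the $C^{1,1}$ case, then dilate) matches the paper's, and your observation that $c_\Psi$ may depend on $\Psi$ is the right one. However, the crucial step---deriving the support-function inequality \eqref{Eqn::LocalHT::SuppFunPhi}---is handled quite differently, and your route has a genuine gap.

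You propose to start from the boundary-only estimate $|\partial\rho_\Psi(\zeta^*)\cdot(\zeta^*-z)|\ge c'|z-\zeta^*|^2$ for $\zeta^*\in b\Psi(\omega)$ and upgrade it by Taylor expansion about the nearest boundary point. This cannot work as stated: the boundary inequality controls only the \emph{modulus} of the complex number $\partial\rho_\Psi(\zeta^*)\cdot(\zeta^*-z)$, not its real part, so there is no mechanism for the Taylor remainder to generate the additive term $\rho_\Psi(\zeta)-\rho_\Psi(z)$. Indeed, when $z$ is interior (so $-\rho_\Psi(z)>0$) and close to $\zeta^*$, one has $-\rho_\Psi(z)\approx|z-\zeta^*|$, which dominates $|z-\zeta^*|^2$; this linear-in-distance contribution cannot be extracted from the boundary modulus estimate. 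The paper avoids this entirely: in the $C^2$ case, once $\sigma_\Psi$ is arranged to be $C^2$ strongly convex, the second-order Taylor formula for the convex function $\rho_\Psi$ gives directly $2\re\big(\partial\rho_\Psi(\zeta)\cdot(\zeta-z)\big)\ge\rho_\Psi(\zeta)-\rho_\Psi(z)+c'_\Psi|z-\zeta|^2$ (this is \cite[(II.2.30)]{RangeSCVBook}), from which \eqref{Eqn::LocalHT::SuppFunPhi} is immediate; reducing strong convexity to strong $\C$-linear convexity as you do throws away precisely this real-part positivity. In the $C^{1,1}$ case the paper does not re-derive the inequality at all: it extends $\rho_\Psi$ to a global $C^{1,1}$ defining function of the \emph{bounded} domain $\Psi\circ F(\Omega)$ and cites \cite[Theorem~1.1 (1.5)]{GongLanzaniCConvex}, which already yields \eqref{Eqn::LocalHT::SuppFunPhi} verbatim on a neighborhood of the closure. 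Your appeal to strong $\C$-linear convexity of the unbounded set $\Psi(\omega)$ is itself not literally covered by Definition~\ref{Defn::WeiEst::Convex}, which is for bounded domains; working with the bounded $\Psi\circ F(\Omega)$ as the paper does sidesteps that as well.

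Two smaller remarks. The dilation $F:=\lambda F_0$ for small $\lambda$ runs the wrong way---a contraction makes $U=F^{-1}(\B^{2n})$ \emph{larger}; you want a magnification. And rather than a compactness-in-$\Psi$ argument, the paper controls $\|\nabla\sigma_\Psi\|_{L^\infty(\R^{2n-1})}<1$ for all admissible $\Psi$ at once by extending $\sigma_0$ to $\R^{2n-1}$ with $\|\nabla\sigma_0\|_{L^\infty}<\tfrac12$ and using the identity $\arctan\tfrac12+\arctan\tfrac13=\arctan1$, which is both sharper and simpler than a continuity-plus-compactness argument; note also that a global Lipschitz bound on $\R^{2n-1}$ (not merely on $\B^{2n-1}$) is needed for $\sigma_\Psi$ to be well-defined as a global graph, which your argument does not explicitly address.
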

By a finer analysis we can choose $\{c_\Psi\}_\Psi$ to be a number that does not depend on $\Psi$. A weaker result is sufficient to our need.
\begin{proof}
    Firstly we note that if $\|\nabla\sigma_0\|_{L^\infty(\R^{2n-1})}<\frac12$ and $\Psi(0,\dots,0,i)\in\{z:y_n>3|(z',x_n)|\}$, then $\sigma_\Psi$ is well-defined. And by the fact $\arctan\frac12+\arctan\frac13=\arctan1$, we obtain $\|\nabla\sigma_\Psi\|_{L^\infty(\R^{2n-1})}<1$. Moreover if $\sigma_0|_{B^{2n-1}(0,\sqrt2)}$ is strongly convex, we see that $\sigma_\Psi|_{\B^{2n-1}}$ must be strongly convex as well.

    When $\Omega$ is $C^2$ strongly pseudoconvex, by \cite[Theorem~II.2.17]{RangeSCVBook} it is locally strongly convexifiable. Therefore, one can find a $\tilde F:\tilde U\xrightarrow{\simeq}B^{2n}(0,2)$ such that $\tilde F(\zeta_0)=0$ and $\tilde F(\tilde U\cap\Omega)$ is strongly convex. By passing to a rotation we can assume that $\tilde F_* T_{\zeta_0}(b\Omega)=\R^{2n-1}$, i.e. $\nabla\tilde\sigma_0(0)=0$, By passing to a dilation we can assume that $\sigma_0|_{B^{2n-1}(0,\sqrt2)}$ is $C^2$ strongly convex and $\|\nabla\sigma_0\|_{L^\infty(B^{2n-1}(0,\sqrt2))}<\frac12$. We can take a Lipschitz extension of $\sigma_0$ to $\R^{2n-1}$ such that $\|\nabla\sigma_0\|_{L^\infty(\R^{2n-1})}<\frac12$ holds.

    Since now $\sigma_0|_{B^{2n-1}(0,\sqrt2)}$ is $C^2$ strongly convex, we see that $\sigma_\Psi|_{\B^{2n-1}}$ are always $C^2$ strongly convex. Therefore (c.f. \cite[(II.2.30)]{RangeSCVBook}) there is a $c_\Psi'>0$ such that
    \begin{equation*}
        2\re\big(\partial \rho_\Psi(\zeta)\cdot(\zeta-z)\big)\ge \rho_\Psi(\zeta)-\rho_\Psi(z)+c_\Psi'|z-\zeta|^2,\quad\text{for all }z\in\B^{2n}\cap\omega,\quad \zeta\in\B^{2n}\backslash\overline\omega.
    \end{equation*}
    We get \eqref{Eqn::LocalHT::SuppFunPhi} with $c_\Psi=\min(1/2,c_\Psi'/2)$. Take $U:=F^{-1}(\B^{2n})$ and $F:=\tilde F|_U$ we complete the proof of this case.

\smallskip
    When $\Omega$ is $C^{1,1}$ strongly $\C$-linearly convex we can take $F$ to be an invertible complex affine linear map such that $F(\zeta_0)=0$. By passing to a rotation and dilation again we can assume $\sigma_0|_{\sqrt2\B^{2n-1}}$ is $C^{1,1}$ and $\|\sigma_0\|_{L^\infty(\sqrt2\B^{2n-1})}<\frac12$. We can extend $\sigma_0$ to be a Lipschitz function on $\R^{2n-1}$ with $\|\sigma_0\|_{L^\infty(\R^{2n-1})}<\frac12$ as well, thus $\|\sigma_\Psi\|_{L^\infty(\R^{2n-1})}<1$ holds for all such $\Psi$.
    
    Then \eqref{Eqn::LocalHT::SuppFunPhi} follows from \cite[Theorem~1.1 (1.5)]{GongLanzaniCConvex} since we can extend the function $\rho_\Psi:\B^{2n}\cap\omega_\Psi\to\R$ to a global $C^{1,1}$ defining function of $\Psi\circ F(\Omega)$.
\end{proof}

% Recall for every unitary transform $\Phi$ such that $\Phi(0,\dots,0,i)\in\{y_n>3|(z',x_n)|\}$, we can write $\Phi(\omega)=\{y_n>\sigma_\Phi(z',x_n)\}$, where $\sigma_\Phi(0)=0$, $\|\nabla\sigma_\Phi\|_{L^\infty}<1$, and mostly importantly the piece of boundary $\{y_n=\sigma(z',x_n)\}\cap\B^{2n}$ is $C^{1,1}$ strongly $\C$-linearly convex.

% By passing a dilation and taking modification outside $B^{2n}(0,2)$ if necessary we assume that
% \begin{itemize}
%     \item $\|\nabla\sigma\|_{L^\infty(\R^{2n-1}}<1$.
%     \item $\sigma$ is $C^{1,1}$ and strongly convex in $B^{2n-1}(0,2)$.
%     \item $\sigma$ is convex (but not strongly) on $\R^{2n-1}$.
% \end{itemize} 
% The last one can be achieved as follows. Suppose $\tilde\sigma:\R^{2n-1}\to\R$ satisfies $\|\nabla\tilde\sigma\|_{L^\infty(\R^{2n-1}}<1$ and $\tilde\sigma|_{B(0,2)}$ strongly convex. Define $\sigma:\R^{2n-1}\to\R$ by $\sigma(r\theta):=\tilde\sigma(r\theta)$ for $0\le r\le2$ and $|\theta|=1$, and by $\sigma(r\theta):=\sigma(2\theta)+(r-2)(\theta\cdot\nabla\sigma(2\theta))$ for $r\ge2$ and $|\theta|=1$.

We now construct the parametrix formulae $f=\Pc f+\Hc'_1\dbar f+\Rc_0f$ for functions $f$, and for $1\le q\le n$, $f=\dbar\Hc'_qf+\Hc'_{q+1}\dbar f+\Rc_qf$ for $(0,q)$ forms $f$ on the domain $\frac12\B^{2n}\cap\omega$.

Take real linearly independent unit vectors $v=(v_1,\dots,v_{2n})\subset\R^{2n}(\simeq\C^n)$ such that
\begin{equation}\label{Eqn::LocalHT::AssumpCone}
    V_v:=\{a_1v_1+\dots+a_{2n}v_{2n}:a_1,\dots,a_n>0\}\subset\{z:y_n<-3|(z',x_n)|\}.
\end{equation}
Note that $\omega-K_v\subset\omega$ and $v_1,\dots,v_{2n}$ all go toward the outer direction of $b\omega$ transversally.

Let $\rho_1,\dots,\rho_{2n}:\C^n\to\R$ be the unique defining functions for $\omega$ such that
\begin{equation}
    D_{v_j}\rho_j(\zeta)\equiv1,\qquad\zeta\in\C^n,\qquad1\le j\le 2n.
\end{equation}

% Here we recall for clarification that  $\rho(z)=\sigma(z',x_n)-y_n$ is the unique defining function such that $D_{-e_{2n}}\rho=-\frac\partial{\partial y_n}\rho\equiv1$.

% We can take unitary maps $\Psi_j:\C^n\to\C^n$ such that $\Psi_{j*}v_j=-\frac\partial{\partial y_n}$. Since we have assumed that $\omega$ satisfies the results in Lemma~\ref{Lem::WeiEst::LocalDomain}~\ref{Item::WeiEst::LocalDomain::RhoPhi}, we conclude that the estimate \eqref{Eqn::WeiEst::SuppFun} holds with $\rho=\rho_j$ for all $1\le j\le 2n$.

For such $\rho_1,\dots,\rho_{2n}$ we take $K^j=K^{\rho_j}$ and $F^j=F^{\rho_j}$ to be the corresponding Leray-Koppelman kernel and Cauchy-Fantappi\`e kernel given in \eqref{Eqn::WeiEst::DefK} and \eqref{Eqn::WeiEst::DefF}.

Let $\Sc^{m,k}f=\mu^{m,k}\ast f$ for $m=k=0$ or $m\ge0$, $1\le k\le 2n$ be the anti-derivative operators associated to $v_1,\dots,v_{2n}$ which are obtained from Theorem~\ref{Thm::RefAtD}. Recall also from Corollary~\ref{Cor::RefAtD::ScBdd} that
\begin{itemize}
    \item $f=\sum_{j=0}^{2n}\Sc^{0,j}f$ and $\Sc^{0,k}f=D_{v_k}^m\Sc^{m,k}f$ for all $1\le k\le 2n$, $m\ge1$ and $f\in\Ss'(\C^n)$.
    \item $\Sc^{0,0},\Sc^{m,k}:\widetilde\Fs_{pq}^s(\omega^c)\to\widetilde\Fs_{pq}^{s+m}(\omega^c)$ are bounded for all $p,q\in(0,\infty]$, $s\in\R$, $m\ge0$ and $1\le k\le 2n$. 
\end{itemize}

Let $\Ec:\Ss'(\omega)\to\Ss'(\C^n)$ to be the Rychkov's extension operator from Definition~\ref{Defn::Space::ExtOp}. We extend its definition to forms by acting on their components. Here recall that $\Ec=\sum_{j=0}^\infty\psi_j\ast(\1_\omega\cdot(\phi_j\ast f))$ for some $\psi_j,\phi_j$ supported in $-\Kb:=\{(z',x_n+iy_n):y_n<-|(z',x_n)|\}$. Therefore for every $f\in\Ss'(\omega)$,
\begin{equation*}
    \supp[\dbar,\Ec]f\subseteq(-\Kb+\supp f)\backslash\omega.
\end{equation*}
Here we use the convention $\supp f\subseteq\overline\omega$, which is a closed set in $\C^n$.

Since $V_v\subset-\Kb$, we see that for every $m,k$ the following holds as well:
\begin{equation}\label{Eqn::LocalHT::SuppFact}
    \supp\Sc^{m,k}[\dbar,\Ec]f\subseteq(-\Kb+\supp f)\backslash\omega.
\end{equation}

Take $\chi\in C_c^\infty(\B^{2n})$ such that $\chi|_{\frac12\B^{2n}}\equiv1$. We define $\Pc$, $\Hc'_q$ ($1\le q\le n$), $\Rc_q$ ($0\le q\le n-1$) by the following:
\begin{align}
    \label{Eqn::LocalHT::P}
    \Pc' f(z):=&\sum_{j=1}^{2n}\int_{\B^{2n}\backslash\overline\omega}F^j(z,\cdot)\wedge \chi\cdot\Sc^{0,j}[\dbar,\Ec]f;
    \\
    \label{Eqn::LocalHT::H'}
    \Hc'_qf(z):=&\int_{\B^{2n}}B_{q-1}(z,\cdot)\wedge\chi\cdot\Ec f+\sum_{j=1}^{2n}\int_{\B^{2n}\backslash\overline\omega}K_{q-1}^j(z,\cdot)\wedge \chi\cdot\Sc^{0,j}[\dbar,\Ec]f;
    \\
    \label{Eqn::LocalHT::R}
    \Rc_qf(z):=&\int_{\B^{2n}}B_q(z,\cdot)\wedge\big(\dbar\chi\wedge\Ec f+\chi\cdot\Sc^{0,0}[\dbar,\Ec]f\big)-\sum_{j=1}^{2n}\int_{\B^{2n}\backslash\overline\omega}K_q^j(z,\cdot)\wedge \dbar\chi\wedge\Sc^{0,j}[\dbar,\Ec]f.
\end{align}

\begin{thm}\label{Thm::LocalHT}
    Let $\omega=\{z\in\C^n:y_n>\sigma(z',x_n)\}$ where $\sigma:\R^{2n-1}\to\R$ satisfies Assumption~\ref{Assumption::DomainRotate}. % We denote $\delta(z)=|y_n-\sigma(x',x_n)|$ for $z\in\B^{2n}$.
    
    The $\Pc',\Hc'_q,\Rc_q$ given above satisfy the following (we still use $\Hc'_0=\Rc_n=\Hc'_{n+1}=0$):
    \begin{enumerate}[(i)]
        \item\label{Item::LocalHT::Bdd} $\Pc':\Ss'(\omega)\to\Ss'(\tfrac12\B^{2n}\cap\omega)$, $\Hc'_q:\Ss'(\omega;\wedge^{0,q})\to\Ss'(\tfrac12\B^{2n}\cap\omega;\wedge^{0,q-1})$ and $\Rc_q:\Ss'(\omega;\wedge^{0,q})\to\Ss'(\tfrac12\B^{2n}\cap\omega;\wedge^{0,q})$ are all defined. Moreover for $s\in\R$ and $\eps>0$, we have the following boundedness:
        \begin{align}\label{Eqn::LocalHT::BddP'}
            &\Pc':\Fs_{p\infty}^s(\omega)\to \Fs_{p\eps}^s(\tfrac12\B^{2n}\cap\omega),&p\in[1,\infty];
            \\\label{Eqn::LocalHT::BddH'1}
            &\Hc'_q:\Fs_{p\infty}^s(\omega;\wedge^{0,q})\to\Fs_{p\eps}^{s+1/2}(\tfrac12\B^{2n}\cap\omega;\wedge^{0,q-1}),&p\in[1,\infty],&& 1\le q\le n;
            \\\label{Eqn::LocalHT::BddH'2}
            &\Hc'_q:\Fs_{p\infty}^s(\omega;\wedge^{0,q})\to
                \Fs_{\frac{(2n+2)p}{2n+2-p},\eps}^{s+\frac12}(\tfrac12\B^{2n}\cap\omega;\wedge^{0,q-1}),&p\in[1,2n+2],&& 1\le q\le n;
            \\\label{Eqn::LocalHT::BddR}&\Rc_q:\Fs_{p\infty}^s(\omega;\wedge^{0,q})\to\Fs_{p\eps}^{s+\frac12}(\tfrac12\B^{2n}\cap\omega;\wedge^{0,q}),& p\in[1,\infty],&&0\le q\le n-1;
            \\
            \label{Eqn::LocalHT::BddH'n}
            &\Hc'_n:\Fs_{pr}^s(\omega;\wedge^{0,n})\to
                \Fs_{pr}^{s+1}(\tfrac12\B^{2n}\cap\omega;\wedge^{0,n-1}),&p,r\in(0,\infty].
        \end{align}
        As a direct corollary, for every $0\le q\le n-1$ and $s\in\R$,
        \begin{itemize}
            \item $\Rc_q:H^{s,2}(\omega;\wedge^{0,q})\to H^{s+\frac12,2}(\omega;\wedge^{0,q})$, $\Hc'_{q+1}:H^{s,2}(\omega;\wedge^{0,q})\to H^{s+\frac12,2}(\omega;\wedge^{0,q-1})$ are bounded.
            
        \end{itemize} 
        
        % \begin{align}\label{Eqn::LocalHT::BddP'}
        %     &\Pc':\Fs_{p\infty}^s(\B^{2n}\cap\omega)\to W^{l,p}(\B^{2n}\cap\omega,\delta^{l-s}),&p\in[1,\infty];
        %     \\\label{Eqn::LocalHT::BddH'1}
        %     &\Hc'_q:\Fs_{p\infty}^s(\B^{2n}\cap\omega;\wedge^{0,q})\to\Fs_{p\infty}^{s+1}(\B^{2n}\cap\omega;\wedge^{0,q-1})+ W^{l,p}(\B^{2n}\cap\omega,\delta^{l-s-\frac12};\wedge^{0,q-1}),&p\in[1,\infty];
        %     \\\label{Eqn::LocalHT::BddH'2}
        %     &\begin{multlined}\Hc'_q:\Fs_{p\infty}^s(\B^{2n}\cap\omega;\wedge^{0,q})\to
        %         \Fs_{\frac{(2n+2)p}{2n+2-p},\infty}^{s+\frac1{2n(n+1)}}(\B^{2n}\cap\omega;\wedge^{0,q-1})
        %         \\\hspace{2.5in}+ W^{l,\frac{(2n+2)p}{2n+2-p}}(\B^{2n}\cap\omega,\delta^{l-s-\frac12};\wedge^{0,q-1})
        %     \end{multlined},&\hspace{-0.1in} p\in[1,2n+2];
        %     \\
        %     \label{Eqn::LocalHT::BddR}&\Rc_q:\Fs_{p\infty}^s(\B^{2n}\cap\omega;\wedge^{0,q})\to\Fs_{p\infty}^{s+\frac12}(\B^{2n}\cap\omega;\wedge^{0,q-1}),& p\in[1,\infty].
        % \end{align}
        
        % In particular, for every $s\in\R$, $\Pc':H^{s,p}\to H^{s,p}$, $\Hc'_q,\Rc_q:H^{s,p}\to H^{s+\frac12,p}$ for all $1<p<\infty$, $\Hc'_q:H^{s,p}\to H^{s,(2n+2)p/(2n+2-p)}$ for all $1<p<2n+2$, and $\Pc':\Co^s\to\Co^s$, $\Hc'_q,\Rc_q:\Co^s\to\Co^{s+1/2}$.

        \item\label{Item::LocalHT::HT} For every $0\le q\le n$ and $f\in\Ss'(\omega;\wedge^{0,q})$,
        \begin{align}
            \label{Eqn:LocalHT::HTP}
        f|_{\frac12\B^{2n}\cap\omega}&=\Pc' f+\Hc_1'\dbar f+\Rc_0f,&\dbar\Pc'f&=0,&& \text{when }q=0;
        \\
        \label{Eqn:LocalHT::HTH}
        f|_{\frac12\B^{2n}\cap\omega}&=\dbar\Hc_q' f+\Hc_{q+1}'\dbar f+\Rc_q f,&&&&\text{when }1\le q\le n.
        \end{align}

        In other words \eqref{Eqn::PM::ConcretePM::LocalHTAssum} are  satisfied.
        \item\label{Item::LocalHT::HoloP'} For every $\lambda_1,\lambda_2\in\Co_c^\infty(\tfrac12\B^{2n})$ that have disjoint supports, $[f\mapsto\lambda_1\Pc'(\lambda_2f)]:\Ss'(\omega)\to\Co^\infty(\omega)$. 
    \end{enumerate}
\end{thm}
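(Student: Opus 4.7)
I would handle the three parts in the order (ii), (i), (iii), since the analytic estimates of (i) rest on the structural identity proved in (ii).

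For (ii), I start from the Bochner-Martinelli formula (Lemma~\ref{Lem::Space::BMFormula}) applied to the compactly supported form $\chi\Ec f$:
\begin{equation*}
\chi\Ec f=\dbar\Bc_q(\chi\Ec f)+\Bc_{q+1}\bigl(\dbar\chi\wedge\Ec f+\chi\Ec\dbar f+\chi\,[\dbar,\Ec]f\bigr).
\end{equation*}
Restricting to $z\in\tfrac12\B^{2n}\cap\omega$, where $\chi\equiv1$ and $\Ec f=f$, gives the starting point. I then insert the decomposition $[\dbar,\Ec]f=\Sc^{0,0}[\dbar,\Ec]f+\sum_{j=1}^{2n}\Sc^{0,j}[\dbar,\Ec]f$ from Theorem~\ref{Thm::RefAtD}\ref{Item::RefAtD::Sum}, and for each $j\ge1$ replace $\Bc_{q+1}(\chi\Sc^{0,j}[\dbar,\Ec]f)$ by its Leray-Koppelman expansion of Lemma~\ref{Lem::WeiEst::LPFormula} (using the defining function $\rho_j$), producing $\Fc^j$, $\dbar\Kc^j$, and $\Kc^j\dbar$ pieces. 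Since convolutions commute with $\dbar$ and $\dbar[\dbar,\Ec]=-[\dbar,\Ec]\dbar$ (from $\dbar^2=0$), the final collection regroups termwise into $\dbar\Hc'_qf+\Hc'_{q+1}\dbar f+\Rc_qf$ for $q\ge1$, with an additional $\Pc'f$ in the $q=0$ case. The identity $\dbar\Pc'f=0$ is immediate because each $F^j(z,\zeta)$ is holomorphic in $z$.

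For (i), the central move is to write $\Sc^{0,j}=D_{v_j}^m\Sc^{m,j}$ (Theorem~\ref{Thm::RefAtD}\ref{Item::RefAtD::Sum}) with $m$ chosen large enough that $s+m>1$ and $m\ge2$, and to distributionally integrate by parts $m$ times. Leibniz then gives
\begin{equation*}
\int K_{q-1}^j(z,\cdot)\wedge\chi\Sc^{0,j}[\dbar,\Ec]f=(-1)^m\sum_{k=0}^m\binom{m}{k}\int(D_{v_j}^k\chi)(D_{v_j,\zeta}^{m-k}K_{q-1}^j(z,\cdot))\wedge\Sc^{m,j}[\dbar,\Ec]f,
\end{equation*}
and analogously for the $\Fc^j$-integrals in $\Pc'$ and the $\dbar\chi$-integrals in $\Rc_q$. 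Boundary terms vanish at $b\B^{2n}$ since $\chi$ is compactly supported there, and at $b\omega$ since $\Sc^{m,j}[\dbar,\Ec]f$ is supported in $\overline{\omega^c}$, on which $K_{q-1}^j(z,\cdot)$ is smooth. A unitary rotation sending $v_j\mapsto-e_{2n}$ (permitted by Lemma~\ref{Lem::LocalHT::LocalDomain}\ref{Item::LocalHT::LocalDomain::RhoPhi} together with Assumption~\ref{Assumption::DomainRotate}) turns $D_{v_j,\zeta}^{m-k}K^j$ into the kernel of $\Kc^{m-k,\rho_j}_q$, so Corollary~\ref{Cor::WeiEst::WeiOp}, applied with parameter $t=s+m$, yields $\Kc^{m-k,\rho_j}_q:\widetilde\Fs_{p\infty}^{s+m-1}\to\Fs_{p\eps}^{s+k+1/2}$ for every $0\le k\le m$. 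Combining with $[\dbar,\Ec]:\Fs_{p\infty}^s(\omega)\to\widetilde\Fs_{p\infty}^{s-1}(\overline{\omega^c})$ (Rychkov's construction) and $\Sc^{m,j}:\widetilde\Fs_{p\infty}^{s-1}\to\widetilde\Fs_{p\infty}^{s+m-1}$ (Corollary~\ref{Cor::RefAtD::ScBdd}), the Triebel-Lizorkin embedding $\Fs_{p\eps}^{s+k+1/2}\hookrightarrow\Fs_{p\eps}^{s+1/2}$ for $k\ge0$ collapses all $m+1$ Leibniz terms into \eqref{Eqn::LocalHT::BddH'1}; the $\Fc^j$-analogue yields \eqref{Eqn::LocalHT::BddP'}, and \eqref{Eqn::LocalHT::BddH'2} follows from the $\tfrac{2n+2}{2n+1}$-exponent line of the same corollary. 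The Bochner-Martinelli summand of $\Hc'_q$ is controlled directly by Lemma~\ref{Lem::Space::BMKernel}, which alone also gives \eqref{Eqn::LocalHT::BddH'n} (since $K_{n-1}^j\equiv0$ leaves only the $\Bc_n$ term). The $\Rc_q$ estimates follow the same template, with the $\Sc^{0,0}$-integral gaining arbitrary regularity because $\Sc^{0,0}$ is smoothing.

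For (iii), by \eqref{Eqn::LocalHT::P} each kernel $F^j(z,\zeta)$ is holomorphic in $z$, so $\Pc'(\lambda_2f)$ is a holomorphic (hence $\Co^\infty$) function on $\tfrac12\B^{2n}\cap\omega$ for every $f\in\Ss'(\omega)$; multiplying by $\lambda_1\in\Co_c^\infty(\tfrac12\B^{2n})$ and extending by zero to $\omega$ yields a $\Co^\infty(\omega)$ function because $\supp\lambda_1$ is compactly contained in $\tfrac12\B^{2n}$. The principal technical difficulty lies in (i): one must select $m$ uniformly across the input regularity $s$ so that the corollary's parameter $t=s+m$ clears its threshold $t>1$ simultaneously for the main term and for each of the $m$ Leibniz cross-terms, and one must carefully verify that the distributional integration by parts introduces no stray boundary contribution at $b\omega$, where the kernel $K^j$ becomes singular.
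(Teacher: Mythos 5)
Your handling of parts (i) and (ii) follows the same route as the paper: Bochner--Martinelli on $\chi\Ec f$ combined with the Leray--Koppelman identity from Lemma~\ref{Lem::WeiEst::LPFormula} for the structural identity, and the reverse integration by parts $\Sc^{0,j}=D_{v_j}^m\Sc^{m,j}$ plus Leibniz plus Corollary~\ref{Cor::WeiEst::WeiOp} for the estimates. Your explicit invocation of the rotation reconciling $D_{v_j}$ with the $D_{\eta_n}$ of Proposition~\ref{Prop::WeiEst::WeiEst} via Assumption~\ref{Assumption::DomainRotate} is in fact a little more careful than the paper's implicit treatment. The choice of $m$ needing to satisfy $s+m>1$ is correct, and the collapse of the Leibniz cross-terms via the monotone embedding $\Fs_{p\eps}^{s+k+1/2}\hookrightarrow\Fs_{p\eps}^{s+1/2}$ is also the paper's argument.

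Part (iii) has a genuine gap. You deduce from holomorphy of $F^j(z,\zeta)$ in $z$ that $\Pc'(\lambda_2 f)$ is ``holomorphic (hence $\Co^\infty$)'' on $\tfrac12\B^{2n}\cap\omega$, and then conclude $\lambda_1\cdot\Pc'(\lambda_2 f)\in\Co^\infty(\omega)$ because $\supp\lambda_1\Subset\tfrac12\B^{2n}$. But $\Co^\infty(\omega)$ in the sense of Definition~\ref{Defn::Space::Hold} means \emph{bounded} smooth, and holomorphy only gives local smoothness: the kernel $F^j(z,\zeta)\sim(\partial\rho_j(\zeta)\cdot(\zeta-z))^{-n}$ degenerates as $z$ and $\zeta$ approach $b\omega$ from opposite sides, so nothing prevents $\Pc'(\lambda_2 f)$ and its derivatives from blowing up near $b\omega$. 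The cutoff $\lambda_1$ does not rescue this: $\supp\lambda_1$ is compact in $\tfrac12\B^{2n}$ but typically intersects $b\omega$ (that is the whole point of a boundary chart), so $\lambda_1$ does not localize to a compact subset of $\omega$. The statement crucially uses the hypothesis $\supp\lambda_1\cap\supp\lambda_2=\varnothing$, which your argument never invokes; without it the conclusion is simply false (take $\lambda_1=\lambda_2$). What one actually needs is the observation that after the operators $\Sc^{m,j}[\dbar,\Ec]$ the integrand is supported in $(\supp\lambda_2 + \overline{-\Kb})\setminus\omega$ (by \eqref{Eqn::LocalHT::SuppFact}), which under the disjointness hypothesis is separated by a positive distance from $\supp\lambda_1\cap\overline\omega$; on that separated region the kernel $D_{\eta_n}^m F^j(z,\zeta)$ and all its $z$-derivatives are uniformly bounded. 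This is precisely the content of Lemma~\ref{Lem::WeiEst::DisjointSupp}, which the paper applies here and which your proposal omits entirely.
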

\begin{rem}
    Note that by Lemmas~\ref{Lem::Space::TLEmbed} and \ref{Lem::Space::SpaceLem}, \eqref{Eqn::LocalHT::BddP'} - \eqref{Eqn::LocalHT::BddR} imply that for every $s\in\R$, $\Pc':H^{s,p}\to H^{s,p}$, $\Hc'_q,\Rc_q:H^{s,p}\to H^{s+\frac12,p}$ for all $1<p<\infty$, $\Hc'_q:H^{s,p}\to H^{s,(2n+2)p/(2n+2-p)}$ for all $1<p<2n+2$, $\Hc'_n:H^{s,p}\to H^{s+1,p}$, and $\Pc':\Co^s\to\Co^s$, $\Hc'_q,\Rc_q:\Co^s\to\Co^{s+1/2}$, $\Hc'_n:\Co^s\to\Co^{s+1}$.
\end{rem}

\begin{proof}[Proof of Theorem~\ref{Thm::LocalHT}]
    Recall the notations $\Bc_qg(z)=\int B_{q-1}(z,\cdot)\wedge g$, $\Fc g(z)=\int F(z,\cdot)\wedge g$ and $\Kc_qg(z)=\int K_{q-2}(z,\cdot)\wedge g$. We also use notation of mixed degree $\Kc=\sum_{q=2}^{n}\Kc_q$ and $\Bc=\sum_{q=1}^n\Bc_q$. For convenience of analysis, for a smooth form $\psi$ we define operator $\M^\psi f:=\psi\wedge f$. Therefore $[\dbar,\M^\psi]=\M^{\dbar\psi}$.
    In this way, 
    \begin{align}\label{Eqn::LocalHT::CompFormulaP'}
        \Pc'=&\sum_{j=1}^{2n}\Fc^j\M^\chi\Sc^{0,j}[\dbar,\Ec];
        \\\label{Eqn::LocalHT::CompFormulaH'}
        \Hc'=&\Bc\M^\chi\Ec+\sum_{j=1}^{2n}\Kc^j\M^\chi\Sc^{0,j}[\dbar,\Ec];
        \\\label{Eqn::LocalHT::CompFormulaR}
        \Rc=&\Bc\M^{\dbar \chi}\Ec+\Bc\M^\chi\Sc^{0,0}[\dbar,\Ec]-\sum_{j=1}^{2n}\Kc^j\M^{\dbar\chi}\Sc^{0,j}[\dbar,\Ec].
    \end{align}

    In the following we work on the forms $f\in\Ss'(\omega;\wedge^{0,\bullet})=\bigoplus_{q=0}^n\Ss'(\omega;\wedge^{0,q})$ with mixed degrees. For convenience we will omit the notations $\wedge^{0,\bullet}$ in the function spaces. If we write $f=\sum_{q=0}^nf_q$ where $f_q$ is the part with degree $(0,q)$, then $\Pc' f=\Pc'f_0$, $\Hc f=\sum_q\Hc'_qf_q$ and $\Rc f=\sum_q\Rc_qf_q$ (see Convention~\ref{Conv::Intro::MixForm}).
    
    % The definedness of $\Pc',\Hc',\Rc$ on all extendable distributions can follow from the boundedness \eqref{Eqn::LocalHT::BddP'} - \eqref{Eqn::LocalHT::BddR}, which we shall prove first.

    Firstly  we illustrate how $\Pc'$ and $\Hc'$ are defined for all extendable distributions on $\omega$. Let $f\in\Ss'(\omega;\wedge^{0,\bullet})$ and let $\psi\in\{\chi,\dbar\chi\}$. By Proposition~\ref{Prop::Space::ExtOp}, $\Ec f\in\Ss '(\C^n)$ is a well-defined distribution, so $\psi\wedge \Ec f\in\Es'(\B^{2n})$ is a compactly supported distribution. By Lemma~\ref{Lem::Space::BMFormula}, we get
    \begin{equation}\label{Eqn::LocalHT::BfDefined}
        \Bc[\psi\wedge\Ec f]\in\Ss'(\B^{2n};\wedge^{0,\bullet})\qquad\text{for }\psi\in\{\chi,\dbar\chi\}\text{ and for all }f\in\Ss'(\omega).
    \end{equation}
    Moreover we have the following estimates: for every $p,r\in(0,\infty]$ and $s\in\R$,
    \begin{align}\label{Eqn::LocalHT::BEBdd1}
        \Bc\M^\psi\Ec:\Fs_{pr}^s(\omega)\xrightarrow[\eqref{Eqn::Space::EBdd}]{\Ec}\Fs_{pr}^s(\C^n)\xrightarrow[\eqref{Eqn::Space::MultBdd}]{\M^\psi}\Fs_{pr}^s(\B^{2n})\xrightarrow[\eqref{Eqn::Space::BMKernelBdd}]{\Bc}\Fs_{pr}^{s+1}(\B^{2n})\twoheadrightarrow\Fs_{pr}^{s+1}(\tfrac12\B^{2n}\cap\omega).
    \end{align}

    Using Lemma~\ref{Lem::Space::TLEmbed} we have $\Fs_{p\infty}^{s+1}(\tfrac12\B^{2n}\cap\omega)\hookrightarrow\Fs_{\frac{(2n+2)p}{2n+2-p},\eps}^{s+\frac1{n+1}}(\tfrac12\B^{2n}\cap\omega)$, which gives
    \begin{equation}\label{Eqn::LocalHT::BEBdd2}
        \Bc\M^\chi\Ec:\Fs_{p\infty}^s(\omega)\to\Fs_{\frac{(2n+2)p}{2n+2-p},\eps}^{s+\frac1{n+1}}(\tfrac12\B^{2n}\cap\omega)\hookrightarrow\Fs_{\frac{(2n+2)p}{2n+2-p},\eps}^{s}(\tfrac12\B^{2n}\cap\omega),\quad p\in(0,2n+2].
    \end{equation}
    
    Note that $[\dbar,\Ec]f|_{\omega}=0$. Since every tempered distribution has finite order, there is an integer $m\ge0$ such that $(\Sc^{m,j}[\dbar,\Ec]f)|_{\B^{2n}}\in L^1$ for $1\le j\le 2n$. By Theorem~\ref{Thm::RefAtD}~\ref{Item::RefAtD::Sum} we have $\Sc^{0,j}=D_{v_j}^m\Sc^{m,j}$ with $\Sc^{m,j}[\dbar,\Ec](\chi f)|_\omega=0$. Therefore taking integration by parts, for $\psi\in\{\chi,\dbar\chi\}$ and $z\in\B^{2n}\cap\omega$,
    \begin{align}
        \notag
        \big(\Kc^j\M^\psi\Sc^{0,j}[\dbar,\Ec]f\big)(z)=&\int_{\B^{2n}\backslash\overline\omega}K^j(z,\cdot)\wedge \psi\wedge\big(\Sc^{0,j}[\dbar,\Ec]f\big)=\int_{\B^{2n}\backslash\overline\omega}K^j(z,\cdot)\wedge \psi\wedge D_{v_j}^m\big(\Sc^{m,j}[\dbar,\Ec]f\big)
        \\\notag
        =&(-1)^m\int_{\B^{2n}\backslash\overline\omega}D_{v_j}^m(K^j(z,\cdot)\wedge\psi)\wedge\Sc^{m,j}[\dbar,\Ec]f
        \\
        \label{Eqn::LocalHT::RevIntFormula}
        =&(-1)^m\sum_{k=0}^m{m\choose k}\int_{\B^{2n}\backslash\overline\omega}(D_{v_j}^kK^j(z,\cdot))\wedge (D_{v_j}^{m-k}\psi)\wedge\Sc^{m,j}[\dbar,\Ec]f
        \\
        \label{Eqn::LocalHT::RevIntKFormula}
        =&(-1)^m\sum_{k=0}^m{m\choose k}\Kc^{k,\rho_j}\M^{D_{v_j}^{m-k}\psi}\Sc^{m,j}[\dbar,\Ec]f(z).
    \end{align}
    Here we use $\Kc^{m,\rho_j}g(z)=\int D_{v_j}^mK^{\rho_j}(z,\cdot)\wedge g$ following the notation in \eqref{Eqn::WeiEst::OpF}.
    
    Since $D_{v_j}\rho_j\equiv1$ we see that $D_{v_j}^kK^j(z,\cdot)\in L^1(\B^{2n}\backslash\overline\omega)$ for all $z\in\B^{2n}\cap\omega$. Therefore the integrand in \eqref{Eqn::LocalHT::RevIntFormula} is $L^1$, which gives the definedness of $\Kc^j\M^\psi\Sc^{0,j}[\dbar,\Ec] f$ for $f\in\Ss'$ and $1\le j\le 2n$.
    
    More specifically, that $(\Sc^{m,j}[\dbar,\Ec]f)|_{\B^{2n}}\in L^1$ and $(\Sc^{m,j}[\dbar,\Ec]f)|_\omega=0$ implies $\M^{D_{v_j}^{m-k}\psi}\Sc^{m,j}[\dbar,\Ec]f\in L^1(\B^{2n}\backslash\overline\omega)$. Applying \eqref{Eqn::WeiEst::SobEstK1} we see that $\Kc^{k,\rho_j}\M^{D_{v_j}^{m-k}\psi}\Sc^{m,j}[\dbar,\Ec]f\in L^1(\B^{2n}\cap\omega,\delta^{m-1/2-s})$. By \eqref{Eqn::Space::HLSob} it belongs to $\Fs_{1\eps}^{s+1/2}(\B^{2n}\cap\omega)\subset\Ss'(\B^{2n}\cap\omega)$. Taking sum over $1\le k\le m$ we conclude that $\Kc^j\M^\psi\Sc^{0,j}[\dbar,\Ec]f\in\Ss'(\B^{2n}\cap\omega)$.
    
    Combining with \eqref{Eqn::LocalHT::BfDefined}, we conclude that for every $f\in\Ss'(\omega)$, $\Hc'f$ and $\Rc f$ are defined in $\Ss'(\B^{2n}\cap\omega)$. Taking restrictions, they are in $\Ss'(\frac12\B^{2n}\cap\omega)$ as well.

    Replacing the $\Kc^j$ in the above argument by $\Fc^j$ and replacing \eqref{Eqn::WeiEst::SobEstK1} by \eqref{Eqn::WeiEst::SobEstF}, note that $D_{v_j}^kF^j(z,\cdot)\in L^1(\B^{2n}\backslash\overline\omega)$ as well for $z\in\B^{2n}\cap\omega$, we see that $\Pc'f$ is well-defined in $\Ss'(\B^{2n}\cap\omega)$ for $f\in\Ss'(\omega)$. Moreover using the notation \eqref{Eqn::WeiEst::OpF},
    \begin{equation}\label{Eqn::LocalHT::RevIntFFormula}
        \Fc^j\M^\chi\Sc^{0,j}[\dbar,\Ec]f=(-1)^m\sum_{k=0}^m{m\choose k}\Fc^{k,\rho_j}\M^{D_{v_j}^{m-k}\chi}\Sc^{m,j}[\dbar,\Ec] f,\quad\text{for every }m\ge0.
    \end{equation}

    Altogether we get the well-definedness of $\Pc f,\Hc'f,\Rc f\in\Ss'(\frac12\B^{2n}\cap\omega;\wedge^{0,\bullet})$ for all $f\in\Ss'(\omega;\wedge^{0,\bullet})$.

    Applying Corollary~\ref{Cor::WeiEst::WeiOp} and Propositions~\ref{Prop::Space::HLLem} where we use $\delta(z)=\dist(z,b\omega)$, for a given $s\in\R$, take integers $m>1-s$, for every $0\le k\le m$ and for $\phi\in\{D_{v_j}^{m-k}\chi,\dbar D_{v_j}^{m-k}\chi\}$,
    \begin{multline*}
        \Kc^{k,\rho_j}\M^\phi\Sc^{m,j}[\dbar,\Ec]:\Fs_{p\infty}^s(\omega)\xrightarrow[\eqref{Eqn::Space::[D,E]Bdd}]{[\dbar,\Ec]}\widetilde\Fs_{p\infty}^{s-1}(\omega^c)\xrightarrow[\eqref{Eqn::RefAtD::ScBdd}]{\Sc^{m,j}}\widetilde\Fs_{p\infty}^{s+m-1}(\omega^c)
        \\
        \xrightarrow[\eqref{Eqn::Space::MultBdd}]{\M^\phi}\Fs_{p\infty}^{s+m-1}(\overline\B^{2n}\backslash\overline\omega)\begin{cases}
            \xrightarrow[\eqref{Eqn::WeiEst::SobBddK1}]{\Kc^{k,\rho_j}}\Fs_{p\eps}^{s+\frac12}(\B^{2n}\cap\omega),&p\in[1,\infty]
            \\
            \xrightarrow[\eqref{Eqn::WeiEst::SobBddK2}]{\Kc^{k,\rho_j}}\Fs_{\frac{(2n+2)p}{2n+2-p},\eps}^s(\B^{2n}\cap\omega),&p\in[1,2n+2].
        \end{cases}
    \end{multline*}

    The same argument but replace $\Kc^{k,\rho_j}$ and \eqref{Eqn::WeiEst::SobBddK1} by $\Fc^{k,\rho_j}$ and \eqref{Eqn::WeiEst::SobBddF} yields
    \begin{equation*}
        \Fc^{k,\rho_j}\M^{D_{v_j}^{m-k}\chi}\Sc^{m,j}[\dbar,\Ec]:\Fs_{p\infty}^s(\omega)\to\Fs_{p\eps}^s(\B^{2n}\cap\omega),\qquad p\in[1,\infty],\qquad 1\le k\le m.
    \end{equation*}

    Since $\Sc^{0,0}:\widetilde\Fs_{p\infty}^s(\omega^c)\to\widetilde\Fs_{p\infty}^{s+m}(\omega^c)$ for all $m\ge0$, take any $m\ge1$ we are enough to conclude
    \begin{equation}\label{Eqn::LocalHT::S00Bdd}
        \Bc\M^\chi\Sc^{0,0}[\dbar,\Ec]:\Fs_{p\infty}^s(\omega)\to\Fs_{p\eps}^{s+\frac12}(\B^{2n}\cap\omega)\twoheadrightarrow\Fs_{p\eps}^{s+\frac12}(\tfrac12\B^{2n}\cap\omega),\qquad p\in[1,\infty].
    \end{equation}
    
    Taking sum over $k$, by \eqref{Eqn::LocalHT::RevIntKFormula} and \eqref{Eqn::LocalHT::RevIntFFormula}, we have, now for every $1\le j\le 2n$ and for $\psi\in\{\chi,\dbar\chi\}$,
    \begin{align*}
        \Kc^j\M^\psi\Sc^{0,j}[\dbar,\Ec]&:\Fs_{p\infty}^s(\omega)\to\Fs_{p\eps}^{s+\frac12}(\B^{2n}\cap\omega)\twoheadrightarrow\Fs_{p\eps}^{s+\frac12}(\tfrac12\B^{2n}\cap\omega),&p\in[1,\infty];
        \\
        \Kc^j\M^\psi\Sc^{0,j}[\dbar,\Ec]&:\Fs_{p\infty}^s(\omega)\to\Fs_{\frac{(2n+2)p}{2n+2-p},\eps}^{s}(\B^{2n}\cap\omega)\twoheadrightarrow\Fs_{\frac{(2n+2)p}{2n+2-p},\eps}^{s}(\tfrac12\B^{2n}\cap\omega),&p\in[1,2n+2];
        \\
        \Fc^j\M^\chi\Sc^{0,j}[\dbar,\Ec]&:\Fs_{p\infty}^s(\omega)\to\Fs_{p\eps}^s(\B^{2n}\cap\omega)\twoheadrightarrow\Fs_{p\eps}^s(\tfrac12\B^{2n}\cap\omega),&p\in[1,\infty].
    \end{align*}

    Taking sum over $1\le j\le 2n$, using \eqref{Eqn::LocalHT::CompFormulaP'} we get \eqref{Eqn::LocalHT::BddP'}. Applying \eqref{Eqn::LocalHT::BEBdd1} to \eqref{Eqn::LocalHT::CompFormulaH'} we get \eqref{Eqn::LocalHT::BddH'1}; applying \eqref{Eqn::LocalHT::BEBdd2} to \eqref{Eqn::LocalHT::CompFormulaH'} we get \eqref{Eqn::LocalHT::BddH'2}. Applying \eqref{Eqn::LocalHT::BEBdd1} and \eqref{Eqn::LocalHT::S00Bdd} to \eqref{Eqn::LocalHT::CompFormulaR} we get \eqref{Eqn::LocalHT::BddR}. 
    
    When $q=n$, $[\dbar,\Ec]$ vanishes on $(0,n)$ forms. Therefore $\Hc'_n=\Bc_n\M^\chi\Ec$ and thus by \eqref{Eqn::LocalHT::BEBdd1} we get \eqref{Eqn::LocalHT::BddH'n}. 
    
    % The Sobolev estimates $\Pc':H^{s,p}\to H^{s,p}$, $\Pc':H^{s,p}\to H^{s,p}$, $\Hc'_q,\Rc_q:H^{s,p}\to H^{s+\frac12,p}$, $\Hc'_q:H^{s,p}\to H^{s,(2n+2)p/(2n+2-p)}$, and H\"older estimates $\Pc':\Co^s\to\Co^s$, $\Hc'_q,\Rc_q:\Co^s\to\Co^{s+1/2}$ follow from \eqref{Eqn::LocalHT::BddP'} - \eqref{Eqn::LocalHT::BddR} since we have $H^{s,p}=\Fs_{p2}^s$ ($1<p<\infty$) and $\Co^s=\Fs_{\infty\infty}^s$ from Lemma~\ref{Lem::Space::SpaceLem} and the embeddings $\Fs_{p\eps}^s\hookrightarrow\Fs_{p2}^s\hookrightarrow\Fs_{p\infty}^s$ (see Lemma~\ref{Lem::Space::TLEmbed}).

    Note that by Lemma~\ref{Lem::Space::TLEmbed}, $\Fs_{p\eps}^s\hookrightarrow\Fs_{pr}^s\hookrightarrow\Fs_{p\infty}^s$ whenever $\eps<r$; and by Lemma~\ref{Lem::Space::SpaceLem}~\ref{Item::Space::SpaceLem::Hsp} $H^{s,2}=\Fs_{22}^s$. Therefore by \eqref{Eqn::LocalHT::BddR} $\Rc_q,\Hc'_q:H^{s,2}\hookrightarrow\Fs_{2\infty}^s\to \Fs_{2\eps}^{s+1/2}\hookrightarrow H^{s+1/2,2}$ are bounded, completing the proof of \ref{Item::LocalHT::Bdd}.

    \medskip
    Recall from Lemma~\ref{Lem::Space::BMFormula} that $\id=\dbar\Bc+\Bc\dbar$ and $\Bc-\Fc=\dbar\Kc-\Kc\dbar$. For every $f\in\Ss'(\omega;\wedge^{0,\bullet})$ we have $f|_{\frac12\B^{2n}\cap\omega}=(\chi f)|_{\frac12\B^{2n}\cap\omega}$ and $(\dbar\chi\wedge f)|_{\frac12\B^{2n}\cap\omega}=0$, therefore in $\frac12\B^{2n}\cap\omega$,
    \begin{align}
        \notag 
        f=&\chi f=\chi\Ec f=\M^\chi\Ec f=\dbar\Bc\M^\chi\Ec f+\Bc\dbar\M^\chi\Ec f=\dbar(\Bc\M^\chi\Ec f)+\Bc\big(\M^{\dbar\chi}\Ec+\M^\chi[\dbar,\Ec]+\M^\chi\Ec \dbar\big)f
        \\
        \label{Eqn::LocalHT::HTPfB}=&\dbar(\Bc\M^\chi\Ec f)+\Bc\M^\chi\Ec (\dbar f)+\Bc\M^{\dbar\chi}\Ec f+\sum_{j=0}^{2n}\Bc\M^\chi\Sc^{0,j}[\dbar,\Ec] f.
    \end{align}

    To prove \ref{Item::LocalHT::HT}, note that $F^j(z,\zeta)$ is holomorphic in $z$, so $\dbar \Fc^j\equiv0$ hence $\dbar\Fc^{m,\rho_j}=(-1)^m\dbar\Fc^j\circ D_{v_j}^m=0$ as well. Therefore by \eqref{Eqn::LocalHT::RevIntFFormula}, $\dbar\Pc' f=0$ as well for all $f\in\Ss'(\omega)$.
    
    Recall from Lemma~\ref{Lem::WeiEst::LPFormula} that $\Bc-\Fc^j=\dbar\Kc^j-\Kc^j\dbar$ where both sides map functions from $\B^{2n}\backslash\overline\omega$ to $\B^{2n}\cap\omega$. Recall that $\Sc^{0,j}$ are convolution operators which means $\dbar\Sc^{0,j}=\Sc^{0,j}\dbar$. Therefore for $1\le j\le 2n$,
    \begin{align}
        \notag&
        \Bc\M^\chi\Sc^{0,j}[\dbar,\Ec]=\Fc^j\M^\chi\Sc^{0,j}[\dbar,\Ec]+(\dbar\Kc^j-\Kc^j\dbar)\M^\chi\Sc^{0,j}[\dbar,\Ec] 
        \\
        \notag=&\Fc^j\M^\chi\Sc^{0,j}[\dbar,\Ec]+\dbar(\Kc^j\M^\chi\Sc^{0,j}[\dbar,\Ec] )-\Kc^j\M^{\dbar\chi}\Sc^{0,j}[\dbar,\Ec]+\Kc^j\M^\chi\Sc^{0,j}[\dbar,\Ec]\dbar
        \\
        \label{Eqn::LocalHT::HTPfK}=&\Fc^j\M^\chi\Sc^{0,j}[\dbar,\Ec]+\dbar(\Kc^j\M^\chi\Sc^{0,j}[\dbar,\Ec])+(\Kc^j\M^\chi\Sc^{0,j}[\dbar,\Ec]) \dbar-\Kc^j\M^{\dbar\chi}\Sc^{0,j}[\dbar,\Ec].
    \end{align}

    Taking sum over $1\le j\le 2n$ for \eqref{Eqn::LocalHT::HTPfK}, and combining \eqref{Eqn::LocalHT::HTPfB} we get
    \begin{equation*}
        f|_{\frac12\B^{2n}\cap\omega}=\Pc'f+\dbar\Hc'f+\Hc'\dbar f+\Rc f.
    \end{equation*}

    Recall that we already have $\dbar\Pc'f=0$. By taking out all the degree components we obtain \eqref{Eqn:LocalHT::HTP} and \eqref{Eqn:LocalHT::HTH}. This completes the proof of \ref{Item::LocalHT::HT}.

    \medskip
    Finally assume $\lambda_1,\lambda_2\in\Co_c^\infty(\tfrac12\B^{2n})$ have disjoint supports and let $f\in\Ss'(\omega)$. By Lemma~\ref{Lem::Space::SsLim}~\ref{Item::Space::SsLim::Cup}, there is a $s<0$ such that $\lambda_2f\in\Co^{s}(\tfrac12\B^{2n}\cap\omega)$. Take $m>1-s$, by \eqref{Eqn::LocalHT::SuppFact} we get for all $0\le k\le m$,
    \begin{equation*}
        \supp\big((D^{m-k}_{v_j}\chi)\cdot\Sc^{m,j}[\dbar,\Ec](\lambda_2f)\big)\subseteq(\supp\lambda_2+(-\Kb))\cap\B^{2n}\backslash\omega.
    \end{equation*}

    Let us denote two compact sets $\Lambda_1:=\overline\omega\cap\supp\lambda_1$ and $\Lambda_2:=(\supp\lambda_2+(\overline{-\Kb}))\cap\overline{\B^{2n}}\backslash\omega$ for convenience. Recall that $\supp\chi\subset\B^{2n}$, thus $D^{m-k}_{v_j}\chi\cdot\Sc^{m,j}[\dbar,\Ec](\lambda_2f)\in L^1(\Lambda_2)$. By the assumption $\supp\lambda_1\cap\supp\lambda_2=\varnothing$ we see that $\Lambda_1\cap\Lambda_2=\varnothing$. In other words $\dist(\Lambda_1,\Lambda_2)>0$.

    Applying Lemma~\ref{Lem::WeiEst::DisjointSupp} with $\lambda=\lambda_1$ and $g=D^{m-k}_{v_j}\chi\cdot\Sc^{m,j}[\dbar,\Ec](\lambda_2f)$ we get $$\lambda_1\cdot \Fc^{k,\rho_j}\M^{D_{v_j}^{m-k}\chi}\Sc^{m,j}[\dbar,\Ec](\lambda_2 f)\in\Co^\infty(\tfrac12\B^{2n}\cap\omega),\quad\text{ for all }0\le k\le m\text{ and }1\le j\le 2n.$$ Taking sum over $k$ via \eqref{Eqn::LocalHT::RevIntFFormula} we get $\lambda_1\cdot\Fc^j\M^\chi\Sc^{0,j}[\dbar,\Ec](\lambda_2f)\in\Co^\infty(\frac12\B^{2n}\cap\omega)$. Taking sum over $j$ via \eqref{Eqn::LocalHT::CompFormulaP'} we get $\lambda_1\cdot\Pc'(\lambda_2f)\in\Co^\infty(\frac12\B^{2n}\cap\omega)$. Since $\supp\lambda_1\subset\frac12\B^{2n}$, we get $\lambda_1\cdot\Pc'(\lambda_2f)\in\Co^\infty(\omega)$, completing the proof of \ref{Item::LocalHT::HoloP'}.
\end{proof}

\section{Proof of the Theorem}\label{Section::ProofThm}
In this section we state a more general result to Theorem~\ref{Thm::MainThm} via Triebel-Lizorkin spaces. This result unify the Sobolev estimates and H\"older estimates, and include the case for $p=1$.

\begin{thm}\label{Thm::GenThm}
    Let $\Omega\subset\C^n$ be a bounded domain, which is either strongly pseudoconvex with $C^2$ boundary, or strongly $\C$-linearly convex with $C^{1,1}$ boundary.
    
    Then there are operators $\Hc_q:\Ss'(\Omega;\wedge^{0,q})\to\Ss'(\Omega;\wedge^{0,q-1})$ that maps $(0,q)$-forms to $(0,q-1)$-forms with distributional coefficients for $1\le q\le n$ (we set $\Hc_{n+1}:=0$) and $\Pc:\Ss'(\Omega)\to\Ss'(\Omega)$ on functions, such that
\begin{enumerate}[(i)]
    \item{\normalfont(Homotopy formula)}\label{Item::GenThm::HT} $f=\dbar\Hc_q f+\Hc_{q+1}\dbar f$ for all $1\le q\le n$ and $(0,q)$-forms $f\in\Ss'(\Omega;\wedge^{0,q})$; and $f=\Pc f+\Hc_1\dbar f$ for all function $f\in\Ss'(\Omega)$.
    \item{\normalfont(Skew Bergman projection estimates)}\label{Item::GenThm::P} For every $s\in\R$, $p\in[1,\infty]$ and $\eps>0$,  $\Pc:\Fs_{p\infty}^s(\Omega)\to \Fs_{p\eps}^s(\Omega)$.
    \item{\normalfont($\frac12$ estimates)}\label{Item::GenThm::Sob} For every $s\in\R$, $p\in[1,\infty]$ and $\eps>0$, $\Hc_q:\Fs_{p\infty}^s(\Omega;\wedge^{0,q})\to \Fs_{p\eps}^{s+\frac12}(\Omega;\wedge^{0,q-1})$.
    \item{\normalfont($L^p$-$L^q$ estimates)}\label{Item::GenThm::Lp} For $s\in\R$, $p\in[1,2n+2]$ and $\eps>0$, $\Hc_q:\Fs_{p\infty}^s(\Omega;\wedge^{0,q})\to \Fs_{\frac{2(n+1)p}{2n+2-p},\eps}^s(\Omega;\wedge^{0,q-1})$.
    \item\label{Item::GenThm::Hn} For $q=n$, additionally $\Hc_n:\Fs_{pr}^s(\Omega;\wedge^{0,n})\to \Fs_{pr}^{s+1}(\Omega;\wedge^{0,n})$ for all $s\in\R$ and $p,r\in(0,\infty]$.
\end{enumerate}
\end{thm}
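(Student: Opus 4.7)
The plan is to deduce Theorem~\ref{Thm::GenThm} by combining the local parametrix construction of Theorem~\ref{Thm::LocalHT} with the abstract gluing mechanism of Proposition~\ref{Prop::PM::ConcretePM}. I would proceed as follows.

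First, for each boundary point $\zeta \in b\Omega$, apply Lemma~\ref{Lem::LocalHT::LocalDomain} to obtain a biholomorphism $\Phi_\zeta: U_\zeta \xrightarrow{\simeq} \B^{2n}$ with $\Phi_\zeta(\zeta) = 0$, along with a special Lipschitz domain $\omega_\zeta = \{y_n > \sigma_\zeta(z',x_n)\}$ that satisfies Assumption~\ref{Assumption::DomainRotate} and $\omega_\zeta \cap \B^{2n} = \Phi_\zeta(U_\zeta \cap \Omega)$. Then invoke Theorem~\ref{Thm::LocalHT} on each $\omega_\zeta$ to obtain local operators $\Pc^\zeta, \Hc_q^\zeta, \Rc_q^\zeta$ defined on $\tfrac12 \B^{2n} \cap \omega_\zeta$. (In applying Proposition~\ref{Prop::PM::ConcretePM} we should actually shrink $U_\zeta$ to $\Phi_\zeta^{-1}(\tfrac12\B^{2n})$ so the neighborhoods match.)

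Next, I would verify the hypotheses \ref{Item::PM::ConcretePM::LocalHT}--\ref{Item::PM::ConcretePM::GlobalHT} of Proposition~\ref{Prop::PM::ConcretePM}. Assumption~\ref{Item::PM::ConcretePM::LocalHT} (local homotopy) is exactly Theorem~\ref{Thm::LocalHT}~\ref{Item::LocalHT::HT}. Assumption~\ref{Item::PM::ConcretePM::SobBdd} (uniform Sobolev bound with some $t_0 > 0$) follows from Theorem~\ref{Thm::LocalHT}~\ref{Item::LocalHT::Bdd} with $t_0 = 1/2$ via the embeddings $H^{s,2} = \Fs_{22}^s \hookrightarrow \Fs_{2\infty}^s$ and $\Fs_{2\eps}^{s+1/2} \hookrightarrow H^{s+1/2,2}$. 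Assumption~\ref{Item::PM::ConcretePM::DPBdd} (the smoothing property of $\Pc^\zeta$ on disjoint supports) is precisely Theorem~\ref{Thm::LocalHT}~\ref{Item::LocalHT::HoloP'}. The global $\Co^\infty$ homotopy \ref{Item::PM::ConcretePM::GlobalHT} exists on either class of domains by classical theory: on $C^2$ strongly pseudoconvex domains by Kohn's $\dbar$-Neumann solution (or Henkin's integral operators), and on $C^{1,1}$ strongly $\C$-linearly convex domains by the Gong--Lanzani construction, both of which yield $H^{s,2} \to H^{s+1/2,2}$ bounds for $s$ large, hence map $\Xs^{+\infty} \to \Xs^{+\infty}$.

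Having verified \ref{Item::PM::ConcretePM::LocalHT}--\ref{Item::PM::ConcretePM::GlobalHT}, Proposition~\ref{Prop::PM::ConcretePM}~\ref{Item::PM::ConcretePM::HT} yields the global operators $\Pc = \Pc^\Omega$ and $\Hc_q = \Hc_q^\Omega$ with the desired homotopy formula \ref{Item::GenThm::HT}. To obtain the regularity statements \ref{Item::GenThm::P}--\ref{Item::GenThm::Hn}, I would apply Proposition~\ref{Prop::PM::ConcretePM}~\ref{Item::PM::ConcretePM::GlobalBdd} with three choices of Triebel--Lizorkin parameters, each time using the bounds \eqref{Eqn::LocalHT::BddP'}--\eqref{Eqn::LocalHT::BddH'n} from Theorem~\ref{Thm::LocalHT}~\ref{Item::LocalHT::Bdd} for the local operators:
\begin{itemize}
    \item For \ref{Item::GenThm::P} and \ref{Item::GenThm::Sob}: take $(s_0, p_0, r_0) = (s, p, \infty)$ and $(s_1, p_1, r_1) = (s, p, \eps)$, resp. $(s+\tfrac12, p, \eps)$. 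Both require $s_0 + 1 > s_1$ and $\tfrac{s_0+1-s_1}{2n} \ge \tfrac{1}{p_0} - \tfrac{1}{p_1}$, which are trivially satisfied.
    \item For \ref{Item::GenThm::Lp}: take $(s_1, p_1, r_1) = (s, \tfrac{2(n+1)p}{2n+2-p}, \eps)$ so that $s_0+1-s_1 = 1$ and $\tfrac{1}{p_0} - \tfrac{1}{p_1} = \tfrac{1}{2(n+1)}$, whence $\tfrac{1}{2n} \ge \tfrac{1}{2n+2}$ holds.
    \item For \ref{Item::GenThm::Hn}: the bound \eqref{Eqn::LocalHT::BddH'n} gives an unrestricted gain of one derivative for the local $\Hc_n^\zeta$ on $(0,n)$-forms, and the corresponding $\Rc_n = 0$ automatically (we set $\Hc_{n+1}=0$), so Proposition~\ref{Prop::PM::ConcretePM}~\ref{Item::PM::ConcretePM::GlobalBdd} applies with $s_1 = s+1$, $(p_1, r_1) = (p, r)$.
\end{itemize}

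The main obstacle I anticipate is the bookkeeping needed to confirm that the local bounds of Theorem~\ref{Thm::LocalHT} translate verbatim into the hypotheses \ref{Item::PM::ConcretePM::GlobalBdd::R}--\ref{Item::PM::ConcretePM::GlobalBdd::Bddq} of Proposition~\ref{Prop::PM::ConcretePM}~\ref{Item::PM::ConcretePM::GlobalBdd}, since the local operators are defined on $\tfrac12\B^{2n} \cap \omega_\zeta$ rather than on all of $\omega_\zeta$, and one must check that the third-index $r_0 = \infty$ is compatible with the gain-by-$\eps$ in $r_1$ when $\Rc_q$ is being fed into the spectral Neumann series; this is precisely why the $\Rc_q^\zeta$ must gain smoothness strictly better than what is being claimed for $\Hc_q^\zeta$, which is exactly what \eqref{Eqn::LocalHT::BddR} provides (gain of $1/2$ derivative with strict improvement of third index).
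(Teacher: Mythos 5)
Your proposal is essentially the same as the paper's proof: glue the local parametrix operators of Theorem~\ref{Thm::LocalHT} via the abstract machinery of Proposition~\ref{Prop::PM::ConcretePM}, then read off the Triebel--Lizorkin bounds by feeding \eqref{Eqn::LocalHT::BddP'}--\eqref{Eqn::LocalHT::BddH'n} into Proposition~\ref{Prop::PM::ConcretePM}~\ref{Item::PM::ConcretePM::GlobalBdd} with the appropriate index choices; these are also exactly the choices the paper makes.

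One correction is needed in your verification of hypothesis~\ref{Item::PM::ConcretePM::GlobalHT}. You propose to use Kohn's $\dbar$-Neumann solution to supply the global $\Co^\infty$ homotopy on a $C^2$ strongly pseudoconvex domain. This does not work: as the paper itself emphasizes (and as Lemma~\ref{Lem::Space::IllPosed} elaborates), when $b\Omega$ is only $C^2$ the canonical solution $\dbar^*N_q$ fails to be bounded on $\Co^s$ for large $s$, so it does not map $\Co^\infty(\Omega)\to\Co^\infty(\Omega)$. The correct references are the integral-representation constructions of Gong \cite{GongHolderSPsiCXC2} or Shi--Yao \cite{ShiYaoC2}, which give $\Co^s\to\Co^{s+1/2}$ on $C^2$ strongly pseudoconvex domains for all $s>1$; on $C^{1,1}$ strongly $\C$-linearly convex domains the reference is Gong--Lanzani \cite{GongLanzaniCConvex}, as you correctly identify.

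A small comment on your treatment of item~\ref{Item::GenThm::Hn}: with the choices $s_0=s$, $s_1=s+1$, $p_0=p_1=p$, $r_0=r_1=r$, the hypothesis $s_0+1>s_1$ of Proposition~\ref{Prop::PM::ConcretePM}~\ref{Item::PM::ConcretePM::GlobalBdd} is satisfied only with equality rather than strictly. However, the strict inequality is imposed in the proposition only to drive the embedding $\Fs_{p_0r_0}^{s_0+1}(\Omega)\hookrightarrow\Fs_{p_1r_1}^{s_1}(\Omega)$ from Lemma~\ref{Lem::Space::TLEmbed}, and that lemma allows $s_0+1-s_1=0$ as long as $r_0\le r_1$, which holds here since $r_0=r_1$. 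So the application goes through; this is the same elision that appears in the paper's own proof of the $q=n$ case.
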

\begin{rem}
    Theorem~\ref{Thm::GenThm} implies Theorem~\ref{Thm::MainThm} by Lemmas~\ref{Lem::Space::TLEmbed} and \ref{Lem::Space::SpaceLem}. 
    
    Indeed, for $s\in\R$ and $1<p<\infty$ we have $H^{s,p}(\Omega)=\Fs_{p2}^s(\Omega)$ and $\Co^s(\Omega)=\Fs_{\infty\infty}^s(\Omega)$. Therefore by \ref{Item::GenThm::Sob} with $\eps<2$ we have 
    \begin{align*}
        \Hc_q:&H^{s,p}=\Fs_{p2}^s\subset\Fs_{p\infty}^s\to\Fs_{p\eps}^{s+1/2}\subset\Fs_{p2}^{s+1/2}=H^{s+1/2,p};
        \\
        \Hc_q:&\Co^s=\Fs_{\infty\infty}^s\to\Fs_{\infty\eps}^{s+1/2}\subset\Fs_{\infty\infty}^{s+1/2}=\Co^{s+1/2}.
    \end{align*}
    The same argument works for $\Pc$ as well, we omit the details.
\end{rem}
\begin{proof}[Proof of Theorem~\ref{Thm::GenThm}]
    The result follows from combining Proposition~\ref{Prop::PM::ConcretePM} and Theorem~\ref{Thm::LocalHT}.

    We first check the condition Proposition~\ref{Prop::PM::ConcretePM}~\ref{Item::PM::ConcretePM::GlobalHT}. When $\Omega$ is $C^2$ strongly pesudoconvex, \eqref{Eqn::PM::ConcretePM::TildeH} can be obtained from \cite[Theorem~1.1]{GongHolderSPsiCXC2} or \cite[Theorem~6.2]{ShiYaoC2}; when $\Omega$ is $C^{1,1}$ strongly $\C$-linearly convex, \eqref{Eqn::PM::ConcretePM::TildeH} can be obtained from \cite[Theorem~1.2]{GongLanzaniCConvex}. In fact in both cases $\widetilde\Hc_q:\Co^s(\Omega;\wedge^{0,q})\to\Co^{s+1/2}(\Omega;\wedge^{0,q-1})$ are bounded for all $s>1$ and $1\le q\le n$.

    Since $\Omega$ is either $C^2$ strongly pseudoconvex or $C^{1,1}$ strongly $\C$-linearly convex, by Lemma~\ref{Lem::LocalHT::LocalDomain} for every $\zeta\in b\Omega$ there are a special Lipschitz domain $\omega_\zeta=\omega\subset\C^n$ that satisfies Assumption~\ref{Assumption::DomainRotate}, a  map $\Phi:\tfrac12\B^{2n}\to\C^n$ which is biholomorphic onto its image such that $\Phi(0)=\zeta$ and $\omega\cap\frac12\B^{2n}=\Phi^{-1}(\Omega)$

    By Theorem~\ref{Thm::LocalHT}, since $\omega$ satisfies Assumption~\ref{Assumption::DomainRotate}, there are operators $\Pc^\zeta=\Pc':\Ss'(\omega)\to\Ss'(\frac12\B^{2n}\cap\omega)$, $\Hc^\zeta_q=\Hc'_q:\Ss'(\omega;\wedge^{0,q})\to\Ss'(\frac12\B^{2n}\cap\omega;\wedge^{0,q-1})$ for $1\le q\le n$ and $\Rc^\zeta_q=\Rc_q:\Ss'(\omega;\wedge^{0,q})\to\Ss'(\frac12\B^{2n}\cap\omega;\wedge^{0,q})$ for $0\le q\le n-1$, such that (we set $\Hc^\zeta_{n+1}=\Rc^\zeta_n=0$),
    \begin{enumerate}[a)]
        \item\label{Item::GenThm::HT0} For $q=0$, $f|_{\frac12\B^{2n}\cap\omega}=\Pc^\zeta f+\Hc^\zeta_1\dbar f+\Rc_0f$ and $\dbar\Pc^\zeta f=0$ for all $f\in\Ss'(\omega)$.
        \item\label{Item::GenThm::HTq} For $1\le q\le n$, $f|_{\frac12\B^{2n}\cap\omega}=\dbar\Hc^\zeta_q f+\Hc^\zeta_{q+1}\dbar f+\Rc_qf$ for all $f\in\Ss'(\omega;\wedge^{0,q})$.
        \item\label{Item::GenThm::RBdd} $\Rc_q:\Fs_{p\infty}^s\to\Fs_{p\infty}^s$ and $\Rc_q:H^{s,2}\to H^{s,2}$ are bounded for all $0\le q\le n-1$, $p\in[1,\infty]$, $s\in\R$.
        \item\label{Item::GenThm::P'Bdd} $\Pc^\zeta:\Fs_{p\infty}^s\to\Fs_{p\eps}^s$ is bounded for all $p\in[1,\infty]$, $\eps>0$, $s\in\R$.
        \item\label{Item::GenThm::H'SobBdd} $\Hc^\zeta_q:\Fs_{p\infty}^s\to\Fs_{p\eps}^{s+\frac12}$ are bounded for $1\le q\le n$, $p\in[1,\infty]$, $\eps>0$ and $s\in\R$. 
        
        \item\label{Item::GenThm::H'Cpt} In particular $\Hc^\zeta_q:\Fs_{p\infty}^s\to\Fs_{p\eps}^s\subset\Fs_{p\infty}^s$ and $\Hc^\zeta_q:H^{s,2}\to H^{s,2}$ are bounded for $p\in[1,\infty]$ and $s\in\R$.
        \item\label{Item::GenThm::H'LpBdd} $\Hc^\zeta_q:\Fs_{p\infty}^s\to\Fs_{\frac{(2n+2)p}{2n+2-p},\eps}^s$ are bounded for all $1\le q\le n$, $p\in[1,2n+2]$, $\eps>0$ and $s\in\R$.
        \item\label{Item::GenThm::H'nBdd} $\Hc^\zeta_n:\Fs_{pr}^s\to\Fs_{pr}^{s+1}$ are bounded for all $p,r\in(0,\infty]$ and $s\in\R$.
        \item\label{Item::GenThm::PSupp}For every $\chi_1,\chi_2\in\Co_c^\infty(\frac12\B^{2n})$, $[f\mapsto\chi_1\cdot\Pc^\zeta(\chi_2f)]:\Ss'(\omega)\to\Co^\infty(\omega)$.
    \end{enumerate}

    \noindent\ref{Item::GenThm::HT}: Now by \ref{Item::GenThm::HT0}, \ref{Item::GenThm::HTq}, \ref{Item::GenThm::RBdd}, \ref{Item::GenThm::H'SobBdd}, \ref{Item::GenThm::PSupp} and the existence of $(\widetilde\Hc_q)_{q=1}^n$, the assumptions Proposition~\ref{Prop::PM::ConcretePM}~\ref{Item::PM::ConcretePM::LocalHT} - \ref{Item::PM::ConcretePM::GlobalHT} are all satisfied, with $t_0=1/2$ in \ref{Item::PM::ConcretePM::SobBdd}. By Proposition~\ref{Prop::PM::ConcretePM}~\ref{Item::PM::ConcretePM::HT} we obtain the homotopy operators $\Pc=\Pc^\Omega:\Ss'(\Omega)\to\Ss'(\Omega)$ and $\Hc_q=\Hc_q^\Omega:\Ss'(\Omega;\wedge^{0,q})\to\Ss'(\Omega;\wedge^{0,q-1})$ for $1\le q\le n$ for Theorem~\ref{Thm::GenThm}~\ref{Item::GenThm::HT}.

    For the boundedness of $\Pc$ and $\Hc_q$ we apply Proposition~\ref{Prop::PM::ConcretePM}~\ref{Item::PM::ConcretePM::GlobalBdd} by verifying \ref{Item::PM::ConcretePM::GlobalBdd::R} - \ref{Item::PM::ConcretePM::GlobalBdd::Bddq}.

    \noindent\ref{Item::GenThm::P}: By \ref{Item::GenThm::RBdd}, \ref{Item::GenThm::P'Bdd} and \ref{Item::GenThm::H'Cpt}, $\Rc_0^\zeta,\Hc_1^\zeta:\Fs_{p\infty}^s\to\Fs_{p\infty}^s$ and $\Rc_0^\zeta,\Pc^\zeta,\Hc_1^\zeta:\Fs_{p\infty}^s\to\Fs_{p\eps}^s$ are bounded. By \ref{Item::PM::ConcretePM::GlobalBdd::R}, \ref{Item::PM::ConcretePM::GlobalBdd::H'Cpt} and \ref{Item::PM::ConcretePM::GlobalBdd::Bdd0} we get $\Pc=\Pc^\Omega:\Fs_{p\infty}^s\to\Fs_{p\eps}^s$.

    \smallskip\noindent\ref{Item::GenThm::Sob}: By \ref{Item::GenThm::RBdd}, \ref{Item::GenThm::H'SobBdd} and \ref{Item::GenThm::H'Cpt}, $\Rc_0^\zeta,\Hc_q^\zeta,\Hc_{q+1}^\zeta:\Fs_{p\infty}^s\to\Fs_{p\infty}^s$ and $\Hc_q^\zeta:\Fs_{p\infty}^s\to\Fs_{p\eps}^{s+1/2}$ are bounded. By \ref{Item::PM::ConcretePM::GlobalBdd::R}, \ref{Item::PM::ConcretePM::GlobalBdd::H'Cpt} and \ref{Item::PM::ConcretePM::GlobalBdd::Bddq} we get $\Hc_q=\Hc_q^\Omega:\Fs_{p\infty}^s\to\Fs_{p\eps}^{s+1/2}$.

    \smallskip\noindent\ref{Item::GenThm::Lp}: By \ref{Item::GenThm::RBdd}, \ref{Item::GenThm::H'Cpt} and \ref{Item::GenThm::H'LpBdd}, $\Rc_0^\zeta,\Hc_q^\zeta,\Hc_{q+1}^\zeta:\Fs_{p\infty}^s\to\Fs_{p\infty}^s$ and $\Hc_q^\zeta:\Fs_{p\infty}^s\to\Fs_{\frac{(2n+2)p}{2n+2-p},\eps}^s$ are bounded. By \ref{Item::PM::ConcretePM::GlobalBdd::R}, \ref{Item::PM::ConcretePM::GlobalBdd::H'Cpt} and \ref{Item::PM::ConcretePM::GlobalBdd::Bddq} we get $\Hc_q=\Hc_q^\Omega:\Fs_{p\infty}^s\to\Fs_{\frac{(2n+2)p}{2n+2-p},\eps}^s$.

    \smallskip\noindent\ref{Item::GenThm::Hn}: By \ref{Item::GenThm::RBdd}, \ref{Item::GenThm::H'Cpt} and \ref{Item::GenThm::H'nBdd}, $\Rc_0^\zeta,\Hc_n^\zeta:\Fs_{pr}^s\to\Fs_{pr}^s$ and $\Hc_n^\zeta:\Fs_{pr}^s\to\Fs_{pr}^{s+1}$ are bounded. By \ref{Item::PM::ConcretePM::GlobalBdd::R}, \ref{Item::PM::ConcretePM::GlobalBdd::H'Cpt} and \ref{Item::PM::ConcretePM::GlobalBdd::Bddq} we get $\Hc_n=\Hc_n^\Omega:\Fs_{pr}^s\to\Fs_{pr}^{s+1}$.
\end{proof}

\begin{rem}
    For the standard construction \eqref{Eqn::Intro::LKFormula}, which has been used in \cite{ShiYaoC2,ShiYaoCk}, the bound  $\Hc_n:\Fs_{pr}^s\to\Fs_{pr}^{s+1}$ follows directly from Lemma~\ref{Lem::Space::BMKernel}. 
    
    However, in our construction, this is not that direct. Note that the operator $\Hc_n$ is obtained from \eqref{Eqn::PM::AbsPM::PH}, which involves taking the inverse of an abstract spectral projection of $R_n$. However by \eqref{Eqn::LocalHT::H'}, the operator $R_n$ in \eqref{Eqn::PM::ConcretePM::Rq} is not zero.
\end{rem}

\appendix
\section{Review of Function Spaces and Integral Operators}\label{Section::Space}

In this section we review the materials of function spaces and certain integral operators. Most of the results below %except Lemmas~\ref{Lem::Space::ExtDiffeo}, \ref{Lem::Space::IllPosed} and Proposition~\ref{Prop::Space::CompBdd} 
are mentioned and covered in \cite{YaoCXFinite}.

\begin{note}\label{Note::Space::Dist}
We denote by $\Ss'(\R^N)$ the \textit{space of tempered distributions}.

For an arbitrary open subset $U\subseteq\R^n$, we denote by $\Ds'(U)$ the \textit{space of distributions} in $U$. We denote by $\Ss'(U):=\{\tilde f|_U:\tilde f\in\Ss'(\R^N)\}\subsetneq\Ds'(U)$ the space of distributions which can be extended to tempered distributions in $\R^N$. We denote by $\Es'(U)$ the space of distributions which are compactly supported in $U$.
\end{note}
Note that $\Ds'(U),\Ss'(U),\Es'(U)$ are all topological vector spaces. See e.g. \cite[Chapter~6]{GrandpaRudin}. See also \cite[(3.1) and Proposition~3.1]{RychkovExtension} for an equivalent description of $\Ss'(U)$.

% We first recall the classical Sobolev and H\"older spaces. The characterizations in Definitions \ref{Defn::Space::Sob} and \ref{Defn::Space::Hold} are not directly used in the paper.

\begin{defn}[Weighted Sobolev]\label{Defn::Space::WeiSob}
Let $U\subseteq\R^N$ be an arbitrary open set. Let $\varphi:U\to[0,\infty)$ be a non-negative continuous function, we define for $k\ge0$ and $1\le p\le\infty$,
\begin{equation}\label{Eqn::Space::WeiSobo}
\begin{gathered}
W^{k,p}(U,\varphi):=\{f\in W^{k,p}_\loc(U):\|f\|_{W^{k,p}(U,\varphi)}<\infty\},
\\
\|f\|_{W^{k,p}(U,\varphi)}:=\bigg(\sum_{|\alpha|\le k}\int_U |\varphi D^\alpha f|^p\bigg)^\frac1p \quad 1 \le p < \infty;\quad
\|f\|_{W^{k,\infty}(U,\varphi)}:=\sup\limits_{|\alpha|\le k}\|\varphi D^\alpha f\|_{L^\infty(U)}.
\end{gathered}
\end{equation}
We define $W^{k,p}(U):=W^{k,p}(U,\1)$ where $\1(x)\equiv1$ is the constant function.
\end{defn}

For completeness we recall the definitions of negative Sobolev, H\"older and BMO spaces. These characterizations are not directly used in the paper.

\begin{defn}[Negative Sobolev]For $k\in\Z_+$, $1<p<\infty$ and $U\subseteq\R^N$, we define the negative Sobolev space $W^{-k,p}(U):=\{\sum_{|\alpha|\le k}D^\alpha g_\alpha:g_\alpha\in L^p(U)\}$ with norm 
    \begin{equation*}
        \|f\|_{W^{-k,p}(U)}=\inf\Big\{\sum_{|\alpha|\le k}\|g_\alpha\|_{L^p(U)}:f=\sum_{|\alpha|\le k}D^\alpha g_\alpha\text{ as distributions}\Big\}.
    \end{equation*}
\end{defn}
\begin{defn}[Sobolev-Bessel]\label{Defn::Space::Sob}
    Let $s\in\R$. For $1<p<\infty$, we define the Bessel potential space $H^{s,p}(\R^N)$ to be the set of all tempered distributions $f\in\Ss'(\R^N)$ such that
    \begin{equation*}
        \|f\|_{H^{s,p}(\R^N)}:=\|(I-\Delta)^\frac s2f\|_{L^p(\R^N)}<\infty.
    \end{equation*}
    Here we use the standard (negative) Laplacian $\Delta=\sum_{j=1}^N\partial_{x_j}^2$.
    
    % We define $H^{s,1}(\R^N)=\{f:\|(I-\Delta)^\frac s2f\|_{h^1(\R^N)}<\infty\}$ and $H^{s,\infty}(\R^N)=\{f:\|(I-\Delta)^\frac s2f\|_{\bmo(\R^N)}<\infty\}$ via local Hardy space $h^1$ and local BMO space $\bmo$, which consist of $f\in\Ss'\cap L^1_\loc(\R^N)$ such that the following norms are finite respectively (see \cite[Theorems 1 and 4(3)]{GoldbergLocalHardy}):
    % \begin{gather*}
    %     \|f\|_{h^1(\R^N)}:=\int_{\R^N}\sup_{0<t<1;|y-x|\le t}|e^{-t\sqrt{-\Delta}}f(y)|dx;
    %     \\
    %     \|f\|_{\bmo(\R^N)}:=\sup_{x\in\R^N;0<r<1}\frac1{|B(0,r)|}\int_{B(x,r)}\Big|f-{\textstyle\frac1{|B(0,r)|}\int_{B(x,r)}f}\Big|+\sup_{x\in\R^N;r>1}\frac1{|B(0,r)|}\int_{B(x,r)}|f|.
    % \end{gather*}
    
    On an open subset $U\subseteq\R^N$, we define $H^{s,p}(U):=\{\tilde f|_U:\tilde f\in H^{s,p}(\R^N)\}$ for $s\in\R$, $1< p<\infty$ with norm $\|f\|_{H^{s,p}(U)}:=\inf_{\tilde f|_U=f}\|\tilde f\|_{H^{s,p}(\R^N)}$.

\end{defn}
\begin{defn}[H\"older-Zygmund]\label{Defn::Space::Hold}Let $U\subseteq\R^N$ be an open subset. We define the H\"older-Zygmund space $\Co^s(U)$ for $s\in\R$ by the following:
\begin{itemize}
    \item For $0<s<1$, $\Co^s(U)$ consists of all $f\in C^0(U)$ such that $\|f\|_{\Co^s(U)}:=\sup\limits_U|f|+\sup\limits_{x,y\in U}\frac{|f(x)-f(y)|}{|x-y|^s}<\infty$.
    \item $\Co^1(U)$ consists of all $f\in C^0(U)$ such that $\|f\|_{\Co^1(U)}:=\sup\limits_U|f|+\sup\limits_{x,y\in U;\frac{x+y}2\in U}\frac{|f(x)+f(y)-2f(\frac{x+y}2)|}{|x-y|}<\infty$.
    \item For $s>1$ recursively, $\Co^s(U)$ consists of all $f\in \Co^{s-1}(U)$ such that $\nabla f\in\Co^{s-1}(U;\C^N)$. We define $\|f\|_{\Co^s(U)}:=\|f\|_{\Co^{s-1}(U)}+\sum_{j=1}^N\|D_j f\|_{\Co^{s-1}(U)}$.
    \item For $s\le0$ recursively, $\Co^s(U)$ consists of all distributions that have the form $g_0+\sum_{j=1}^N\partial_jg_j$ where $g_0,\dots,g_N\in \Co^{s+1}(U)$. We define $\|f\|_{\Co^s(U)}:=\inf\{\sum_{j=0}^N\|g_j\|_{\Co^{s+1}(U)}:f=g_0+\sum_{j=1}^N\partial_j g_j\in\Ds'(U)\}$.
    \item We define by $\Co^\infty(U):=\bigcap_{s>0}\Co^s(U)$ the space of bounded smooth functions.
\end{itemize}
    
\end{defn}
\begin{defn}[BMO space]
    Let $U\subseteq\R^N$ be an open subset. We define $\BMO(U)$ and $\bmo(U)$ (see \cite[Definition 1.2]{ChangHardy}) to be the spaces consisting of $f\in L^1_\loc(U)$ such that: 
\begin{gather*}
    \|f\|_{\BMO(U)}:=\sup_{B\subseteq U}\frac1{|B|}\int_{B}\Big|f(x)-{\frac1{|B|}\int_{B}f}\Big|dx<\infty,\quad
    \|f\|_{\bmo(U)}:=\|f\|_{\BMO(U)}+\sup_{B\subseteq U}\frac{1}{|B|}\int_{B}|f|<\infty.
\end{gather*}
Here $B$ denotes the balls in $\R^N$.
\end{defn}
\begin{rem}
    Clearly $\bmo(U)\subset L^1_\loc(U)$ is embedding. When $U$ is a bounded connected Lipschitz domain $\BMO(U)=\bmo(U)/\{c\cdot\1_U:c\in\C\}$ ignores the constant functions. In some studies e.g. \cite[Section~4]{McNealStein} people do not distinguish $\BMO(U)$ and $\bmo(U)$, despite that $\|\cdot\|_{\BMO(U)}$ is only a semi-norm.
\end{rem}

% In the paper we consider a more general version of the function spaces: the Triebel-Lizorkin spaces.
\begin{defn}[Triebel-Lizorkin]\label{Defn::Space::TLSpace}
    Let $\phi=(\phi_j)_{j=0}^\infty$ be a sequence of Schwartz functions satisfying:
\begin{itemize}% [label=(\thesection.\arabic*)]\setcounter{enumi}{\value{equation}}
    \item\label{Item::Space::LambdaFour} The Fourier transform $\hat\phi_0(\xi)=\int_{\R^n}\phi_0(x)2^{-2\pi ix\xi}dx$ satisfies $\supp\hat\phi_0\subset\{|\xi|<2\}$ and $\hat\phi_0|_{\{|\xi|<1\}}\equiv1$.
    \item\label{Item::Space::LambdaScal}  $\phi_j(x)=2^{jn}\phi_0(2^jx)-2^{(j-1)n}\phi_0(2^{j-1}x)$ for $j\ge1$.\setcounter{equation}{\value{enumi}}
\end{itemize}

Let $0< p,q\le\infty$ and $s\in\R$, we define the Triebel-Lizorkin norm $\|\cdot\|_{\Fs_{pq}^s(\phi)}$ as
\begin{align}%\label{Eqn::Space::BsNorm}
%\|f\|_{\Bs_{pq}^s(\phi)}&:=\|(2^{js}\phi_j\ast f)_{j=0}^\infty\|_{\ell^q(\N;L^p(\R^N))}=\bigg(\sum_{j=0}^\infty2^{jsq}\Big(\int_{\R^N}|\phi_j\ast f(x)|^pdx\Big)^\frac qp\bigg)^\frac1q;
%\\
\label{Eqn::Space::TLNorm1}
    \|f\|_{\Fs_{pq}^s(\phi)}&:=\|(2^{js}\phi_j\ast f)_{j=0}^\infty\|_{L^p(\R^N;\ell^q(\N))}=\bigg(\int_{\R^N}\Big(\sum_{j=0}^\infty|2^{js}\phi_j\ast f(x)|^q\Big)^\frac pqdx\bigg)^\frac1p,&p<\infty;
    \\
    \label{Eqn::Space::TLNorm2}
    \|f\|_{\Fs_{\infty q}^s(\phi)}&:=\sup_{x\in\R^N,J\in\Z}2^{NJ\frac1q}\|(2^{js}\phi_j\ast f)_{j=\max(0,J)}^\infty\|_{L^q(B(x,2^{-J});\ell^q)},&p=\infty.
\end{align}
Here for $q=\infty$ we take the usual modifications: we replace the $\ell^q$ sum by the supremum over $j$.

We define $\Fs_{pq}^s(\R^N)$ with its norm given by a fixed choice of $\phi$.

For an arbitrary open subset $U\subseteq\R^N$, we define
\begin{align}
    \label{Eqn::Space::TLDomain}\Fs_{pq}^s(U)&:=\{\tilde f|_U:\tilde f\in\Fs_{pq}^s(\R^N)\}&\text{with}\quad \|f\|_{\Fs_{pq}^s(U)}:=\inf_{\tilde f|_U=f}\|\tilde f\|_{\Fs_{pq}^s(\R^N)};
    \\
    \label{Eqn::Space::TildeSpace}\widetilde\Fs_{pq}^s(\overline U)&:=\{f\in \Fs_{pq}^s(\R^N):f|_{\overline{U}^c}=0\}&\text{ as a closed subspace of }\Fs_{pq}^s(\R^N).
\end{align}
\end{defn}
\begin{rem}\label{Rmk::Space::TLRmk}
\begin{enumerate}[(i)]
    \item When $p$ or $q<1$, \eqref{Eqn::Space::TLNorm1} and \eqref{Eqn::Space::TLNorm2} are only quasi-norms. For convenience we still use the terminology ``norms'' to refer them.
    \item Different choices of $\phi$ results in equivalent norms. See \cite[Proposition~2.3.2]{TriebelTheoryOfFunctionSpacesI} and \cite[Propositions 1.3 and 1.8]{TriebelTheoryOfFunctionSpacesIV}.
%    \item\label{Item::Space::TLRmk::Embed} We have embedding  $\Fs_{pq_1}^s(\R^N)\hookrightarrow\Fs_{pq_2}^s(\R^N)\hookrightarrow\Fs_{pq_1}^{s-\delta}(\R^N)$ for all $0<p\le\infty$, $s\in\R$, $q_1\le q_2$, $\delta>0$. 
    % \\
    % We have embedding $\Fs_{p_1q_1}^s(\R^N)\hookrightarrow\Fs_{p_1q_2}^{s-N(\frac1{p_1}-\frac1{p_2})}(\R^N)$ for all $p_1<p_2\le\infty$, $s\in\R$, $0< q_1,q_2\le\infty$.
    
    % See \cite[Corollary~2.7]{TriebelTheoryOfFunctionSpacesIV} for an illustration. Taking restrictions to an arbitrary open subset $U$ we have $\Fs_{pq_1}^s(U)\hookrightarrow\Fs_{pq_2}^s(U)\hookrightarrow\Fs_{pq_1}^{s-\delta}(U)$ for $q_1\le q_2$ and $\delta>0$, and $\Fs_{p\infty}^s(U)\hookrightarrow\Fs_{r\eps}^{s-1}(U)$ for $\eps>0$ and $0<p\le r\le Np/\max(N-p,0)$.
    
    % In Section~\ref{Section::SPsiCX} we use Sobolev embedding $\Fs_{p,\infty}^{s+1}(\R^N)\hookrightarrow\Fs_{\frac{pN}{N-p},\eps}^{s}(\R^N)$ for $1\le p\le N$, also see \cite[Corollary~2.7]{TriebelTheoryOfFunctionSpacesIV} for the proof. Thus $\Fs_{p,\infty}^{s+1}(U)\hookrightarrow\Fs_{\frac{pN}{N-p},\eps}^{s}(U)$.
    \item When $p=q=\infty$, \eqref{Eqn::Space::TLNorm2} can be written as $\|f\|_{\Fs_{\infty\infty}^s(\phi)}=\sup_{j\ge0}\|2^{js}\phi_j\ast f\|_{L^\infty(\R^N)}$. It is more common to be referred as the \textit{Besov norm} $\Bs_{\infty\infty}^s$. See also \cite[(1.15)]{TriebelTheoryOfFunctionSpacesIV} and \cite[Remark~2.3.4/3]{TriebelTheoryOfFunctionSpacesI}.
    \item For an arbitrary open subset $U\subseteq\R^N$, we have $\Fs_{pq}^s(\overline{U}^c)=\Fs_{pq}^s(\R^N)/\widetilde\Fs_{pq}^s(\overline U)$, or equivalently \\$\Fs_{pq}^s(U)=\Fs_{pq}^s(\R^N)/\widetilde\Fs_{pq}^s(U^c)$. See also \cite[Remark~4.3.2/1]{TriebelInterpolation}.
\end{enumerate}
\end{rem}

\begin{lem}[Sobolev embedding]\label{Lem::Space::TLEmbed}
    Let $p_0,p_1,q_0,q_1\in(0,\infty]$ and $s_0,s_1\in\R$ be such that $s_0-s_1\ge N\max(0,\frac1{p_0}-\frac1{p_1})$, and either $q_0\le q_1$ or $s_0-s_1>0$. Let $\Omega\subset\R^N$ be a bounded open subset. We have embedding $\Fs_{p_0q_0}^{s_0}(\Omega)\hookrightarrow\Fs_{p_1q_1}^{s_1}(\Omega)$.

    If in addition $s_0-s_1> N\max(0,\frac1{p_0}-\frac1{p_1})$, then the embedding is compact.
\end{lem}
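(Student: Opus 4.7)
The plan is to reduce this bounded-domain embedding to known facts on $\R^N$ via the extension-restriction presentation of $\Fs_{pq}^s(\Omega)$ built into Definition~\ref{Defn::Space::TLSpace}, splitting the argument into the regimes $p_0\le p_1$ and $p_0>p_1$. In the first regime the scaling hypothesis reads $s_0-N/p_0\ge s_1-N/p_1$, which is the classical Jawerth--Franke condition; combined with the supplementary hypothesis ``$q_0\le q_1$ or $s_0>s_1$'', the embedding $\Fs_{p_0q_0}^{s_0}(\R^N)\hookrightarrow\Fs_{p_1q_1}^{s_1}(\R^N)$ is standard (see e.g.\ \cite[Section~2.7]{TriebelTheoryOfFunctionSpacesI}). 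For $f\in\Fs_{p_0q_0}^{s_0}(\Omega)$ I will pick an almost-optimal extension $\tilde f\in\Fs_{p_0q_0}^{s_0}(\R^N)$, apply this $\R^N$-embedding, and restrict to $\Omega$, which suffices in this case.

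In the second regime $p_0>p_1$ the scaling condition collapses to $s_0\ge s_1$, the raw $\R^N$-embedding genuinely fails, so I will exploit the boundedness of $\Omega$ through a cutoff. Fix $\chi\in C_c^\infty(\R^N)$ with $\chi\equiv 1$ on a neighborhood of $\overline\Omega$; for any extension $\tilde f$ of $f$ the product $g:=\chi\tilde f$ is again an extension of $f$, now supported in a fixed compact $K$, and pointwise multiplication by $\chi$ preserves $\Fs_{p_0q_0}^{s_0}(\R^N)$ with comparable norm. For such a compactly supported $g$, a dyadic estimate on $\phi_j\ast g$, using the Schwartz decay of the Littlewood--Paley kernels $\phi_j$ to control the tails outside a fixed dilate of $K$ and H\"older's inequality on the essential support, will deliver $\|g\|_{\Fs_{p_1q_0}^{s_0}(\R^N)}\lesssim_K\|g\|_{\Fs_{p_0q_0}^{s_0}(\R^N)}$. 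The final same-$p$ embedding $\Fs_{p_1q_0}^{s_0}(\R^N)\hookrightarrow\Fs_{p_1q_1}^{s_1}(\R^N)$, justified by the supplementary hypothesis exactly as in the first case, then completes the continuous embedding.

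For the compactness assertion under the strict inequality $s_0-s_1>N\max(0,1/p_0-1/p_1)$, I will factor the embedding through an intermediate space $\Fs_{p_*q_*}^{s_*}$ with $s_0>s_*>s_1$ still satisfying a strict scaling gain: the first factor $\Fs_{p_0q_0}^{s_0}(\Omega)\hookrightarrow\Fs_{p_*q_*}^{s_*}(\Omega)$ is continuous by the embedding just established, and the compactness of the second factor reduces via the same cutoff-plus-restriction maneuver to the standard compact embedding of compactly supported Littlewood--Paley profiles, which follows either from the Frazier--Jawerth atomic decomposition or from a direct Fr\'echet--Kolmogorov argument on $(2^{js_*}\phi_j\ast g)_j$, exploiting the strict regularity gain $s_*>s_1$. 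Restricting the precompact subsequence back to $\Omega$ yields the required compactness.

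The main technical obstacle will be the $p_0>p_1$ step: the $\R^N$-level inclusion $\Fs_{p_0q_0}^{s_0}\subset\Fs_{p_1q_0}^{s_0}$ is genuinely false without the compact support, so the dyadic estimate of $\|\phi_j\ast g\|_{L^{p_1}}$ must be uniform in $j$ and must carefully partition $\R^N$ into a fixed dilate of $K$ (where H\"older's inequality transfers $L^{p_0}$ to $L^{p_1}$ at cost $|K|^{1/p_1-1/p_0}$) and its complement (where $\phi_j\ast g$ decays rapidly by the Schwartz property). Everything else is routine bookkeeping and citation of Triebel's monographs.
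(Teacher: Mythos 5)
Your plan takes a genuinely different route from the paper. The paper's proof is a two-line reduction: enlarge $\Omega$ to a smooth bounded $\tilde\Omega\supset\Omega$, cite \cite[Remark~2.87]{TriebelTheoryOfFunctionSpacesIV} for the embedding and compactness on $\tilde\Omega$, and transfer both to $\Omega$ via the identity $\|f\|_{\Fs_{pq}^s(\Omega)}=\inf_{\tilde f|_\Omega=f}\|\tilde f\|_{\Fs_{pq}^s(\tilde\Omega)}$ (and, for compactness, by extending a bounded sequence to $\tilde\Omega$, extracting a subsequence there, and restricting). You instead re-derive the embedding from scratch, splitting into $p_0\le p_1$ (whole-space Jawerth--Franke plus restriction, which is correct) and $p_0>p_1$ (cutoff plus a dyadic estimate). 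That is a legitimate and self-contained alternative, but it is considerably longer, and the step you yourself flag as the main obstacle has a genuine gap.

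After cutting off, you want $\|g\|_{\Fs_{p_1q_0}^{s_0}(\R^N)}\lesssim_K\|g\|_{\Fs_{p_0q_0}^{s_0}(\R^N)}$ for $g$ supported in a fixed compact $K$ and $p_0>p_1$. H\"older on the fixed dilate $2K$ is indeed routine, but ``Schwartz decay of $\phi_j$'' does not by itself control the tail $\int_{\R^N\setminus 2K}$: for $x\notin 2K$, $\phi_j\ast g(x)=\langle g,\phi_j(x-\cdot)\rangle$, and the decay of $D^\alpha\phi_j$ on $K$ gives a bound of the form $C_g\,2^{j(N+m-M)}\dist(x,K)^{-M}$ in which the constant $C_g$ and the differentiation order $m$ reflect the \emph{distributional order} of $g$ — not a priori controlled by $\|g\|_{\Fs_{p_0q_0}^{s_0}}$. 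To make the estimate uniform over the unit ball you must insert a quantitative form of the continuous embedding $\Fs_{p_0q_0}^{s_0}(\R^N)\hookrightarrow\Ss'(\R^N)$ — e.g.\ factoring through $\Co^{s_0-N/p_0-\eps}$ and using that every $\Co^{-m}$ element is a finite sum of derivatives of bounded continuous functions, giving $|\langle g,\psi\rangle|\lesssim\|g\|_{\Fs_{p_0q_0}^{s_0}}\|\psi\|_{C^{m}(U)}$ for a fixed neighborhood $U\supset K$ and a fixed $m=m(s_0,p_0,q_0,N)$. Only then does the tail sum in $j$ and integrate in $x$ with the right dependence. Similarly, the compactness step is stated only as a plan (``Fr\'echet--Kolmogorov on $(2^{js_*}\phi_j\ast g)_j$'' or ``atomic decomposition''); each of these can be carried through, but each is itself a nontrivial argument rather than bookkeeping, and the entire point of the paper's proof is to sidestep exactly these technicalities by citation. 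Either fully execute the tail and compactness steps with the quantitative $\Fs\hookrightarrow\Ss'$ ingredient made explicit, or adopt the paper's reduction to a smooth bounded domain.
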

\begin{proof}
    See e.g. \cite[Remark~2.87]{TriebelTheoryOfFunctionSpacesIV}. In the reference, it is stated for smooth domains. For an arbitrary bounded open subset $\Omega$ we can take a smooth bounded open set $\tilde\Omega\supset\Omega$. Since $\|f\|_{\Fs_{pq}^s(\Omega)}=\inf_{\tilde f|_\Omega=f}\|\tilde f\|_{\Fs_{pq}^s(\tilde\Omega)}$, the embedding $\Fs_{p_0q_0}^{s_0}(\tilde\Omega)\hookrightarrow\Fs_{p_1q_1}^{s_1}(\tilde \Omega)$ implies $\Fs_{p_0q_0}^{s_0}(\Omega)\hookrightarrow\Fs_{p_1q_1}^{s_1}(\Omega)$.

    Now suppose $s_0-s_1> N\max(0,\frac1{p_0}-\frac1{p_1})$, let $(f_k)_{k=1}^\infty\subset\Fs_{p_0q_0}^{s_0}(\Omega)$ be a bounded sequence. We can take extensions $(\tilde f_k)_{k=1}^\infty\subset\Fs_{p_0q_0}^{s_0}(\tilde\Omega)$ which is also a bounded sequence. By the known compactness from the reference $(\tilde f_k)_k$ has convergent subsequence $(\widetilde f_{k_j})_{j=1}^\infty$ in $\Fs_{p_1q_1}^{s_1}(\tilde\Omega)$. Therefore, the subsequence $( f_{k_j})_{j=1}^\infty$ converges in $\Fs_{p_1q_1}^{s_1}(\Omega)$ to $(\lim_{j\to\infty}\tilde f_{k_j})|_\Omega$. Hence, the embedding on $\Omega$ is compact.
\end{proof}

\begin{lem}\label{Lem::Space::SpaceLem}
    Let $\Omega\subseteq\R^N$ be either a bounded Lipschitz domain or the total space $\R^N$.
    \begin{enumerate}[(i)]
        \item\label{Item::Space::SpaceLem::Hsp} $H^{s,p}(\Omega)=\Fs_{p2}^s(\Omega)$  for all $s\in\R$ and $1< p<\infty$;
        \item\label{Item::Space::SpaceLem::Wkp} $W^{k,p}(\Omega)=\Fs_{p2}^k(\Omega)$ for all $k\in\Z$ and $1<p<\infty$;
        \item\label{Item::Space::SpaceLem::Zygmund} $\Co^s(\Omega)=\Fs_{\infty\infty}^s(\Omega)$ for all $s\in\R$;
        % \item\label{Item::Space::SpaceLem::Holder}  $\Co^{k+s}(\Omega)=C^{k,s}(\Omega)$ for all $k\in\Z_{\ge0}$ and $0<s<1$;
        \item\label{Item::Space::SpaceLem::BMO} $\bmo(\Omega)=\Fs_{\infty2}^0(\Omega)$.
    \end{enumerate}
\end{lem}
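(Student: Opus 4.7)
The plan is to reduce each identity to its known $\R^N$ counterpart from the Triebel monographs and then transfer to bounded Lipschitz $\Omega$ via the Rychkov extension operator $\Ec$ recalled in Section~\ref{Section::Space}. Since $\Ec$ is a \emph{universal} extension in the sense that it extends $\Fs_{pq}^s$, $H^{s,p}$, $W^{k,p}$, $\Co^s$, and $\bmo$ simultaneously for all admissible indices, every statement of the form $\Xs(\Omega)=\Ys(\Omega)$ will follow from $\Xs(\R^N)=\Ys(\R^N)$ together with the definition $\Xs(\Omega)=\{\tilde f|_\Omega:\tilde f\in\Xs(\R^N)\}$ and the boundedness of $\Ec:\Xs(\Omega)\to \Xs(\R^N)$.

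For the full-space identities I would invoke standard facts: \ref{Item::Space::SpaceLem::Hsp} is the Littlewood--Paley characterization of Bessel potential spaces $H^{s,p}(\R^N)=\Fs_{p2}^s(\R^N)$ for $1<p<\infty$ (see \cite[Theorem~2.5.6]{TriebelTheoryOfFunctionSpacesI}); \ref{Item::Space::SpaceLem::Wkp} follows from \ref{Item::Space::SpaceLem::Hsp} together with the classical fact that $W^{k,p}(\R^N)=H^{k,p}(\R^N)$ for $k\in\Z$ and $1<p<\infty$ (using Riesz transforms for $k>0$ and duality $W^{-k,p}=(W^{k,p'})^*$ for $k<0$); \ref{Item::Space::SpaceLem::Zygmund} is the identification of the H\"older--Zygmund spaces $\Co^s(\R^N)$ with the diagonal Besov spaces $\Bs_{\infty\infty}^s(\R^N)=\Fs_{\infty\infty}^s(\R^N)$ (see Remark~\ref{Rmk::Space::TLRmk} and \cite[Section~2.5.7]{TriebelTheoryOfFunctionSpacesI}); \ref{Item::Space::SpaceLem::BMO} is Triebel's identification $\bmo(\R^N)=\Fs_{\infty2}^0(\R^N)$ from \cite[Section~1.7.2]{TriebelTheoryOfFunctionSpacesIV}.

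To transfer from $\R^N$ to a bounded Lipschitz $\Omega$, I proceed by showing both inclusions with equivalent norms. For the direction $\Xs(\Omega)\hookrightarrow\Ys(\Omega)$ (where $\{\Xs,\Ys\}$ is one of the four pairs), take $f\in\Xs(\Omega)$ and extend via $\Ec$ to $\Ec f\in\Xs(\R^N)=\Ys(\R^N)$; restricting to $\Omega$ gives $f\in\Ys(\Omega)$ with $\|f\|_{\Ys(\Omega)}\le\|\Ec f\|_{\Ys(\R^N)}\lesssim\|f\|_{\Xs(\Omega)}$. The opposite direction is symmetric. The only subtlety is that the ``Sobolev'' definition of $W^{k,p}(\Omega)$ in Definition~\ref{Defn::Space::WeiSob} is intrinsic (not defined by restriction), so for \ref{Item::Space::SpaceLem::Wkp} one must verify that restrictions of $\R^N$ Sobolev functions to a Lipschitz domain give precisely $W^{k,p}(\Omega)$ in the intrinsic sense; this is classical for $k\ge0$ (Stein's or Rychkov's extension) and extends to $k<0$ by duality with $W^{k,p'}_0(\Omega)$. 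Similarly for \ref{Item::Space::SpaceLem::Zygmund}, the paper's recursive Definition~\ref{Defn::Space::Hold} of $\Co^s$ via second differences at $s=1$ matches the standard Zygmund space and hence is preserved under the universal extension.

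The main technical obstacle is \ref{Item::Space::SpaceLem::BMO}: even on $\R^N$ the identification $\bmo=\Fs_{\infty2}^0$ is delicate, and the definition of $\Fs_{\infty2}^0(\Omega)$ by restriction must be shown to coincide with the intrinsic $\bmo(\Omega)$ using balls contained in $\Omega$. Here one must check that any $f\in\bmo(\Omega)$ admits an extension $\tilde f\in\bmo(\R^N)$ with controlled norm---a Lipschitz-domain extension theorem for local $\bmo$ which is again furnished by Rychkov's construction (every atom supported near $b\Omega$ extends to a $\bmo$ atom). Once these universal extension properties are in hand, all four identities follow by the same two-line argument.
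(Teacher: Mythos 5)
Your overall strategy—reduce to the full-space identities and transfer via the universal extension—is indeed the right high-level idea and is what the paper does for part \ref{Item::Space::SpaceLem::Hsp}, which is defined by restriction. The trouble is that parts \ref{Item::Space::SpaceLem::Wkp}, \ref{Item::Space::SpaceLem::Zygmund}, and \ref{Item::Space::SpaceLem::BMO} concern \emph{intrinsically} defined norms (pointwise weighted integrals, finite differences, ball mean-oscillation), and your ``two-line argument'' tacitly assumes $\Ec$ is bounded on those intrinsic spaces. That is not a priori available: Rychkov's boundedness is established for the \emph{restriction} spaces $\Fs_{pq}^s(\Omega)$, so invoking it requires first knowing $f$ admits an extension, which is precisely what needs to be proved. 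You acknowledge this for $W^{k,p}$ (and your duality route for $k<0$ is a legitimate alternative to the paper's direct manipulation via $(I-\Delta)^{\pm m}$), but you do not actually resolve it for $\Co^s$ with $s\le0$ or for $\bmo$.

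The genuine gap is in part \ref{Item::Space::SpaceLem::BMO}. You write that the needed extension of an intrinsic $\bmo(\Omega)$ function to $\bmo(\R^N)$ ``is again furnished by Rychkov's construction (every atom supported near $b\Omega$ extends to a $\bmo$ atom),'' but this is neither proved nor a known consequence of Rychkov's theorem. What is actually needed is the Jones extension theorem for $\bmo$ on Lipschitz domains (recorded in \cite[Theorem~1.4]{ChangHardy}, with the construction in \cite[Section~3]{JonesBMO}): for bounded Lipschitz $\Omega$, every $f\in\bmo(\Omega)$ admits $\tilde f\in\bmo(\R^N)$ with $\|\tilde f\|_{\bmo(\R^N)}\lesssim\|f\|_{\bmo(\Omega)}$. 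That theorem identifies the intrinsic $\bmo(\Omega)$ with the restriction space, after which $\bmo(\R^N)=\Fs_{\infty2}^0(\R^N)$ finishes the job. Rychkov's operator does not do this work for you; it is bounded once membership in $\Fs_{\infty2}^0(\Omega)$ is established, not before. A similar but less serious omission occurs in \ref{Item::Space::SpaceLem::Zygmund}: your argument addresses $s>0$ via the second-difference characterization but does not address $s\le0$, where the recursive definition $\Co^s=\{g_0+\sum\partial_j g_j : g_j\in\Co^{s+1}\}$ needs an argument parallel to the negative-$k$ Sobolev case (one inclusion from $\partial_j:\Fs_{\infty\infty}^{s+1}\to\Fs_{\infty\infty}^s$, the other by lifting with $(I-\Delta)^{-m}$ into the positive-smoothness regime), and where the $0<s\le1$ range relies on the intrinsic equivalence $\|\cdot\|_{\Co^s(\Omega)}\approx\|\cdot\|_{\Fs_{\infty\infty}^s(\Omega)}$ from \cite{DispaBesovNorm}.
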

\begin{proof}
    When $\Omega=\R^N$ see \cite[Theorems 2.5.5, 2.5.7]{TriebelTheoryOfFunctionSpacesI} for \ref{Item::Space::SpaceLem::Hsp}, \ref{Item::Space::SpaceLem::Wkp} for $k\ge0$, \ref{Item::Space::SpaceLem::Zygmund} for $s>0$. See \cite[Proposition~1.13]{TriebelTheoryOfFunctionSpacesIV} for \ref{Item::Space::SpaceLem::BMO}. 

    % To see $W^{k,p}(\R^N)=\Fs_{p2}^s(\R^N)$ for $k\le 0$, one can use \cite[Chapter~3]{AdamsSobolevSpaces}. By \cite[Theorem~3.12]{AdamsSobolevSpaces} we see that $W^{-k,p}(\R^N)=W^{-k,p'}_0(\R^N)'$. By \cite[Corollary~3.23]{AdamsSobolevSpaces} $W^{-k,p'}_0(\R^N)=W^{-k,p'}(\R^N)$. 

    When $\Omega$ is a bounded Lipschitz domain, \ref{Item::Space::SpaceLem::Hsp} follows from taking restrictions to domain via $H^{s,p}(\R^N)=\Fs_{p2}^s(\R^N)$, \ref{Item::Space::SpaceLem::Wkp} for $k\ge0$ % and \ref{Item::Space::SpaceLem::Holder} 
    follow from \cite[Theorem~1.122]{TriebelTheoryOfFunctionSpacesIII}.

    \smallskip Here let we prove \ref{Item::Space::SpaceLem::Wkp} for $k\ge0$. Let $l=-k$ for convenience. For every $g_\alpha\in L^p(\R^N)=\Fs_{p2}^0(\R^N)$, by e.g. \cite[Theorem~2.3.8]{TriebelTheoryOfFunctionSpacesI} we get $D^\alpha g_\alpha\in \Fs_{p2}^{-|\alpha|}(\R^N)$. This guarantees $W^{-l,p}(\Omega)\subseteq\Fs_{p2}^{-l}(\Omega)$ with the inclusion being embedding. Conversely for $f\in\Fs_{p2}^{-l}(\Omega)$ take an extension $\tilde f\in\Fs_{p2}^{-l}(\R^N)$ (where $f=\tilde f$ if $\Omega=\R^N$) and take an $m\ge l/2$, we have $ f=(I-\Delta)^m((I-\Delta)^{-m}\tilde f|_\Omega)$. By \cite[Theorem~2.3.8]{TriebelTheoryOfFunctionSpacesI} and the case $k\ge0$, $(I-\Delta)^{-m}\tilde f\in \Fs_{p2}^{2m-l}(\R^N)=W^{2m-l,p}(\R^N)$ thus $(I-\Delta)^{-m}\tilde f|_\Omega\in=W^{2m-l,p}(\Omega)$. By expanding $(I-\Delta)^m$ we can write $f=\sum_{|\beta|\le 2m}D^\beta h_\beta$ for some $h_\beta\in W^{2m-l,p}(\Omega)$. Since $D^\gamma h_\beta\in L^p$ for all $|\gamma|\le 2m-l$, by reorganizing the derivatives we conclude the embedding $\Fs_{p2}^{-l}(\Omega)\subseteq W^{-l,p}(\Omega)$, which completes the proof.

\smallskip
    For \ref{Item::Space::SpaceLem::Zygmund}, note that the $\|\cdot\|_{\Co^s}$ are defined intrinsically, not by extensions. The statement is slightly different from \cite[Theorem~1.122 (1.400)]{TriebelTheoryOfFunctionSpacesI}.    
    
    For $0<s\le 1$ this can follow from \cite[Theorem~3.18]{DispaBesovNorm}, which says $\|f\|_{\Fs_{\infty\infty}^s(\Omega)}\approx\|f\|_{\Co^s(\Omega)}$ provided that either side is finite.    
    
    When $s>1$, the Definition~\ref{Defn::Space::Hold} we use is $\|f\|_{\Co^s(\Omega)}=\sum_{|\alpha|\le1}\|D^\alpha f\|_{\Co^{s-1}(\Omega)}$. On the other hand by \cite[Theorem~1.1]{ShiYaoExt} we have $\|f\|_{\Fs_{\infty\infty}^s(\Omega)}\approx\sum_{|\alpha|\le1}\|D^\alpha f\|_{\Fs^{s-1}_{\infty\infty}(\Omega)}$. These two properties also guarantee $\|f\|_{\Co^s(\Omega)}\approx\|f\|_{\Fs_{\infty\infty}^s(\Omega)}$ for $s>1$.

    For other different versions of H\"older-Zygmund norms when $s>0$ see also \cite[Section~5]{GongHolderAq}.

    For $s\le0$, see e.g. \cite[Remark~4.6~(vi)]{YaoCXFinite} for a proof. The proof in the reference follows from the fact that $\Co^s(\R^N)=\Fs_{\infty\infty}^s(\R^N)$ and $\Co^t(\Omega)=\Fs_{\infty\infty}^t(\Omega)$ when $t>0$. The argument is identical to the proof of $W^{k,p}(\Omega)=\Fs_{p2}^k(\Omega)$ for $k\ge0$.

    \smallskip
    For \ref{Item::Space::SpaceLem::BMO} when $\Omega$ is bounded Lipschitz, by e.g. \cite[Theorem~1.4]{ChangHardy} (the construction is given in \cite[Section~3]{JonesBMO}) for every $f\in\bmo(\Omega)$ there is an extension $\tilde f\in\bmo(\R^N)$ such that $\|\tilde f\|_{\bmo(\R^N)}\lesssim\|f\|_{\bmo(\Omega)}$ where the implied constant is independent of $f$. In particular $\bmo(\Omega)=\{\tilde f|_\Omega:\tilde f\in\bmo(\R^N)\}$. Since we know $\bmo(\R^N)=\Fs_{\infty2}^0(\R^N)$, taking restrictions to $\Omega$ we get $\Fs_{\infty2}^0(\Omega)=\bmo(\Omega)$. 
\end{proof}
\begin{rem}
    For $s>0$, in \cite{DispaBesovNorm} the theorem only shows that $\|\cdot\|_{\Co^s(\Omega)}$ is an equivalent norm of $\Fs_{\infty\infty}^s(\Omega)$. Without further information given in the reference, it is possible that $\Fs_{\infty\infty}^s(\Omega)\subseteq\Co^s(\Omega)$ is a proper closed subspace. This can be the case if $\Omega$ is not a Lipschitz domain. Fortunately, on Lipschitz domain, every $\Co^s$ function admits a $\Co^s$ extension to $\R^N$. For a more general result on extension of Zygmund functions, see \cite[Theorem~18.5]{BesovBookVol2}, where the $l$-horn condition in the reference can be found in \cite[Section~8]{BesovBookVol1}.
\end{rem}

\begin{lem}\label{Lem::Space::SsLim}
    Let $1<p<\infty$ and let $\Omega\subset\R^N$ be a bounded Lipschitz domain.
    \begin{enumerate}[(i)]
        \item\label{Item::Space::SsLim::Cap} $\bigcap_{s>0}H^{s,p}(\Omega)=\bigcap_{k=1}^\infty W^{k,p}(\Omega)=\Co^\infty(\Omega)$;
        \item\label{Item::Space::SsLim::Cup} $\bigcup_{s>0}H^{-s,p}(\Omega)=\bigcup_{k=1}^\infty W^{-k,p}(\Omega)=\bigcup_{s>0}\Co^{-s}(\Omega)=\Ss'(\Omega)=\{\tilde f|_\Omega:\tilde f\in\Ds'(\R^N)\}$.
    \end{enumerate}
\end{lem}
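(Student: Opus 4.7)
The plan is to reduce the statement to a handful of scale-to-scale embeddings via Lemmas~\ref{Lem::Space::SpaceLem} and \ref{Lem::Space::TLEmbed}, and then handle the one genuinely non-trivial identification---namely that $\Ss'(\Omega)$ is captured by the union $\bigcup_{s>0} H^{-s,p}(\Omega)$---by a cutoff plus structure-theorem argument.

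For part \ref{Item::Space::SsLim::Cap}, Lemma~\ref{Lem::Space::SpaceLem}~\ref{Item::Space::SpaceLem::Wkp} gives $W^{k,p}(\Omega) = H^{k,p}(\Omega)$ for integer $k \ge 0$, and Lemma~\ref{Lem::Space::TLEmbed} (with $q_0 = q_1 = 2$, $p_0 = p_1 = p$, $s_0 = s$, $s_1 = k$) gives $H^{s,p}(\Omega) \hookrightarrow H^{k,p}(\Omega)$ whenever $s \ge k$, so $\bigcap_{s>0} H^{s,p}(\Omega) = \bigcap_{k \ge 1} H^{k,p}(\Omega) = \bigcap_{k \ge 1} W^{k,p}(\Omega)$. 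The identification with $\Co^\infty(\Omega) = \bigcap_{s>0} \Co^s(\Omega)$ is then a two-sided embedding. The inclusion $\Co^\infty(\Omega) \subseteq \bigcap_k W^{k,p}(\Omega)$ is immediate because $\Omega$ is bounded, so any bounded function with all derivatives bounded lies in every $L^p(\Omega)$. The reverse uses Lemma~\ref{Lem::Space::TLEmbed} again, in the form $H^{k,p}(\Omega) = \Fs_{p2}^k(\Omega) \hookrightarrow \Fs_{\infty\infty}^{k - N/p - \eps}(\Omega) = \Co^{k - N/p - \eps}(\Omega)$ (valid once $k - N/p - \eps \ge 0$ and $\eps > 0$), so intersecting over all $k$ forces $f \in \bigcap_{t>0} \Co^t(\Omega) = \Co^\infty(\Omega)$.

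For part \ref{Item::Space::SsLim::Cup}, the first three equalities follow by the same monotonicity/embedding sandwich applied at negative indices: $W^{-k,p}(\Omega) = H^{-k,p}(\Omega)$ by Lemma~\ref{Lem::Space::SpaceLem}~\ref{Item::Space::SpaceLem::Wkp}; the embeddings
\begin{equation*}
\Co^{-s}(\Omega) = \Fs_{\infty\infty}^{-s}(\Omega) \hookrightarrow \Fs_{p2}^{-s-\eps}(\Omega) = H^{-s-\eps,p}(\Omega)
\qquad\text{and}\qquad
H^{-s,p}(\Omega) \hookrightarrow \Co^{-s - N/p - \eps}(\Omega)
\end{equation*}
from Lemma~\ref{Lem::Space::TLEmbed}; and passing to unions. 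The substantive step is the last chain. The inclusion $\bigcup_s H^{-s,p}(\Omega) \subseteq \Ss'(\Omega)$ is tautological, since $H^{-s,p}(\Omega)$ is by definition the restriction of $H^{-s,p}(\R^N) \subseteq \Ss'(\R^N)$. For the converse, given $f = \tilde f|_\Omega$ with $\tilde f \in \Ss'(\R^N)$ I would pick a cutoff $\chi \in C_c^\infty(\R^N)$ with $\chi \equiv 1$ in a neighborhood of $\overline\Omega$; then $\chi\tilde f \in \Es'(\R^N)$ has finite order $m$, and Paley--Wiener on the Fourier side yields $\chi\tilde f \in H^{-s,2}(\R^N)$ for any $s > m + N/2$. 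A further Triebel--Lizorkin embedding $H^{-s,2}(\R^N) \hookrightarrow H^{-s',p}(\R^N)$ for $s'$ large enough (Lemma~\ref{Lem::Space::TLEmbed}) and restriction to $\Omega$ place $f$ into $H^{-s',p}(\Omega)$. Finally, $\Ss'(\Omega) = \{\tilde f|_\Omega : \tilde f \in \Ds'(\R^N)\}$ goes by the same cutoff: for any $\tilde f \in \Ds'(\R^N)$, the distribution $\chi\tilde f$ is compactly supported, hence automatically tempered, and agrees with $\tilde f$ on $\Omega$; the reverse inclusion is trivial from $\Ss'(\R^N) \subset \Ds'(\R^N)$.

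The main potential nuisance is just bookkeeping the exponents so that the Triebel--Lizorkin sandwiches really do close under intersection and union, and invoking the standard Paley--Wiener/structure-theorem fact that a compactly supported distribution of finite order sits in some $H^{-s,2}(\R^N)$. Neither is a genuine obstacle once Lemmas~\ref{Lem::Space::SpaceLem} and \ref{Lem::Space::TLEmbed} are in hand.
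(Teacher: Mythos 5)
Your proof is correct and follows the same overall strategy as the paper: cutoff to reduce $\{\tilde f|_\Omega : \tilde f\in\Ds'(\R^N)\}$ to $\Ss'(\Omega)$, then sandwich embeddings between $\Co^t(\Omega)$ and $H^{t,p}(\Omega)$ on the bounded domain, taking intersections and unions. The one genuinely different choice is in the last chain of part~(ii): where the paper simply cites \cite[Lemma~4.15]{YaoCXFinite} for $\bigcup_{s>0}\Co^{-s}(\Omega)=\Ss'(\Omega)$, you re-derive the substance of that fact inline by compactifying support, invoking the structure theorem/Paley--Wiener bound for a compactly supported distribution of finite order to land in $H^{-s,2}(\R^N)$, and then switching to $H^{-s',p}$. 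That buys a self-contained argument at the cost of a bit more bookkeeping; the paper's version is shorter but opaque without the reference.

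One small slip to repair: you invoke Lemma~\ref{Lem::Space::TLEmbed} for an embedding $H^{-s,2}(\R^N)\hookrightarrow H^{-s',p}(\R^N)$, but that lemma is stated only for \emph{bounded} open subsets, and indeed the analogous global embedding on $\R^N$ fails in general when one tries to lower the integrability exponent. The fix is to swap the order of your last two steps: restrict $\chi\tilde f$ to $\Omega$ first, obtaining $(\chi\tilde f)|_\Omega\in H^{-s,2}(\Omega)=\Fs^{-s}_{22}(\Omega)$, and then apply Lemma~\ref{Lem::Space::TLEmbed} on the bounded domain $\Omega$ to get $\Fs^{-s}_{22}(\Omega)\hookrightarrow\Fs^{-s'}_{p2}(\Omega)=H^{-s',p}(\Omega)$ for $s'\ge s + N\max(0,\tfrac12-\tfrac1p)$. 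With that reordering the argument closes cleanly.
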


Note that the results are false for general unbounded domains.
\begin{proof}
    Since $\Omega$ is a bounded set, take $\chi\in\Co_c^\infty(\R^N)$ be such that $\chi|_\Omega=1$, if $f=\tilde f|_\Omega$ for some $\tilde f\in\Ds'(\R^N)$, then $f=(\chi\tilde f)|_\Omega$ holds with $\chi\tilde f\in\Es'(\R^N)\subset\Ss'(\R^N)$. Therefore $\Ss'(\Omega)=\{\tilde f|_\Omega:\tilde f\in\Ds'(\R^N)\}$.

    See e.g. \cite[Lemma~4.15]{YaoCXFinite} for $\bigcup_{s>0}\Co^{-s}(\Omega)=\Ss'(\Omega)$. Recall by definition $\bigcap_{s>0}\Co^{s}(\Omega)=\Co^\infty(\Omega)$.

    We have embedding $\Fs_{\infty\infty}^{s+\delta}(\Omega)\subset\Fs_{p2}^s(\Omega)\subset\Fs_{\infty\infty}^{s-n}(\Omega)$ for every $s\in\R$, $\delta>0$ and $1<p<\infty$ (see e.g. \cite[Proposition~4.6]{TriebelTheoryOfFunctionSpacesIII}). By Lemma~\ref{Lem::Space::SpaceLem} this is $\Co^{k+\delta}(\Omega)\subset H^{k,p}(\Omega)=W^{k,p}(\Omega)\subset\Co^{k-n}(\Omega)$. Taking $k\to+\infty$ we get \ref{Item::Space::SsLim::Cap}, taking $k\to-\infty$ we get \ref{Item::Space::SsLim::Cup}.
\end{proof}

For $U\subseteq\R^N$ open, for $s\in\R$ and $1<p<\infty$, similar to \eqref{Eqn::Space::TildeSpace} we can define 
\begin{equation}\label{Eqn::Space::TildeHsp}
    \widetilde H^{s,p}(\overline U):=\{f\in H^{s,p}(\R^N):\supp f\subseteq\overline U\},\qquad \widetilde \Co^s(\overline U):=\{f\in \Co^s(\R^N):\supp f\subseteq\overline U\}.
\end{equation}
Recall that in literature we use have space $H_0^{s,p}(U)$ given by the closure of $C_c^\infty(U)$ under $\|\cdot\|_{H^{s,p}(U)}$.
\begin{lem}\label{Lem::Space::DenseHs}
    Let $\Omega\subseteq\R^N$ be a bounded Lipschitz domain. Let $1<p<\infty$.
    \begin{enumerate}[(i)]
        \item\label{Item::Space::DenseHs::>0}For $s\ge0$, we have identification $H_0^{s,p}(\Omega)=\widetilde H^{s,p}(\overline\Omega)=\widetilde\Fs_{p2}^s(\overline \Omega)$. Moreover $H_0^{s,p}(\Omega)'=H^{-s,p'}(\Omega)$.
        \item\label{Item::Space::DenseHs::<0} For $s\le0$, $\Co_c^\infty(\Omega)$ is a dense subset of $H^{s,p}(\Omega)$.
    \end{enumerate}
\end{lem}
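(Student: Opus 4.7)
The plan is to prove (i) first via a translation-plus-mollification density, and then deduce (ii) through a two-step cut-off approximation.

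For (i), the identification $\widetilde H^{s,p}(\overline\Omega) = \widetilde\Fs_{p2}^s(\overline\Omega)$ is immediate from Lemma~\ref{Lem::Space::SpaceLem}~\ref{Item::Space::SpaceLem::Hsp}. For the inclusion $H_0^{s,p}(\Omega) \subseteq \widetilde H^{s,p}(\overline\Omega)$, note that $\widetilde H^{s,p}(\overline\Omega)$ is a closed subspace of $H^{s,p}(\R^N)$, hence Banach under the inherited norm; since for $s \geq 0$, $L^p$-class distributions supported on $b\Omega$ vanish, the restriction map $\widetilde H^{s,p}(\overline\Omega) \to H^{s,p}(\Omega)$ is injective, which via the open mapping theorem on its image yields $\|\bar\phi\|_{H^{s,p}(\R^N)} \approx \|\phi\|_{H^{s,p}(\Omega)}$ for $\phi \in C_c^\infty(\Omega)$ with zero extension $\bar\phi$. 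Cauchy sequences in the two norms then correspond, and the closure of $C_c^\infty(\Omega)$ in $H^{s,p}(\Omega)$ embeds into $\widetilde H^{s,p}(\overline\Omega)$.

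The main step is the reverse inclusion $\widetilde H^{s,p}(\overline\Omega) \subseteq H_0^{s,p}(\Omega)$, i.e.\ density of $C_c^\infty(\Omega)$ in $\widetilde H^{s,p}(\overline\Omega)$ under the $H^{s,p}(\R^N)$-norm. Given $f \in \widetilde H^{s,p}(\overline\Omega)$, use a Lipschitz partition of unity and bi-Lipschitz flattening (cf.~Lemma~\ref{Lem::Space::ExtDiffeo}) to reduce to a special Lipschitz domain $\omega = \{x_N > \sigma(x')\}$ with $\|\nabla\sigma\|_\infty < 1$. Translating inward by $\tau_t f(x) := f(x - t e_N)$ for $t > 0$ shifts $\supp f \subseteq \overline\omega$ to a set of positive distance $\gtrsim t(1 - \|\nabla\sigma\|_\infty)$ from $b\omega$, so convolving with a smooth kernel of radius $< t/2$ produces $C_c^\infty(\omega)$ approximants converging to $\tau_t f$ in $H^{s,p}(\R^N)$; and $\tau_t f \to f$ as $t \to 0$ by strong continuity of translation on $H^{s,p}(\R^N) = \Fs_{p2}^s(\R^N)$ for $1 < p < \infty$. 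The duality $H_0^{s,p}(\Omega)' = H^{-s,p'}(\Omega)$ then follows from the Banach-space identity $Y' = X'/Y^\perp$ applied to $Y = \widetilde H^{s,p}(\overline\Omega) \subseteq X = H^{s,p}(\R^N)$, combined with the self-duality $H^{s,p}(\R^N)' = H^{-s,p'}(\R^N)$ via Bessel potentials and the quotient description $H^{-s,p'}(\Omega) = H^{-s,p'}(\R^N)/\widetilde H^{-s,p'}(\overline{\Omega^c})$; the annihilator identity $Y^\perp = \widetilde H^{-s,p'}(\overline{\Omega^c})$ is a symmetric reprise of the just-given density, applied locally near $b\Omega = b(\Omega^c)$.

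For (ii), let $f \in H^{s,p}(\Omega)$ with $s \leq 0$. Extend to $\tilde f \in H^{s,p}(\R^N)$ and approximate $\tilde f$ by $\tilde f_n \in C_c^\infty(\R^N)$ using density of $C_c^\infty$ in $H^{s,p}(\R^N)$ for $1 < p < \infty$; restriction is norm-nonincreasing, so $\tilde f_n|_\Omega \to f$ in $H^{s,p}(\Omega)$. For each fixed $\tilde f_n$, choose $\chi_\varepsilon \in C_c^\infty(\Omega)$ with $\chi_\varepsilon \equiv 1$ on $\{x \in \Omega : \dist(x, b\Omega) > \varepsilon\}$ and $0 \leq \chi_\varepsilon \leq 1$. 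Then $\chi_\varepsilon \tilde f_n \in C_c^\infty(\Omega)$, and
\[
    \|\chi_\varepsilon \tilde f_n - \tilde f_n|_\Omega\|_{H^{s,p}(\Omega)} \leq C\|(1 - \chi_\varepsilon)\tilde f_n\|_{L^p(\Omega)} \to 0
\]
as $\varepsilon \to 0$ by dominated convergence, using that $s \leq 0$ gives the continuous embedding $L^p(\Omega) \hookrightarrow H^{s,p}(\Omega)$. A diagonal argument completes the density. The main obstacle is the translation-plus-mollification argument in (i): one must certify uniform transversality of the inward translation (hence the hypothesis $\|\nabla\sigma\|_\infty < 1$) and confirm strong continuity of translation on $H^{s,p}(\R^N)$ for all $s \in \R$, both classical but requiring care when globalizing through a partition of unity and checking that multiplication by smooth cut-offs preserves $\widetilde H^{s,p}(\overline\Omega)$.
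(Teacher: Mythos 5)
The paper does not prove this lemma: it simply cites \cite[Theorem~3.5]{TriebelLipschitzDomain} for both parts and points to \cite[Proposition~2.12]{ShiYaoC2} for further discussion of (i). You instead attempt a self-contained proof, which is a genuinely different route. Your argument for (ii) — restriction of global test-function approximants, then cut-offs $\chi_\varepsilon$ near $b\Omega$ combined with the embedding $L^p(\Omega)\hookrightarrow H^{s,p}(\Omega)$ for $s\le0$ and a diagonal step — is sound. The translation-plus-mollification argument for density of $\Co_c^\infty(\Omega)$ in $\widetilde H^{s,p}(\overline\Omega)$ (in the $H^{s,p}(\R^N)$-norm) is the right classical strategy for a Lipschitz graph domain, and the transversality $\|\nabla\sigma\|_\infty<1$ does exactly the work you describe. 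However, the localization step should use a smooth partition of unity together with a rigid rotation only: the local picture of $\Omega$ is \emph{already} a special Lipschitz domain after a rotation. Lemma~\ref{Lem::Space::ExtDiffeo} concerns $C^\infty$ diffeomorphisms and is not available here, and a genuine ``bi-Lipschitz flattening'' would not preserve $H^{s,p}$ once $|s|\ge1$, so that clause of the proposal would need to be deleted or replaced.

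The genuine gap is your argument for the inclusion $H_0^{s,p}(\Omega)\subseteq\widetilde H^{s,p}(\overline\Omega)$ via the open mapping theorem. You note that the restriction map $R:\widetilde H^{s,p}(\overline\Omega)\to H^{s,p}(\Omega)$ is continuous and (for $s\ge0$) injective, and conclude the two-sided norm estimate $\|\bar\phi\|_{H^{s,p}(\R^N)}\approx\|\phi\|_{H^{s,p}(\Omega)}$ on $\Co_c^\infty(\Omega)$. But the open mapping theorem only yields a bounded inverse if the range of $R$ is complete, i.e.\ closed in $H^{s,p}(\Omega)$ — and the range being closed is equivalent to $R$ being bounded below, which is exactly the estimate you are trying to prove. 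So the argument is circular. The lower bound $\|\bar\phi\|_{H^{s,p}(\R^N)}\lesssim\|\phi\|_{H^{s,p}(\Omega)}$ is a real theorem about Lipschitz domains (essentially Hardy-type estimates plus the extension machinery) and is precisely the content of the cited result; in your self-contained proof it must be established directly or cited separately, not deduced from injectivity alone. Once that bound is in hand, the rest of your duality argument via $Y'=X'/Y^\perp$ and the annihilator identification $Y^\perp=\widetilde H^{-s,p'}(\overline{\Omega^c})$ (which follows from the density $\Co_c^\infty(\Omega)\subset\widetilde H^{s,p}(\overline\Omega)$ alone) goes through.
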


\begin{proof}
    See e.g. \cite[Theorem~3.5]{TriebelLipschitzDomain} for both \ref{Item::Space::DenseHs::>0} and \ref{Item::Space::DenseHs::<0} where we recall from Lemma~\ref{Lem::Space::SpaceLem}~\ref{Item::Space::SpaceLem::Hsp} that $H^{s,p}=\Fs_{p2}^s$. We also refer to \cite[Proposition~2.12]{ShiYaoC2} for a brief explanation of \ref{Item::Space::DenseHs::>0}.
\end{proof}
    %Let $f\in H^{s,p}(\Omega)$, take an extension $\tilde f\in H^{s,p}_c(\R^N)$. By the standard approximate of identity argument $\tilde f$ can be approximated by smooth sequence $\phi_k\ast \tilde f\in\Co^\infty(\R^N)$ where $\phi_k\in\Co_c^\infty(\B^N)\xrightarrow{k\to\infty}\delta_0$. In particular $(\phi_k\ast \tilde f)|_\Omega\in L^p(\Omega)$. Since $C_c^\infty(\Omega)$ is dense in $L^p(\Omega)$, it is also dense in $H^{s,p}(\Omega)$ as well.

\begin{rem}
    We have the restriction map  $f\mapsto f|_\Omega$, which is bounded $\widetilde H^{s,p}(\overline \Omega)\to H^{s,p}_0(\Omega)$ for all $s\in\R$ and all open $\Omega\subseteq\R^N$. For $s\ge0$ \ref{Item::Space::DenseHs::>0} says that the map is an isomorphism. However when $s<-1/p$ this is not injective due to the existence of distributions $h\in H^{s,p}(\R^N)$ such that $\supp h\subseteq b\Omega$ (we shall not say $h\in H^{s,p}(b\Omega)$ since $b\Omega$ is a hypersurface: $H^{s,p}(b\Omega)$ in literature, e.g. \cite[Chapter~7]{TriebelTheoryOfFunctionSpacesII}, refers to the space on a Riemannian manifold). See \cite[Section~3.2]{TriebelLipschitzDomain} for more discussions.
\end{rem}
\begin{defn}\label{Defn::Space::SpecDom}
    A \textit{special Lipschitz domain} on $\R^N$ is an open subset $\omega=\{(x',x_N):x_N>\sigma(x')\}$ where $\sigma:\R^{N-1}\to\R$ is a Lipschitz function such that $\|\nabla\sigma\|_{L^\infty}<1$ in the sense that $$\sup_{x',y'\in\R^{N-1};x'\neq y'}|\sigma(x')-\sigma(y')|/|x'-y'|<1.$$
\end{defn}
\begin{rem}\label{Rmk::Space::SpecDom+Cone}
    Let $\Kb:=\{(x',x_N):x_N>|x'|\}$ be an open cone. Then we have $\omega+\Kb\subseteq\omega$.
\end{rem}

\begin{defn}\label{Defn::Space::ExtOp}
For a special Lipschitz domain $\omega\subset\R^N$ the \textit{Rychkov's universal extension operator} $\Ec=\Ec_\omega$ for $\omega$ is given by the following:
\begin{equation}\label{Eqn::Space::ExtOp}
\Ec_\omega f: =\sum_{j=0}^\infty\psi_j\ast(\1_{\omega}\cdot(\phi_j\ast f)),\qquad f\in\Ss'(\omega).
\end{equation}
Here $(\psi_j)_{j=0}^\infty$ and $(\phi_j)_{j=0}^\infty$ are families of Schwartz functions that satisfy the following properties: 
\begin{itemize}
    \item\label{Item::Space::PhiScal} \textit{Scaling condition}: $\phi_j(x)=2^{(j-1)N}\phi_1(2^{j-1}x)$ and $\psi_j(x)=2^{(j-1)N}\psi_1(2^{j-1}x)$ for $j\ge2$.
	\item\label{Item::Space::PhiMomt} \textit{Moment condition}: $\int\phi_0=\int\psi_0=1$, $\int x^\alpha\phi_0(x)dx=\int x^\alpha\psi_0(x)dx=0$ for all multi-indices $|\alpha|>0$, and $\int x^\alpha\phi_1(x)dx=\int x^\alpha\psi_1(x)dx=0$ for all $|\alpha|\ge0$.
	\item\label{Item::Space::PhiApprox}\textit{Approximate identity}: $\sum_{j=0}^\infty\phi_j=\sum_{j=0}^\infty\psi_j\ast\phi_j=\delta_0$ is the Dirac delta measure.
	\item\label{Item::Space::PhiSupp} \textit{Support condition}: $\phi_j,\psi_j$ are all supported in the negative cone $-\Kb=\{(x',x_N):x_N<-|x'|\}$.%\setcounter{equation}{\value{enumi}}
\end{itemize}
\end{defn}
For extension operator on a bounded Lipschitz domain one needs to combine \eqref{Eqn::Space::ExtOp} with partition of unity. See \cite[(6.1)]{ShiYaoExt} or \cite[(4.14)]{YaoCXFinite}. In the paper we only need to use extension on special Lipschitz domains.

\begin{prop}{\normalfont\cite{RychkovExtension}}\label{Prop::Space::ExtOp} There are $(\phi_j)_{j=0}^\infty,(\psi_j)_{j=0}^\infty$ which satisfy the assumptions in Definition~\ref{Defn::Space::ExtOp}. And extension operator $\Ec$ has boundedness $\Ec:\Ss'(\omega)\to\Ss'(\R^N)$ and
\begin{equation}\label{Eqn::Space::EBdd}
    \Ec:\Fs_{pq}^s(\omega)\to\Fs_{pq}^s(\R^N),\qquad p,q\in(0,\infty],\quad s\in\R.
\end{equation}

In particular
\begin{equation}\label{Eqn::Space::[D,E]Bdd}
    [\nabla,\Ec]:\Fs_{pq}^s(\omega)\to\widetilde\Fs_{pq}^{s-1}(\omega^c;\C^N),\qquad p,q\in(0,\infty],\quad s\in\R.
\end{equation}
\end{prop}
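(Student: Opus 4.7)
The plan is to follow Rychkov's original strategy in \cite{RychkovExtension}. First I would construct $\phi_0, \psi_0 \in \Ss(\R^N)$ explicitly with the prescribed structure. Choose any $\phi_0 \in \Co_c^\infty(-\Kb)$ with $\int \phi_0 = 1$ and set $\phi_1(x) := \phi_0(x) - 2^{-N} \phi_0(x/2)$, so that on the Fourier side $\widehat{\phi_1}(\xi) = \widehat{\phi_0}(\xi) - \widehat{\phi_0}(\xi/2)$ vanishes to infinite order at $\xi = 0$; this encodes $\int x^\alpha \phi_1 = 0$ for all $\alpha$. Define $\phi_j(x) := 2^{(j-1)N}\phi_1(2^{j-1}x)$ for $j \ge 2$; the telescoping gives $\widehat{\sum_{j=0}^J \phi_j}(\xi) = \widehat{\phi_0}(2^{-J}\xi) \to 1$ pointwise, so $\sum_j \phi_j = \delta_0$ in $\Ss'$. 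The partner $(\psi_j)$ is produced by inverting the equation $\sum_j \widehat{\psi_j \ast \phi_j} \equiv 1$ dyadically and lifting the solution back to real space using a Paley-Wiener argument that preserves the support of $\phi_j, \psi_j$ inside the convex cone $-\Kb$.

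Next I would verify $\Ec : \Ss'(\omega) \to \Ss'(\R^N)$ is well defined. Because $\omega + \Kb \subseteq \omega$ (Remark~\ref{Rmk::Space::SpecDom+Cone}) and $\supp \phi_j \subset -\Kb$, the convolution $\phi_j \ast f$ restricted to $\omega$ depends only on $f|_\omega$, so $\1_\omega \cdot (\phi_j \ast f)$ is an unambiguous tempered distribution supported in $\overline{\omega}$. Convergence of the outer sum in $\Ss'$ then follows from polynomial growth control combined with the dyadic scaling and the moment conditions. For the main bound $\Ec : \Fs_{pq}^s(\omega) \to \Fs_{pq}^s(\R^N)$, the key tool is Rychkov's intrinsic local means norm on $\Fs_{pq}^s(\omega)$: because $(\phi_j)$ has support in the cone, one has
\[
\|f\|_{\Fs_{pq}^s(\omega)} \approx \bigl\| (2^{js} \phi_j \ast f)_{j \ge 0} \bigr\|_{L^p(\omega; \ell^q)}.
\]
To bound $\|\Ec f\|_{\Fs_{pq}^s(\R^N)}$, one tests against a standard Littlewood-Paley family $(\lambda_k)$ for $\R^N$ and uses almost-orthogonality: the composition $\lambda_k \ast \psi_j$ decays rapidly in $|j - k|$ thanks to the two-sided moment vanishing. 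The characteristic-function cutoff $\1_\omega$ is absorbed by the Peetre maximal function, which is tame in $L^p(\ell^q)$ via Fefferman-Stein in the Banach range and via its quasi-Banach refinement when $\min(p, q) < 1$.

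Finally, the commutator bound \eqref{Eqn::Space::[D,E]Bdd} follows for free. From $\sum_j \psi_j \ast \phi_j = \delta_0$ and the support containment one reads off $(\Ec f)|_\omega = f$ for every $f \in \Ss'(\omega)$, hence $\nabla(\Ec f)|_\omega = \nabla f = (\Ec \nabla f)|_\omega$, so $[\nabla, \Ec] f$ vanishes on $\omega$ and is supported in $\overline{\omega}^c$. Composing $\Ec : \Fs_{pq}^s(\omega) \to \Fs_{pq}^s(\R^N)$ with the standard derivative bound $\nabla : \Fs_{pq}^s(\R^N) \to \Fs_{pq}^{s-1}(\R^N; \C^N)$ places $[\nabla, \Ec] f$ in $\widetilde{\Fs}_{pq}^{s-1}(\overline{\omega}^c; \C^N)$ as claimed. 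The genuine obstacle is the no-loss Triebel-Lizorkin bound across the full range $(s, p, q) \in \R \times (0, \infty]^2$: the quasi-Banach case forbids Hölder/Minkowski shortcuts, so the Peetre maximal function argument for swallowing $\1_\omega$ inside a vector-valued $L^p$ norm, combined with the delicate almost-orthogonality, is what carries the whole proof.
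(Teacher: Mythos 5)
Your proposal takes a more ambitious route than the paper: the paper's proof of \eqref{Eqn::Space::EBdd} is essentially a citation (Rychkov's Theorem~4.1 for $p<\infty$ or $p=q=\infty$, Zhuo--Sickel--Yang--Yuan \cite[Theorem~4.6]{ZhuoTriebelType} for $p=\infty>q$, and \cite[Proposition~3.10]{ShiYaoExt} for $\Ec:\Ss'\to\Ss'$), while you reconstruct Rychkov's argument from scratch. The broad arc of your sketch is right (dyadic cone-supported pair, intrinsic local-means norm on $\omega$, almost-orthogonality plus Peetre maximal function to absorb $\1_\omega$, and the commutator bound from $(\Ec f)|_\omega=f$ together with $\nabla:\Fs_{pq}^s\to\Fs_{pq}^{s-1}$), and your commutator argument matches the paper's one-line derivation.

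However, two steps in your reconstruction leave genuine gaps. First, the construction of the dual family $(\psi_j)$ is glossed over by ``a Paley--Wiener argument that preserves the support inside $-\Kb$.'' This is not how the pair is produced and will not work as stated: Paley--Wiener theorems characterize compact support via entire extensions of exponential type, not support in a proper cone in $\R^N$. Rychkov's actual mechanism (his Proposition~2.1) is to pick a one-dimensional $g$ with all higher moments vanishing and $\int g=1$ supported in a half-line, build $\phi_0$ as a tensor product, and solve $\hat\psi_0\hat\phi_0+\sum_{j\ge1}\hat\psi_j\hat\phi_j\equiv1$ via a convergent Neumann-type series whose terms are iterated convolutions of cone-supported functions; cone support is preserved because a proper convex cone is closed under addition, not because of any Fourier-side support theorem. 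You either need to reproduce this construction or simply cite Rychkov's Proposition~2.1, as the paper does.

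Second, your almost-orthogonality sketch would only yield \eqref{Eqn::Space::EBdd} in the ranges Rychkov's 2000 paper actually covers, namely $p<\infty$ or $p=q=\infty$. The statement claims the full scale $p,q\in(0,\infty]$, and the case $p=\infty$ with $q<\infty$ (the $\Fs^s_{\infty q}$ scale with the local $\ell^q$-averaged norm \eqref{Eqn::Space::TLNorm2}) does not follow from the classical Fefferman--Stein vector-valued maximal inequality; the paper explicitly supplies a separate reference for it. Your outline does not address this, so as written it leaves part of the claimed range unproved.
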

\begin{proof}
    The result $\Ec:\Fs_{pq}^s\to\Fs_{pq}^s$ for $p<\infty$ or $p=q=\infty$ is done in \cite[Theorem~4.1]{RychkovExtension}. The case $p=\infty>q$ is done in \cite[Theorem~4.6]{ZhuoTriebelType} (see also \cite[Remark~20]{YaoExtensionMorrey}) A complete proof of $\Ec:\Ss'\to\Ss'$ can be found in \cite[Proposition~3.10~(i)]{ShiYaoExt}.

    Now $[\nabla,\Ec]:\Fs_{pq}^s\to\Fs_{pq}^{s-1}$. Since $([\nabla,\Ec]f)|_\omega=0$ for all $f\in\Ss'(\omega)$. Using the notation \eqref{Eqn::Space::TildeSpace} we get \eqref{Eqn::Space::[D,E]Bdd}.
\end{proof}
\begin{rem}\label{Rmk::Space::ExtOp} 
    Fix a $\R$-linear basis $v=(v_1,\dots,v_N)$ for $\R^N$ and let $V_v:=\{a_1v_1+\dots+a_Nv_N:a_1,\dots,a_N>0\}$. By passing to an invertible linear transform, we can replace the support condition by $\supp\phi_j,\supp\psi_j\subset V_v$ while keeping the scaling, moment and identity conditions.
\end{rem}
\begin{defn}
    Let $U,V\subseteq\R^N$ be open sets (not necessarily bounded), we say $\Phi:U\to V$ is \textit{diffeomorphism}, if it is bijective smooth and $\Phi^{-1}$ is also smooth.
    
    We say $\Phi$ is a \textit{bounded diffeomorphism}, if in addition, for every multi-index $\alpha\neq0$, $\sup_{x\in U}|D^\alpha\Phi(x)|+\sup_{y\in V}|D^\alpha(\Phi^{-1})(y)|<\infty$.
\end{defn}

\begin{lem}\label{Lem::Space::ExtDiffeo}
    Let $\Phi:U\to V$ be a diffeomorphism. For every $x_0\in U$ there is an open subset $x_0\in U'\subset U$ such that $\Phi|_{U'}$ admits an extension to a bounded diffeomorphism $\tilde\Phi:\R^N\to\R^N$.
\end{lem}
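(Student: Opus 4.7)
The plan is to replace $\Phi$ on a small neighborhood of $x_0$ with its affine tangent map outside a larger neighborhood, using a smooth cutoff as interpolation, and then choose the radius small enough that the resulting map is a global diffeomorphism of $\R^N$ with bounded derivatives of all positive orders.

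Concretely, let $A:=d\Phi(x_0)\in GL(N,\R)$ and $L(x):=\Phi(x_0)+A(x-x_0)$, the affine approximation of $\Phi$ at $x_0$. Pick a standard radial cutoff $\chi\in\Co_c^\infty(\R^N)$ with $\chi\equiv1$ on $B(0,1/2)$ and $\supp\chi\subset B(0,1)$, and for $r>0$ set $\chi_r(x):=\chi((x-x_0)/r)$. Define
\begin{equation*}
    \tilde\Phi_r(x):=L(x)+\chi_r(x)\big(\Phi(x)-L(x)\big),\qquad x\in\R^N.
\end{equation*}
Then $\tilde\Phi_r\equiv\Phi$ on $B(x_0,r/2)$ and $\tilde\Phi_r\equiv L$ outside $B(x_0,r)$, so the plan will be to verify that for sufficiently small $r$, $\tilde\Phi_r$ is a bounded diffeomorphism $\R^N\to\R^N$, and then take $U':=B(x_0,r/2)$.

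Taylor expansion at $x_0$ gives $|\Phi(x)-L(x)|\lesssim r^2$ and $\|d\Phi(x)-A\|\lesssim r$ on $B(x_0,r)$, while $|d\chi_r|\lesssim r^{-1}$. Writing $g_r:=\chi_r\cdot(\Phi-L)$, the product rule gives $\|dg_r\|_{L^\infty(\R^N)}\lesssim r$, so for $r$ small enough $\|dg_r\|_{L^\infty}<\|A^{-1}\|^{-1}/2$. Consequently $d\tilde\Phi_r(x)=A+dg_r(x)$ is invertible everywhere, and a standard perturbation-of-affine argument (e.g.\ the Banach fixed point theorem applied to $y\mapsto A^{-1}(y-\Phi(x_0))+x_0-A^{-1}g_r(\cdot)$) shows that $\tilde\Phi_r:\R^N\to\R^N$ is a bijection. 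Hence $\tilde\Phi_r$ is a $C^\infty$ diffeomorphism of $\R^N$.

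For the boundedness of derivatives: $\tilde\Phi_r-L=g_r$ has compact support contained in $B(x_0,r)$, and all its derivatives are smooth and compactly supported, so $\sup_{\R^N}|D^\alpha\tilde\Phi_r|<\infty$ for every $|\alpha|\ge1$ (the order-$1$ case follows from $\|d\tilde\Phi_r\|_\infty\le\|A\|+\|dg_r\|_\infty$). For the inverse, observe that $\tilde\Phi_r=L$ off $B(x_0,r)$ forces $\tilde\Phi_r^{-1}=L^{-1}$ off the bounded set $L(B(x_0,r))\cup\tilde\Phi_r(B(x_0,r))$, so $\tilde\Phi_r^{-1}$ is a smooth map which coincides with the affine $L^{-1}$ outside a compact set, and therefore likewise has $\sup_{\R^N}|D^\alpha\tilde\Phi_r^{-1}|<\infty$ for every $|\alpha|\ge1$. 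I expect the only slightly delicate step to be the global bijectivity of $\tilde\Phi_r$, which however is entirely routine once the smallness of $\|dg_r\|_\infty$ relative to $\|A^{-1}\|^{-1}$ is quantified.
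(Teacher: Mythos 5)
Your proof is correct and follows essentially the same approach as the paper: interpolate between $\Phi$ and its affine linearization using a cutoff at scale $r$, bound the derivative of the correction term by $O(r)$, and invoke a contraction/fixed-point argument for global bijectivity, with bounded derivatives of the inverse following from the map agreeing with an affine map outside a compact set. The only cosmetic difference is that the paper normalizes $x_0=0$, $\Phi(0)=0$, $\nabla\Phi(0)=I$ up front (so that $L=\id$), whereas you carry the affine map $L$ explicitly throughout; both amount to the same computation.
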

\begin{proof}
    In the proof for a matrix $A\in\R^{N\times N}$ we use the standard matrix norm $|A|:=\sup_{v\in\R^N;|v|=1}|Av|$.

    By composing an invertible affine linear transform, which itself is clearly a bounded diffeomorphism, we can assume that $x_0=0$, $\Phi(0)=0$ and $\nabla\Phi(0)=I\in\R^{N\times N}$ is the identity matrix. Therefore by smoothness of $\Phi$ there is a $C>0$ (depending on $\nabla^2\Phi$) such that $B(0,1/C)\subset U$, $|\nabla\Phi(x)-I|\le\frac12$ and $|\Phi(x)-x|\le C|x|^2$ for all $|x|<1/C$.

    Fix a $\chi\in\Co^c(B(0,1))$ such that $\1_{B(0,1/2)}\le\chi\le\1_{B(0,1)}$. Let $\chi_R(x)=\chi(Rx)$ for $R>C$, and let $\tilde \Phi_R(x)=\chi_R(x)\cdot \Phi(x)+(1-\chi_R(x))\cdot x$. Therefore $\tilde\Phi:\R^N\to\R^N$ satisfies $\tilde\Phi_R|_{B(0,1/R)}=\Phi|_{B(0,1/R)}$.
    
     Since $\tilde\Phi_R(x)-x=\chi_R(x)\cdot(\Phi(x)-x)$ and $\|\nabla\chi_R\|_{L^\infty}=R\|\nabla\chi\|_{L^\infty}$, we see that
    \begin{equation*}
        \sup_{x\in\R^N}|\nabla\tilde\Phi_R(x)-I|\le\sup_{|x|<R^{-1}}|\nabla\chi_R(x)||\Phi(x)-x|+\sup_{|x|<R^{-1}}|\nabla\Phi(x)-I|\le R\|\nabla\chi\|_{L^\infty}\cdot CR^{-2}+\tfrac12.
    \end{equation*}

    By taking $R$ large enough we have $\sup_{x\in\R^N}|\nabla\tilde\Phi_R(x)-I|<\frac23$. Fix such $R$, take $U':=B(0,(2R)^{-1})$ and $\tilde\Phi:=\tilde \Phi_R$, we see that $\tilde\Phi:\R^N\to\R^N$ has all bounded derivatives and $\tilde \Phi|_{U'}=\Phi|_{U'}$.
    
    Since $\sup_x|\nabla\tilde\Phi(x)-I|<\frac23$, the map $\id-\tilde\Phi:\R^N\to\R^N$ is a contraction. By the fixed point theorem for every $y\in\R^N$ there is a unique $x\in\R^N$ such that $x=y+x-\tilde\Phi(x)$, i.e. $\tilde\Phi(x)=y$. We conclude that $\tilde\Phi$ is surjective. On the other hand,    
    $|\tilde\Phi_R(x_1)-\tilde\Phi_R(x_2)|\ge(1-\frac23)|x_1-x_2|=\frac13|x_1-x_2|$ for all $x_1,x_2\in\R^N$. We conclude that $\tilde\Phi_R$ is injective and $\sup_y|\nabla(\Phi^{-1})(y)|<3$. The boundedness of $\sup_y|D^\alpha(\tilde\Phi^{-1})(y)|$ then follows from taking chain rules on the inverse functions.
\end{proof}

\begin{prop}\label{Prop::Space::CompBdd}
    Let $\psi\in\Co^\infty(\R^N)$, we define the multiplier operation $\M^\psi f:=\psi f$.

    \begin{enumerate}[(i)]
        \item We have boundedness $\M^\psi:\Fs_{pq}^s(\R^N)\to\Fs_{pq}^s(\R^N)$ for all $p,q\in(0,\infty]$ and $s\in\R$. In particular, for every open subsets $\Omega\subset\R^N$,
    \begin{equation}\label{Eqn::Space::MultBdd}
        \M^\psi:\Fs_{pq}^s(\Omega)\to\Fs_{pq}^s(\Omega),\quad p,q\in(0,\infty],\quad s\in\R.
    \end{equation}
        \item Let $U,V\subseteq\R^N$ be open subsets and let $\Phi:U\to V$ be a diffeomorphism. Assume $\psi\in\Co_c^\infty(V)$. Then the map $\Phi^*\circ\M^\psi f=\M^{\psi\circ\Phi}\circ\Phi^*f=(\psi f)\circ\Phi$ has boundedness $\Phi^*\circ\M^\psi:\Fs_{pq}^s(\R^N)\to\Fs_{pq}^s(\R^N)$. In particular, for every open subsets $\Omega,\omega\subset\R^N$ such that $\Phi^{-1}(V\cap\Omega)=U\cap\omega$,
    \begin{equation}\label{Eqn::Space::CompBdd}
        \Phi^*\circ\M^\psi:\Fs_{pq}^s(\Omega)\to\Fs_{pq}^s(\omega),\quad p,q\in(0,\infty],\quad s\in\R.
    \end{equation}
    \end{enumerate}

\end{prop}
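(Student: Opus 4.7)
The plan is to reduce both assertions to two classical theorems on $\Fs_{pq}^s(\R^N)$: (a) the pointwise multiplier theorem, which says that $\M^\psi$ is bounded on $\Fs_{pq}^s(\R^N)$ whenever $\psi$ has sufficiently high $\Fs_{\infty\infty}^\sigma$ regularity with bounded derivatives; and (b) the diffeomorphism invariance theorem, which says that pullback by a bounded diffeomorphism $\tilde\Phi:\R^N\to\R^N$ is a bounded operator on $\Fs_{pq}^s(\R^N)$ for every $s\in\R$ and $p,q\in(0,\infty]$. Both are classical in Triebel's theory (see \cite{TriebelTheoryOfFunctionSpacesI} and related references). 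The domain versions \eqref{Eqn::Space::MultBdd} and \eqref{Eqn::Space::CompBdd} will then follow by the definition $\Fs_{pq}^s(\Omega)=\Fs_{pq}^s(\R^N)/\widetilde\Fs_{pq}^s(\Omega^c)$ via a routine extension-restriction argument.

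For part (i), Lemma~\ref{Lem::Space::SpaceLem}~\ref{Item::Space::SpaceLem::Zygmund} gives $\Co^\infty(\R^N)=\bigcap_{\sigma>0}\Fs_{\infty\infty}^\sigma(\R^N)$, so $\psi$ lies in $\Fs_{\infty\infty}^\sigma$ for arbitrarily large $\sigma$. Theorem (a) therefore gives the $\R^N$-level boundedness. For the domain version, given $f\in\Fs_{pq}^s(\Omega)$ I pick an extension $\tilde f\in\Fs_{pq}^s(\R^N)$ with $\|\tilde f\|_{\Fs_{pq}^s(\R^N)}\le 2\|f\|_{\Fs_{pq}^s(\Omega)}$; since $\psi\tilde f$ is an extension of $\psi f$, the $\R^N$ bound applied to $\tilde f$ and infimum yield \eqref{Eqn::Space::MultBdd}.

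For part (ii), I localize $\Phi$ using a finite partition of unity together with Lemma~\ref{Lem::Space::ExtDiffeo}. Because $\supp\psi$ is a compact subset of $V$, its preimage $K:=\Phi^{-1}(\supp\psi)$ is a compact subset of $U$. For each $x_0\in K$, Lemma~\ref{Lem::Space::ExtDiffeo} supplies a neighborhood $U'_{x_0}\subset U$ and a bounded diffeomorphism $\tilde\Phi_{x_0}:\R^N\to\R^N$ agreeing with $\Phi$ on $U'_{x_0}$. By compactness choose finitely many $x_1,\dots,x_M$ such that $\{U'_\nu\}_{\nu=1}^M$ covers $K$, and take smooth cut-offs $\eta_\nu\in\Co_c^\infty(\Phi(U'_\nu))$ with $\sum_\nu\eta_\nu\equiv1$ on $\supp\psi$. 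Then
\begin{equation*}
\Phi^*\circ\M^\psi f=\sum_{\nu=1}^M\Phi^*\circ\M^{\eta_\nu\psi}f=\sum_{\nu=1}^M\tilde\Phi_\nu^*\circ\M^{\eta_\nu\psi}f,
\end{equation*}
where the second equality holds because $\eta_\nu\psi$ is supported in $\Phi(U'_\nu)=\tilde\Phi_\nu(U'_\nu)$, on which $\Phi$ and $\tilde\Phi_\nu$ coincide. Each summand is bounded $\Fs_{pq}^s(\R^N)\to\Fs_{pq}^s(\R^N)$: $\M^{\eta_\nu\psi}$ by part (i), and $\tilde\Phi_\nu^*$ by theorem (b). The domain statement \eqref{Eqn::Space::CompBdd} follows from the same extension-restriction trick as in (i), noting that the hypothesis $\Phi^{-1}(V\cap\Omega)=U\cap\omega$ is exactly what guarantees $(\psi\tilde f)\circ\Phi$ is an element of $\widetilde\Fs_{pq}^s$ on $\R^N$ that restricts to $(\psi f)\circ\Phi$ on $\omega$.

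The only non-routine ingredients are the two cited $\R^N$-level theorems; in the full range $p,q\in(0,\infty]$ and all $s\in\R$ these are standard. The main potential subtlety is the quasi-Banach case $\min(p,q)<1$ for the diffeomorphism theorem, but this is already contained in \cite{TriebelTheoryOfFunctionSpacesI}, so no new work is required here. The partition-of-unity reduction is the standard bridge between Lemma~\ref{Lem::Space::ExtDiffeo} (purely geometric) and the $\R^N$-level functional-analytic statement.
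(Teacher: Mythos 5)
Your proof is correct and follows essentially the same strategy as the paper's: invoke the classical pointwise-multiplier and diffeomorphism-invariance theorems on $\Fs_{pq}^s(\R^N)$, localize the diffeomorphism near $\Phi^{-1}(\supp\psi)$ via Lemma~\ref{Lem::Space::ExtDiffeo} and a partition of unity, and pass to domains by the standard extension-restriction argument. Your version is, if anything, slightly cleaner in bookkeeping: you correctly cover $K=\Phi^{-1}(\supp\psi)$ by the $U'_\nu\subset U$ and place the cut-offs $\eta_\nu$ in $\Co_c^\infty(\Phi(U'_\nu))\subset\Co_c^\infty(V)$, whereas the paper's phrasing ($\bigcup U'_\nu\supset\supp\psi$ with $\chi_\nu\in\Co_c^\infty(U'_\nu)$) conflates the $U$- and $V$-sides of the map, though its intent is the same.
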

\begin{proof}
    See e.g. \cite[Theorem~2.28]{TriebelTheoryOfFunctionSpacesIV} for boundedness of $\M^\psi$ on $\Fs_{pq}^s(\R^N)$. The domain case \eqref{Eqn::Space::MultBdd} follows from the standard extension argument via \eqref{Eqn::Space::TLDomain}.

    We now prove \eqref{Eqn::Space::CompBdd}. By \cite[Theorem~2.25]{TriebelTheoryOfFunctionSpacesIV}, for every bounded diffeomorphism $\tilde\Phi:\R^N\to\R^N$, the pullback map $\tilde\Phi^*=[f\mapsto f\circ\tilde\Phi]:\Fs_{pq}^s(\R^N)\to\Fs_{pq}^s(\R^N)$ is bounded for all $p,q,s$. % By passing to domain we have $\tilde\Phi^*:\Fs_{pq}^s(\Omega)\to\Fs_{pq}^s(\tilde\Phi^{-1}(\Omega))$ for every open subset $\Omega$.

    Since $\supp\psi$ is now compact, by Lemma~\ref{Lem::Space::ExtDiffeo} there is a finitely open cover $\bigcup_{\nu=1}^MU'_\nu\supset\supp\psi$ such that for each $\nu$, $\Phi|_{U'_\nu}$ admits a bounded diffeomorphism extension $\tilde\Phi_\nu:\R^N\to\R^N$. Take a partition of unity $\{\chi_\nu\in\Co_c^\infty(U'_\nu)\}_{\nu=1}^M$ such that $\psi=\sum_{\nu=1}^M\chi_\nu\psi$ we have, for every $f\in\Fs_{pq}^s(\R^N)$,
    \begin{align*}
        \|(\psi f)\circ\Phi\|_{\Fs_{pq}^s}\le\sum_{\nu=1}^M\|\Phi^*(\chi_\nu\psi f)\|_{\Fs_{pq}^s}=\sum_{\nu=1}^M\|\tilde\Phi_\nu^*(\chi_\nu\psi f)\|_{\Fs_{pq}^s}\lesssim_{p,q,s,\tilde\Phi_\nu}\sum_{\nu=1}^M\|\chi_\nu\psi f\|_{\Fs_{pq}^s}\overset{\eqref{Eqn::Space::MultBdd}}\lesssim\|f\|_{\Fs_{pq}^s}.
    \end{align*}
    We get $\Phi^*\circ\M^\psi:\Fs_{pq}^s(\R^N)\to\Fs_{pq}^s(\R^N)$. By passing to domain, since $(\psi f)\circ\Phi|_{\omega\backslash V}\equiv0$, we get \eqref{Eqn::Space::CompBdd} as well.
\end{proof}

\begin{lem}[{Schur's test, see e.g. \cite[Appendix B]{RangeSCVBook}}]\label{Lem::Space::Schur}
    Let $(X,\mu)$ and $(Y,\nu)$ be two measure spaces. Let $G:X\times Y\to\C$ be a measurable function (with respect to $\mu\otimes\nu$). Let $1\le \gamma\le\infty$ and $A>0$ that satisfy
    \begin{equation*}
        \|G\|_{L^\infty_yL^\gamma_x}=\essup_{y\in Y}\Big(\int_X|G(x,y)|^\gamma d\mu(x)\Big)^{1/\gamma}\le A;\qquad \|G\|_{L^\infty_xL^\gamma_y}=\essup_{x\in X}\Big(\int_Y|G(x,y)|^\gamma d\nu(y)\Big)^{1/\gamma}\le A.
    \end{equation*}
    
    Then the integral operator $Tf(y):=\int_XG(x,y)f(x)d\mu(x)$ has boundedness $T:L^p(X,d\mu)\to L^q(Y,d\nu)$, with operator norm $\|T\|_{L^p\to L^q}\le A$ for all $1\le p,q\le\infty$ such that $\frac1q=\frac1p+\frac1\gamma-1$.
\end{lem}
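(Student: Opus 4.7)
The plan is to reduce the general bound to two natural endpoint estimates and conclude by interpolation. First I would establish the two endpoints dictated by the exponent relation $\tfrac1q = \tfrac1p + \tfrac1\gamma - 1$. For $(p,q) = (\gamma',\infty)$, applying H\"older's inequality pointwise in $y$ to $|Tf(y)| \le \int_X |G(x,y)||f(x)|\,d\mu(x)$ and using the hypothesis $\|G\|_{L^\infty_y L^\gamma_x} \le A$ gives $\|Tf\|_{L^\infty_y} \le A\|f\|_{L^{\gamma'}}$. For $(p,q) = (1,\gamma)$, Minkowski's integral inequality together with $\|G\|_{L^\infty_x L^\gamma_y} \le A$ gives $\|Tf\|_{L^\gamma_y} \le A\|f\|_{L^1}$.

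Next I would invoke the Riesz--Thorin interpolation theorem: every admissible pair $(p,q)$ satisfying $\tfrac1q = \tfrac1p + \tfrac1\gamma - 1$ corresponds to a convex combination of $(\tfrac{1}{\gamma'},0)$ and $(1,\tfrac1\gamma)$ in the $(\tfrac1p,\tfrac1q)$-plane, so interpolation between the two endpoint bounds yields $\|T\|_{L^p \to L^q} \le A^{1-\theta}A^\theta = A$. The degenerate cases where one of $p,q,\gamma$ is $1$ or $\infty$ coincide with (or reduce to) one of the two endpoints, so they are covered by the raw H\"older/Minkowski calculation with no extra work.

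If a self-contained proof avoiding Riesz--Thorin is preferred, I would alternatively use a single three-factor H\"older inequality: split
\[
|G(x,y)f(x)| \;=\; \bigl(|G|^\gamma |f|^p\bigr)^{1/q}\cdot |G|^{1-\gamma/q}\cdot |f|^{1-p/q}
\]
and apply H\"older in $x$ with exponents $q$, $\gamma q/(q-\gamma)$, $pq/(q-p)$; by the scaling relation their reciprocals sum to $\tfrac1q + (\tfrac1\gamma - \tfrac1q) + (\tfrac1p - \tfrac1q) = 1$. This produces
\[
|Tf(y)|^q \;\le\; A^{q-\gamma}\,\|f\|_{L^p}^{q-p}\int_X |G(x,y)|^\gamma|f(x)|^p\,d\mu(x),
\]
and integrating in $y$, applying Fubini, and invoking $\|G(x,\cdot)\|_{L^\gamma_y}^\gamma \le A^\gamma$ yields $\|Tf\|_{L^q}^q \le A^q \|f\|_{L^p}^q$ directly. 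There is no genuine obstacle: the result is classical, and the only care required is bookkeeping the three H\"older exponents and the resulting powers of $A$.
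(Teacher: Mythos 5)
Your first route coincides with the paper's own sketch in the remark following the lemma: both identify the endpoints $L^{\gamma'}\to L^\infty$ and $L^1\to L^\gamma$ and interpolate by Riesz--Thorin along the line $\tfrac1q=\tfrac1p+\tfrac1\gamma-1$. The only cosmetic difference is that you obtain the $L^1\to L^\gamma$ endpoint directly via Minkowski's integral inequality, whereas the paper passes from the $L^{\gamma'}\to L^\infty$ bound by duality; both are one-line arguments and equally valid.

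Your alternative three-factor H\"older argument is a genuinely different and fully elementary route, and it checks out: with $\tfrac1q=\tfrac1p+\tfrac1\gamma-1$ one has $q\ge p$ and $q\ge\gamma$, the three exponents $q$, $\gamma q/(q-\gamma)$, $pq/(q-p)$ have reciprocals summing to $1$, the middle factor collapses to $\|G(\cdot,y)\|_{L^\gamma_x}^{\,q-\gamma}\le A^{q-\gamma}$, and Fubini with the second hypothesis supplies the remaining factor $A^\gamma$, giving $\|Tf\|_{L^q}\le A\|f\|_{L^p}$ with no interpolation machinery. What the interpolation route buys is brevity and the clean conceptual picture (two endpoints on a line); what the H\"older route buys is self-containedness and that it avoids invoking Riesz--Thorin (whose rigorous application to the integral operator $T$ requires a small density argument). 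The one thing to keep in mind for the H\"older route, which you correctly flag, is that the three-exponent split degenerates when $q=p$ or $q=\gamma$; those are exactly the two endpoint cases and are handled by the raw Minkowski/H\"older estimates, so no gap remains.
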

\begin{rem}
    Alternatively one can show $\|T\|_{L^{\gamma'}\to L^\infty}\le\|G\|_{L^\infty_yL^\gamma_x}$, thus by duality $\|T\|_{L^1\to L^\gamma}\le \|G\|_{L^\infty_xL^\gamma_y}$. By complex interpolation, i.e. the Riesz-Thorin theorem,
    \begin{equation*}
        \|T\|_{L^{\frac\gamma{\gamma-\theta}}\to L^{\frac\gamma{1-\theta}}}\le\|G\|_{L^\infty_yL^\gamma_x}^\theta\|G\|_{L^\infty_xL^\gamma_y}^{1-\theta},\qquad 0\le\theta\le1.
    \end{equation*}
    We leave the proof to the readers.
\end{rem}

One key ingredient to achieve $1/2$-estimates for the homotopy operators is the Hardy's distance inequality, which builds a bridge between weighted Sobolev spaces and fractional Sobolev spaces / H\"older-Zygmund spaces. In our case we use a strong result via Triebel-Lizorkin spaces.
\begin{prop}[{Strong Hardy's distance inequality \cite[Proposition~5.3]{YaoCXFinite}}]\label{Prop::Space::HLLem}
    Let $\omega=\{x_N>\sigma(x')\}$ be a special Lipschitz domain, let $\delta(x)=\max(1,\dist(x,b\omega))$ for $x\in\R^N$.
    
    Let $1\le p\le \infty$ and $s\in\R$. Then we have the following embeddings between Triebel-Lizorkin spaces and weighted Sobolev spaces:
    \begin{align}\label{Eqn::Space::HLTilde}
        &\widetilde\Fs_{p\infty}^s(\omega^c)\hookrightarrow L^p(\overline\omega^c,\delta^{-s}),&&\text{for every }s>0;
        \\\label{Eqn::Space::HLSob}
        &W^{m,p}(\omega,\delta^{m-s})\hookrightarrow\Fs_{p\eps}^s(\omega),&&\text{for every integer }m>s\text{ and every }\eps>0.
    \end{align}
\end{prop}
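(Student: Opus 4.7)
The plan is to establish both embeddings via Littlewood-Paley techniques adapted to the Lipschitz geometry of $\omega$, leveraging the Rychkov decomposition from Proposition~\ref{Prop::Space::ExtOp}. The generating functions $(\phi_j,\psi_j)$ are supported in the negative cone $-\Kb$, and together with the geometric fact $\omega+\Kb\subseteq\omega$ from Remark~\ref{Rmk::Space::SpecDom+Cone} (equivalently $\omega^c-\Kb\subseteq\omega^c$), this provides the crucial compatibility between convolution at dyadic scales and the half-space structure of $\omega^c$. The two embeddings are essentially dual statements, but it is cleaner to prove them separately.

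For \eqref{Eqn::Space::HLTilde}, I would start from $f=\sum_{j\geq 0}\psi_j\ast\phi_j\ast f$, valid in $\Ss'(\R^N)$. Fix $x\in\overline\omega^c$ with $\delta(x)\approx 2^{-J}$. The support condition $\supp\phi_j\subset -\Kb$ combined with $f|_\omega=0$ ensures that for $j\leq J$, the low-frequency piece $\phi_j\ast f$ is controlled by values of $f$ only in $\omega^c$; more importantly, a telescoping identity shows that the coarse-scale contributions ``see'' a scale larger than $\delta(x)$ and must be negligible due to the moment conditions and the vanishing of $f$ on a ball of comparable size inside $\omega$ obtained by translating in the $\Kb$ direction. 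The precise outcome is the pointwise estimate
\[
|f(x)|\;\lesssim\;\delta(x)^s\cdot \sup_{j\geq J}2^{js}|\psi_j\ast\phi_j\ast f(x)|,
\]
from which raising to the $p$-th power, integrating over $\omega^c$ with the weight $\delta^{-sp}$, and applying Fefferman--Stein in its $\Fs_{p\infty}^s$ form yields the $L^p(\overline\omega^c,\delta^{-s})$ bound.

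For \eqref{Eqn::Space::HLSob}, the natural approach is to dyadically decompose $\omega$ into shells $\omega_k=\{x\in\omega:2^{-k-1}<\dist(x,b\omega)\leq 2^{-k}\}$, on each of which $\delta\approx 2^{-k}$. On each shell, a rescaled local Sobolev embedding $W^{m,p}\hookrightarrow \Fs_{p\eps}^s$ at scale $2^{-k}$ produces a constant of order $2^{k(m-s)}$, which exactly compensates the weight $\delta^{m-s}\approx 2^{-k(m-s)}$ on the right-hand side. The shells must then be recombined into a global $\Fs_{p\eps}^s$ estimate; this is where the parameter $\eps>0$ plays a role, absorbing logarithmic scale-mixing in the $\ell^{\eps}$-in-$j$ norm of the Triebel-Lizorkin characterization. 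A convenient formulation is through the atomic or molecular decomposition of $\Fs_{p\eps}^s$, treating each $f\cdot\chi_{\omega_k}$ (smoothed by a partition of unity) as a molecule at scale $2^{-k}$.

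The main obstacle will be making the Hardy-type pointwise bound in \eqref{Eqn::Space::HLTilde} rigorous, since $\Fs_{p\infty}^s$ is the largest Triebel-Lizorkin space at smoothness $s$: only supremum-in-$j$ control of the Littlewood-Paley pieces is available, so there is no room to lose a logarithm. The telescoping argument must be executed using only the moment conditions and cone support of $\phi_j,\psi_j$, without a Schwartz tail estimate giving rapid decay across the boundary. Since the statement coincides with \cite[Proposition~5.3]{YaoCXFinite}, I expect the cleanest route in practice is to invoke that reference directly rather than reprove it.
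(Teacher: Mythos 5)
The paper does not supply a proof of this proposition at all: it is imported verbatim from \cite[Proposition~5.3]{YaoCXFinite}, and the only proof-related content in the present paper is the subsequent Remark, which notes that \eqref{Eqn::Space::HLSob} can be derived from \eqref{Eqn::Space::HLTilde} by duality (for $\eps\ge1$) or directly via Rychkov's intrinsic characterization of $\Fs_{pq}^s(\omega)$. Your final recommendation — to invoke \cite{YaoCXFinite} rather than reprove the statement — therefore matches exactly what the paper does, and your observation that the two embeddings are essentially dual is also correct and reflected in that Remark.

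That said, the pointwise estimate $|f(x)|\lesssim\delta(x)^s\sup_{j\geq J}2^{js}|\psi_j\ast\phi_j\ast f(x)|$ that you propose as the engine of \eqref{Eqn::Space::HLTilde} is not correct as written, and this is more than a matter of rigor. The coarse-scale pieces $\sum_{j<J}\psi_j\ast\phi_j\ast f(x)$ do not become negligible merely from the cone-support geometry and the vanishing of $f$ on $\omega$: the cone $x+\Kb$ intersected with $\supp f\subseteq\overline\omega^c$ is contained in a ball $B(x,Cd)$ with $d\approx\delta(x)$, and the crude bound on each term yields $|\psi_j\ast\phi_j\ast f(x)|\lesssim d\,2^{j(N+1)}\int_{B(x,Cd)}|f|$; summing over $j<J$ recovers only a Hardy--Littlewood maximal function $\lesssim Mf(x)$, with no factor of $\delta(x)^s$. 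The gain of $\delta(x)^s$ has to come from re-expanding each $\phi_j\ast f$ via $f=\sum_k\psi_k\ast\phi_k\ast f$ and using Rychkov's almost-orthogonality estimates $\|\phi_j\ast\psi_k\|_{L^1}\lesssim 2^{-M|j-k|}$ (a consequence of the infinite-order moment conditions, not of support alone), together with the cone geometry to localize the $k$-sum near $k\approx J$. Moreover, even with the gain, the bound should be in terms of a Peetre-type maximal function of $\phi_j\ast f$ rather than its pointwise value at $x$, which is precisely where the Fefferman--Stein vector maximal inequality enters — but it enters to replace maximal averages, not to upgrade a pointwise inequality after integration. Your shell-decomposition sketch for \eqref{Eqn::Space::HLSob} is closer to workable and is in the same spirit as the intrinsic-characterization approach the Remark references, though the recombination step across scales needs the precise molecular machinery to avoid a logarithmic loss near $r=\eps$.
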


\begin{rem}
    For the result \eqref{Eqn::Space::HLSob}, the norm \eqref{Eqn::Space::WeiSobo} of $W^{m,p}(\omega,\delta^{m-s})$ is intrinsic while the norm \eqref{Eqn::Space::TLDomain} of $\Fs_{p\eps}^s(\omega)$ relies on its extensibility to $\R^N$. The extensibility issue can be resolved by using Rychkov's extension $\Ec f$ in \eqref{Eqn::Space::ExtOp} for $f\in W^{m,p}(\omega,\delta^{m-s})$, where each summand $\psi_j\ast(\1_\omega\cdot(\phi_j\ast f))$ makes sense without choosing any apriori extension of $f$.

    To prove \eqref{Eqn::Space::HLSob}, if one only care $\eps\ge1$ the result can be done by applying duality argument on \eqref{Eqn::Space::HLTilde}. The duality argument avoid using intrinsic characterization of $\Fs_{p\eps}^s(\omega)$, see \cite[Section~5]{YaoExtensionMorrey} for details. In \cite[Appendix~A]{YaoExtensionMorrey} a direct proof of \eqref{Eqn::Space::HLSob} is given using an intrinsic characterization by Rychkov: for $p,q\notin\{\infty\}\times(0,\infty)$,
    \begin{equation*}
        \|f\|_{\Fs_{pq}^s(\omega)}\approx_{p,q,s}\|(2^{js}\phi_j\ast f)_{j=0}^\infty\|_{L^p(\omega;\ell^q(\N))}.
    \end{equation*}
    When $p=\infty>q$ the intrinsic characterization is also true but a modification is required.
    
    If one assume $f\in L^1_\loc(\omega)$, the right hand side above also makes sense without apriori extensions. We can pick $\Ec f$ from \eqref{Eqn::Space::ExtOp} for an extension to $\R^N$ as well.
\end{rem}

The $\frac12$-estimates are then the corollary to the following:
\begin{cor}\label{Cor::Space::IntOpBdd}
    Let $U\subset\R^N$ and $V\subset\R^M$ be two bounded Lipschitz domains. Let $\delta_U(x):=\dist(x,bU)$ and $\delta_V(y):=\dist(y,bV)$.
        Let $r,s>0$, $\gamma\in[1,\infty]$ and $m\in\N$. Suppose $S\in L^1_\loc(U\times V)$ satisfies
    \begin{equation*}
        \sup_{y\in V}\delta_V(y)^r\Big(\sum_{|\alpha|\le m}\int_{U}|D^\alpha_yS(x,y)\delta_U(x)^s|^\gamma dx\Big)^{1/\gamma}+\sup_{x\in U}\delta_U(x)^s\Big(\sum_{|\alpha|\le m}\int_{U}|D^\alpha_yK(x,y)\delta_V(y)^r|^\gamma dy\Big)^{1/\gamma}<\infty.
    \end{equation*}

    Then $Tf(y):=\int_US(x,y)f(x)dx$ defines a linear operator with boundedness
    \begin{equation}\label{Eqn::Space::IntOpBdd}
        T:\widetilde\Fs_{p\infty}^s(\overline U)\to\Fs_{q\eps}^{m-r}(V),\quad\text{for all }\eps>0\quad\text{and}\quad p,q\in[1,\infty]\text{ such that }\tfrac1q=\tfrac1p+\tfrac1\gamma-1.
    \end{equation}
    In particular $T:\widetilde H^{s,p}(\overline U)\to H^{m-r,q}(V)$ for all such $1<p<\frac\gamma{\gamma-1}$. And if $\gamma=1$, then in addition $T:\widetilde \Co^s(\overline U)\to \Co^{m-r}(V)$.
\end{cor}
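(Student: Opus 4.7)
My plan is to prove Corollary~\ref{Cor::Space::IntOpBdd} by sandwiching the action of $T$ between the two Hardy-type embeddings in Proposition~\ref{Prop::Space::HLLem} and handling the middle weighted $L^p\to L^q$ step by Schur's test. Concretely, I will aim to prove that $Tf\in W^{m,q}(V,\delta_V^r)$ whenever $f\in L^p(U,\delta_U^{-s})$, and then close things up using: (a) $\widetilde\Fs_{p\infty}^s(\overline U)\hookrightarrow L^p(U,\delta_U^{-s})$ via \eqref{Eqn::Space::HLTilde}; and (b) $W^{m,q}(V,\delta_V^r)\hookrightarrow \Fs_{q\eps}^{m-r}(V)$ via \eqref{Eqn::Space::HLSob}, which applies because our integer $m$ satisfies $m>m-r$ exactly when $r>0$ and the weight exponent $m-(m-r)=r$ matches.

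For the middle step I would set, for each $|\alpha|\le m$,
\[
G_\alpha(x,y):=\delta_V(y)^r(D^\alpha_y S)(x,y)\,\delta_U(x)^s,\qquad \widetilde T_\alpha \tilde f(y):=\int_U G_\alpha(x,y)\tilde f(x)\,dx.
\]
The two halves of the hypothesis on $S$ are precisely the Schur bounds $\|G_\alpha\|_{L^\infty_y L^\gamma_x}<\infty$ and $\|G_\alpha\|_{L^\infty_x L^\gamma_y}<\infty$, so Lemma~\ref{Lem::Space::Schur} immediately yields $\widetilde T_\alpha:L^p(U)\to L^q(V)$ for every $p,q\in[1,\infty]$ with $\tfrac1q=\tfrac1p+\tfrac1\gamma-1$. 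Undoing the rescaling via $\tilde f=\delta_U^{-s}f$ gives $\delta_V^r D^\alpha_y Tf=\widetilde T_\alpha(\delta_U^{-s}f)$, hence $\|\delta_V^r D^\alpha_y Tf\|_{L^q(V)}\lesssim\|\delta_U^{-s}f\|_{L^p(U)}$. Summing in $|\alpha|\le m$ gives the desired $W^{m,q}(V,\delta_V^r)$ bound and, chained with (a) and (b), produces \eqref{Eqn::Space::IntOpBdd}.

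The two particular cases then follow by choosing $\eps$ appropriately. For $T:\widetilde H^{s,p}(\overline U)\to H^{m-r,q}(V)$ with $1<p<\gamma/(\gamma-1)$, I take any $\eps\le 2$ and apply Lemma~\ref{Lem::Space::TLEmbed} to sandwich $\widetilde\Fs_{p2}^s(\overline U)\hookrightarrow \widetilde\Fs_{p\infty}^s(\overline U)$ and $\Fs_{q\eps}^{m-r}(V)\hookrightarrow \Fs_{q2}^{m-r}(V)$, together with the identification $H^{t,r}=\Fs_{r2}^t$ of Lemma~\ref{Lem::Space::SpaceLem}. For the H\"older case with $\gamma=1$, the Schur relation forces $p=q=\infty$ to be allowed and then $\Fs_{\infty\eps}^{m-r}(V)\hookrightarrow \Fs_{\infty\infty}^{m-r}(V)=\Co^{m-r}(V)$ again by Lemma~\ref{Lem::Space::TLEmbed} and Lemma~\ref{Lem::Space::SpaceLem}.

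The main obstacle I foresee is technical rather than conceptual: Proposition~\ref{Prop::Space::HLLem} is stated for special Lipschitz domains, so I will need a brief localization step to transfer \eqref{Eqn::Space::HLTilde} and \eqref{Eqn::Space::HLSob} to the bounded Lipschitz domains $U$ and $V$. The plan there is to fix a finite boundary atlas of $U$ (resp.\ $V$), take a subordinate smooth partition of unity, and in each boundary chart diffeomorphically straighten a neighborhood of $bU$ into the graph of a special Lipschitz function; the composition boundedness \eqref{Eqn::Space::CompBdd} of Proposition~\ref{Prop::Space::CompBdd} preserves $\widetilde\Fs_{p\infty}^s$- and $\Fs_{q\eps}^{m-r}$-norms under such charts, and the boundary distances $\delta_U,\delta_V$ are comparable to their special-Lipschitz counterparts in each chart. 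The interior piece of the partition contributes only a trivial smooth $L^p$ (or weighted $L^p$) estimate. Once this bookkeeping is carried out, the three-step chain ``Hardy embedding $\to$ Schur $\to$ Hardy embedding'' delivers the full claim.
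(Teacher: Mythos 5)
Your proof follows exactly the paper's argument: wrap a Schur-test weighted $L^p\to W^{m,q}$ bound between the two Hardy embeddings of Proposition~\ref{Prop::Space::HLLem}, then translate into Triebel--Lizorkin, Sobolev, and Zygmund statements via Lemmas~\ref{Lem::Space::TLEmbed} and \ref{Lem::Space::SpaceLem}. Your normalization $G_\alpha(x,y)=\delta_V(y)^{r}(D^\alpha_yS)(x,y)\delta_U(x)^{s}$ is the correct one for the weights used in Definition~\ref{Defn::Space::WeiSob}, and your identification of the exponents ($m>m-r$ iff $r>0$, weight exponent $r$) matches the paper.

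One point in your favor: you explicitly flag that Proposition~\ref{Prop::Space::HLLem} is stated only for special Lipschitz domains while $U$ and $V$ here are bounded Lipschitz, and you sketch a localization (finite boundary atlas, partition of unity, straightening diffeomorphisms handled by Proposition~\ref{Prop::Space::CompBdd}, trivial interior term). The paper's own proof invokes Proposition~\ref{Prop::Space::HLLem} directly on $U$ and $V$ without making this transfer explicit, so your version is somewhat more self-contained; carrying out the chart-by-chart comparison of $\delta_U,\delta_V$ with the special-domain cutoff distance is routine, and the interior pieces reduce to unweighted embeddings via Lemma~\ref{Lem::Space::TLEmbed}, so your plan closes the gap.
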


\begin{proof}
    By taking $\mu=dx$, $\nu=dy$ and $G(x,y)=\delta_V(y)^{-r}D^\alpha S(x,y)\delta_U(x)^s$ in Lemma~\ref{Lem::Space::Schur}, the Schur's test yields the boundedness $D^\alpha T:L^p(U,\delta_U^{-s})\to L^q(V,\delta_V^r)$ for all $1\le p\le\infty$ and $|\alpha|\le m$. In other words $T:L^p(U,\delta_U^{-s})\to W^{m,q}(V,\delta_V^r)$ holds. Applying Proposition~\ref{Prop::Space::HLLem}, the Strong Hardy's inequalities yield the embeddings $\widetilde\Fs_{p\infty}^s(\overline U)\hookrightarrow L^p(U,\delta_U^{-s})$ and $W^{m,q}(V,\delta_V^r)\hookrightarrow\Fs_{q\eps}^{m-r}(V)$ for $1\le p\le\frac\gamma{\gamma-1}$ and $\eps>0$. Taking compositions we conclude $T:\widetilde\Fs_{p\infty}^s(\overline U)\to\Fs_{q\eps}^{m-r}(V)$.

    By Lemma~\ref{Lem::Space::SpaceLem}, $\widetilde H^{s,p}(\overline U)=\widetilde\Fs_{p2}^s(\overline U)\subset\widetilde\Fs_{p\infty}^s(\overline U)$, $H^{m-r,q}(V)=\Fs_{q2}^{m-r}(V)\supset\Fs_{q\eps}^{m-r}(V)$ if $1<p,q<\infty$, $\widetilde \Co^s(\overline U)=\widetilde\Fs_{\infty\infty}^s(\overline U)$ and $\Co^{m-r}(V)=\Fs_{\infty\infty}^{m-r}(V)\supset\Fs_{\infty\eps}^{m-r}(V)$. The bound $T:\widetilde H^{s,p}\to H^{m-r,q}$ and $T:\widetilde \Co^s\to \Co^{m-r}$ follow immediately.
\end{proof}

Recall the Bochner-Martinelli form $B(z,\zeta)=\sum_{q=0}^{n-1}B_q(z,\zeta)$ in \eqref{Eqn::Intro::DefB}. One can see that its coefficients are constant linear combinations of derivatives of the Newtonian potential. We use the associated integral operator 
\begin{equation}\label{Eqn::Space::DefBOp}
    \Bc_qg(z):=\int_{\C^n}B_{q-1}(z,\cdot)\wedge g,\qquad 1\le q\le n,
\end{equation}
whenever the integral makes sense.

\begin{lem}\label{Lem::Space::BMKernel}
    For every bounded open set $\Uc\subset\C^n$ and $1\le q\le n$, the $\Bc_q$ defines an operator
    \begin{equation}\label{Eqn::Space::BMKernelBdd}
        \Bc_q:\widetilde\Fs_{pr}^s(\overline\Uc;\wedge^{0,q})\to\Fs_{pr}^{s+1}(\Uc;\wedge^{0,q-1})\quad\text{for all }p,r\in(0,\infty],\ s\in\R.
    \end{equation}

    In particular, for every $\lambda,\chi\in\Co_c^\infty(\C^n)$, the map $[f\mapsto \lambda\cdot\Bc_q[\chi f]]$ has boundedness $H^{s,p}(\C^n;\wedge^{0,q})\to H^{s+1,p}(\C^n;\wedge^{0,q-1})$ and $\Co^s(\C^n;\wedge^{0,q})\to \Co^{s+1}(\C^n;\wedge^{0,q-1})$ for all $s\in\R$ and $1<p<\infty$.
\end{lem}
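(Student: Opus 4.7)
The plan is to reduce the operator $\Bc_q$ to convolution with the Newtonian potential followed by one derivative, and then invoke standard Triebel--Lizorkin mapping properties. First, I would inspect the definition \eqref{Eqn::Intro::DefB} and expand $b\wedge(\dbar b)^{n-1}$. Since each coefficient of $B_{q-1}(z,\zeta)$ is a constant linear combination of terms of the form $c_{j,I,J}\,(\bar\zeta_j-\bar z_j)|\zeta-z|^{-2n}\cdot d\bar z^I\wedge d\bar\zeta^J\wedge d\zeta$, and since
\[
\frac{\bar\zeta_j-\bar z_j}{|\zeta-z|^{2n}}=c_n\,\frac{\partial}{\partial z_j}\,\frac{1}{|\zeta-z|^{2n-2}}=c_n\,\frac{\partial}{\partial z_j}\,N(z-\zeta),\qquad n\ge 2,
\]
(with the usual modification using the Cauchy kernel when $n=1$), each component of $\Bc_qf$ has the form $\sum_j \partial_{z_j}(N*g_{j,I})$, where the $g_{j,I}$ are the scalar coefficients of $f$, regarded as compactly supported distributions in $\overline\Uc$. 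This reduction is purely algebraic.

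Second, I would show that for $g\in\widetilde\Fs_{pr}^s(\overline\Uc)$, the convolution $N*g$ is a well-defined tempered distribution and satisfies $N*g\in\Fs_{pr}^{s+2}_{\mathrm{loc}}(\C^n)$. To achieve this, pick $\tilde\chi\in\Co_c^\infty(\C^n)$ with $\tilde\chi\equiv1$ on an open neighborhood of $\overline\Uc$, and split $N=\tilde\chi N+(1-\tilde\chi)N$. The second summand is smooth on $\C^n$, so convolution with it sends compactly supported distributions into $\Co^\infty$. For the first summand, $\tilde\chi N$ is a compactly supported integrable kernel whose Fourier transform differs from $c\cdot(1+|\xi|^2)^{-1}$ by a smooth, rapidly decaying function; thus convolution with $\tilde\chi N$ agrees with $(I-\Delta)^{-1}$ modulo a smoothing operator. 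Since $(I-\Delta)^{-1}:\Fs_{pr}^s(\C^n)\to\Fs_{pr}^{s+2}(\C^n)$ is an isomorphism by e.g.\ \cite[Theorem~2.3.8]{TriebelTheoryOfFunctionSpacesI} (and its extension to $p=\infty$), and since the extension of $g$ by zero gives an element of $\Fs_{pr}^s(\C^n)$, we obtain $N*g\in\Fs_{pr}^{s+2}(\C^n)$ with an estimate controlled by $\|g\|_{\widetilde\Fs_{pr}^s(\overline\Uc)}$.

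Third, applying $\partial_{z_j}$, which is bounded $\Fs_{pr}^{s+2}(\C^n)\to\Fs_{pr}^{s+1}(\C^n)$, and restricting to $\Uc$ (which is bounded by definition of $\Fs_{pr}^{s+1}(\Uc)$ via \eqref{Eqn::Space::TLDomain}), we conclude the desired boundedness
\[
\Bc_q:\widetilde\Fs_{pr}^s(\overline\Uc;\wedge^{0,q})\to\Fs_{pr}^{s+1}(\Uc;\wedge^{0,q-1}).
\]

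For the ``in particular'' statement, apply Proposition~\ref{Prop::Space::CompBdd} to see that $f\mapsto\chi f$ is bounded $\Fs_{pr}^s(\C^n)\to\widetilde\Fs_{pr}^s(\overline{\supp\chi})$, and that $g\mapsto\lambda g$ is bounded $\Fs_{pr}^{s+1}(\Uc)\to\Fs_{pr}^{s+1}(\C^n)$ for any $\Uc\supset\supp\lambda$. The $H^{s,p}$ and $\Co^s$ statements follow from Lemma~\ref{Lem::Space::SpaceLem}, identifying these spaces with $\Fs_{p2}^s$ and $\Fs_{\infty\infty}^s$ respectively. The main obstacle in this proof is handling the quasi-Banach range $p,r\in(0,1)$, where one cannot simply dualize; however, the Fourier multiplier characterization of $(I-\Delta)^{-1}$ and the Triebel--Lizorkin lift property extend to the full range $p,r\in(0,\infty]$, so the argument goes through uniformly.
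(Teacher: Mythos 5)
Your reduction of $\Bc_q$ to first derivatives of the Newtonian potential agrees with the paper, and your strategy of localizing the kernel and invoking the Bessel-potential lift $(I-\Delta)^{-1}:\Fs_{pr}^s\to\Fs_{pr}^{s+2}$ is a genuinely different route from the paper's, which instead dilates $G\ast\psi_j$ to form a dyadic family and applies Proposition~\ref{Prop::AntiDev::RychOp} directly. Both approaches are reasonable: yours is closer to classical elliptic regularity, while the paper's avoids Fourier multiplier theory by working entirely within the Littlewood--Paley framework it already has in place.

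However, there is a genuine gap in your second step. You assert that $\widehat{\tilde\chi N}(\xi)$ differs from $c(1+|\xi|^2)^{-1}$ by a \emph{smooth, rapidly decaying} function, and conclude that convolution with $\tilde\chi N$ agrees with $(I-\Delta)^{-1}$ modulo a smoothing operator. The difference is indeed smooth (both terms are smooth: $\tilde\chi N$ has compact support so its Fourier transform is entire, and $(1+|\xi|^2)^{-1}$ is smooth), but it is \emph{not} rapidly decaying. Writing $\hat N(\xi)=c|\xi|^{-2}$, one has
\begin{equation*}
    \widehat{\tilde\chi N}(\xi)-c(1+|\xi|^2)^{-1}=\widehat{(\tilde\chi-1)N}(\xi)+\frac{c}{|\xi|^2(1+|\xi|^2)}.
\end{equation*}
The first term is rapidly decaying at infinity (since $\partial^\alpha[(\tilde\chi-1)N]\in L^1$ for $|\alpha|\ge 3$), but the second is exactly $\approx c|\xi|^{-4}$, so the total has only polynomial decay of order $-4$. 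Consequently, the difference operator is a pseudodifferential operator of order $-4$, not a smoothing operator, and the logic as written does not close. The conclusion is still correct, and either of the following standard repairs works: (a) observe that $m(\xi):=(1+|\xi|^2)\widehat{\tilde\chi N}(\xi)$ is a classical symbol of order $0$, so that $\tilde\chi N\ast\cdot=(I-\Delta)^{-1}\circ T_m$ with $T_m$ bounded on every $\Fs_{pr}^s(\C^n)$ by the Fourier multiplier theorem for Triebel--Lizorkin spaces; or (b) keep your decomposition but note that a symbol of order $-4$ still gives boundedness $\Fs_{pr}^s\to\Fs_{pr}^{s+4}\hookrightarrow\Fs_{pr}^{s+2}$, which is more than enough.
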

\begin{proof}
    For $(p,r)\notin\{\infty\}\times(0,\infty)$ see e.g. \cite[Lemma~6.1]{YaoCXFinite}. Recall from Lemma~\ref{Lem::Space::SpaceLem} that $H^{s,p}=\Fs_{p2}^s$ and $\Co^s=\Fs_{\infty\infty}^s$. We give an alternative proof below. Notice that the $\Bc_q$ is defined on distributions because the convolutions of the Newtonian potential and compactly supported distributions are well-defined.

    One can see that from e.g. \cite[Chapter~IV.1]{RangeSCVBook} $B_{q-1}(z,\zeta)$ is  the constant linear combinations of the derivatives of the Newtonian potential $G(z-\zeta):=-\frac{(n-2)!}{4\pi^n}|z-\zeta|^{2-2n}$. By e.g. \cite[Theorem~1.24]{TriebelTheoryOfFunctionSpacesIV} $\nabla:\Fs_{pr}^{s+2}(\C^n)\to\Fs_{pr}^{s+1}(\C^n;\C^{2n})$ are all bouned, by passing to domain we get $\nabla:\Fs_{pr}^{s+2}(\Uc)\to\Fs_{pr}^{s+1}(\Uc;\C^{2n})$ as well. Therefore, it suffices to prove $[f\mapsto G\ast f]:\widetilde\Fs_{pr}^s(\overline\Uc)\to\Fs_{pr}^{s+2}(\Uc)$ for all $0<p,r\le\infty$ and $s\in\R$.

    Let $(\phi_j)_{j=0}^\infty$ be a Littlewood-Paley family for $\R^N=\C^n$ ($N=2n$) from Definition~\ref{Defn::Space::TLSpace}. Define $(\psi_j)_{j=1}^\infty\subset\Ss(\R^N)$ be such that $\psi_j(x)=2^{(j-1)N}\psi_1(2^{j-1}x)$ for $j\ge2$, and $\hat\psi_1\in\Co_c^\infty(\R^N\backslash\{0\})$, $\hat\psi_1|_{\supp\hat\phi_1}\equiv1$ for $j\ge1$. We see that $\phi_j=\psi_j\ast\phi_j$ for all $j\ge1$. Therefore $f=\phi_0\ast f+\sum_{j=1}^\infty\psi_j\ast \phi_j\ast f$ for $f\in\Ss'(\R^N)$.

    Let $\eta_j:=2^{2j}G\ast\psi_j$ for $j\ge1$, we see that $\eta_j\in\dot\Ss(\R^N)$ (is Schwartz and has all moment vanishing) and $\eta_j(x)=2^{(j-1)N}\eta_1(2^{j-1}x)$ for $j\ge2$ as well. By Proposition~\ref{Prop::AntiDev::RychOp}, with $\theta_j=\phi_j$ and $r=-2$ in the statement, we see that $[f\mapsto\sum_{j=1}^\infty G\ast\psi_j\ast\phi_j\ast f]:\Fs_{pr}^s(\R^N)\to\Fs_{pr}^{s+2}(\R^N)$. That is, $[f\mapsto G\ast(f-\phi_0\ast f)]:\Fs_{pr}^s(\C^n)\to\Fs_{pr}^{s+2}(\C^n)$ is bounded.

    On the other hand, $G\ast \phi_0\in\Co^\infty_\loc(\R^N)$ therefore $[f\mapsto G\ast \phi_0\ast f]:\Es'(\R^N)\to\Co^\infty_\loc(\R^N)$. In particular $[f\mapsto G\ast \phi_0\ast f]:\widetilde\Fs_{pr}^s(\overline\Uc)\to\Co^\infty(\Uc)$. 

    Since $G\ast f=G\ast\phi_0\ast f+G\ast(f-\phi_0\ast f)$. Combing the above results we obtain \eqref{Eqn::Space::BMKernelBdd}.
\end{proof}

\begin{lem}\label{Lem::Space::BMFormula}
    Let $0\le q\le n$. For every $(0,q)$ form $g$ whose coefficients are compactly supported distribution, i.e. $g\in\Es'(\C^n;\wedge^{0,q})$, we have Bochner-Martinelli formula $g=\dbar\Bc_qg+\Bc_{q+1}\dbar g$.

    In other words, using the mixed degree convention we have $\id=\dbar\Bc+\Bc\dbar$.
\end{lem}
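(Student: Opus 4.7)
The plan is to deduce the distributional identity from the classical Bochner-Martinelli-Koppelman formula for smooth compactly supported forms by a mollification argument. The reduction hinges on the observation that $\Bc_q$ is itself a convolution operator with a kernel depending only on $z-\zeta$, so it commutes with smoothing by convolution.

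First I would recall that for $g \in \Co_c^\infty(\C^n;\wedge^{0,q})$ the identity
\begin{equation*}
g = \dbar \Bc_q g + \Bc_{q+1}\dbar g
\end{equation*}
holds pointwise; this is the classical Bochner-Martinelli-Koppelman formula, for instance \cite[Theorem~11.1.2]{ChenShawBook}, where one uses the conventions $\Bc_0 = \Bc_{n+1} = 0$ (noting that when $q=n$ the term $\dbar g$ vanishes identically, and when $q = 0$ the term $\dbar \Bc_q g$ vanishes). Because the original paper states this only on the bounded domain $\Omega$, I would derive the $\C^n$-version either by applying it with $\Omega$ replaced by a large ball containing $\supp g$, or equivalently from the identity $\dbar_{z,\zeta} B(z,\zeta) = 0$ off the diagonal combined with the fundamental-solution property of the Newtonian potential.

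Next, I would approximate by mollification. Fix $\varphi \in \Co_c^\infty(\C^n)$ with $\int \varphi = 1$ and set $\varphi_\eps(z) := \eps^{-2n}\varphi(z/\eps)$. For $g \in \Es'(\C^n;\wedge^{0,q})$ the componentwise convolution $g_\eps := \varphi_\eps * g$ lies in $\Co_c^\infty(\C^n;\wedge^{0,q})$ with $\supp g_\eps \subseteq \supp g + B(0,\eps)$, and $g_\eps \to g$ in $\Es'$ as $\eps \to 0^+$. Similarly $\dbar g_\eps = \varphi_\eps * \dbar g \to \dbar g$. Applying the smooth case to $g_\eps$ gives
\begin{equation*}
\varphi_\eps * g = g_\eps = \dbar \Bc_q g_\eps + \Bc_{q+1}\dbar g_\eps.
\end{equation*}

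The key observation is that $\Bc_q$ commutes with $\varphi_\eps*$. Indeed, by \eqref{Eqn::Intro::DefB} the kernel $B_{q-1}(z,\zeta)$ is a constant-coefficient linear combination of first partial derivatives of the translation-invariant Newtonian potential, and in particular depends only on $z-\zeta$ and is locally integrable on $\C^n$. Thus $\Bc_q$ acts as convolution on each component, and associativity of convolution -- valid because $\varphi_\eps$ is Schwartz, $g$ has compact support, and the kernel is a tempered distribution -- yields $\Bc_q(\varphi_\eps * g) = \varphi_\eps * \Bc_q g$; likewise $\Bc_{q+1}\dbar g_\eps = \varphi_\eps * \Bc_{q+1}\dbar g$, and $\dbar$ trivially commutes with convolution. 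Lemma~\ref{Lem::Space::BMKernel}, applied with any bounded open $\Uc \supset \supp g$, guarantees $\Bc_q g$ and $\Bc_{q+1}\dbar g$ are well-defined elements of $\Ss'(\C^n)$ (restricting to $\Uc$ and noting that outside $\supp g$ the image is smooth). Passing to the limit $\eps \to 0^+$ in $\Ss'(\C^n)$ on both sides of the identity for $g_\eps$ then gives $g = \dbar \Bc_q g + \Bc_{q+1}\dbar g$. There is no substantive obstacle; the only minor subtlety is the associativity of convolution involving a non-compactly-supported but locally integrable kernel, which is entirely standard.
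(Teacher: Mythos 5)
Your proposal is correct and follows essentially the same route as the paper: both invoke the classical Bochner--Martinelli--Koppelman formula for smooth compactly supported forms (via \cite[Theorem~11.1.2]{ChenShawBook}) and then extend to $\Es'(\C^n;\wedge^{0,q})$ by smooth approximation. The paper leaves the approximation step entirely implicit, whereas you supply the clean technical observation that $\Bc_q$ is a componentwise convolution operator with a translation-invariant kernel and hence commutes with mollification, so that the identity for $g_\eps=\varphi_\eps\ast g$ reads $\varphi_\eps\ast g=\varphi_\eps\ast(\dbar\Bc_q g+\Bc_{q+1}\dbar g)$ and the limit $\eps\to0^+$ in $\Ss'(\C^n)$ is immediate.
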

Here we set $\Bc_{0}=\Bc_{n+1}=0$ as usual.
\begin{proof}
    See e.g. \cite[Theorem~11.1.2]{ChenShawBook}. In the reference $g=\dbar\Bc_qg+\Bc_{q+1}\dbar g$ holds for $g\in C^1_c$. The case where $g$ is compactly supported distribution can be done by taking smooth approximations.
\end{proof}
\begin{rem}
    Fix $s\le0$, $1<p<\infty$ and a bounded Lipschitz domain $\Omega\subset\C^n$. From above we have $g=\dbar\Bc+\Bc\dbar g$ for $g\in C_c^\infty(\Omega;\wedge^{0,\bullet})$. Taking $\|\cdot\|_{H^{s,p}(\R^N)}$ approximations, this is true for $g\in \widetilde H^{s,p}(\overline\Omega)$ as well.
    
    Although by Lemma~\ref{Lem::Space::DenseHs}~\ref{Item::Space::DenseHs::<0} $C_c^\infty(\Omega)\subset H^{s,p}(\Omega)$ is dense in $H^{s,p}(\Omega)$, this formula does \textbf{not} extend to all $g\in H^{s,p}(\Omega;\wedge^{0,\bullet})$ because (the restricted operator) $\Bc:C_c^\infty(\Omega;\wedge^{0,\bullet})\to\Co^\infty(\Omega;\wedge^{0,\bullet})$ cannot extend to a bounded map $H^{s-1,p}(\Omega;\wedge^{0,\bullet})\to\Ds'(\Omega;\wedge^{0,\bullet})$!

    Indeed, suppose $\Bc_1$ has continuous extension to $H^{-1,p}(\Omega;\wedge^{0,1})\to\Ds'(\Omega)$, then for every function $g\in L^p(\Omega)$ we have $g=\Bc_1\dbar g$ in $\Omega$ (since $\Bc_0=0$). However take $g=\1_\Omega$ with an $L^p$ approximations $(g_k)_{k=1}^\infty\subset C_c^\infty(\Omega)$, then  $0=(\dbar g)|_\Omega=\lim_{j\to\infty}\dbar g_j|_\Omega$ converging in $H^{-1,p}(\Omega) $. By the continuity assumption $\Bc_1\dbar g=0$, but $g\neq0$ in $\Omega$, contradiction!

    This idea will also be used in the proof of Lemma~\ref{Lem::Space::IllPosed} below.
\end{rem}

In the paper we construct solution operators using integral presentations. This is a different method from the $\dbar$-Neumann approach. It is important to notice that  the canonical solution is never well-posed on spaces of distributions with large enough order. This is mentioned implicitly in \cite{ShiYaoCk,YaoCXFinite}. For completeness, let us formulate the statement here.
\begin{lem}\label{Lem::Space::IllPosed}
    Let $n\ge1$ and $\Omega\subset\C^n$ be a bounded pseoduconvex domain. For every $1\le q\le n$ the canonical solution operator $\dbar^*N_q:L^2(\Omega;\wedge^{0,q})\to L^2(\Omega;\wedge^{0,q-1})$ on $(0,q)$ forms cannot extend to a continuous linear operator $\dbar^*N_q:H^{-q-1,2}(\Omega;\wedge^{0,q})\to \Ds'(\Omega;\wedge^{0,q-1})$. In particular the $\dbar$-Neumann problem are not well-posed on $H^{-q-1,2}(\Omega;\wedge^{0,q})$ for all $0\le q\le n$.
\end{lem}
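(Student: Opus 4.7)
The plan is to argue by duality and reduce to a Bergman-projection statement, which I then contradict via the maximum principle. Suppose toward contradiction that for some $1\le q\le n$ the canonical solution $\dbar^*N_q$ admits a continuous extension $T:H^{-q-1,2}(\Omega;\wedge^{0,q})\to\Ds'(\Omega;\wedge^{0,q-1})$. Since $L^2\subset H^{-q-1,2}$ is dense by Lemma~\ref{Lem::Space::DenseHs}~\ref{Item::Space::DenseHs::<0}, the extension $T$ is unique with $T|_{L^2}=\dbar^*N_q$. For each $\phi\in C_c^\infty(\Omega;\wedge^{0,q-1})$ the functional $f\mapsto\langle Tf,\phi\rangle$ is continuous on $H^{-q-1,2}$; using Lemma~\ref{Lem::Space::DenseHs}~\ref{Item::Space::DenseHs::>0} to identify $H^{-q-1,2}(\Omega;\wedge^{0,q})^\ast=\widetilde H^{q+1,2}(\overline\Omega;\wedge^{0,q})$ and computing on the dense subspace $L^2$ via $(\dbar^*N_q)^\ast=N_q\dbar$ (which uses $N_q^\ast=N_q$), I identify the representing element as $N_q\dbar\phi$. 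Hence the extension forces
\[
    N_q\dbar\phi\in\widetilde H^{q+1,2}(\overline\Omega;\wedge^{0,q})\qquad\text{for every }\phi\in C_c^\infty(\Omega;\wedge^{0,q-1}).
\]

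I then apply $\dbar^*$ to reduce this to a statement about a Bergman-type projection. Set $u:=N_q\dbar\phi$; on $L^2$ we have the identity $\dbar^*N_q\dbar=\id-P_{q-1}$, where $P_{q-1}$ is the orthogonal projection onto $\ker\dbar\cap L^2(\Omega;\wedge^{0,q-1})$ (with $P_0=P$ the usual Bergman projection; existence uses the vanishing of $L^2$-harmonic forms on pseudoconvex $\Omega$ in positive degree). Since the zero-extension $\tilde u=u\cdot\1_\Omega$ lies in $H^{q+1,2}(\R^{2n};\wedge^{0,q})$ and in particular in $H^{1,2}(\R^{2n})$ with vanishing trace on $b\Omega$, the distributional formula $\dbar^*\tilde u=(\dbar^*u)\cdot\1_\Omega$ holds on $\R^{2n}$ with no boundary contribution, so $(\phi-P_{q-1}\phi)\cdot\1_\Omega\in H^{q,2}(\R^{2n};\wedge^{0,q-1})$. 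Since $\phi\in C_c^\infty\subset\widetilde H^{q,2}(\overline\Omega;\wedge^{0,q-1})$, this forces $P_{q-1}\phi\in\widetilde H^{q,2}(\overline\Omega;\wedge^{0,q-1})$ for every $\phi\in C_c^\infty(\Omega;\wedge^{0,q-1})$.

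For $q=1$ this is an immediate contradiction: $P_0\phi=P\phi$ is holomorphic on $\Omega$, and membership in $\widetilde H^{1,2}(\overline\Omega)$ forces its $H^{1/2}$-boundary trace to vanish, which via Hardy-space/maximum-modulus theory on bounded domains forces $P\phi\equiv 0$. By density of $C_c^\infty$ in $L^2(\Omega)$, $P\equiv 0$ on $L^2$, contradicting $P(1)=1\ne 0$. Running the same duality directly on $P$ yields the ``in particular'' claim for $q=0$. The main obstacle I expect is extending the contradiction to $q\ge 2$, where $P_{q-1}\phi$ is a $\dbar$-closed $(0,q-1)$-form rather than a holomorphic function; I plan to handle this either by iterating the $\dbar^*$-reduction to cascade down to the $q=1$ case, or by invoking the Bochner--Martinelli representation of the compactly supported $\dbar$-closed form $P_{q-1}\phi\cdot\1_\Omega$ together with a unique-continuation argument to contradict the nontriviality of $P_{q-1}$ on $\ker\dbar\cap L^2$.
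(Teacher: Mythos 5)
Your $q=1$ argument is correct and takes essentially the same route as the paper's: assume $\dbar^*N_1$ extends continuously to $H^{-2,2}(\Omega;\wedge^{0,1})\to\Ds'(\Omega)$, deduce by duality that the Bergman projection $P$ must map $\Co_c^\infty(\Omega)$ into $H_0^{1,2}(\Omega)$, and contradict because $H_0^{1,2}(\Omega)$ contains no nonzero holomorphic function. The paper dualizes the self-adjoint $L^2$ projection $P=\id-\dbar^*N_1\dbar$ directly, whereas you dualize $\dbar^*N_1$ first, identify the representing element as $N_1\dbar\phi\in\widetilde H^{2,2}(\overline\Omega;\wedge^{0,1})$, and then apply $\dbar^*$ to land on the same membership $P\phi\in\widetilde H^{1,2}(\overline\Omega)$. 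That is a harmless variant of the same idea and is fine.

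The gap is $q\ge 2$, which you flag yourself: you offer two sketches but carry out neither, so as written the proposal does not prove the lemma for those degrees. The paper closes this by a downward induction on $q$: the $L^2$ identity $\dbar\dbar^*N_{q-1}=\id-\dbar^*N_q\dbar$, valid on $\Co_c^\infty(\Omega;\wedge^{0,q-1})$ (dense in $H^{-q,2}$ by Lemma~\ref{Lem::Space::DenseHs}), combined with the boundedness $\dbar\colon H^{-q,2}\to H^{-q-1,2}$, shows that a continuous extension of $\dbar^*N_q$ to $H^{-q-1,2}\to\Ds'$ would yield a continuous extension of $\dbar\dbar^*N_{q-1}$ to $H^{-q,2}\to\Ds'$, which is then set against the case $q-1$. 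This is the completed form of your first ``cascade'' sketch and is the route you should pursue and write out. Your second sketch does not obviously close: for $q\ge 2$ a nonzero compactly supported $\dbar$-closed $(0,q-1)$-form on $\C^n$ is not forbidden (unlike the $q-1=0$ case of compactly supported holomorphic functions, which must vanish), so even granting $\dbar$-closedness of $P_{q-1}\phi\cdot\1_\Omega$ the Bochner--Martinelli representation by itself gives no contradiction with the nontriviality of $P_{q-1}$.
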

With a finer analysis on tangential and normal derivatives, it is possible to show that $\dbar^*N_q$ is not defined on $H^{-s,2}$ when $s>\frac12$, even when $\Omega$ is a unit ball.
\begin{proof}
    For a $\dbar$-closed $(0,q)$-form $f$ on $\Omega$ with $L^2$ coefficients, the canonical solution $u=\sum_Iu_Id\bar z^I:=\dbar^*N_qf$ is a $(0,q-1)$ form on $\Omega$ such that $\dbar u=f$ and $\|u\|_{L^2(\Omega;\wedge^{0,q})}^2:=\sum_I\|u_I\|_{L^2}^2$ is minimal. Here $N_q:L^2(\Omega;\wedge^{0,q})\to L^2(\Omega;\wedge^{0,q})$ is the $\dbar$-Neumann solution operator. See e.g. \cite[Corollary~4.4.2]{ChenShawBook}. 

    In the proof we use the fact that $\dbar:H^{-q,2}(\Omega;\wedge^{0,q-1})\to H^{-q-1,2}(\Omega;\wedge^{0,q})$.

    For $q=1$, suppose by contrast that we have continuous extension $\dbar^*N_1:H^{-2,2}\to\Ds'$, then we get $\dbar^*N_1\dbar:H^{-1,2}(\Omega)\to\Ds'(\Omega)$. In other words the Bergman projection $P=\id-\dbar^*N_1\dbar:L^2(\Omega)\to L^2(\Omega)$ extends to $H^{-1,2}\to\Ds'$ as well.

    On the other hand, $P:L^2\to L^2$ is self-adjoint since it is an orthogonal projection. By duality we get $P=P^*:\Co_c^\infty(\Omega)\to H_0^{s,2}(\Omega)$. But $H_0^{1,2}(\Omega)$ is a Sobolev function with zero trace on the boundary, which cannot  contain any nonzero holomorphic functions. Therefore $Pf=0$ for all $\Co_c^\infty(\Omega)$, which is impossible.

    For $q>1$ we proceed by induction. Note that for $q\ge1$ we have $f=\Box N_{q-1}f=\dbar\dbar^*N_{q-1}f+\dbar^*N_q\dbar f$ for all $f\in\Co_c^\infty(\Omega;\wedge^{0,q-1})$, see e.g. \cite[Theorem~4.4.1]{ChenShawBook}. Therefore $\dbar\dbar^*N_{q-1}=\id-\dbar^*N_q\dbar$ holds on $\Co_c^\infty(\Omega;\wedge^{0,q-1})$, which by Lemma~\ref{Lem::Space::DenseHs}~\ref{Item::Space::DenseHs::<0}, is a dense subspace of $H^{-q,2}(\Omega;\wedge^{0,q-1})$.
    
    Suppose we obtained that $\dbar^*N_{q-1}$ cannot extend to $H^{-q-1,2}\to\Ds'$, which means $\dbar\dbar^*N_{q-1}$ cannot extend to $H^{-q-1,2}\to\Ds'$ as well. If $\dbar^*N_q$ admits extension to $H^{-q-1,2}\to\Ds'$ we must have boundedness $\dbar^*N_q\dbar:H^{-q,2}\to\Ds'$. But by $\dbar\dbar^*N_{q-1}=\id-\dbar^*N_q\dbar$ we get the boundedness $\dbar\dbar^*N_{q-1}:H^{-q,2}\to\Ds'$, which contradicts the inductive hypothesis.
\end{proof}
\begin{rem}
    Although the boundary value problem is not well-posed on distributions in $\Omega$, it is still possible to talk about the problems on $\overline\Omega$, namely for distributions $f$ and $u$ on $\C^n$ supported in $\overline\Omega$. We refer the reader to \cite{NegativeBVPBook}.
\end{rem}

\bibliographystyle{amsalpha}
\bibliography{reference} 
\end{document}